\numberwithin{equation}{section}
\theoremstyle{plain} 
\newtheorem{thm}[equation]{Theorem}
\newtheorem*{thm*}{Theorem}
\newtheorem{cor}[equation]{Corollary}
\newtheorem{lem}[equation]{Lemma}
\newtheorem{prop}[equation]{Proposition}
\theoremstyle{definition}
\newtheorem{defn}[equation]{Definition}
\newtheorem{conj}[equation]{Conjecture}
\theoremstyle{remark}
\newtheorem{rem}[equation]{Remark}
\newtheorem{ex}[equation]{Example}
\title[Distinguished waves]{Distinguished waves and slopes in genus two}
\author{John Berge}
\email{jberge@charter.net}
\date{\today}
\begin{document}

\begin{abstract}
If $R$ is a nonseparating simple closed curve on the boundary of a genus two handlebody $H$ and the manifold $H[R]$, obtained by adding a 2-handle to $H$ along $R$, has incompressible boundary, then there exists a unique arc $\omega$ in $\partial H$, meeting $R$ only in its endpoints, such that, $\omega$ is isotopic in $\partial H$, keeping its endpoints on $R$, to a nontrivial wave based at $R$ in each Heegaard diagram of $R$ on $\partial H$ which has no cut-vertex. Such a wave $\omega$ is a \emph{distinguished-wave} based at $R$. Then surgery on $R$ along its distinguished wave $\omega$ yields a pair of disjoint simple closed curves, say $m_1$ and $m_2$, in $\partial H$, each of which represents a $\omega$-determined-slope $m$ on $\partial H[R]$, that depends only on $R$ and $H$.

\noindent A few  consequences: 
\begin{itemize}
\item Only Dehn filling of $H[R]$ at slope $m$ can yield $S^3$, $(S^1 \times S^2)\; \#\; L(p,q)$, or $S^1 \times S^2$. So $H[R]$ embeds in at most one of $S^3$, $(S^1 \times S^2)\; \#\; L(p,q)$, or $S^1 \times S^2$. And, if such an embedding exists, it is unique. 

\item Theta curves arising from unknotting tunnels of tunnel-number-one knots in $S^3$, $S^1 \times S^2$, or $(S^1 \times S^2)\; \#\; L(p,q)$, have canonical component knots. 

\item One can recognize (1,1) tunnels of (1,1) knots in $S^3$ or $S^1 \times S^2$. 

\item Algorithms for recognizing genus two Heegaard diagrams of $S^3$, $S^1 \times S^2$, or $(S^1 \times S^2)\; \#\; L(p,q)$ that use waves can be streamlined. 

\item Efficient procedures for computing the depth of an unknotting tunnel of a knot in $S^3$, $S^1 \times S^2$, or $(S^1 \times S^2)\; \#\; L(p,q)$ exist.
\end{itemize}

Finally, examples are provided showing that, if $R_1$ and $R_2$ are nonseparating simple closed curves on $\partial H$ such that $H[R_1]$ is homeomorphic to $H[R_2]$, but $(H, R_1)$ and $(H,R_2)$ are not homeomorphic, then the $\omega$-determined slopes on $\partial H[R_1]$ and $\partial H[R_2]$ may differ. 

Continuing, the last section of the paper summarizes computational evidence suggesting however that, if $\mathcal{R}$ is a set of simple closed curves on $\partial H$ such that $R_1 \in \mathcal{R}$ and $R_2 \in \mathcal{R}$ means $H[R_1]$ is homeomorphic to $H[R_2]$, then at most two distinct slopes appear as $\omega$-determined slopes for curves in $\mathcal{R}$, and that, if such distinct $\omega$-determined slopes exist, they are never more than distance one apart.
\end{abstract}

\maketitle

\begin{center}
{CONTENTS}
\end{center}
\noindent \textbf{Section \ref{Introduction})} Introduction.

\noindent \textbf{Section \ref{Preliminaries})} Preliminaries.

\noindent \textbf{Section \ref{Basics of genus two Heegaard diagrams})} Basics of genus two Heegaard diagrams, their graphs and waves based at their curves.

\noindent \textbf{Section  \ref{Distinguished waves are well-defined.})} Distinguished waves are well-defined.

\noindent \textbf{Section \ref{Only distinguished-wave-determined slopes are meridians})} Only distinguished-wave-determined slopes are meridians of tunnel-number-one knots in $S^3$, $S^1 \times S^2$, or $(S^1 \times S^2)\; \#\; L(p,q)$.

\noindent \textbf{Section \ref{Distinguished waves in genus two Heegaard diagrams of closed manifolds})} Distinguished waves in genus two Heegaard diagrams of closed manifolds.

\noindent \textbf{Section \ref{Distinguished waves in genus two Heegaard diagrams of S^3, S1 X S2 etc})} Distinguished waves in genus two Heegaard diagrams of $S^3$, $S^1 \times S^2$, or $(S^1 \times S^2)\; \#\; L(p,q)$.

\noindent \textbf{Section \ref{Some properties of meridian representatives})} Some properties of meridian representatives of tunnel-number-one knots in $S^3$, $S^1 \times S^2$, or $(S^1 \times S^2)\; \#\; L(p,q)$.

\noindent \textbf{Section \ref{Unknotting tunnels and canonical constituent knots of theta curves})} Unknotting tunnels and canonical constituent knots of theta curves in $S^3$, $S^1 \times S^2$, or $(S^1 \times S^2)\; \#\; L(p,q)$.

\noindent \textbf{Section \ref{Recognizing (1,1) tunnels of (1,1) knots})} Recognizing (1,1) tunnels of (1,1) knots in $S^3$ or $S^1 \times S^2$.

\noindent \textbf{Section \ref{Recognizing S^3 etc})} Recognizing genus two Heegaard diagrams of $(S^1 \times S^2)\; \#\; L(p,q)$, $S^3$, or $S^1 \times S^2$.

\noindent \textbf{Section \ref{Computing depth})} Computing the depth of a tunnel of a tunnel-number-one knot in $S^3$, $S^1 \times S^2$, or $(S^1 \times S^2)\; \#\; L(p,q)$.

\noindent \textbf{Section \ref{Other waves based at a fixed curve})} Other waves based at a fixed curve.

\noindent \textbf{Section \ref{Distinguished-wave-determined slope dependence on tunnels})} Distinguished-wave-determined slope dependence on tunnels in tun-nel-number-one manifolds which do not embed in $S^3$, $S^1 \times S^2$, or $(S^1 \times S^2)\; \# \; L(p,q)$.

\section{Introduction}
\label{Introduction}

If $R$ is an essential simple closed curve in a Heegaard diagram $\mathcal{D}$ on the boundary of a handlebody $H$, and $F$ is the surface obtained by cutting $\partial H$ open along $R$, then a \emph{wave} based at $R$ in $\mathcal{D}$ is a properly embedded essential arc $w$ in $F$ such that both endpoints of $w$ lie on the same boundary component of $F$ and $w$ is otherwise disjoint from the curves in $\mathcal{D}$.

When $H$ has genus two, the boundary $\partial H[R]$ of the manifold $H[R]$, obtained by adding a 2-handle to $H$ along $R$, is a torus. In this case, when $w$ is a wave based at $R$ in a Heegaard diagram $\mathcal{D}$ of $R$ on $\partial H$, the two boundary components, $m_1$ and $m_2$ of the regular neighborhood of $R \cup w$ in $\partial H$, which are not isotopic to $R$ in $\partial H$, are isotopic in $\partial H[R]$ and so specify a \emph{wave-determined} \emph{slope}, say $m$, in $\partial H[R]$. 

The results of this paper focus on such wave-determined slopes. Every essential nonseparating simple closed curve $R$ on $\partial H$ such that $H[R]$ has incompressible boundary has at least one wave-determined slope $w$, and may have many. For instance, a later example shows that an infinite number of cyclic surgery slopes of the trefoil knot are wave-determined. While most such wave-determined slopes are ephemeral in the sense that they disappear when the Heegaard diagram $\mathcal{D}$ of $R$ is changed, it is demonstrated here that there always exists a \emph{distinguished-wave} based at $R$ such that, $\omega$ is isotopic in $\partial H$, keeping its endpoints on $R$, to a nontrivial wave based at $R$ in each Heegaard diagram of $R$ on $\partial H$ which has no cut-vertex. Some consequences that follow readily are: 
\begin{itemize}
\item Only Dehn filling of $H[R]$ at slope $m$ can yield $S^3$, $(S^1 \times S^2)\; \#\; L(p,q)$,$S^3$, or $S^1 \times S^2$. So $H[R]$ embeds in at most one of $S^3$, $(S^1 \times S^2)\; \#\; L(p,q)$,$S^3$, or $S^1 \times S^2$. And, if such an embedding exists, it is unique. 

\item Theta curves arising from unknotting tunnels of tunnel-number-one knots in $S^3$, $S^1 \times S^2$, or $(S^1 \times S^2)\; \#\; L(p,q)$, have canonical component knots. 

\item One can recognize (1,1) tunnels of (1,1) knots in $S^3$ or $S^1 \times S^2$. 

\item Algorithms for recognizing genus two Heegaard diagrams of $S^3$, $S^1 \times S^2$, or $(S^1 \times S^2)\; \#\; L(p,q)$ that use waves can be streamlined. 

\item Efficient procedures for computing the depth of an unknotting tunnel of a knot in $S^3$, $S^1 \times S^2$, or $(S^1 \times S^2)\; \#\; L(p,q)$ exist. 
\end{itemize}

Finally the paper considers how distinguished-wave-determined slopes compare in cases in which $R$ and $R'$ are nonseparating curves on the boundaries of genus two handlebodies $H$ and $H'$ respectively such that $H[R]$ and $H'[R']$ are homeomorphic.

In this case, generically, the distinguished-wave-determined slope is an invariant of the manifold $H[R]$, as results of \cite{J06} and \cite{ST06} show that when $R$ is at distance greater than 5 in the curve-complex of $\partial H$ from each curve in $\partial H$ that bounds an essential disk in $H$, and $R'$ is a curve in the boundary of a handlebody
$H'$ of genus two, such that $H[R]$ and $H'[R']$ are homeomorphic, then the pairs $(H,R)$ and $(H',R')$ are also homeomorphic.

The situation is more complicated when $H[R]$ and $H'[R']$ are homeomorphic but $(H,R)$ and $(H',R')$ are not homeomorphic. To investigate how distinguished-wave-determined slopes behave in this situation, the author's Heegaard program \ref{OCC data} was used to look for tunnel-number one manifolds and their distinguished-wave-determined slopes in the subset of 59,107 one-cusped hyperbolic manifolds in SnapPy's `OrientableCuspedCensus' or `OCC' of the set of all hyperbolic, orientable, cusped manifolds which have triangulations using no more than 9 ideal tetrahedra.

A search using Heegaard found 113,378 tunnels for 49,933 of these 59,107 one-cusped manifolds in the OCC. So at least 49,933, or approximately 84.5\% of the one-cusped manifolds in the OCC are tunnel-number one manifolds. Discarding the tunnel-number one manifolds which embed in $S^3$, $S^1 \times S^2$, or $(S^1 \times S^2)\, \# \, L(p,q)$ from this set of 49,933 manifolds left a set of 48,688 manifolds of which 7,917, or about 16.3\%, have distinct distinguished-wave-determined slopes. (Section \ref{Distinguished-wave-determined slope dependence on tunnels} has more details on the OCC manifolds with distinct distinguished-wave-determined slopes.)

\begin{rem}
See \ref{OCC data} for links to source code for Heegaard and data files for Heegaard's survey of distinguished-wave-determined slopes of OCC manifolds.
\end{rem}

\section{Preliminaries}
\label{Preliminaries}

Pairs of simple closed curves in a surface, say $F$, are always assumed to have only essential intersections, up to isotopy in $F$.
A properly embedded nonseparating disk in a handlebody $H$ is a \emph{meridian disk} of $H$.
A set $\Delta$ of pairwise disjoint meridian disks in a handlebody $H$ is a \emph{complete set} of meridian disks of $H$ if cutting $H$ open along the disks in $\Delta$ yields a 3-ball. Then the \emph{genus} of $H$ is equal to the number of disks in $\Delta$, which in turn is equal to the genus of the closed connected orientable surface $\partial H$.

A finite, nonempty set $\mathcal{C}$ of pairwise disjoint essential simple closed curves in the boundary of a handlebody $H$ together with a set $\partial \Delta$ of simple closed curves in $\partial H$ which bound the disks of a complete set of meridian disks $\Delta$ of $H$ is a \emph{Heegaard diagram} $\mathcal{D}$. (We usually assume there is no curve $c \in \mathcal{C}$ that bounds a disk in $H$. And, since $\mathcal{D}$ is generally nonplanar, usually work with the planar diagram $\mathcal{D}'$ obtained by cutting $H$ open along the disks in $\Delta$.)

The operation of cutting $H$ open along the disks in $\Delta$ cuts $H$ into a 3-ball $\mathcal{B}$ with two copies of each disk of $\Delta$ in $\partial \mathcal{B}$. These disks in $\partial \mathcal{B}$ form the `fat'-vertices of a planar graph $G(\mathcal{D})$ whose edges are the arcs obtained by cutting each curve $c \in \mathcal{C}$ open at the points in $c \cap \partial \Delta$. Then $\mathcal{D}$ and $\mathcal{D}'$ can recovered from $G(\mathcal{D})$ together with some additional information which indicates how each pair of `fat'-vertices of $G$ arising from cutting $H$ open along a disk in $\Delta$, must be identified to reconstitute $H$ and the simple closed curves in $\mathcal{C}$.

The \emph{complexity} of the Heegaard diagram of $\mathcal{C}$ and $\partial \Delta$ in $\partial H$ is the total number of essential intersections of $\mathcal{C}$ and $\partial \Delta$ in $\partial H$.

If $c$ is a simple closed curve and $\mathcal{D}$ is a Heegaard diagram to which $c$ contributes edges, let $|c|$ denote the \emph{complexity} of $c$ in $\mathcal{D}$, which we take to be equal to the total number of edges which $c$ contributes to $\mathcal{D}$. More generally, we use $|X|$ to denote the number of elements of a finite set $X$.

The graph terminology we use is standard and basic.
 
If $v$ is a vertex of a connected graph $G$ such that deleting $v$ and the interiors of edges of $G$ meeting $v$ from $G$ disconnects $G$, we say $v$ is a \emph{cut-vertex} of $G$.

An edge $e$ of a graph $G$ is a \emph{loop} in $G$ if there is a vertex $v$ of $G$ containing both ends of $e$.

An element $x$ of a free group $F$ is a \emph{primitive} element of $F$ if $x$ is a free generator of $F$. If $H$ is a handlebody of genus $g$, then $\pi_1(H)$ is a free group of rank $g$, and it is well-known that if $c$ is a simple closed curve in $\partial H$, then $c$ is primitive in $\pi_1(H)$ if and only if there is a meridian disk of $H$ whose boundary intersects $c$ transversely in a single point. 

Let $\Sigma$ be a closed orientable surface, and let $\mathcal{C}$ be a finite set of simple closed curves in $\Sigma$. Let $c$ be a member of $\mathcal{C}$ and let $F$ be the surface with two boundary components $c^+$, $c^-$ obtained by cutting $\Sigma$ open along $c$. Then a \emph{wave based} at $c$ is a properly embedded arc $\omega$ in $F$ such that the interior of $\omega$ is disjoint from each curve in $\mathcal{C}$, both endpoints of $\omega$ lie on the same boundary component of $F$, and $\omega$ is essential in $F$. (If we need to be specific about the \emph{side} of $c$ at which $\omega$ is based, we speak of $\omega$ as \emph{based at} $c^+$, respectively $c^-$, depending upon whether the endpoints of $\omega$ lie on $c^+$, respectively $c^-$.)

\begin{rem}
\label{wave pairs}
Note that because of the symmetry of genus two Heegaard diagrams induced by the omnipresent hyperelliptic involution of genus two Heegaard surfaces  \cite{HS89}, waves based at nonseparating simple closed curves in genus two Heegaard diagrams always arise in pairs whose members are exchanged by the action of the hyperelliptic involution, with one member of each pair based at the `+' side of a curve and the other member of the pair based at the `-' side of a curve, as in Figure~\ref{DPCFig8cs}b, for example. 

With this understood, we will often speak of a wave as being `unique' when we should perhaps say something like ``the pair of waves equivalent under the action of the hyperelliptic involution is unique up to proper isotopy in the Heegaard surface \dots''.
\end{rem}

\begin{rem}
Figures in this paper will often display only one member of each pair of waves related by the hyperelliptic involution.
\end{rem}

\begin{defn}\textbf{Wave-determined slope representatives.}\hfill
\label{wave-determined slope representatives}

Suppose $\mathcal{D}$ is a Heegaard diagram of a nonseparating simple closed curve $R$ on the boundary of a genus two handlebody $H$, $\omega$ is a wave to $R$ in $\mathcal{D}$, and $N$ is a regular neighborhood of $R \cup \omega$ in $\partial H$. Then $N$ is a pair of pants, and we say the two boundary components of $N$, not isotopic to $R$ in $\partial H$, say $m_1$ and $m_2$, are \emph{wave-determined slope representatives}.
\end{defn}

\begin{defn}\textbf{Wave-determined slopes.}\hfill
\label{wave-determined slopes}

While wave-determined slope representatives are not isotopic in $\partial H$, they are isotopic unoriented curves in the torus boundary of $H[R]$, and so represent a \emph{wave-determined slope} $s$ on $\partial H[R]$.
\end{defn}

\begin{defn}\textbf{Surgery along a wave.}\hfill
\label{Surgery Along a Wave}

Suppose $R$ is a nonseparating simple closed curve in an orientable surface $\Sigma$ and $\omega$ is a wave in $\Sigma$ based at $R$. Then a regular neighborhood $N$ of $ R \cup \omega $ in $\Sigma$ is a pair of pants with three simple closed curve boundary components, one of which is isotopic to $R$. If $m$ is one of the other boundary components of $N$, we say $m$ was obtained by \emph{surgery along $R$}.
\end{defn}


\section{Basics of genus two Heegaard diagrams, their graphs and waves based at their curves}
\label{Basics of genus two Heegaard diagrams}

\begin{lem}\emph{\textbf{Graphs of genus two Heegaard diagrams.}}\hfill
\label{3 types of graphs of genus two diagrams}

If $\mathcal{C}$ is a finite set of disjoint essential simple closed curves on the boundary of a genus two handlebody $H$,\;$\{D_A, D_B\}$\;is a complete set of meridian disks for $H$, and no curve in $\mathcal{C}$ is disjoint from $\partial D_A \cup \partial D_B$, then the graph of the Heegaard diagram of the curves in $\mathcal{C}$ with respect to $\{D_A,D_B\}$ has the form of one of the three graphs in Figure~\emph{\ref{DPCFig8as}}.
\end{lem}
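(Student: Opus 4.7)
Cutting $H$ open along $D_A\cup D_B$ produces a $3$-ball $\mathcal{B}$ whose boundary sphere carries four distinguished disks: the two copies $A^+,A^-$ of $D_A$ and the two copies $B^+,B^-$ of $D_B$. The complement $S$ of the interiors of these four disks in $\partial \mathcal{B}$ is a four-holed sphere. Because every curve of $\mathcal{C}$ meets $\partial D_A\cup\partial D_B$, the intersection $\mathcal{C}\cap S$ has no closed components, so each edge of $G(\mathcal{D})$ is a properly embedded arc in $S$ with endpoints among the four boundary circles of $S$. My plan is to read off the possible topological types of $G(\mathcal{D})$ from the possible disjoint-arc systems in $S$, subject to the constraint that the arcs reassemble into embedded simple closed curves when $A^+$ is reglued to $A^-$ and $B^+$ to $B^-$.

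\textbf{Main steps.} First, I would classify each edge by the (unordered) pair of fat vertices on which its endpoints lie: the possibilities are a loop at one of the four fat vertices, an edge $A^+A^-$, an edge $B^+B^-$, or a ``mixed'' edge joining an $A$-vertex to a $B$-vertex. Second, I would exploit planarity of the arc system in $S^2$ together with the simple-closed-curve condition: the cyclic order of arc endpoints on $\partial A^+$ must be the reverse of the cyclic order on $\partial A^-$, and similarly for $\partial B^\pm$, in order for the reglued curves to be disjoint embedded circles. Third, I would pass to the identified graph on two vertices $v_A,v_B$ obtained by gluing $A^\pm$ and $B^\pm$ back together; this is a planar multigraph on $\partial H$ whose edges are of three kinds ($v_A$-loops, $v_B$-loops, $v_Av_B$-edges). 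A short case analysis on which kinds of edges actually appear, combined with the connectivity forced by the fact that $\mathcal{C}$ is nonempty and every curve contributes edges, should match each surviving possibility to one of the three graphs displayed in Figure~\ref{DPCFig8as}.

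\textbf{Main obstacle.} The hard part is the third step: showing that the list of three graphs is genuinely exhaustive, i.e.\ that no combinatorially plausible multigraph outside these three forms can be realized by a disjoint arc system in the four-holed sphere $S$ that also respects the two cyclic-matching conditions. Rather than enumerating subcases one by one, I expect to argue by contradiction: any ``extra'' edge type would either force some pair of arcs to intersect in $S$ (contradicting disjointness of the curves in $\mathcal{C}$) or would prevent the cyclic-matching conditions from being simultaneously satisfied on $\partial A^\pm$ and $\partial B^\pm$. The cleanest route is to observe that a single mixed arc, together with the two circles $\partial A^+\cup\partial A^-$ and $\partial B^+\cup\partial B^-$, already cuts $S$ into a small number of regions into which any further arc must embed, and then to track the permissible options per region. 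This planarity-plus-disjointness check is where the real bookkeeping lives; once it is carried out, the reduction to the three types is essentially automatic.
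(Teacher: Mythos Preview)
The paper does not prove this lemma at all; it simply cites Ochiai's 1979 paper \cite{O79}. So your proposal is not being compared against an argument but against a bare reference, and in that sense you are doing strictly more work than the paper does.

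Your overall strategy---cut along $D_A\cup D_B$, obtain a four-holed sphere $S$, and classify the isotopy classes of disjoint properly embedded arc systems in $S$---is exactly the right one and is essentially how Ochiai's result is obtained. Two remarks, though. First, your third step, ``pass to the identified graph on two vertices $v_A,v_B$,'' is a misdirection: the graph $G(\mathcal{D})$ in the paper has \emph{four} fat vertices $A^+,A^-,B^+,B^-$, and the three target graphs in Figure~\ref{DPCFig8as} are four-vertex graphs. Collapsing to two vertices throws away precisely the information (which side of each disk an arc meets) that distinguishes the three types, so you should stay with the four-vertex picture throughout. Second, the ``main obstacle'' you flag is less formidable than you suggest. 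In a four-holed sphere there are only finitely many isotopy classes of essential properly embedded arcs (ten, up to the obvious symmetries: four loop types, one $A^+A^-$, one $B^+B^-$, and four mixed $A^\pm B^\pm$), and the planarity constraint on disjointness is just that two arc classes can coexist if and only if they have disjoint representatives---a condition one reads off directly from a single picture of $S$. The ``cyclic-matching'' constraint you invoke then rules out loops (an arc with both ends on $A^+$, say, would force a parallel arc with both ends on $A^-$, and together these separate $S$), after which the surviving compatible families are exactly the three in the figure. No contradiction argument is needed; a direct enumeration suffices.
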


\begin{proof}
This is a result of \cite{O79}.
\end{proof}

\begin{figure}[ht]
\includegraphics[width = 1.0\textwidth]{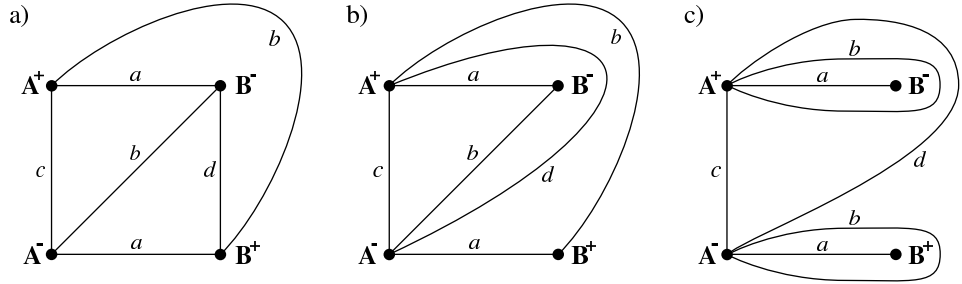}
\caption{\textbf{Three types of graphs of genus two Heegaard diagrams.}
Here $a,b,c,d \geq 0$ indicate how many parallel edges connect each pair of vertices.}
\label{DPCFig8as}
\end{figure}

\begin{lem}\emph{\textbf{Reducing the complexity of Heegaard diagrams with cut-vertices.}}\hfill
\label{wave reduction}

If a Heegaard diagram $\mathcal{D}$ of a set of pairwise disjoint essential simple closed curves in the boundary of a genus two handlebody $H$ has a cut-vertex, $\mathcal{D}$ has the form of Figure \emph{\ref{DPCFig8as}c} or Figure~\emph{\ref{DPCFig8cs}a}. Then there are waves based at $\partial D_A^+$ and $\partial D_A^-$ in $\mathcal{D}$, as shown in Figure~\emph{\ref{DPCFig8cs}b}, and, if $a+b > 0$ in Figure \emph{\ref{DPCFig8cs}a}, the complexity of $\mathcal{D}$ can be reduced by an amount equal to $a+2b$ by replacing the meridional disk $D_A$ of $H$ with the meridional disk $D_C$ of $H$ bounded by the curve $C$ in Figure~\emph{\ref{DPCFig8cs}c}. 
\end{lem}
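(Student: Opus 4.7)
The plan is to combine Ochiai's classification from Lemma~\ref{3 types of graphs of genus two diagrams} with a case analysis of which of the three graph types in Figure~\ref{DPCFig8as} can actually contain a cut-vertex. In graphs of type (a) and (b), the hypothesis that no curve in $\mathcal{C}$ is disjoint from $\partial D_A \cup \partial D_B$ forces both endpoints of each vertex to be incident to edge-bundles going to more than one other vertex, so deleting either vertex leaves the remaining graph connected. Hence a cut-vertex can occur only in type (c), and after possibly swapping the roles of $A$ and $B$ (and of the $+$ and $-$ sides) one may assume the diagram is exactly the configuration of Figure~\ref{DPCFig8cs}a.

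Next I would produce the waves. Working in the planar diagram $\mathcal{D}'$ obtained by cutting $H$ open along $D_A \cup D_B$, the cut-vertex structure at $D_A$ means the edges of $\mathcal{D}'$ incident to $D_A^+$ separate into disjoint arcs lying in a disk neighborhood of $D_A^+$ in $\partial \mathcal{B}$, with a gap outside them in which one can embed an arc from one side of $D_A^+$ to the other, entirely missing $\mathcal{C}$. Reglued, this arc descends to a properly embedded arc in $\partial H \smallsetminus R$ with both endpoints on $R^+$; essentiality follows because the two endpoints of the arc separate the $D_A$-copy of $R$ into two subarcs, each of which meets other edges of $\mathcal{D}$. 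The mirror construction produces the wave based at $R^-$, and together these are the arcs displayed in Figure~\ref{DPCFig8cs}b.

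For the complexity reduction, I would carry out the disk swap explicitly. In $\partial H$ there is a properly embedded arc $\alpha$ dual to the wave which avoids $\partial D_B$ and crosses $\partial D_A$ in a single point; band-summing $D_A$ with a parallel copy of $D_B$ along $\alpha$ produces a new meridian disk $D_C$ of $H$ whose boundary is the curve $C$ of Figure~\ref{DPCFig8cs}c. Then $\partial D_C$ agrees with $\partial D_A$ everywhere except in a neighborhood of the band, where it follows two parallel copies of $\partial D_B$. I would then count the net change in intersections with $\mathcal{C}$: each of the $a$ edges running directly between the two copies of $D_A$ across the band is removed from the count, while each of the $b$ edges that passes from $D_A$ to $D_B$ through the band region contributes a net reduction of $2$ (two intersections with $\partial D_A$ eliminated, none with $\partial D_B$ created, since the parallel $\partial D_B$ copies cancel these edges). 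Summing gives the claimed drop of $a + 2b$.

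The main obstacle is the first step: making the case analysis rigorous enough to conclude that types (a) and (b) truly admit no cut-vertex regardless of the values of the parameters, and that in type (c) the cut-vertex always sits at the distinguished vertex $D_A$ after a suitable normalization. Once the diagram is pinned down in the form of Figure~\ref{DPCFig8cs}a, locating the waves and computing the bookkeeping for $a+2b$ is straightforward; the subtlety is only in ensuring the banding arc $\alpha$ is chosen so that $\partial D_C$ really does decompose the parallel edge bundles in the way the count requires.
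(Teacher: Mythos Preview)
The paper's own proof is a single sentence: ``Examine Figure~\ref{DPCFig8cs}.'' So your outline is really an attempt to spell out what the picture encodes, and in broad strokes the plan---classify via Lemma~\ref{3 types of graphs of genus two diagrams}, locate the wave from the cut-vertex structure, then count the drop in intersections after the disk swap---is the right expansion.

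That said, there is a genuine confusion in your wave construction. The waves in this lemma are based at $\partial D_A$, not at any curve $R \in \mathcal{C}$; there is no $R$ in the statement. When you write ``a properly embedded arc in $\partial H \smallsetminus R$ with both endpoints on $R^+$'' and argue essentiality via ``the $D_A$-copy of $R$,'' you have swapped the roles of the meridian curve and the relator curve. The wave $\omega$ here has both endpoints on the same copy of $\partial D_A$ and is disjoint from $\mathcal{C} \cup \partial D_B$; that is what Figure~\ref{DPCFig8cs}b depicts.

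Your description of the disk $D_C$ is also internally inconsistent: you say the banding arc $\alpha$ ``avoids $\partial D_B$ and crosses $\partial D_A$ in a single point,'' and then use it to band-sum $D_A$ with a copy of $D_B$. An arc disjoint from $\partial D_B$ cannot serve as a band connecting $\partial D_A$ to $\partial D_B$. What actually happens in the figure is that the wave $\omega$ itself (which \emph{does} run from $\partial D_A$ around the $B$-handle) guides the replacement: surgering $\partial D_A$ along $\omega$ produces the curve $C$, and $C$ bounds a disk $D_C$ in $H$ precisely because $\omega$ cobounds a disk with a subarc of $\partial D_A$. Once $C$ is correctly identified, the intersection count is immediate from the picture: the edges labelled $a$ and the loops labelled $b$ in Figure~\ref{DPCFig8cs}a are exactly the ones that $C$ no longer meets, and the loops contribute twice because each loop hits $\partial D_A$ in two points. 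Your bookkeeping paragraph has the roles of $a$ and $b$ and the factor of $2$ attached to the wrong bundle, which is another symptom of not having the correct picture of $C$.
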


\begin{proof}
Examine Figure \ref{DPCFig8cs}.
\end{proof}

\begin{figure}[ht]
\includegraphics[width = 1.0\textwidth]{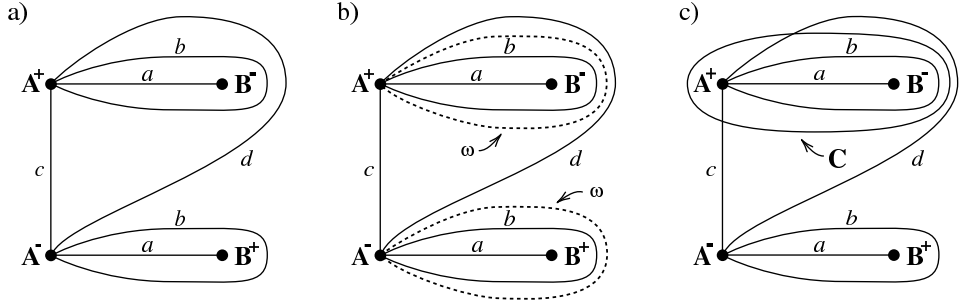}
\caption{\textbf{Reducing the complexity of Heegaard diagrams with cut-vertices.}
}
\label{DPCFig8cs}
\end{figure}

\subsection{Finding minimal complexity genus two Heegaard diagrams}\hfill
\label{Finding minimal complexity genus two Heegaard diagrams}
\smallskip
 
Although the following two lemmas are certainly well-known, because of the central roles they play here, we outline proofs.

\begin{lem}
\label{Lemma 1 of Finding minimal complexity genus two Heegaard diagrams}
Suppose $\mathcal{C}$ is a finite set of pairwise disjoint simple closed curves on the boundary of a genus two handlebody $H$, and $\Delta$ is a complete set of meridian disks for $H$.

If there exists a complete set of meridian disks $\Delta'$ for $H$ such that $|\partial \Delta \cap \mathcal{C}| > |\partial \Delta' \cap \mathcal{C}|$, then there also exists a complete set of meridian disks $\Delta^*$ for $H$ such that $|\partial \Delta \cap \mathcal{C}| > |\partial \Delta^* \cap \mathcal{C}|$, and $\Delta^*$ is obtained from $\Delta$ by replacing one disk of $\Delta$ with a disk in $H$ whose boundary is a bandsum in $\partial H$ of the boundaries of the two disks in $\Delta$. 

It follows that there exists a finite sequence $\{\Delta_i\}$, $0 \leq i \leq n$, of complete sets of meridian disks of $H$ such that: $\Delta_0 = \Delta$; $\partial \Delta_n \cap \mathcal{C}$ is minimal; for each $i$ with $0 \leq i < n$, $|\partial \Delta_i \cap \mathcal{C}| > |\partial \Delta_{i+1} \cap \mathcal{C}|$; and, for each $i$ with $0 \leq i < n$, $\Delta_{i+i}$ is obtained from $\Delta_i$ by replacing one of the disks of $\Delta_i$ with a disk whose boundary is a bandsum in $\partial H$ of the boundaries of the two disks in $\Delta_i$.
\end{lem}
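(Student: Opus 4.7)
The plan is a classical outermost-disk argument comparing the two complete meridian systems $\Delta$ and $\Delta'$ inside the handlebody $H$. First I would put $\Delta$ and $\Delta'$ in general position and, by an innermost-disk reduction inside $H$, eliminate every closed curve of $\Delta \cap \Delta'$ without increasing $|\partial \Delta' \cap \mathcal{C}|$, so that $\Delta \cap \Delta'$ consists only of arcs. The degenerate case $\Delta \cap \Delta' = \emptyset$ is then handled directly by cutting $H$ open along $\Delta$ and examining the two disjoint properly embedded disks of $\Delta'$ in the resulting $3$-ball: a short case analysis, using $|\partial \Delta' \cap \mathcal{C}| < |\partial \Delta \cap \mathcal{C}|$ to rule out $\Delta'$ being isotopic to $\Delta$, produces a disk of $\Delta'$ expressible as a single bandsum of $D_1$ and $D_2$, which then serves as the required $\Delta^*$.

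In the generic case $\Delta \cap \Delta' \neq \emptyset$, I would take an outermost arc $a$ of $\Delta \cap \Delta'$ in $\Delta'$, cutting off a sub-disk $E$ of some $D_j' \in \Delta'$ whose interior is disjoint from $\Delta$ and whose boundary is $a \cup \alpha$, with $a \subset D_i \in \Delta$ and $\alpha \subset \partial D_j' \subset \partial H$. Both endpoints of $\alpha$ lie on $\partial D_i$, and by outermostness of $E$ the interior of $\alpha$ is disjoint from $\partial \Delta$, so $\alpha$ is an arc in the four-holed sphere $\partial H \setminus \partial \Delta$ with both endpoints on one boundary circle. If $\alpha$ is boundary-parallel, an $E$-bigon between $\partial D_j'$ and $\partial D_i$ may be removed from $\Delta \cap \Delta'$, contradicting minimality; otherwise $\alpha$ is essential and therefore separates the two copies of $\partial D_{3-i}$ onto one side of the four-holed sphere. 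In this case $E$ witnesses a handle slide of $D_i$ over $D_{3-i}$: there is a band $B$ in $\partial H$ from $\partial D_i$ to $\partial D_{3-i}$, with core parallel to $\alpha$, such that the bandsum disk $D^* = D_i \#_B D_{3-i}$ gives the required $\Delta^* = \{D^*, D_{3-i}\}$.

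The main obstacle is to verify the strict inequality $|\partial \Delta^* \cap \mathcal{C}| < |\partial \Delta \cap \mathcal{C}|$, since a bare geometric intersection count after bandsumming can easily go up rather than down. Here I would use $E$ to locate explicit bigons between $\partial D^*$ and $\mathcal{C}$ whose removal by isotopy delivers the drop: the arc $\alpha$, lying on $\partial D_j'$, meets $\mathcal{C}$ no more often than the arc of $\partial D_i$ it effectively substitutes across $E$, and that the substitution is strict somewhere is forced by combining the global hypothesis $|\partial \Delta' \cap \mathcal{C}| < |\partial \Delta \cap \mathcal{C}|$ with the outermostness of $E$, which localises the global inequality to that specific pair of arcs. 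Once the single-step reduction is established, the iterative conclusion follows by applying it to each successive $\Delta_i$ paired with any still-winning $\Delta'$, halting in finitely many steps when $|\partial \Delta_i \cap \mathcal{C}|$ attains its minimum value.
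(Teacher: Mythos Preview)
Your outermost-disk framework matches the paper's, and you have correctly isolated the crux: verifying the strict drop $|\partial \Delta^* \cap \mathcal{C}| < |\partial \Delta \cap \mathcal{C}|$ after the bandsum. But your resolution of that step is not an argument. The claim that ``the arc $\alpha$, lying on $\partial D_j'$, meets $\mathcal{C}$ no more often than the arc of $\partial D_i$ it effectively substitutes'' is simply false in general: a single outermost subdisk $E$ of $\Delta'$ can perfectly well have $|\alpha \cap \mathcal{C}|$ larger than either arc into which $a$ cuts $\partial D_i$. Nothing about outermostness of $E$ in $\Delta'$ controls this, and the global inequality $|\partial \Delta' \cap \mathcal{C}| < |\partial \Delta \cap \mathcal{C}|$ is a sum over many arcs and does not ``localise'' to the particular pair you chose. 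Picking an arbitrary outermost arc (you do not even select one minimising $|\alpha \cap \mathcal{C}|$) gives you nothing here.

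The paper closes this gap with an idea you are missing: look at outermost subdisks on \emph{both} sides. First arrange $\Delta' \in \min(\mathcal{C})$ with $|\Delta \cap \Delta'|$ minimal among such. Let $m'$ be the least value of $|D' \cap \mathcal{C}|$ over outermost subdisks $D'$ of $\Delta'$, and let $m$ be the least value of $|D \cap \mathcal{C}|$ over outermost subdisks $D$ of $\Delta$. The arc $\alpha$ cuts $D_A$ into two pieces, each of which contains an outermost subdisk of $\Delta$ and hence meets $\mathcal{C}$ at least $m$ times; so the bandsum disk satisfies $|\partial D^* \cap \mathcal{C}| \le |\partial D_A \cap \mathcal{C}| - m + m'$, and the desired strict drop follows once you know $m > m'$. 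That inequality is obtained by the symmetric move: if $m \le m'$, surger $\Delta'$ along an outermost subdisk of $\Delta$, which either lowers $|\partial \Delta' \cap \mathcal{C}|$ (contradicting $\Delta' \in \min(\mathcal{C})$) or lowers $|\Delta \cap \Delta'|$ (contradicting the minimality choice). This two-sided comparison is the content of the lemma; your proposal needs it.
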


\begin{proof}
Given $\mathcal{C}$ on $\partial H$, let $min(\mathcal{C})$ consist of the complete sets of meridian disks of $H$ which intersect the curves in $\mathcal{C}$ minimally. And then choose $\Delta' \in min(\mathcal{C})$ so that $|\Delta' \cap \Delta|$ consists of a minimal number of arcs. (Recall that because handlebodies are aspherical, simple closed curves of intersection of two disks in $H$ can be removed. So we may assume disks in $\Delta$ and $\Delta'$ intersect only in arcs.) Then there are two possibilities.

If $\Delta$ and $\Delta'$ are disjoint, then either $\Delta$ and $\Delta'$ are isotopic in $H$, or $\Delta'$ is obtained from $\Delta$ by replacing one disk of $\Delta$ with a disk in $H$ whose boundary is a bandsum in $\partial H$ of the boundaries of the two disks in $\Delta$. In which case, we may take $\Delta^*$ to be $\Delta'$.

If $\Delta$ and $\Delta'$ are not disjoint, then there exist outermost subdisks of both $\Delta$ and $\Delta'$ that are cut off of disks of $\Delta$ and $\Delta'$ by arcs of intersection in $\Delta \cap \Delta'$. In this case, choose $D$ among the outermost subdisks of $\Delta$ so that $|D \cap \mathcal{C}|$ is minimal, and set $m = |D \cap \mathcal{C}|$. Similarly, choose $D'$ among the outermost subdisks of $\Delta'$ so that $|D' \cap \mathcal{C}|$ is minimal, and set $m' = |D' \cap \mathcal{C}|$.

Now suppose $\Delta = \{D_A,D_B\}$. Then, since $D'$ is outermost, $D'$ meets one member of $\{D_A,D_B\}$ in a single arc of intersection, say $\alpha$, lying in $\partial D'$, and $D'$ is disjoint from the other member of $\{D_A,D_B\}$. Supposing $\alpha$ lies in, say $D_A$, $\alpha$ cuts $D_A$ into two subdisks, say $D_{A_1}$ and $D_{A_2}$, each of which contains an outermost subdisk meeting $\mathcal{C}$ at least $m$ times. 

Next, note both $D_{A_1} \cup D_{m'}$ and $D_{A_2} \cup D_{m'}$ are nonseparating disks in $H$, with one member of $\{D_{A_1} \cup D_{m'}, D_{A_2} \cup D_{m'}\}$ isotopic to $D_B$ in $H$, while the boundary of the other member of $\{D_{A_1} \cup D_{m'}, D_{A_2} \cup D_{m'}\}$ is a bandsum in $\partial H$ of $\partial D_A$ and $\partial D_B$. In which case, the other member of $\{D_{A_1} \cup D_{m'}, D_{A_2} \cup D_{m'}\}$ together with $D_B$ forms a complete set of meridian disks for $H$. 

Supposing for the moment that $m > m'$, the replacement of $D_A$ with the member of $\{D_{A_1} \cup D_{m'}, D_{A_2} \cup D_{m'}\}$, not isotopic to $D_B$ in $H$, yields a new complete set of meridian disks for $H$, which can be taken as the complete set of meridian disks $\Delta^*$ of the hypothesis. 

It is possible to see that $m > m'$ holds by observing that the alternative $m \leq m'$ is not compatible with the choice of $\Delta'$. For, if $m \leq m'$ held, the outermost disk $D$ of $\Delta$ could be used to perform surgery on a member of $\Delta'$, demonstrating that either $\Delta'$ does not lie in $min(\mathcal{C})$, or $\Delta'$ lies in $min(\mathcal{C})$, but there exists another complete set of meridian disks for $H$, say $\Delta'^*$, in $min(\mathcal{C})$ such that $|\Delta \cap \Delta'^*| < |\Delta \cap \Delta'|$.

Finally, it should be clear that the procedure above can be performed repeatedly and will produce a sequence of complete sets of meridian disks for $H$ with the properties claimed in the statement of the lemma.
\end{proof}

\begin{lem}
\label{Lemma 2 of Finding minimal complexity genus two Heegaard diagrams}
Suppose $\mathcal{C}$ is a finite set of pairwise disjoint simple closed curves on the boundary of a genus two handlebody $H$, and $\Delta$ is a complete set of meridian disks for $H$ such that no complete set $\Delta^*$ of meridian disks for $H$ obtained by replacing one member of $\Delta$ with a disk in $H$ whose boundary is a bandsum in $\partial H$ of the boundaries of the two disks in $\Delta$ satisfies $|\partial \Delta \cap \mathcal{C}| > |\partial \Delta^* \cap \mathcal{C}|$. If $\Delta'$ is also a complete set of meridian disks for $H$ such that no complete set $\Delta'^*$ of meridian disks for $H$ obtained by replacing one member of $\Delta'$ with a disk in $H$ whose boundary is a bandsum in $\partial H$ of the boundaries of the two disks in $\Delta'$ satisfies $|\partial \Delta' \cap \mathcal{C}| > |\partial \Delta'^* \cap \mathcal{C}|$, then there exists a finite sequence $\{\Delta_i\}$, $0 \leq i \leq n$, of complete sets of meridian disks of $H$ such that: $\Delta_0 = \Delta$; $\Delta_n = \Delta'$; for each $i$ with $0 \leq i < n$, $|\partial \Delta \cap \mathcal{C}| = |\partial \Delta' \cap \mathcal{C}|$; and, for each $i$ with $0 \leq i < n$, $\Delta_{i+i}$ is obtained from $\Delta_i$ by replacing one of the disks of $\Delta_i$ with a disk whose boundary is a bandsum in $\partial H$ of the boundaries of the two disks in $\Delta_i$. In particular, $|\partial \Delta \cap \mathcal{C}| = |\partial \Delta' \cap \mathcal{C}|$.
\end{lem}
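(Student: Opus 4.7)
The plan is to modify the proof of Lemma \ref{Lemma 1 of Finding minimal complexity genus two Heegaard diagrams} so that, instead of pushing the intersection count of $\partial\Delta\cap\mathcal{C}$ strictly down, we produce bandsum moves that keep it constant while pushing $|\Delta\cap\Delta'|$ strictly down. More precisely, I would let $\mathcal{E}(\Delta)$ denote the set of complete meridian systems of $H$ reachable from $\Delta$ by a finite sequence of bandsum moves each of which preserves $|\partial\Delta\cap\mathcal{C}|$. Similarly define $\mathcal{E}(\Delta')$. The goal is to show $\mathcal{E}(\Delta)\cap\mathcal{E}(\Delta')\neq\emptyset$, which will in particular force $|\partial\Delta\cap\mathcal{C}|=|\partial\Delta'\cap\mathcal{C}|$ and give the desired sequence. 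Choose representatives $\tilde\Delta\in\mathcal{E}(\Delta)$ and $\tilde\Delta'\in\mathcal{E}(\Delta')$ so that $|\tilde\Delta\cap\tilde\Delta'|$ (a minimal number of arcs, since curves of intersection can be pushed off) is as small as possible.

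If $\tilde\Delta$ and $\tilde\Delta'$ are disjoint, then as in Lemma \ref{Lemma 1 of Finding minimal complexity genus two Heegaard diagrams} either they are isotopic in $H$, in which case we are done, or else $\tilde\Delta'$ is obtained from $\tilde\Delta$ by a single bandsum move which by local minimality cannot decrease the intersection count with $\mathcal{C}$ and by symmetry cannot increase it, so $|\partial\tilde\Delta\cap\mathcal{C}|=|\partial\tilde\Delta'\cap\mathcal{C}|$ and we are again done.

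Suppose, for contradiction, $\tilde\Delta\cap\tilde\Delta'\neq\emptyset$. Select outermost subdisks $D\subseteq\tilde\Delta$ and $D'\subseteq\tilde\Delta'$ cut off by arcs of $\tilde\Delta\cap\tilde\Delta'$, with $m=|D\cap\mathcal{C}|$ and $m'=|D'\cap\mathcal{C}|$ as small as possible. Writing $\tilde\Delta=\{D_A,D_B\}$, the arc $\alpha=\partial D'\cap D_A$ splits $D_A$ into $D_{A_1},D_{A_2}$ with $k_i=|D_{A_i}\cap\mathcal{C}|$, $k_1+k_2=|D_A\cap\mathcal{C}|$, and $k_1,k_2\geq m$. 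Exactly one of $D_{A_1}\cup D'$, $D_{A_2}\cup D'$ is non-isotopic to $D_B$ and its boundary is a bandsum of $\partial D_A$ and $\partial D_B$; say this is $D_{A_2}\cup D'$. Replacing $D_A$ by $D_{A_2}\cup D'$ in $\tilde\Delta$ yields a new complete meridian system $\tilde\Delta^\star$ with
\[
|\partial\tilde\Delta^\star\cap\mathcal{C}|-|\partial\tilde\Delta\cap\mathcal{C}|=m'-k_1.
\]
Local minimality of $\tilde\Delta$ (inherited from $\Delta$ through $\mathcal{E}(\Delta)$) forces $m'-k_1\geq0$, i.e.\ $m'\geq k_1\geq m$. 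By the exactly symmetric argument starting from an outermost subdisk of $\tilde\Delta$ in $\tilde\Delta'$, we also get $m\geq m'$. Thus $m=m'=k_1$, and $\tilde\Delta^\star\in\mathcal{E}(\Delta)$. But after an isotopy near $D'$, the new disk $D_{A_2}\cup D'$ has strictly fewer arcs of intersection with $\tilde\Delta'$ than $D_A$ did, so $|\tilde\Delta^\star\cap\tilde\Delta'|<|\tilde\Delta\cap\tilde\Delta'|$, contradicting the minimal choice.

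The key step, and the one that requires the most care, is the double use of local minimality to pin down $m=m'=k_1$ (and hence show that the disk-exchange preserves $|\partial\tilde\Delta\cap\mathcal{C}|$); everything else is bookkeeping. The final equality $|\partial\Delta\cap\mathcal{C}|=|\partial\Delta'\cap\mathcal{C}|$ then follows because every move in the sequence from $\Delta$ to $\Delta'$ preserves the intersection count.
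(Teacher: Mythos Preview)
Your argument is correct and takes essentially the same route as the paper: both use outermost subdisks of the two systems to produce a bandsum replacement, invoke the local-minimality hypotheses symmetrically to force $m=m'$ (hence the replacement preserves $|\partial\Delta\cap\mathcal{C}|$), and observe that this move strictly reduces the number of intersection arcs between the two systems, allowing an induction. Your packaging via $\mathcal{E}(\Delta)$ and $\mathcal{E}(\Delta')$ is a slightly tidier version of the paper's informal iteration; just note that the claim that local minimality is ``inherited'' by every member of $\mathcal{E}(\Delta)$ tacitly uses Lemma~\ref{Lemma 1 of Finding minimal complexity genus two Heegaard diagrams} (locally minimal $\Rightarrow$ globally minimal), a point the paper also leaves implicit when it iterates.
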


\begin{proof}
The argument is similar to that above, and again there are two cases.

If $\Delta$ and $\Delta'$ are disjoint in $H$, then either $\Delta$ and $\Delta'$ are isotopic in $H$, or $\Delta'$ is obtained from $\Delta$ by replacing one of the disks of $\Delta$ with a disk whose boundary in $\partial H$ is a bandsum of the boundaries of the disks in $\Delta$, and similarly, $\Delta$ is obtained from $\Delta'$ by replacing one of the disks of $\Delta'$ with a disk whose boundary in $\partial H$ is a bandsum of the boundaries of the disks in $\Delta'$. Then the hypothesis have ruled out the possibility that $|\partial \Delta \cap \mathcal{C}| < |\partial \Delta' \cap \mathcal{C}|$ or that $|\partial \Delta' \cap \mathcal{C}| < |\partial \Delta \cap \mathcal{C}|$. So $|\partial \Delta \cap \mathcal{C}| = |\partial \Delta' \cap \mathcal{C}|$.

If $\Delta$ and $\Delta'$ are not disjoint, then there exist outermost subdisks of both $\Delta$ and $\Delta'$ that are cut off of disks of $\Delta$ and $\Delta'$ by arcs of intersection in $\Delta \cap \Delta'$. In this case, choose $D$ among the outermost subdisks of $\Delta$ so that $|D \cap \mathcal{C}|$ is minimal, and set $m = |D \cap \mathcal{C}|$. Similarly, choose $D'$ among the outermost subdisks of $\Delta'$ so that $|D' \cap \mathcal{C}|$ is minimal, and set $m' = |D' \cap \mathcal{C}|$.

Now suppose $\Delta = \{D_A,D_B\}$. Then, since $D'$ is outermost, $D'$ meets one member of $\{D_A,D_B\}$ in a single arc of intersection, say $\alpha$, lying in $\partial D'$, and $D'$ is disjoint from the other member of $\{D_A,D_B\}$. Supposing $\alpha$ lies in, say $D_A$, $\alpha$ cuts $D_A$ into two subdisks, say $D_{A_1}$ and $D_{A_2}$, each of which contains an outermost subdisk meeting $\mathcal{C}$ at least $m$ times. 

Next, note both $D_{A_1} \cup D_{m'}$ and $D_{A_2} \cup D_{m'}$ are nonseparating disks in $H$, with one member of $\{D_{A_1} \cup D_{m'}, D_{A_2} \cup D_{m'}\}$ isotopic to $D_B$ in $H$, while the boundary of the other member of $\{D_{A_1} \cup D_{m'}, D_{A_2} \cup D_{m'}\}$ is a bandsum in $\partial H$ of $\partial D_A$ and $\partial D_B$. In which case, the other member of $\{D_{A_1} \cup D_{m'}, D_{A_2} \cup D_{m'}\}$ together with $D_B$ forms a complete set of meridian disks $\Delta^*$ for $H$.

Recalling that by hypothesis, $|\Delta^* \cap \mathcal{C}| \geq |\Delta \cap \mathcal{C}|$, we see $m \leq m'$.

A similar argument, replacing $D'$ with $D$ and $\Delta$ with $\Delta'$, shows that $D$ can be used to produce a complete set $\Delta'^*$ of meridian disks for $H$ which is obtained from $\Delta'$ by replacing one disk of $\Delta'$ with a disk whose boundary is a bandsum in $\partial H$ of the boundaries of the two disks in $\Delta'$. However, by hypothesis, $|\Delta'^* \cap \mathcal{C}| \geq |\Delta' \cap \mathcal{C}|$, which implies $m' \leq m$.

So $m = m'$, $|\Delta^* \cap \mathcal{C}| = |\Delta \cap \mathcal{C}|$ and $|\Delta'^* \cap \mathcal{C}| = |\Delta' \cap \mathcal{C}|$. In addition, since $D'$ is a subdisk of $\Delta'$, $|\Delta \cap \Delta'| > |\Delta^* \cap \Delta'|$. And similarly, since $D$ is a subdisk of $\Delta$, $|\Delta \cap \Delta'| > |\Delta'^* \cap \Delta|$. 

It follows that the desired finite sequence $\{\Delta_i\}$, $0 \leq i \leq n$, of complete sets of meridian disks of $H$ exists such that: $\Delta_0 = \Delta$; $\Delta_n = \Delta'$; for each $i$ with $0 \leq i < n$, $|\partial \Delta \cap \mathcal{C}| = |\partial \Delta' \cap \mathcal{C}|$; and, for each $i$ with $0 \leq i < n$, $\Delta_{i+i}$ is obtained from $\Delta_i$ by replacing one of the disks of $\Delta_i$ with a disk whose boundary is a bandsum in $\partial H$ of the boundaries of the two disks in $\Delta_i$.
\end{proof}

\subsection{Three types of bandsums in genus two}\hfill
\label{Three types of bandsums in genus two}

Suppose $\Delta$ = $\{D_A,D_B\}$ is a complete set of meridian disks for $H$ comprised of meridian disks $D_A$ and $D_B$ for $H$, and $\mathcal{D}$ is the Heegaard diagram of the curves in $\mathcal{C}$ with respect to $\partial \Delta$.

Consider the effect on $\mathcal{D}$ of replacing one of $D_A,D_B$ with a meridian disk $D_C$ of $H$ whose boundary $C$ in $\partial H$ is a bandsum of $\partial D_A$ with $\partial D_B$. This replaces $\mathcal{D}$ with a new Heegaard diagram $\mathcal{D}'$  of the curves in $\mathcal{C}$ with respect to the new set of complete meridian disks for $H$. The change in diagrams in passing from $\mathcal{D}$ to $\mathcal{D}'$ is determined by the band $\beta$ used to band $\partial D_A$ and $\partial D_B$ together to form $\partial D_C$. Figure \ref{DPCFig8fff} shows three relevant configurations of $\beta$ with respect to the edges and faces of $\mathcal{D}$. Namely, configurations in which:
\begin{enumerate}
\item $\beta$ has essential intersections with edges of $\mathcal{D}$. 
\item $\beta$ lies in a face of $\mathcal{D}$, but $\beta$ is not properly isotopic to an edge of $\mathcal{D}$.
\item $\beta$ lies in a face of $\mathcal{D}$ and $\beta$ is properly isotopic to an edge of $\mathcal{D}$.
\end{enumerate}
Then examination of Figure \ref{DPCFig8fff} provides some details about the changes that occur to $\mathcal{D}$ in passing from $\mathcal{D}$ to $\mathcal{D}'$ in each of the three cases enumerated above. (Here let $|\{D_A,D_B\}|$ denote the complexity of $\mathcal{D}$, and let $|\{D_A,D_C\}|$, respectively $|\{D_B,D_C\}|$, denote the complexity of $\mathcal{D}'$ when $\mathcal{D}'$ uses compete sets of meridian disks $|\{D_A,D_C\}|$, respectively $|\{D_B,D_C\}|$ of $H$.)
\begin{enumerate}
\item $\beta$ has essential intersections with edges of $\mathcal{D}$.\\ 
In this case, $\mathcal{D}'$ has a cut-vertex, since there is a wave $\omega$ to $C$ in $\mathcal{D}'$. Also $|\{D_A,D_C\}|$ -  $|\{D_A,D_B\}| > a > 0$, and $|\{D_B,D_C\}|$ - $|\{D_A,D_B\}| > b > 0$. So the complexity of $\mathcal{D}'$ is always greater than the complexity of $\mathcal{D}$ in this case.
\item $\beta$ lies in a face of $\mathcal{D}$ and $\beta$ is not properly isotopic to an edge of $\mathcal{D}$.\\
In this case, $\mathcal{D}'$ has a cut-vertex, since there is a wave $\omega$ to $C$ in $\mathcal{D}'$. Also $|\{D_A,D_C\}|$ -  $|\{D_A,D_B\}|$ = $a > 0$, and $|\{D_B,D_C\}|$ - $|\{D_A,D_B\}|$ = $b > 0$. So the complexity of $\mathcal{D}'$ is again always greater than the complexity of $\mathcal{D}$ in this case.
\item $\beta$ lies in a face of $\mathcal{D}$ and $\beta$ is properly isotopic to an edge of $
\mathcal{D}$.\\
In this case, Lemmas \ref{Bandsums along edges of nonpositive diagrams yield connected diagrams without cut-vertices} and \ref{Bandsums along edges of positive diagrams yield connected diagrams without cut-vertices} guarantee that $\mathcal{D}'$ is connected and has no cut-vertex. Then $|\{D_A,D_C\}|$ -  $|\{D_A,D_B\}|$ = $a - c$, and $|\{D_B,D_C\}|$ - $|\{D_A,D_B\}|$ = $b - c$. So, depending on the relative magnitudes of $a,b$, and $c$, and whether $D_C$ replaces $D_A$ or $D_B$, the complexity of $\mathcal{D}'$ may be less than, equal to, or greater than that of $\mathcal{D}$.
\end{enumerate}

In any case, we see the following lemmas hold:

\begin{lem}\textbf{\emph{Only bandsums along edges of Heegaard diagrams do not necessarily increase complexity.}}\hfill
\label{Only bandsums along Heegaard diagram edges do not necessarily increase complexity}
\end{lem}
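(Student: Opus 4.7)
The plan is to read off the conclusion directly from the three-case analysis already carried out in the paragraphs preceding Figure~\ref{DPCFig8fff}, which exhaust the possible configurations of the band $\beta$ relative to $\mathcal{D}$. First I would fix notation: $\partial D_C$ traverses almost all of $\partial D_A$, runs along $\beta$, traverses almost all of $\partial D_B$, and returns along $\beta$, so the essential intersections of $\partial D_C$ with $\mathcal{C}$ are accounted for by those of $\partial D_A$ and $\partial D_B$ together with $2|\beta \cap \mathcal{C}|$, minus the contributions of the short sub-arcs of $\partial D_A$ and $\partial D_B$ excised by the band. Since the new complexity is $|\partial D_B \cap \mathcal{C}| + |\partial D_C \cap \mathcal{C}|$ when $D_A$ is replaced (symmetric otherwise), the change reduces to comparing $|\partial D_C \cap \mathcal{C}|$ with $|\partial D_A \cap \mathcal{C}|$.

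In case (1), the band $\beta$ crosses at least one edge of $\mathcal{D}$, so $|\beta \cap \mathcal{C}| \geq 1$, and together with the full $a$ or $b$ intersections at the incident vertex this forces $|\{D_B,D_C\}| - |\{D_A,D_B\}| > a > 0$ and $|\{D_A,D_C\}| - |\{D_A,D_B\}| > b > 0$, giving a strict increase. In case (2), the band lies inside a face of $\mathcal{D}$ but is not properly isotopic to an edge, so its endpoints at the incident vertex are separated by a full edge-bundle of multiplicity $a$ (respectively $b$) that $\partial D_C$ is then obliged to sweep past; I would verify this by inspecting the cyclic ordering of edge-ends at the vertex where $\beta$ terminates, as depicted in the middle panel of Figure~\ref{DPCFig8fff}, and conclude that again the complexity strictly increases. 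In case (3), the band $\beta$ is properly isotopic to an edge of $\mathcal{D}$ of multiplicity $c$, so the bandsum cancels $c$ intersections on each side of $\beta$, yielding net changes $a-c$ and $b-c$, which may be negative, zero, or positive; Lemmas~\ref{Bandsums along edges of nonpositive diagrams yield connected diagrams without cut-vertices} and~\ref{Bandsums along edges of positive diagrams yield connected diagrams without cut-vertices} guarantee that $\mathcal{D}'$ nevertheless remains connected and cut-vertex-free.

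The main obstacle is the combinatorial verification in case (2): one must argue that a band disjoint from $\mathcal{C}$ but not isotopic to an edge is topologically ``pinned'' at its incident vertex in a way that compels $\partial D_C$ to encircle an entire parallel bundle of edges, rather than slipping between parallel edges without picking up new intersections. This is handled by observing that any placement of $\beta$ in a face that failed to capture the full bundle would admit a proper isotopy rel endpoints to an edge of $\mathcal{D}$, contradicting the hypothesis of case (2). Once that observation is in hand, the three tabulated identities from the enumeration are immediate, and the conclusion that only case (3)—bandsums along edges—can fail to increase complexity follows directly.
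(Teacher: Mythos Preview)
Your proposal is correct and follows essentially the same approach as the paper: the lemma is stated in the paper immediately after the three-case enumeration and Figure~\ref{DPCFig8fff} with no separate proof, so reading off the conclusion from that analysis is exactly what is intended. Your elaboration of case~(2)---arguing that a band in a face but not isotopic to an edge must trap a full parallel bundle, since otherwise it could be isotoped rel endpoints to an edge---supplies more detail than the paper gives (the paper simply records the resulting complexity changes and refers to the figure), but the substance is the same.
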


\begin{lem}\textbf{\emph{Only bandsums along edges of Heegaard diagrams do not necessarily introduce cut-vertices.}}\hfill
\label{Only bandsums along diagram edges do not introduce cut-vertices}
\end{lem}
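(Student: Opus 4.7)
The plan is to read the lemma directly off the case analysis performed in Subsection \ref{Three types of bandsums in genus two} immediately preceding it. For any band $\beta$ effecting a bandsum of $\partial D_A$ with $\partial D_B$ to yield the new meridian curve $C = \partial D_C$, the placement of $\beta$ in the planar complement of $\mathcal{D}$ falls into exactly one of the three mutually exclusive configurations enumerated there: either $\beta$ has essential intersections with some edge of $\mathcal{D}$ (case (1)), or $\beta$ lies in a single face of $\mathcal{D}$ but is not properly isotopic in that face to any edge (case (2)), or $\beta$ lies in a face and is properly isotopic to an edge (case (3)). Case (3) is by definition the class of bandsums performed along edges of $\mathcal{D}$, so the lemma reduces to the assertion that every bandsum falling in case (1) or case (2) introduces a cut-vertex into $\mathcal{D}'$.

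To verify this, I would produce an explicit wave $\omega$ based at $C$ in $\mathcal{D}'$ in each of these two configurations. The arc obtained by pushing the core of $\beta$ slightly off $C$ to the appropriate side is a properly embedded arc in $\partial H$ cut open along $C$ whose interior is disjoint from every curve of $\mathcal{D}'$ and whose two endpoints both lie on $C$. In case (1), the essential intersections of $\beta$ with edges of $\mathcal{D}$ ensure this arc cannot be boundary-parallel in the cut-open surface; in case (2), the fact that $\beta$ is not properly isotopic to an edge in its face provides the same conclusion. Thus $\omega$ is a wave based at $C$ in $\mathcal{D}'$.

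Having produced such a wave at the meridian $C$, Lemma \ref{wave reduction} together with Lemma \ref{3 types of graphs of genus two diagrams} forces $\mathcal{D}'$ into the form of Figure \ref{DPCFig8cs}a (or Figure \ref{DPCFig8as}c), which exhibits a cut-vertex. Combined with the observation that case (3) is precisely the bandsum-along-an-edge configuration, this establishes the lemma.

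The only step that requires real attention is the verification of essentiality of $\omega$ in cases (1) and (2); this is a direct inspection of Figure \ref{DPCFig8fff}, and the strict inequalities $a > 0$, $b > 0$ recorded in those cases within the surrounding discussion are exactly the quantitative content guaranteeing that $\omega$ is not a trivial arc. Everything else is a bookkeeping consequence of the trichotomy.
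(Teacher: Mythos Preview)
Your proposal is correct and is exactly the paper's own approach: the lemma is not given a separate proof but is stated as an immediate consequence of the case analysis in Subsection~\ref{Three types of bandsums in genus two}, where cases (1) and (2) are explicitly shown to produce a wave $\omega$ based at $C$ in $\mathcal{D}'$ and hence a cut-vertex. One small correction: Lemma~\ref{wave reduction} runs in the direction ``cut-vertex $\Rightarrow$ wave at a meridian,'' not the converse you need; the implication ``wave at $C$ $\Rightarrow$ cut-vertex'' instead follows by inspecting the three graph types of Lemma~\ref{3 types of graphs of genus two diagrams} and noting that only type~(c) admits a face meeting a single vertex.
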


\begin{figure}[ht]
\includegraphics[width = 0.70\textwidth] {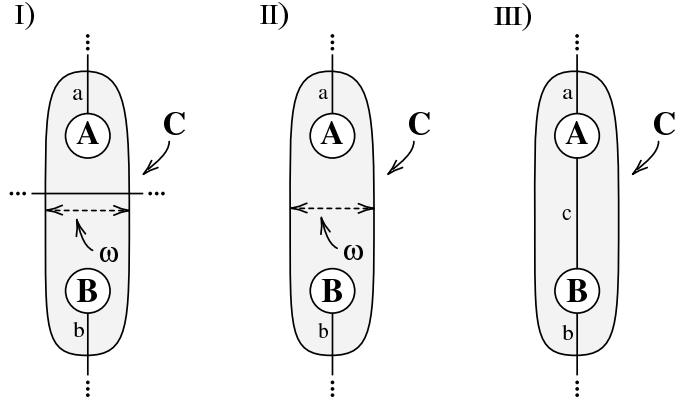}
\label{three types of bandsums figure}
\caption {\textbf{Three types of bandsums.} Here $a,b,c > 0$, and boundary components $A$ and $B$ of each shaded pair of pants should be thought of as connected by a band $\beta$ disjoint from the curve $C$.}
\label{DPCFig8fff}
\end{figure}

\begin{defn}\textbf{Positive and Nonpositive connected diagrams.}\hfill
\label{positive and nonpositive diagrams}

Say a connected Heegaard diagram is \emph{positive} if the curves of the diagram can be oriented so that all intersections of curves in the diagram are positive. Otherwise, say the diagram is \emph{nonpositive}.
\end{defn}

\begin{lem}\textbf{\emph{Bandsums along edges of nonpositive diagrams yield connected diagrams without cut-vertices.}} \hfill
\label{Bandsums along edges of nonpositive diagrams yield connected diagrams without cut-vertices}

Suppose $H$ is a genus two handlebody with a complete set of meridian disks $\{D_A,D_B\}$, and $\mathcal{D}$ is a connected nonpositive Heegaard diagram, without cut-vertices, of a nonseparating simple closed curve $R$ on $\partial H$ with respect to $\{ D_A, D_B\}$. If one of $D_A,D_B$, say $D_B$, is replaced by a meridian disk $D_C$ in $H$ whose boundary $C$ is a bandsum of $\partial D_A$ and $\partial D_B$ along an edge of $\mathcal{D}$, then the resulting Heegaard diagram $\mathcal{D}'$ of $R$ with respect to $\{ D_A,D_C \}$ is connected and has no cut-vertices.
\end{lem}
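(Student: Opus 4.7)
My plan is to argue by contradiction via wave-lifting, using the pair-of-pants picture of the bandsum from Case~3 of the discussion preceding Figure~\ref{DPCFig8fff}. Set $P = N(\partial D_A \cup \beta \cup \partial D_B) \subset \partial H$, a pair of pants with boundary components $A$ (a pushoff of $\partial D_A$), $B$ (a pushoff of $\partial D_B$), and $C = \partial D_C$. Since $\beta$ is isotopic in $\partial H$ to an edge $e$ of $\mathcal{D}$, all parallel copies of $e$ in $R$ can be arranged to sit inside $P$, while loops of $R$ at $\partial D_A$ or $\partial D_B$ can be chosen disjoint from $P$.

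Connectedness of $\mathcal{D}'$ is immediate: because $\mathcal{D}$ is connected and $e$ joins $\partial D_A$ to $\partial D_B$, the new curve $\partial D_C$ inherits essential intersections with $R$ on both its $A$-parallel and $B$-parallel portions, while $\partial D_A$ is unchanged; this suffices to keep the graph of $\mathcal{D}'$ connected.

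For the no-cut-vertex assertion, suppose to the contrary that $\mathcal{D}'$ has a cut-vertex. By Lemma~\ref{wave reduction} applied to $\mathcal{D}'$, there is a wave $\omega$ in $\mathcal{D}'$. After an initial isotopy of $\omega$ minimizing intersections with $\partial D_B$ (which is no longer a curve of $\mathcal{D}'$): if $\omega$ still meets $\partial D_B$, any outermost subarc cut off by $\partial D_B$ is an essential arc with both endpoints on $\partial D_B$ whose interior is disjoint from $R \cup \partial D_A \cup \partial D_B$, hence a wave at $\partial D_B$ in $\mathcal{D}$, forcing $\mathcal{D}$ to have a cut-vertex by Lemma~\ref{wave reduction} --- contradiction. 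So one may assume $\omega$ is disjoint from $\partial D_B$. Push any endpoints of $\omega$ lying on $\partial D_C$ to the nearby $\partial D_A$ or $\partial D_B$ across annular regions in $P$. Three subcases arise: (i) both endpoints on $\partial D_A$, in which case $\omega$ is a wave at $\partial D_A$ in $\mathcal{D}$ and the same contradiction arises; (ii) both on $\partial D_B$, analogously; or (iii) the \emph{mixed subcase}, where one endpoint lies on $\partial D_A$ and the other on $\partial D_B$. In (iii), concatenate $\omega$ with a parallel copy of $e$ to produce an arc with both endpoints on $R$; the nonpositive hypothesis is then used to verify this concatenation is essential in $\partial H$ cut open along $R$, yielding a nontrivial wave at $R$ in $\mathcal{D}$ --- again a cut-vertex of $\mathcal{D}$ by Lemma~\ref{wave reduction}, the sought contradiction.

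The main obstacle is the mixed subcase: verifying the concatenated arc is essential. In a positive diagram, a consistent orientation of $R, \partial D_A, \partial D_B$ would force the concatenation of $\omega$ with $e$ to cobound a bigon with an edge of $R$, making the arc inessential; in that setting a separate argument is needed, provided by the companion Lemma~\ref{Bandsums along edges of positive diagrams yield connected diagrams without cut-vertices}. Under the nonpositive hypothesis, the absence of any consistent positive orientation rules out this degeneration, and the desired wave at $R$ in $\mathcal{D}$ is produced.
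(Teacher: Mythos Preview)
Your argument has a genuine error in the mixed subcase (iii). You conclude that producing a wave based at $R$ in $\mathcal{D}$ forces $\mathcal{D}$ to have a cut-vertex ``by Lemma~\ref{wave reduction}.'' But Lemma~\ref{wave reduction} runs in the opposite direction: it says that \emph{if} a diagram has a cut-vertex \emph{then} it has a wave based at a meridian curve. It does not say that a wave based at $R$ implies a cut-vertex, and indeed this implication is false. Every connected genus-two diagram of $R$ without cut-vertices has a wave based at $R$ --- this is exactly what Lemma~\ref{nonpositive diagrams have unique waves} and Definition~\ref{horizontal and vertical waves in positive diagrams def} describe --- so finding such a wave in $\mathcal{D}$ is no contradiction at all. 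The mixed subcase therefore does not close, and the argument collapses there.

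There are smaller issues as well. In your reduction to $\omega \cap \partial D_B = \emptyset$, the ``outermost subarc'' you take need not have both endpoints on the same side of $\partial D_B$, and need not be essential; you would have to argue those points. And the concatenation with ``a parallel copy of $e$'' is not clearly an arc with both endpoints on $R$: $e$ is a subarc of $R$ running between $\partial D_A$ and $\partial D_B$, so a parallel copy still has its endpoints on $\partial D_A$ and $\partial D_B$, not on $R$.

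For comparison, the paper's proof is entirely different and much shorter: it is a direct combinatorial check. Because $\mathcal{D}$ is nonpositive, connected, and without cut-vertices, its graph has the specific form of Figure~\ref{DPCFig8as}a with certain edge-weights positive; one then draws $C$ into this picture (Figure~\ref{DPCFig8g1}) and verifies by inspection that each of the four vertices $A^+, A^-, C^+, C^-$ of $\mathcal{D}'$ is joined to at least two distinct other vertices. That suffices for connectedness and absence of cut-vertices. No contradiction argument or wave-lifting is needed.
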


\begin{proof}
Consider Figure \ref{DPCFig8g1}, which displays the relevant subdiagram of $\mathcal{D}$ together with the curve $C$. Observe that $\mathcal{D}'$ will be connected and have no cut-vertices if each vertex of $\mathcal{D}'$ is connected to at least two other vertices in $\mathcal{D}'$. 

To see this is the case, consider, for example, vertex $A^+$ of $\mathcal{D}'$. Since $b > 0$, there is an edge $e$ of $\mathcal{D}'$ connecting $A^+$ to vertex $C^+$ of $\mathcal{D}'$. And, since $a > 0$, there is an edge $f$ of $\mathcal{D}'$ which is the union of an edge of $\mathcal{D}$ connecting $A^+$ to $B^-$ and an edge of $\mathcal{D}$ connecting $B^+$ to one of $A^-$ or $C^-$. So $A^+$ is connected to at least two distinct vertices in $\mathcal{D}'$. 

Similarly, one verifies that each of the other vertices of $\mathcal{D}'$ is connected to at least two vertices of $\mathcal{D}'$. It follows $\mathcal{D}'$ is connected and has no cut-vertices.
\end{proof}

\begin{figure}[ht]
\includegraphics[width = 0.40\textwidth] {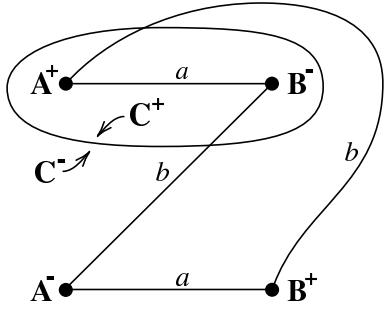}
\caption {\textbf{Bandsums along edges of nonpositive diagrams yield connected diagrams without cut-vertices.} Here $a > 0$ and $b > 0$.}
\label{DPCFig8g1}
\end{figure}

\begin{lem}\textbf{\emph{Bandsums Along Edges of Positive Diagrams Yield Connected, Nonpositive Diagrams Without Cut-Vertices.}} \hfill
\label{Bandsums along edges of positive diagrams yield connected diagrams without cut-vertices}

Suppose $H$ is a genus two handlebody with a complete set of meridian disks $\{D_A,D_B\}$, and $\mathcal{D}$ is a connected positive Heegaard diagram, without cut-vertices, of a nonseparating simple closed curve $R$ on $\partial H$ with respect to $\{D_A, D_B\}$. If one of $D_A,D_B$, say $D_B$, is replaced by a meridian disk $D_C$ in $H$ whose  boundary $C$ is a bandsum of $\partial D_A$ and $\partial D_B$ along an edge of $\mathcal{D}$, then the resulting Heegaard diagram $\mathcal{D}'$ of $R$ with respect to $\{D_A, D_C\}$ is connected, nonpositive, and has no cut-vertices.
\end{lem}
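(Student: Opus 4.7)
The plan is to proceed in two steps, following the template of Lemma~\ref{Bandsums along edges of nonpositive diagrams yield connected diagrams without cut-vertices} and then adding a new ingredient for nonpositivity.

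First, I would argue that $\mathcal{D}'$ is connected and has no cut-vertices. Since the band $\beta$ is properly isotopic to an edge of $\mathcal{D}$ joining $\partial D_A$ to $\partial D_B$ (a bandsum along an edge whose endpoints lie on the same $\partial D_i$ would not yield a new complete set of meridian disks), the subdiagram local to $\beta$ is exactly the one pictured in Figure~\ref{DPCFig8g1}, with $a > 0$ and $b > 0$ guaranteed by the fact that $\mathcal{D}$ is connected and free of cut-vertices. The vertex-by-vertex connectivity check of Lemma~\ref{Bandsums along edges of nonpositive diagrams yield connected diagrams without cut-vertices} then carries over verbatim: each of $A^+$, $A^-$, $C^+$, $C^-$ is joined by edges of $\mathcal{D}'$ to at least two other vertices, so $\mathcal{D}'$ is connected and has no cut-vertices.

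Second, and this is the genuinely new content, I would show that $\mathcal{D}'$ is nonpositive. Fix orientations of $R$, $\partial D_A$, $\partial D_B$ realizing positivity of $\mathcal{D}$; up to simultaneous reversal this is the unique choice making every intersection of $R$ with $\partial D_A \cup \partial D_B$ positive. Because $\partial D_C$ is formed by joining $\partial D_A$ and $\partial D_B$ via the rectangular band $\beta$ in the orientable surface $\partial H$, the two opposite sides of $\beta$ are traversed by $\partial D_C$ in opposite directions; equivalently, the orientation that $\partial D_C$ inherits from $\partial D_A$ on its $\partial D_A$-portion is the reverse of what $\partial D_C$ inherits from $\partial D_B$ on its $\partial D_B$-portion. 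Consequently the intersections of $R$ with the $\partial D_A$-portion of $\partial D_C$ all have one sign, while the intersections of $R$ with the $\partial D_B$-portion of $\partial D_C$ all have the opposite sign. The inequalities $a > 0$ and $b > 0$ guarantee that both types of intersection actually occur, so $R \cap \partial D_C$ contains intersections of both signs. Globally reorienting $R$ or $\partial D_C$ flips all signs uniformly and cannot eliminate the mixing, while reorienting $\partial D_A$ would destroy positivity of $R \cap \partial D_A$. Thus no orientation choice makes $\mathcal{D}'$ positive.

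The main obstacle is the orientation-reversal claim in the second step. One must set up an explicit local model of the bandsum — a thin rectangular strip lying in the face of $\mathcal{D}$ adjacent to the edge $e$ — and use that $\partial H$ is orientable (so the band carries no orientation-reversing twist) together with the fact that the endpoints of $e$ lie on \emph{distinct} boundary circles $\partial D_A$ and $\partial D_B$ to conclude that $\partial D_C$ must reverse exactly one of the two original orientations. Everything else is routine bookkeeping inherited from the preceding lemma.
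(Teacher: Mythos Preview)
Your two-step outline is reasonable, but neither step goes through quite as you describe.

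\textbf{Step 1.} The positive diagram does \emph{not} have the local form of Figure~\ref{DPCFig8g1}; that figure encodes the nonpositive situation with its own parameters $a,b>0$. A connected positive diagram without cut-vertices has the structure of Figure~\ref{DPCFig8bs}a (equivalently Figure~\ref{DPCFig8hs}) with parameters $a,c,d>0$: there are bundles of edges $A^{+}\!-\!A^{-}$ (weight~$c$) and $B^{+}\!-\!B^{-}$ (weight~$d$) in addition to the $A\!-\!B$ edges. The vertex-by-vertex check you propose is indeed the right method, but it must be carried out in this figure, using $c>0$ (not a parameter ``$b$'') to produce the second edge at $A^{\pm}$. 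So the argument does not carry over verbatim from Lemma~\ref{Bandsums along edges of nonpositive diagrams yield connected diagrams without cut-vertices}; the paper redoes it with the correct picture.

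\textbf{Step 2.} Your orientation-reversal principle is false as stated. For two disjoint oriented simple closed curves in an oriented surface, an untwisted band may perfectly well produce a bandsum whose orientation agrees with \emph{both} original orientations (think of the two boundary components of an annulus with their induced boundary orientations, banded radially). Orientability of $\partial H$ and the fact that the band joins distinct curves do not by themselves force a sign flip; what matters is on which \emph{sides} of $\partial D_A$ and $\partial D_B$ the band is attached, and that is dictated by the positive-diagram structure. The paper sidesteps this by reading the signs directly off Figure~\ref{DPCFig8hs}: since $c>0$ and $d>0$, the curve $C$ meets both the $A^{+}\!-\!A^{-}$ edges and the $B^{+}\!-\!B^{-}$ edges of $R$, and one checks in the picture that these crossings carry opposite signs. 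Your abstract argument could be repaired, but only by appealing to exactly this local picture --- at which point it becomes the paper's argument.
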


\begin{proof}
Figure \ref{DPCFig8hs} displays $\mathcal{D}$, the curve $C$, and a horizontal wave $\omega_h$ based at $R$. 
First, observe that $c > 0$ and $d > 0$ in Figure \ref{DPCFig8hs} imply $C$ has both positive and negative signed intersections with $R$. So $\mathcal{D}'$ is nonpositive. Next, note it is enough to show that each vertex of $\mathcal{D}'$ is connected to at least two other vertices of $\mathcal{D}'$ in order to conclude $\mathcal{D}'$ is connected and has no cut-vertices. 

Consider, for example, vertex $A^+$ of $\mathcal{D}'$. Since $a > 0$, there is an edge $f$ of $\mathcal{D}'$ which is the union of an edge of $\mathcal{D}$ connecting $A^+$ to $B^-$ and an edge of $\mathcal{D}$ connecting $B^+$ to one of $A^-$ or $C^-$. In addition, since $c > 0$, $A^+$ is connected to $C^+$ in $\mathcal{D}'$. So $A^+$ is connected to at least two distinct vertices in $\mathcal{D}'$.

Similarly, one verifies that each of the other vertices of $\mathcal{D}'$ is connected to at least two other vertices of $\mathcal{D}'$. It follows $\mathcal{D}'$ is connected and has no cut-vertices.
\end{proof}

\begin{figure}[ht]
\includegraphics[width = 0.35\textwidth] {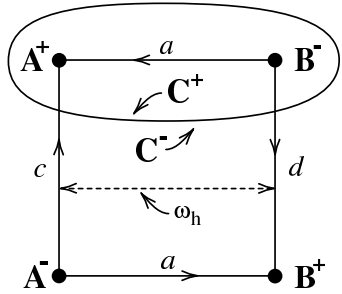}
\caption {\textbf{Bandsums along edges of positive diagrams yield connected diagrams without cut-vertices.} Here $a,c,d > 0$.} 
\label{DPCFig8hs}
\end{figure}

\subsection{Waves based at nonseparating simple closed curves in connected genus two diagrams without cut-vertices}\hfill

\begin{lem}\textbf{\emph{A nonseparating curve in a nonpositive diagram has unique waves.}} \hfill
\label{nonpositive diagrams have unique waves}

Suppose H is a genus two handlebody, $\{D_A,D_B\}$ is a complete set of meridian disks for H and R is a nonseparating simple closed curve on $\partial H$ such that the Heegaard diagram $\mathcal{D}$ of R with respect to $\{D_A,D_B\}$ is nonpositive. Then there is a unique wave based at $R^+$, and a unique wave based at $R^-$, in $\mathcal{D}$.
\end{lem}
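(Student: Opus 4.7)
The plan is a direct combinatorial case analysis built on the three-type classification of $G(\mathcal{D})$ from Lemma \ref{3 types of graphs of genus two diagrams} and Figure \ref{DPCFig8as}, using the nonpositivity assumption to pin down the orientation pattern of $R$ around the fat vertices $A^\pm, B^\pm$.

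First I would fix orientations of $R$, $\partial D_A$, $\partial D_B$. Nonpositivity means that some intersection sign is negative, so around at least one fat vertex, say $A^+$, the cyclic sequence of arcs of $R$ incident to $A^+$ is not uniformly alternating in orientation: there is at least one consecutive pair of arcs along $\partial A^+$ that both enter or both leave $A^+$. I would call such a pair a \emph{switchback} at $A^+$, and define switchbacks at the other fat vertices analogously. Each switchback at a fat vertex $V$ determines a face of the diagram on $\partial H$ whose boundary contains two arcs of $R$ on the same side of $R$ joined across a single arc of $\partial V$, and a chord in this face joining those two arcs is an essential wave based at that side of $R$. Conversely, every wave based at $R^+$ (or $R^-$) must lie in such a switchback face, since a face whose $R$-boundary arcs alternate sides admits no essential chord with both endpoints on the same side.

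For each of the three graph types of Figure \ref{DPCFig8as}, the constraints that (i) $R$ is a single simple closed curve, (ii) $\mathcal{D}$ is nonpositive, and (iii) the multiplicities $a,b,c,d$ are the specified parallel edge counts, pin down the number of switchbacks around each fat vertex. The key combinatorial observation is a parity/counting identity: tracking the orientation of $R$ as it traverses the four fat vertices in cyclic order, the total number of orientation reversals equals twice the total number of switchbacks, and since $R$ closes up as a single curve, this total is forced to be exactly two in the nonpositive case---one switchback producing a wave on the $R^+$ side and one on the $R^-$ side, matched by the hyperelliptic involution as in Remark \ref{wave pairs}.

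The main obstacle I foresee is the clean execution of the counting argument in each of the three graph types, and specifically the verification that two distinct switchback faces on the same side of $R$ (if they existed) would produce non-isotopic waves rather than isotopic ones. For the latter, I expect to argue that distinct switchback faces are disjoint disks in $F = \partial H \setminus R$, so any chords inside them represent distinct isotopy classes of arcs in $F$ with endpoints on $R^+$; hence uniqueness of the wave on each side is equivalent to uniqueness of the switchback face on that side, which the parity count supplies.
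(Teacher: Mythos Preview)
Your switchback idea is the right starting observation, but the uniqueness argument built on it has a real gap. The parity count you sketch---``exactly two switchbacks, one on each side''---is not correct as stated. Nothing prevents a nonpositive diagram from having many switchbacks: if $R$ has several consecutive same-sign intersections with $\partial D_A$ (or $\partial D_B$), each adjacent same-sign pair is a switchback, and the count $p$ of $(+,+)$-type arcs of $\partial D_A\cup\partial D_B$ in $F$ can be any positive integer. The constraint that $R$ is a single curve does not force $p=1$. Worse, your proposed justification for why distinct switchback faces would give non-isotopic waves is backwards: disjoint essential arcs in $F$ with endpoints on $R^{+}$ are very often isotopic, and in fact that is exactly what happens here---all the switchback waves on the $R^{+}$ side are mutually isotopic, not distinct.

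The paper's argument sidesteps the counting entirely by working directly in the twice-punctured torus $F=\partial H\setminus R$. Nonpositivity guarantees at least one subarc $b^{+}$ of $\partial D_A\cup\partial D_B$ with both endpoints on $R^{+}$ (and a companion $b^{-}$ on $R^{-}$). Any wave $\omega^{+}$ based at $R^{+}$ is by definition disjoint from $\partial D_A\cup\partial D_B$, hence lies in a complementary disk of the arc system in $F$; being essential with both ends on $R^{+}$, it must be properly isotopic in $F$ to one of the $b^{+}$-type boundary arcs of that disk. The point is then that all such $b^{+}$ arcs are parallel in $F$, which one reads off from the picture. So uniqueness comes not from there being a unique switchback face, but from all switchback faces producing the same isotopy class of arc in $F$. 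If you want to salvage your combinatorial approach, this is the statement you need to prove: parallel, not unique.
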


\begin{proof}
Consider Figure \ref{PPFig5ja}.
\end{proof}

\begin{figure}[ht]
\includegraphics[width = 0.50\textwidth]{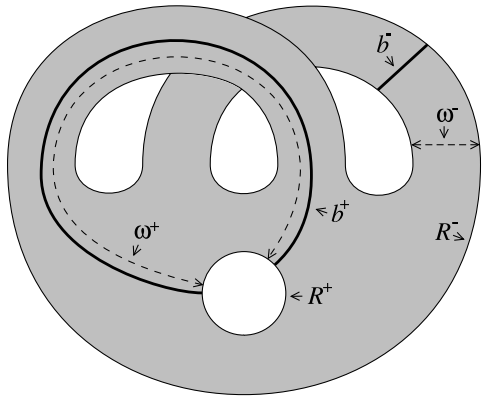}
\caption{\textbf{Genus two nonpositive Heegaard diagrams of nonseparating simple closed curves have unique waves.} Figure \ref{PPFig5ja} shows a twice-punctured torus $F$ obtained by cutting the boundary of a genus two handlebody $H$ open along a nonseparating simple closed curve $R$ in $\partial H$. If $\{D_A,D_B\}$ is a complete set of meridian disks of $H$ such that the Heegaard diagram $\mathcal{D}$ of $R$ with respect to $\{D_A, D_B\}$ is nonpositive, there is a properly embedded subarc $b^+$ of $\partial D_A \cup \partial D_B$ in $F$ with its endpoints on $R^+$, and a properly embedded subarc $b^-$ of $\partial D_A \cup \partial D_B$ in $F$ with its endpoints on $R^-$. Then it is easy to see that any wave $\omega^+$ based at $R^+$ in $\mathcal{D}$ is properly isotopic in $F$ to $b^+$. Similarly, any wave $\omega^-$ based at $R^-$ in $\mathcal{D}$ is properly isotopic in $F$ to $b^-$.}
\label{PPFig5ja}
\end{figure}

\begin{rem}
Although nonpositive genus two Heegaard diagrams with cut-vertices have unique waves, the waves obtained from different nonpositive diagrams may be distinct. Example \ref{Nonpositive distinct waves} shows there exist an infinite number of distinct nonpositive genus two Heegaard diagrams of a trefoil relator $R$ such that waves based at $R$ in distinct nonpositive diagrams are also distinct.
\end{rem}

\begin{defn}\textbf{Horizontal and Vertical waves in positive diagrams without cut-vertices.} \hfill
\label{horizontal and vertical waves in positive diagrams def}

Suppose $\mathcal{D}$ is a positive, connected genus two Heegaard diagram, without cut-vertices, of a nonseparating simple closed curve. Then $\mathcal{D}$ has the form of Figure~\ref{DPCFig8bs}a with vertices $A^+$, $A^-$, $B^+$, and $B^-$, and weights $a, c, d > 0$.

A wave $\omega$ in $\mathcal{D}$ is a \emph{vertical} wave if one endpoint of $\omega$ lies on an edge of $\mathcal{D}$ connecting $A^+$ to one member of $\{B^+, B^-\}$, while the other endpoint of $\omega$ lies on an edge of $\mathcal{D}$ connecting $A^-$ to the other member of $\{B^+, B^-\}$.

A wave $\omega$ in $\mathcal{D}$ is a \emph{horizontal} wave if one endpoint of $\omega$ lies on an edge of $\mathcal{D}$ connecting vertices $A^+$ and $A^-$, while the other endpoint of $\omega$ lies on an edge of $\mathcal{D}$ connecting vertices $B^+$ and $B^-$.
\end{defn}

\begin{figure}[ht]
\includegraphics[width = 0.75\textwidth] {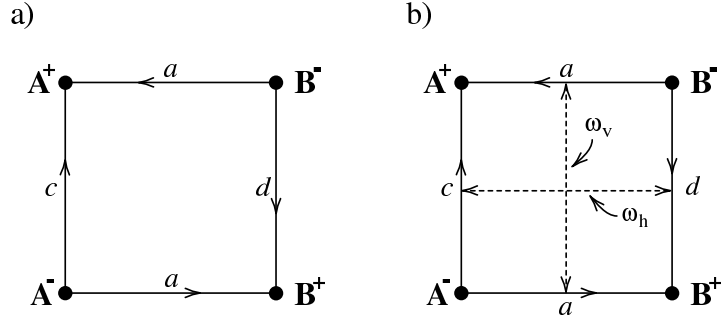}
\caption {\textbf{Horizontal and vertical waves in positive diagrams without cut-vertices.}
A Heegaard diagram $\mathcal{D}$ of a nonseparating simple closed curve $R$ on the boundary of a genus two handlebody $H$ is a \emph{positive diagram} if the graph of $\mathcal{D}$ has the form of Figure~\ref{DPCFig8bs}a with $a,c,d > 0$. Then $\mathcal{D}$ has two waves $\omega_h$ and $\omega_v$, as shown in Figure~\ref{DPCFig8bs}b. In particular, $\mathcal{D}$ has a \emph{horizontal wave} $\omega_h$ with one endpoint of $\omega_h$ on an edge of Figure~\ref{DPCFig8bs}b connecting vertices $A^-$ to $A^+$, and the other endpoint of $\omega_h$ on an edge of Figure~\ref{DPCFig8bs}b connecting vertices $B^-$ to $B^+$.
In addition, $\mathcal{D}$ has a \emph{vertical wave} $\omega_v$ with one endpoint of $\omega_v$ on an edge of Figure~\ref{DPCFig8bs}b connecting vertices $B^-$ to $A^+$, and the other endpoint of $\omega_v$ on an edge of Figure~\ref{DPCFig8bs}b connecting vertices $A^-$ to $B^+$.}
\label{DPCFig8bs}
\end{figure}

\begin{defn}\textbf{The distinguished wave at a nonseparating curve.} \hfill

Suppose $R$ is a nonseparating simple closed curve on the boundary of a genus two handlebody $H$, $H[R]$ has incompressible boundary, and $\{D_A,D_B\}$ is a complete set of meridian disks for $H$ such that the Heegaard diagram $\mathcal{D}$ of $R$ with respect to $\{ D_A, D_B\}$ is connected, and has no cut-vertex. 
\begin{enumerate}
\item If $\mathcal{D}$ is nonpositive, let $\omega$ be the unique wave based at $R$ in $\mathcal{D}$.
\item If $\mathcal{D}$ is positive, let $\omega$ be the horizontal wave $\omega_h$ based at $R$ in $\mathcal{D}$.
\end{enumerate}
Then $\omega$ is the \emph{distinguished wave} based at $R$ in $\partial H$. 
\end{defn}

\section{Distinguished waves are well-defined.}
\label{Distinguished waves are well-defined.}

\begin{thm}\textbf{\emph{Distinguished waves are well-defined.}}\hfill
\label{Thm Distinguished waves are well-defined}

Suppose $R$ is a nonseparating simple closed curve on the boundary of a genus two handlebody $H$, $\Delta$ is a complete set of meridian disks for $H$, the Heegaard diagram $\mathcal{D}$, of $R$ with respect to $\Delta$ is connected and has no cut-vertex, and $\omega$ is a distinguished wave based at $R$ in $\mathcal{D}$.
Then the distinguished wave $\omega$ based at $R$ is well-defined. In other words, $\omega$ depends only on the embedding of $R$ in $\partial H$.

In particular, if $\Delta'$ is another complete set of meridian disks for $H$ such that the Heegaard diagram $\mathcal{D}'$ of $R$ with respect to $\Delta'$ is connected and has no cut-vertex, then $\omega$ also appears as a distinguished wave based at $R$ in $\mathcal{D}'$, up to proper isotopy of $\omega$ in $\partial H$ keeping the endpoints of $\omega$ on $R$.
\end{thm}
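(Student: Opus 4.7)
The plan is to connect $\Delta$ and $\Delta'$ by a finite sequence of single-disk replacements $\Delta = \Delta_0, \Delta_1, \dots, \Delta_n = \Delta'$, each of which is a bandsum of the boundaries of the two disks in the previous set along an edge of the current Heegaard diagram of $R$, and then to verify that the distinguished wave is preserved under each such elementary step.

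First, I would apply Lemma \ref{Lemma 1 of Finding minimal complexity genus two Heegaard diagrams} to reduce both $\Delta$ and $\Delta'$ to complete sets $\Delta_{\min}$ and $\Delta'_{\min}$ of locally minimal complexity (no single-bandsum replacement reduces complexity), and then Lemma \ref{Lemma 2 of Finding minimal complexity genus two Heegaard diagrams} to connect $\Delta_{\min}$ to $\Delta'_{\min}$ by a sequence of single-bandsum replacements that preserves complexity. Concatenating and reversing as needed yields a sequence joining $\Delta$ to $\Delta'$. Each bandsum in this sequence either strictly decreases complexity or preserves it; in either case, Lemma \ref{Only bandsums along Heegaard diagram edges do not necessarily increase complexity} forces the bandsum to lie along an edge of the current diagram. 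Then Lemmas \ref{Bandsums along edges of nonpositive diagrams yield connected diagrams without cut-vertices} and \ref{Bandsums along edges of positive diagrams yield connected diagrams without cut-vertices} guarantee that each intermediate diagram $\mathcal{D}_i$ remains connected with no cut-vertex, so a distinguished wave $\omega_i$ is defined at every stage of the sequence.

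Second, I would prove the key local claim: if $\mathcal{D}_i$ and $\mathcal{D}_{i+1}$ are related by a single bandsum along an edge of $\mathcal{D}_i$, then $\omega_i$ and $\omega_{i+1}$ are properly isotopic in $\partial H$ with endpoints kept on $R$. The argument splits by the sign of $\mathcal{D}_i$. If $\mathcal{D}_i$ is nonpositive, Lemma \ref{nonpositive diagrams have unique waves} identifies $\omega_i$ with the subarc $b^\pm$ of $\partial D_A \cup \partial D_B$ pictured in Figure~\ref{PPFig5ja}; inspecting the local picture around the band $\beta$ in Figure~\ref{DPCFig8g1} shows that after isotopy across $\beta$ the arc $\omega_i$ still misses $\partial D_A \cup \partial D_C$, so it is a wave in $\mathcal{D}_{i+1}$. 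If $\mathcal{D}_{i+1}$ is nonpositive, uniqueness forces $\omega_i$ to equal $\omega_{i+1}$; if $\mathcal{D}_{i+1}$ is positive, reading off the endpoint locations of the transported arc shows it satisfies the horizontal condition of Definition \ref{horizontal and vertical waves in positive diagrams def}, so again $\omega_i = \omega_{i+1}$. Conversely, if $\mathcal{D}_i$ is positive, Lemma \ref{Bandsums along edges of positive diagrams yield connected diagrams without cut-vertices} forces $\mathcal{D}_{i+1}$ to be nonpositive, and examining Figure~\ref{DPCFig8hs} shows that the horizontal wave $\omega_h$ of $\mathcal{D}_i$ survives as a wave in $\mathcal{D}_{i+1}$, hence equals $\omega_{i+1}$ by uniqueness.

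The main obstacle will be carrying out the local verification of Step~2 carefully in every sub-configuration allowed by the edge-multiplicity parameters and by the choice of which edge carries the band. Although intuitively clear from Figures~\ref{DPCFig8g1} and \ref{DPCFig8hs}, this requires tracking how the endpoints of $\omega_i$ sit relative to the new vertex $C^\pm$ created by the bandsum and checking that a proper isotopy in $\partial H \setminus R$ can always push the arc across $\beta$ without crossing $C$. Once this local preservation is established, a straightforward induction along the sequence produced in Step~1 shows that $\omega_0 = \omega$ and $\omega_n$ (the distinguished wave of $\mathcal{D}'$) are isotopic in $\partial H$ rel $R$, completing the proof.
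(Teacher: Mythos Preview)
Your approach is essentially the paper's: the paper packages your Step~1 as Lemma~\ref{Sequences of connected Heegaard diagrams without cut-vertices} and your Step~2 as Lemma~\ref{Distinguished Wave persistence under a bandsum}, and the theorem follows by induction along the sequence exactly as you describe.

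One slip to fix in Step~1: after concatenating $S_1$, $S_3$, and the \emph{reversal} of $S_2$, it is not true that every bandsum in the full sequence decreases or preserves complexity---the reversed portion increases it. So Lemma~\ref{Only bandsums along Heegaard diagram edges do not necessarily increase complexity} alone does not force those bandsums to lie along edges. The clean way out (which the paper uses implicitly) is to note that the diagrams visited in the reversed portion are literally the same diagrams as in $S_2$, already shown to be connected without cut-vertices; then Lemma~\ref{Only bandsums along diagram edges do not introduce cut-vertices} gives that each reversed bandsum is along an edge. Your Step~2 hypothesis should therefore read ``$\mathcal{D}_{i+1}$ is connected without cut-vertex'' rather than ``complexity does not increase.''

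For the nonpositive-to-positive sub-case in Step~2, the paper avoids your proposed direct endpoint check with a short indirect argument: the discarded curve $B=\partial D_B$ must reappear in $\mathcal{D}_{i+1}$ as a bandsum of $\partial D_A$ and $\partial D_C$ along an edge (since $\mathcal{D}_i$ was connected without cut-vertex), and in the positive picture (Figure~\ref{DPCFig8hab}) any such $B$ necessarily crosses $\omega_v$; since $\omega$ was disjoint from $B$ in $\mathcal{D}_i$, it cannot be $\omega_v$, hence $\omega=\omega_h$. This sidesteps the case-by-case verification you flagged as the main obstacle.
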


\begin{proof}
Since both $\mathcal{D}$ and $\mathcal{D}'$ are connected and have no cut-vertex, Lemma \ref{Sequences of connected Heegaard diagrams without cut-vertices} implies there exist finite sequences $\{\Delta_i\}$, $0 \leq i \leq n$, of complete sets of meridian disks for $H$, and Heegaard diagrams $\{\mathcal{D}_i\}$, where $\mathcal{D}_i$ is the Heegaard diagram of the curves in $C$ with respect to $\Delta_i$, such that: 
\begin{enumerate}
\item $\Delta$ = $\Delta_0$, $\mathcal{D}$ = $\mathcal{D}_0$, $\Delta'$ = $\Delta_n$, $\mathcal{D}'$ = $\mathcal{D}_n$.
\item For each $i$, $0 \leq i < n$, $\Delta_{i+1}$ is obtained from $\Delta_i$ by replacing a disk of $\Delta_i$ with a disk whose boundary in $\partial H$ is a bandsum of the boundaries of the two disks of $\Delta_i$ in $\partial H$.
\item For each $i$, $0 \leq i \leq n$, $\mathcal{D}_i$ is connected and has no cut-vertex.
\end{enumerate}

It then follows, using Lemma \ref{Distinguished Wave persistence under a bandsum} inductively on $i$, that $\omega$ appears as a distinguished wave based at $R$ in each Heegaard diagram $\mathcal{D}_i$ with $0 \leq i \leq n$. In particular, $\omega$ appears in both $\mathcal{D}$ = $\mathcal{D}_0$ and $\mathcal{D}'$ = $\mathcal{D}_n$ as a distinguished wave based at $R$.
\end{proof}

\begin{lem}\textbf{\emph{Sequences of connected Heegaard diagrams without cut-vertices.}} \hfill
\label{Sequences of connected Heegaard diagrams without cut-vertices}

Suppose $\mathcal{C}$ is a finite set of pairwise disjoint simple closed curves on the boundary of a genus two handlebody $H$, $\Delta$ and $\Delta'$ are two complete sets of meridian disks for $H$, and $\mathcal{D}$, respectively $\mathcal{D}'$, is the Heegaard diagram of the curves in $\mathcal{C}$ with respect to $\Delta$, respectively $\Delta'$. If both $\mathcal{D}$ and $\mathcal{D}'$ are connected and have no cut-vertex, then there exists a finite sequence $\{\Delta_i\}$, $0 \leq i \leq n$, of complete sets of meridian disks for $H$, and an associated sequence of Heegaard diagrams $\{\mathcal{D}_i\}$, where $\mathcal{D}_i$ is the Heegaard digram of the curves in $C$ with respect to $\Delta_i$, such that: 
\begin{enumerate}
\item $\Delta$ = $\Delta_0$, $\mathcal{D}$ = $\mathcal{D}_0$, $\Delta'$ = $\Delta_n$, $\mathcal{D}'$ = $\mathcal{D}_n$.
\item For each $i$, $0 \leq i < n$, $\Delta_{i+1}$ is obtained from $\Delta_i$ by replacing a disk of $\Delta_i$ with a disk whose boundary in $\partial H$ is a bandsum of the boundaries of the two disks of $\Delta_i$ in $\partial H$.
\item For each $i$, $0 \leq i \leq n$, $\mathcal{D}_i$ is connected and has no cut-vertex.
\end{enumerate}
\end{lem}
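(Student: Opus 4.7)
The plan is to connect $\Delta$ to $\Delta'$ in three segments: first descend from $\Delta$ through a sequence of complexity-decreasing bandsums to a complete set $\Delta_m$ of minimal complexity, then traverse the minimal-complexity level from $\Delta_m$ to a similarly-constructed $\Delta'_m$, and finally ascend by reversal from $\Delta'_m$ back to $\Delta'$. The two inputs I lean on throughout are the three-types-of-bandsums analysis in Section \ref{Three types of bandsums in genus two} (which says that a bandsum that does not strictly increase complexity must be of type 3, i.e., along an edge of the current diagram) together with Lemmas \ref{Bandsums along edges of nonpositive diagrams yield connected diagrams without cut-vertices} and \ref{Bandsums along edges of positive diagrams yield connected diagrams without cut-vertices} (or their evident extensions when $\mathcal{C}$ is not a single curve), which say that an edge-bandsum starting from a connected, cut-vertex-free diagram produces another such diagram.

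First I apply Lemma \ref{Lemma 1 of Finding minimal complexity genus two Heegaard diagrams} to $\Delta$ to obtain a descending sequence $\Delta = \Delta_0, \Delta_1, \ldots, \Delta_k = \Delta_m$ in which each $\Delta_{i+1}$ is a single-disk bandsum replacement of $\Delta_i$, with $|\partial\Delta_{i+1}\cap\mathcal{C}| < |\partial\Delta_i\cap\mathcal{C}|$ and $\Delta_m$ of globally minimal complexity. Because each step strictly decreases complexity, each step is of type 3, i.e., an edge-bandsum; starting from $\mathcal{D}_0 = \mathcal{D}$, which is connected and cut-vertex-free by hypothesis, induction using Lemmas \ref{Bandsums along edges of nonpositive diagrams yield connected diagrams without cut-vertices} and \ref{Bandsums along edges of positive diagrams yield connected diagrams without cut-vertices} shows that every intermediate $\mathcal{D}_i$ is connected with no cut-vertex. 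An identical construction applied to $\Delta'$ yields a descending sequence $\Delta' = \Delta'_0, \ldots, \Delta'_{k'} = \Delta'_m$ with the same properties.

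Second, since $\Delta_m$ and $\Delta'_m$ both have globally minimal (hence equal) complexity, neither admits a complexity-decreasing bandsum replacement, so Lemma \ref{Lemma 2 of Finding minimal complexity genus two Heegaard diagrams} supplies an equal-complexity bandsum sequence from $\Delta_m$ to $\Delta'_m$. Every step in this middle segment preserves complexity, hence is again an edge-bandsum by the three-types analysis, and induction starting at $\mathcal{D}_m$ preserves connectedness and the cut-vertex-free property throughout.

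Third, I concatenate the first segment, the middle segment, and the reverse of the second descending sequence. The one delicate point, and the step I expect to be the main obstacle, is that each reverse step $\Delta'_{i+1} \to \Delta'_i$ strictly \emph{increases} complexity, so the three-types analysis does not immediately rule out a type 1 or type 2 bandsum on this leg. I resolve this by turning the implication around: the diagrams $\mathcal{D}'_i$ have already been shown in the second paragraph to be connected and cut-vertex-free, and since type 1 and type 2 bandsums always introduce a cut-vertex, the reverse bandsum from $\mathcal{D}'_{i+1}$ to $\mathcal{D}'_i$ must itself be of type 3, i.e., an edge-bandsum of $\mathcal{D}'_{i+1}$. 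Thus the edge-bandsum lemmas apply to the reverse segment too, and the concatenated sequence consists entirely of single-disk bandsum replacements whose intermediate diagrams are all connected with no cut-vertex, as required.
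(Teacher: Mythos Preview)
Your proposal is correct and follows essentially the same three-segment strategy as the paper: descend from each of $\Delta$ and $\Delta'$ to minimal complexity via Lemma~\ref{Lemma 1 of Finding minimal complexity genus two Heegaard diagrams}, bridge the two minima via Lemma~\ref{Lemma 2 of Finding minimal complexity genus two Heegaard diagrams}, and concatenate with the second descent reversed, using the type-3 classification and Lemmas~\ref{Bandsums along edges of nonpositive diagrams yield connected diagrams without cut-vertices}--\ref{Bandsums along edges of positive diagrams yield connected diagrams without cut-vertices} to certify that every intermediate diagram is connected without cut-vertex. Your ``delicate point'' about the reversed leg is unnecessary: property~(3) concerns the diagrams $\mathcal{D}'_i$ themselves, which you already verified in the forward direction, and property~(2) is symmetric since a single-disk bandsum replacement is invertible by another single-disk bandsum replacement; there is no need to re-invoke the edge-bandsum lemmas on the reverse segment.
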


\begin{proof}
Given $\mathcal{C}$ on $\partial H$, let $min(\mathcal{C})$ consist of the complete sets of meridian disks of $H$ which intersect the curves in $\mathcal{C}$ minimally. 

Then the goal is to show that a sequence $\{\Delta_i\}$, of complete sets of meridian disks for $H$ satisfying the lemma can be constructed by concatenating up to three subsequences of complete sets of meridian disks for $H$, depending on whether $\Delta \notin min(\mathcal{C})$, $\Delta \in min(\mathcal{C})$, $\Delta' \notin min(\mathcal{C})$, or $\Delta' \in min(\mathcal{C})$.

Perhaps a detailed example will suffice. 

Suppose, for instance, that neither $\Delta$ or $\Delta'$ belongs to $min(\mathcal{C})$. 
Then, since $\Delta \notin min(\mathcal{C})$, it follows from Lemma \ref{Lemma 1 of Finding minimal complexity genus two Heegaard diagrams} that there exists a finite sequence $S_1 = \{\Delta_h\}$, $0 \leq h \leq n_1$, of complete sets of meridian disks for $H$ such that: $\Delta_0 = \Delta$; $\Delta_{n_1} \in min(\mathcal{C})$; for each $h$ with $0 \leq h < n_1$, $|\partial \Delta_h \cap \mathcal{C}| > |\partial \Delta_{h+1} \cap \mathcal{C}|$; and, for each $h$ with $0 \leq h < n_1$, $\Delta_{h+1}$ is obtained from $\Delta_h$ by replacing one of the disks of $\Delta_h$ with a disk whose boundary is a bandsum in $\partial H$ of the boundaries of the two disks in $\Delta_h$.

Furthermore, since, for $0 \leq h < n_1$, $|\partial \Delta_h \cap \mathcal{C}| > |\partial \Delta_{h+1} \cap \mathcal{C}|$, we have $|\mathcal{D}_h| > |\mathcal{D}_{h+1}|$ in the sequence $\{\mathcal{D}_h\}$ of associated Heegaard diagrams, where $\mathcal{D}_h$ is the Heegaard diagram of the curves in $\mathcal{C}$ with respect to $\Delta_h$. Then Lemma \ref{Only bandsums along Heegaard diagram edges do not necessarily increase complexity} implies that each $\mathcal{D}_{h+1}$, $0 \leq h < n_1$, is obtained from $\mathcal{D}_h$ by a bandsum along an edge of $\mathcal{D}_h$. And then Lemma \ref{Bandsums along edges of nonpositive diagrams yield connected diagrams without cut-vertices} or Lemma \ref{Bandsums along edges of positive diagrams yield connected diagrams without cut-vertices} applies and implies each $\mathcal{D}_{h+1}$, with $0 \leq h < n_1$, is connected and has no cut-vertex.

Next, since $\Delta' \notin min(\mathcal{C})$, the argument used above for the case in which $\Delta \notin min(\mathcal{C})$, applies again, and it implies there exists a finite sequence $S_2 = \{\Delta_j\}$, $0 \leq j \leq n_2$, of complete sets of meridian disks for $H$ such that: $\Delta_0 = \Delta'$; $\Delta_{n_2} \in min(\mathcal{C})$; for each $j$ with $0 \leq j < n_2$, $|\partial \Delta_j \cap \mathcal{C}| > |\partial \Delta_{j+1} \cap \mathcal{C}|$; and, for each $j$ with $0 \leq j < n_2$, $\Delta_{j+1}$ is obtained from $\Delta_j$ by replacing one of the disks of $\Delta_j$ with a disk whose boundary is a bandsum in $\partial H$ of the boundaries of the two disks in $\Delta_j$.

And again, since, for $0 \leq j < n_2$, $|\partial \Delta_j \cap \mathcal{C}| > |\partial \Delta_{j+1} \cap \mathcal{C}|$, we have $|\mathcal{D}_j| > |\mathcal{D}_{j+1}|$ in the sequence $\{\mathcal{D}_j\}$ of associated Heegaard diagrams, where $\mathcal{D}_j$ is the Heegaard diagram of the curves in $\mathcal{C}$ with respect to $\Delta_j$. Then Lemma \ref{Only bandsums along Heegaard diagram edges do not necessarily increase complexity} implies that each $\mathcal{D}_{j+1}$, $0 \leq j < n_2$, is obtained from $\mathcal{D}_j$ by a bandsum along an edge of $\mathcal{D}_j$. And then Lemma \ref{Bandsums along edges of nonpositive diagrams yield connected diagrams without cut-vertices} or Lemma \ref{Bandsums along edges of positive diagrams yield connected diagrams without cut-vertices} applies and implies each $\mathcal{D}_{j+1}$, with $0 \leq j < n_2$, is connected and has no cut-vertex.

Now while $\Delta_{n_1} \in min(\mathcal{C})$ and $\Delta_{n_2} \in min(\mathcal{C})$, it may be the case that $\Delta_{n_1}$ and $\Delta_{n_2}$ are distinct. In this case, it follows from Lemma \ref{Lemma 2 of Finding minimal complexity genus two Heegaard diagrams} that there exists a finite sequence $S_3 = \{\Delta_k\}$, $0 \leq k \leq n_3$, of complete sets of meridian disks for $H$ such that: $\Delta_0 = \Delta_{n_1}$; $\Delta_{n_3} = \Delta_{n_2}$; for each $k$ with $0 \leq k < n_3$, $|\partial \Delta_k \cap \mathcal{C}| = |\partial \Delta_{k+1} \cap \mathcal{C}|$; and, for each $k$ with $0 \leq k < n_3$, $\Delta_{k+1}$ is obtained from $\Delta_k$ by replacing one of the disks of $\Delta_k$ with a disk whose boundary is a bandsum in $\partial H$ of the boundaries of the two disks in $\Delta_k$.

And again, since, for $0 \leq k < n_3$, $|\partial \Delta_k \cap \mathcal{C}| = |\partial \Delta_{k+1} \cap \mathcal{C}|$, we have $|\mathcal{D}_k| = |\mathcal{D}_{k+1}|$ in the sequence $\{\mathcal{D}_k\}$ of associated Heegaard diagrams, where $\mathcal{D}_k$ is the Heegaard diagram of the curves in $\mathcal{C}$ with respect to $\Delta_k$. Then Lemma \ref{Only bandsums along Heegaard diagram edges do not necessarily increase complexity} once more implies that each $\mathcal{D}_{k+1}$, $0 \leq k < n_3$, is obtained from $\mathcal{D}_k$ by a bandsum along an edge of $\mathcal{D}_k$. After which Lemma \ref{Bandsums along edges of nonpositive diagrams yield connected diagrams without cut-vertices} or Lemma \ref{Bandsums along edges of positive diagrams yield connected diagrams without cut-vertices} applies and implies each $\mathcal{D}_{k+1}$, with $0 \leq k < n_3$, is connected and has no cut-vertex.

Finally, a sequence of complete sets of meridian disks $\{\Delta_i\}$ for $H$ and an associated sequence of connected Heegaard diagrams $\{\mathcal{D}_i\}$, without cut-vertices, of the curves in $\mathcal{C}$, where $\mathcal{D}_i$ is the Heegaard diagram of the curves in $\mathcal{C}$ with respect to $\Delta_i$, satisfying the statement of the lemma, can be assembled from $S_1$ followed by $S_3$ and $-S_2$, where $-S_2$ is the sequence of complete sets of meridian disks for $H$ obtained by visiting the complete sets of meridian disks for $H$ in $S_2$ in reverse order.
\end{proof}

\begin{lem}\textbf{\emph{Distinguished wave persistence under a bandsum.}} \hfill
\label{Distinguished Wave persistence under a bandsum}

Suppose $R$ is a nonseparating simple closed curve on the boundary of a genus two handlebody $H$, $\{D_A,D_B\}$ is a complete set of meridian disks for $H$, the Heegaard diagram $\mathcal{D}$ of $R$ with respect to $\{ D_A, D_B\}$ is connected and has no cut-vertex, and $\omega$ is the distinguished wave based at $R$ in $\mathcal{D}$.
In addition, suppose $\{D_A,D_C\}$ is another complete set of meridian disks of $H$ in which the boundary $C$ of $D_C$ is a bandsum of $\partial D_A$ with $\partial D_B$ in $\partial H$, $\mathcal{D}'$ is the Heegaard diagram of $R$ with respect to $\{ D_A, D_C \}$, and either the complexity of $\mathcal{D}'$ is not greater than the complexity of $\mathcal{D}$, or $\mathcal{D}'$ is connected and has no cut-vertex. Then $\omega$ also appears in $\mathcal{D}'$ as a distinguished wave based at $R$.
\end{lem}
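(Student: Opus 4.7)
The plan is to combine the trichotomy of bandsums in Figure~\ref{DPCFig8fff} with the wave-persistence Lemmas~\ref{Bandsums along edges of nonpositive diagrams yield connected diagrams without cut-vertices} and \ref{Bandsums along edges of positive diagrams yield connected diagrams without cut-vertices}, and then to track $\omega$ through a short case analysis determined by the positivity of $\mathcal{D}$. The first step is to show that the hypotheses force the band $\beta$ producing $C$ from $\partial D_A$ and $\partial D_B$ to be of type~(3) in Figure~\ref{DPCFig8fff}, i.e., to lie in a face of $\mathcal{D}$ and be properly isotopic in that face to an edge of $\mathcal{D}$. Indeed, type~(1) and type~(2) bandsums strictly increase complexity by Lemma~\ref{Only bandsums along Heegaard diagram edges do not necessarily increase complexity} and introduce a cut-vertex by Lemma~\ref{Only bandsums along diagram edges do not introduce cut-vertices}, so either of the two stated alternatives on $\mathcal{D}'$ rules them out.

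Next I would split into cases by the positivity of $\mathcal{D}$. If $\mathcal{D}$ is positive, Lemma~\ref{Bandsums along edges of positive diagrams yield connected diagrams without cut-vertices} makes $\mathcal{D}'$ connected, nonpositive, and cut-vertex free, and the distinguished wave of $\mathcal{D}$ is the horizontal wave $\omega_h$ of Figure~\ref{DPCFig8bs}. Since $\beta$ may be pushed off to one side of its parallel edge, $\omega_h$ can be arranged to miss $\beta$, hence $C$; so $\omega_h$ remains a wave in $\mathcal{D}'$, and by Lemma~\ref{nonpositive diagrams have unique waves} it is the unique, and therefore distinguished, wave there. If $\mathcal{D}$ is nonpositive, Lemma~\ref{Bandsums along edges of nonpositive diagrams yield connected diagrams without cut-vertices} makes $\mathcal{D}'$ connected and cut-vertex free, and $\omega$ is the unique wave of Figure~\ref{PPFig5ja}, represented by a subarc of $\partial D_A \cup \partial D_B$. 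A local check near $\beta$ shows that, after a small isotopy, this arc is disjoint from $\partial D_A \cup C$, so it persists as a wave in $\mathcal{D}'$. If $\mathcal{D}'$ is nonpositive, uniqueness (Lemma~\ref{nonpositive diagrams have unique waves}) again identifies it as the distinguished wave; if $\mathcal{D}'$ is positive, the endpoint combinatorics of Figure~\ref{DPCFig8bs} together with the fact that $\beta$ is disjoint from $\omega$ force the horizontal interpretation of $\omega$ in $\mathcal{D}'$.

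The main obstacle is the geometric verification of this last subcase: one must check that $\omega$, represented by the subarc of Figure~\ref{PPFig5ja}, has endpoints on edges of $\mathcal{D}'$ of the horizontal type (connecting $A^+$ to $A^-$ and $C^+$ to $C^-$) rather than the vertical type of Definition~\ref{horizontal and vertical waves in positive diagrams def}. This reduces to observing that the alternative, in which the endpoints of $\omega$ lie on edges between the vertex pairs $\{A^+,C^-\}$ and $\{A^-,C^+\}$, would force $\beta$ to meet $\omega$ transversely, contradicting the fact that $\beta$ has already been arranged to lie in a face of $\mathcal{D}$ that contains the interior of $\omega$. Once this endpoint bookkeeping is done, the persistence of $\omega$ as the distinguished wave of $\mathcal{D}'$ follows in every remaining subcase.
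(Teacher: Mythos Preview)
Your overall architecture matches the paper's proof closely: reduce to a type-(3) bandsum via Lemmas~\ref{Only bandsums along Heegaard diagram edges do not necessarily increase complexity} and~\ref{Only bandsums along diagram edges do not introduce cut-vertices}, then split on the positivity of $\mathcal{D}$, and in the nonpositive case split again on the positivity of $\mathcal{D}'$. The positive-$\mathcal{D}$ case and the nonpositive/nonpositive subcase are handled essentially as in the paper.

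The gap is exactly where you flag it: the subcase $\mathcal{D}$ nonpositive, $\mathcal{D}'$ positive. Your proposed resolution---that if $\omega$ were vertical in $\mathcal{D}'$ then $\beta$ would have to cross $\omega$, contradicting that ``$\beta$ has already been arranged to lie in a face of $\mathcal{D}$ that contains the interior of $\omega$''---does not go through. Nothing you have done forces $\beta$ and the interior of $\omega$ to lie in the \emph{same} face of $\mathcal{D}$; $\beta$ is parallel to some edge of $\mathcal{D}$, and the unique wave $\omega$ sits in a (possibly different) face. So you cannot extract a contradiction from $\beta\cap\omega\neq\emptyset$ this way, and the endpoint bookkeeping you describe is not actually carried out.

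The paper closes this subcase by a reversal trick that you are missing: rather than tracking $\beta$ inside $\mathcal{D}$, look at $B=\partial D_B$ inside $\mathcal{D}'$. Since $\mathcal{D}$ is connected without cut-vertex, $B$ must appear in $\mathcal{D}'$ as a bandsum of $\partial D_A$ with $\partial D_C$ along an edge of $\mathcal{D}'$ (this is Lemma~\ref{Only bandsums along diagram edges do not introduce cut-vertices} applied in the other direction). Any such bandsum curve in the positive diagram $\mathcal{D}'$ necessarily has an essential intersection with the vertical wave $\omega_v$ (Figure~\ref{DPCFig8hab}). But $\omega$ was disjoint from $B$ in $\mathcal{D}$, hence $\omega\neq\omega_v$, so $\omega=\omega_h$. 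Replacing your last paragraph with this argument completes the proof.
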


\begin{proof}
It follows from Lemmas \ref{Only bandsums along Heegaard diagram edges do not necessarily increase complexity} and \ref{Only bandsums along diagram edges do not introduce cut-vertices} that the curve $C$ is a bandsum of $\partial D_A$ with $\partial D_B$ along an edge of $\mathcal{D}$. Then there are two cases depending on whether $\mathcal{D}$ is positive or nonpositive.

First, suppose $\mathcal{D}$ is positive. Then $\mathcal{D}$ and $C$ have the form shown in Figure~\ref{DPCFig8hs}, and the distinguished wave $\omega$ based at $R$ appears as the horizontal wave $\omega_h$ based at $R$. 

Since $c > 0$ and $d > 0$, $C$ has both positive and negative signed intersections with $R$, and so $\mathcal{D}'$ is nonpositive. In addition, since $\omega_h$ is disjoint from $\partial D_A$ and $C$ in Figure~\ref{DPCFig8hs}, $\omega_h$ appears in $\mathcal{D}'$ as a wave based at $R$. But waves based at $R$ in nonpositive connected Heegaard diagrams without cut-vertices are unique. So $\omega_h$ appears as the distinguished wave based at $R$ in $\mathcal{D}'$.

Next, suppose $\mathcal{D}$ is nonpositive. In this case, there are two possibilities depending on whether $\mathcal{D}'$ is positive or nonpositive. 

First, suppose $\mathcal{D}'$ is nonpositive. Since $\omega$ in $\mathcal{D}$ is properly isotopic to a subarc of $\partial D_A$ or $\partial D_B$ in $\mathcal{D}$, and the curve $C$ is disjoint from $\partial D_A$ and $\partial D_B$ in  $\mathcal{D}$, $\omega$ appears as a wave based at $R$ in $\mathcal{D}'$. However, since $\mathcal{D}'$ is nonpositive, the distinguished wave based at $R$ is the only wave based at $R$. So $\omega$ is the distinguished wave based at $R$.

Second, suppose $\mathcal{D}'$ is positive. As in the case in which $\mathcal{D}'$ is nonpositive, $\omega$ appears as a wave based at $R$ in $\mathcal{D}'$. But now there is the possibility that $\omega$ appears as a vertical wave $\omega_v$ based at $R$ in $\mathcal{D}'$, rather than as a horizontal wave $\omega_h$ based at $R$ in $\mathcal{D}'$. So the possibility that $\omega$ appears as a vertical wave $\omega_v$ in $\mathcal{D}'$ needs to be ruled out.

To do this, consider Figure \ref{DPCFig8hab}. In passing from $\mathcal{D}$ to $\mathcal{D}'$, cutting disk $D_B$ was discarded, and $D_B$ was replaced with $D_C$. But the curve $B = \partial D_B$ is still present in $\mathcal{D}'$, and because $\mathcal{D}$ is connected, and has no cut-vertex, $B$ must appear in $\mathcal{D}'$ as a bandsum of $\partial D_A$ with $\partial D_C$ along an edge of $\mathcal{D}'$. So $B$ must appear in Figure \ref{DPCFig8hab} as shown, in which case $B$ has an essential intersection with $\omega_v$ in $\mathcal{D}'$. Conclude that, since $\omega$ is disjoint from $B = \partial D_B$ in $\mathcal{D}$,  $\omega$ appears as the horizontal wave $\omega_h$ based at $R$ in $\mathcal{D}'$.
\end{proof}

\begin{figure}[ht]
\includegraphics[width = 0.35\textwidth] {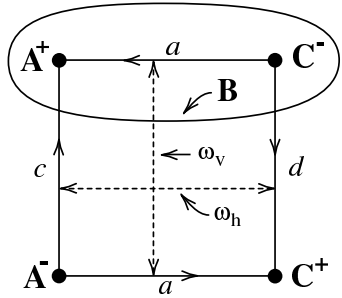}
\caption {\textbf{Bandsums along edges of positive diagrams yield connected diagrams without cut-vertices in which the distinguished wave $\boldsymbol{\omega_h}$ persists.} Here $a,c,d > 0$.} 
\label{DPCFig8hab}
\end{figure}

\begin{defn}\textbf{Distinguished-wave-determined slope representatives  \\ and distinguished-wave-determined slopes.}\hfill

Suppose $R$ is a nonseparating simple closed curve on the boundary of a genus two handlebody $H$, $H[R]$ has incompressible boundary, $\Delta$ is a complete set of meridian disks for $H$ such that the Heegaard diagram $\mathcal{D}$ of $R$ with respect to $\Delta$ is connected, has no cut-vertex, and $\omega$ is the distinguished wave based at $R$ in $\mathcal{D}$. Then the wave-determined slope representatives of Definition \ref{wave-determined slope representatives} are \emph{distinguished-wave-determined slope representatives}, and wave-determined slopes of Definition \ref{wave-determined slopes} as \emph{distinguished-wave-determined slopes}. (Usually shortened to \emph{distinguished-slope representatives} and \emph{distinguished-slopes} respectively.)
\end{defn}

\begin{lem}\textbf{\emph{Distinguished-wave-determined slope representatives never bound disks.}}

Suppose $R$ is a nonseparating simple closed curve on the boundary of a genus two handlebody $H$, $\Delta$ is a complete set of meridian disks for $H$ such that the Heegaard diagram $\mathcal{D}$ of $R$ with respect to $\Delta$ is connected, has no cut-vertex, $\omega$ is the distinguished wave based at $R$ in $\mathcal{D}$, and $m_1$ and $m_2$ are the two distinguished slope representatives obtained by surgery on $R$ along $\omega$. Then neither $m_1$ or $m_2$ bound disks in $H$.
\end{lem}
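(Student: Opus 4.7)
The plan is to show that $m_1$ is essential on the torus $\partial H[R]$; then the incompressibility of $\partial H[R]$ (which is part of the hypothesis defining the distinguished wave) forbids $m_1$ from bounding any disk in $H[R]$, and in particular in $H\subset H[R]$.

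First I would verify that $m_1,m_2$ are essential in $\partial H$ and not isotopic to $R$ in $\partial H$. Letting $\alpha_1,\alpha_2$ be the two subarcs into which the endpoints of $\omega$ divide $R$, one has $m_i\simeq \omega\cup\alpha_i$ in $\partial H$. Triviality of $m_i$ in $\partial H$ would mean $\omega$ is properly isotopic to $\alpha_i$ in $F=\partial H\setminus R$, and $m_1\simeq R$ in $\partial H$ (resp.\ $m_2\simeq R$) would mean $\omega$ is properly isotopic to $\alpha_2$ (resp.\ $\alpha_1$); each of these possibilities contradicts the essentiality of $\omega$ as a wave.

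Next, writing $N=\mathrm{nbhd}(R\cup\omega)$ and letting $R'$ denote its boundary parallel to $R$, an Euler characteristic count yields $\chi(\partial H\setminus N)=-1$ with three boundary circles $R',m_1,m_2$, so $\partial H\setminus N$ is either (a) a pair of pants, (b) a disjoint union of a disk and a twice-punctured torus, or (c) a disjoint union of an annulus and a once-punctured torus. I would rule out (b) and (c) by cases: a disk component bounded by $m_i$ would make $m_i$ trivial in $\partial H$; a disk bounded by $R'$ would make $R$ trivial in $\partial H$, contradicting nonseparation; an annulus component with boundaries $\{R',m_i\}$ would give $R\simeq m_i$ in $\partial H$; and an annulus component with boundaries $\{m_1,m_2\}$ would decompose $\partial H[R]$ as two disjoint closed tori (the once-punctured torus capped by the disk $A\cup D^-$ forming one, and the two annuli glued along $m_1$ and $m_2$ forming the other, where $A$ and $P$ are the two components of $N\setminus R$ and $D^\pm$ are the caps of the $2$-handle), contradicting the connectedness of $\partial H[R]$.

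With $\partial H\setminus N$ forced to be a pair of pants, the torus $\partial H[R]$ splits as the union of two annuli $P\cup D^+$ and $(\partial H\setminus N)\cup(A\cup D^-)$, both bounded by $m_1$ and $m_2$ and glued along these two circles. A torus so decomposed has essential common boundary circles, so $m_1$ is essential on $\partial H[R]$, and any disk in $H$ bounded by $m_1$ becomes a compressing disk for $\partial H[R]$ in $H[R]$, contradicting the incompressibility hypothesis. I expect the case analysis identifying $\partial H\setminus N$ as a pair of pants to be the main obstacle; once that is in hand, the remainder is immediate.
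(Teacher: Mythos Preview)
Your argument is correct, but it takes a genuinely different route from the paper's. The paper's proof is a single line: if $m_i$ bounds a disk in $H$, that disk is an essential disk in $H$ whose boundary is disjoint from $R$; a standard outermost-arc argument against the disks of $\Delta$ then produces either a disk of $\Delta$ missing $R$ or a wave based at $\partial\Delta$ in $\mathcal{D}$, forcing $\mathcal{D}$ to be disconnected or to have a cut-vertex, contrary to hypothesis. In other words, the paper never looks at $\partial H[R]$ at all---it works entirely inside the Heegaard diagram and uses the combinatorial hypothesis directly.

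Your approach instead proves the stronger topological fact that $m_1,m_2$ are essential (indeed parallel core curves) on the torus $\partial H[R]$, and then invokes the incompressibility of $\partial H[R]$. This is more work but it buys you something: you recover along the way exactly the statement underlying Definition~2.4, that $m_1$ and $m_2$ represent a common slope on $\partial H[R]$. Two small remarks on your write-up: first, your case of an annulus component with boundary $\{m_1,m_2\}$ in $\partial H\setminus N$ is more directly excluded by noting it would make $R'$ (hence $R$) separating in $\partial H$; second, the assertion that $m_i\not\simeq R$ is already built into the paper's Definition~2.3, so you may simply cite it rather than rederive it.
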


\begin{proof}
If $m_1$ or $m_2$ bounds a disk in $H$, $\mathcal{D}$ is either not connected, or has a cut-vertex, contrary to hypothesis.
\end{proof}


\clearpage
\section{Only distinguished-wave-determined slopes are meridians of tunnel-number-one knots in $S^3$, $S^1 \times S^2$, or $(S^1 \times S^2)\; \#\; L(p,q)$.}
\label{Only distinguished-wave-determined slopes are meridians}

\begin{lem}\textbf{\emph{Waves provide meridians.}} \hfill
\label{waves provide meridians}

Suppose H is a genus two handlebody, R is a nonseparating simple closed curve on $\partial H$ such that the manifold $H[R]$ has incompressible boundary, and $H[R]$ is the exterior of a knot in $S^3$, $S^1 \times S^2$, or $(S^1 \times S^2)\; \#\; L(p,q)$. Then a simple closed curve $m$ in $\partial H$ representing the meridian of $H[R]$ can be obtained by surgery on $R$ along a wave based at $R$.
\end{lem}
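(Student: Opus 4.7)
The plan is to translate the hypothesis into a genus two Heegaard splitting of the ambient manifold $M$, write down its four-curve Heegaard diagram on $\partial H$, and then invoke the classical theorem of Homma--Ochiai--Takahashi (together with its extensions by Negami and others to $S^1 \times S^2$ and $(S^1 \times S^2)\,\#\,L(p,q)$) which says that every genus two Heegaard diagram of one of these three manifolds either admits a wave or lies in a short list of standard exceptional diagrams.

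First I would construct the natural Heegaard splitting. Because $H[R]$ is the exterior of a knot $K$ in $M$ and $\partial H[R]$ is incompressible, $K$ is nontrivial and has tunnel number one: the cocore $\tau$ of the 2-handle attached along $R$ is an arc in $H[R]$ whose complement deformation retracts to $H$, so $\tau$ is an unknotting tunnel for $K$. Setting $\bar H = N(K \cup \tau)$ gives a genus two Heegaard splitting $M = H \cup \bar H$. A complete meridian system of $\bar H$ consists of the cocore of the 1-handle $N(\tau)$, whose boundary is $R$, together with a meridian disk of $N(K)$, whose boundary is the meridian $m$ of $K$; after isotopy, $R$ and $m$ are disjoint simple closed curves on $\partial H = \partial \bar H$. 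For any complete meridian system $\{D_A, D_B\}$ of $H$, the four-curve diagram $\mathcal{D}_4 = \{\partial D_A, \partial D_B, R, m\}$ is then a genus two Heegaard diagram of $M$.

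By the classical theorem, $\mathcal{D}_4$ admits a wave, for the standard exceptional diagrams all force $R$ to be disk-busting in a way incompatible with the incompressibility of $\partial H[R]$ (they typically make $R$ primitive in $H$, in which case $H[R]$ would be a solid torus). If this wave $w$ is based at $R$, then since $w$ misses $\partial D_A \cup \partial D_B \cup m$ it is in particular a wave based at $R$ in the Heegaard diagram of $R$ alone with respect to $\{D_A, D_B\}$; and surgery on $R$ along $w$ yields two essential simple closed curves $m_1, m_2$ on $\partial H$, each disjoint from $m$. On the torus $\partial H[R]$ the curves $m_1$ and $m$ are then disjoint essential simple closed curves, and any two such curves on a torus are isotopic, so $m_1$ represents the meridian slope of $K$, as required. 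If instead $w$ is based at $m$, $\partial D_A$, or $\partial D_B$, I would perform the corresponding wave-reduction: the surgery replaces the reduced curve by a new one of strictly smaller complexity that still bounds a meridian disk of the same handlebody, yielding a new Heegaard diagram of $M$ in which $R$ is unchanged and the same analysis can be repeated. In the case of a wave based at $m$, the disjointness-on-the-torus argument above shows that the replacement $m'$ is still a meridian representative of $K$. Since the total complexity strictly decreases at each step, after finitely many reductions the diagram must admit a wave based at $R$.

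The main obstacle will be the persistence and termination argument. One must verify that each wave-reduction strictly decreases the total complexity of $\mathcal{D}_4$ (standard for waves), that wave-reduction based at $m$ genuinely produces a new meridian representative of $K$ (which, as noted, follows from the torus-disjointness argument), that wave-reduction based at $\partial D_A$ or $\partial D_B$ produces a new complete meridian system of $H$ while leaving $R$ and $m$ untouched, and that the standard exceptional diagrams for the three target manifolds are all excluded by the incompressibility hypothesis at every stage of the iteration. Once these points are in place, the argument of the third paragraph finishes the proof.
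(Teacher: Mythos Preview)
Your approach is correct in outline and reaches the same conclusion, but it takes a longer route than the paper's argument. You set up an iteration: apply the Homma--Ochiai--Takahashi / Negami--Okita wave theorem, and if the resulting wave is based at $m$, $\partial D_A$, or $\partial D_B$, perform the corresponding wave-reduction and repeat, using the strict complexity drop to force eventual termination at a wave based at $R$ (or at an exceptional low-complexity diagram, which you then rule out via the incompressibility of $\partial H[R]$). This works, but the termination analysis and the exclusion of the exceptional diagrams at every stage require exactly the care you flag in your final paragraph.

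The paper short-circuits the iteration entirely by making two minimizing choices at the outset. First, it selects $\{D_A,D_B\}$ so that the diagram of $R$ alone is already connected with no cut-vertex; such a system exists because $\partial H[R]$ is incompressible, and this choice immediately forbids any wave based at $\partial D_A$ or $\partial D_B$ in the four-curve diagram. Second, among all meridian representatives of $H[R]$ disjoint from $R$, it chooses one, $m$, of minimal complexity with respect to this fixed $\{D_A,D_B\}$; this minimality forbids any wave based at $m$, since wave-surgery on $m$ would produce a meridian representative of strictly smaller complexity (by the same torus-disjointness observation you use). With both obstructions in place, a single invocation of the wave theorem forces the wave onto $R$, and the pair-of-pants/torus argument you give finishes. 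Your iterative reduction is essentially a dynamical version of these two static minimizations; the paper's phrasing is cleaner and sidesteps the need to track exceptional diagrams through the process.
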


\begin{proof}
Since $H[R]$ has incompressible boundary, there exists a complete set of meridian disks $\{D_A,D_B\}$ for $H$ such that the Heegaard diagram $\mathcal{D}$ of $R$ with respect to $\{D_A,D_B\}$ is connected and has no cut-vertex. And, since $H[R]$ is the exterior of a knot in $S^3$, $S^1 \times S^2$, or $(S^1 \times S^2)\; \#\; L(p,q)$, there are simple closed curves in $\partial H$, disjoint from $R$, which represent the meridian of $H[R]$. Among all such representatives of the meridian of $H[R]$, suppose $m$ is chosen so that the complexity of the Heegaard diagram $\mathcal{D}'$ of $\{m, R\}$ with respect to $\{D_A,D_B\}$ is minimal. Then, since $\mathcal{D}$ is connected and has no cut-vertex, $\mathcal{D}'$ is also connected, has nlabelo cut-vertex, and certainly has complexity greater than two.

It now follows from the main results of \cite{HOT80} and \cite{NO85} that $\mathcal{D}'$ must have a wave $\omega$ based at one of $\partial D_A$, $\partial D_B$, $m$, or $R$. However, since $\mathcal{D}'$ is connected, and has no cut-vertex, $\omega$ can not be based at $\partial D_A$ or $\partial D_B$. Furthermore, the minimality condition used to choose $m$ implies $\omega$ is not based at $m$. So $\omega$ must be based at $R$. Then, since the boundary components of a regular neighborhood $N$ of $R \cup \omega$ in $\partial H$ form a pant decomposition of $\partial H$, $m$ must be isotopic to one of the two boundary components of $N$ not isotopic to $R$ in $\partial H$. In other words, $m$ can be obtained by surgery on $R$ along a wave $\omega$ based at $R$.
\end{proof}

\begin{thm}\textbf{\emph{Only distinguished-wave-determined slopes are meridians of tunnel-number-one knots in $\boldsymbol{S^3}$, $\boldsymbol{S^1 \times S^2}$, or $\boldsymbol{(S^1 \times S^2)\; \#\; L(p,q)}$.}}\hfill
\label{Only distinguished waves provide meridians}
\smallskip

Suppose $R$ is a nonseparating simple closed curve on the boundary of a genus two handlebody $H$ such that $R$ has a connected Heegaard diagram $\mathcal{D}$ without cut-vertices with respect to a complete set of meridian disks $\{D_A,D_B\}$ of $H$. Then only Dehn filling of $H[R]$ along the distinguished-wave-determined slope can yield $S^3$, $S^1 \times S^2$, or $(S^1 \times S^2)\; \#\; L(p,q)$.
\end{thm}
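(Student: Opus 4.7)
The plan is to combine Lemma~\ref{waves provide meridians} with the well-definedness of the distinguished wave (Theorem~\ref{Thm Distinguished waves are well-defined}), reducing everything to a wave-classification dichotomy on $\mathcal{D}$. Suppose that Dehn filling $H[R]$ along a slope $s$ yields one of $S^3$, $S^1\times S^2$, or $(S^1\times S^2)\#L(p,q)$; then $H[R]$ is the exterior of a tunnel-number-one knot in such a manifold and $s$ is its meridian, so in particular $H[R]$ has incompressible boundary. Apply Lemma~\ref{waves provide meridians} to produce a simple closed curve $m\subset\partial H$ representing $s$ that is obtained by surgery on $R$ along a wave $\omega'$ based at $R$ in a Heegaard diagram $\mathcal{D}'$ of $\{m,R\}$ with respect to some complete set $\{D_A,D_B\}$ of meridian disks for which the $R$-subdiagram $\mathcal{D}$ is connected and has no cut-vertex, and such that $|\mathcal{D}'|$ is minimal over all meridian representatives of $s$ disjoint from $R$.

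The crucial observation is that $\omega'$, whose interior is disjoint from every curve of $\mathcal{D}'$ and therefore from both $m$ and $\partial D_A\cup\partial D_B$, is also a wave based at $R$ in the $R$-subdiagram $\mathcal{D}$. Since $\mathcal{D}$ is connected and has no cut-vertex, it supports a distinguished wave $\omega$, and Theorem~\ref{Thm Distinguished waves are well-defined} guarantees that $\omega$ is an isotopy invariant of the pair $(H,R)$, independent of the choice of $\{D_A,D_B\}$. If $\mathcal{D}$ is nonpositive, Lemma~\ref{nonpositive diagrams have unique waves} supplies a unique wave at $R$, forcing $\omega'=\omega$ up to proper isotopy in $\partial H$ rel $R$; in particular the wave-determined slopes coincide, so $s$ is the distinguished-wave-determined slope and the proof is done in this case.

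If $\mathcal{D}$ is positive, Definition~\ref{horizontal and vertical waves in positive diagrams def} leaves exactly two candidates for $\omega'$: the distinguished horizontal wave $\omega_h=\omega$ or the vertical wave $\omega_v$, and the main obstacle is to rule out $\omega'=\omega_v$. I expect to do this via the minimality of $|\mathcal{D}'|$: if $\omega_v$ were disjoint from $m$ and surgery along $\omega_v$ produced (a curve isotopic to) $m$, then inspection of Figure~\ref{DPCFig8bs}b in the positive setting shows that $\omega_h$ also persists as a wave in $\mathcal{D}'$, and band-summing $m$ across the pair-of-pants neighborhood determined by $\omega_h$ yields another meridian representative of $s$ with strictly fewer intersections with $\partial D_A\cup\partial D_B$, contradicting the minimality of $m$. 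Once $\omega'=\omega_h=\omega$ is established in both cases, the slope $s$ represented by $m$ is by definition the distinguished-wave-determined slope of $R$, completing the proof. The subtle point, and the main thing I would need to verify carefully, is precisely this minimality-versus-vertical-wave incompatibility in the positive case — everything else is formal.
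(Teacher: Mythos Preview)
Your setup and the nonpositive case match the paper's proof exactly. The gap is in the positive case, and it is exactly the point you flagged as ``subtle'': the claim that $\omega_h$ persists as a wave in the diagram $\mathcal{D}'$ of $\{m,R\}$ is false. The waves $\omega_h$ and $\omega_v$ meet transversely in a single point (Proposition~\ref{Slopes determined by vertical waves and distance one}), and each of the curves $m_1,m_2$ produced by $\omega_v$--surgery on $R$ carries a parallel copy of $\omega_v$; hence $\omega_h$ has essential intersections with both $m_1$ and $m_2$. So $\omega_h$ is \emph{not} a wave in the $\{m,R\}$--diagram, and your minimality argument never gets started. (Note also that even if $\omega_h$ were disjoint from $m$, the conclusion would be that $m$ already represents the distinguished slope, not a complexity contradiction --- $\omega_h$ is based at $R$, not at $m$.)

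The paper handles the positive case by a completely different mechanism (Lemma~\ref{horizontal wave lemma}). Rather than arguing with minimality in the fixed cutting system $\{D_A,D_B\}$, it \emph{changes} the cutting system: replace $D_B$ by a disk $D_C$ whose boundary is the bandsum of $\partial D_A$ and $\partial D_B$ along an edge of $\mathcal{D}$. With respect to $\{D_A,D_C\}$ the $R$--subdiagram is still connected without cut-vertices, but a direct inspection in the twice-punctured torus $F=\partial H\setminus R$ (Figures~\ref{DPCFig8dd1} and~\ref{PPFig5k1}) shows the $\{m,R\}$--diagram has \emph{no} wave based at either $R$ or $m$. This contradicts the wave theorem of \cite{HOT80} and \cite{NO85}, which guarantees such a wave whenever the filled manifold is $S^3$, $S^1\times S^2$, or $(S^1\times S^2)\#L(p,q)$. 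The key idea you are missing is this change of cutting system to kill all waves; the minimality of $m$ alone does not suffice.
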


\begin{proof}
Lemma \ref{waves provide meridians} shows that, if $H[R]$ embeds in $S^3$, $S^1 \times S^2$, or $(S^1 \times S^2)\; \#\; L(p,q)$, a representative of the meridian of $H[R]$ can be obtained by surgery on $R$ along a wave $\omega$ based at $R$ in $\mathcal{D}$. It remains to show that in this case $\omega$ is actually a distinguished wave based at $R$.

There are two cases to consider, depending upon whether the Heegaard diagram $\mathcal{D}$ of $R$ with respect to $\{D_A,D_B\}$ is positive or nonpositive. If $\mathcal{D}$ is nonpositive, Lemma \ref{nonpositive diagrams have unique waves} shows $\omega$ is the unique wave based at $R$ in $\mathcal{D}$, and so, in this case, $\omega$ is the distinguished wave based at $R$ by default.

The case in which $\mathcal{D}$ is positive requires some work, since in this case there are always two distinct waves based at $R$ in $\mathcal{D}$, only one of which is a distinguished wave. In particular, when $\mathcal{D}$ is connected, positive and has no cut-vertex, the graph of $\mathcal{D}$ has the form of Figure \ref{DPCFig8bs}a in \ref{horizontal and vertical waves in positive diagrams def}, and $\omega$ is either the wave $\omega_h$ or the wave $\omega_v$ in Figure \ref{DPCFig8bs}b in \ref{horizontal and vertical waves in positive diagrams def}.

Lemma \ref{horizontal wave lemma} below rules out the possibility that $\omega$ is a vertical wave $\omega_v$, as in Figure \ref{DPCFig8bs}b in \ref{horizontal and vertical waves in positive diagrams def}.
\end{proof}

\begin{lem}\textbf{\emph{Only horizontal waves yield meridians of positive diagrams of $\boldsymbol{S^3}$, $\boldsymbol{S^1 \times S^2}$, or $\boldsymbol{(S^1 \times S^2)\; \#\; L(p,q)}$.} }\hfill
\label{horizontal wave lemma}

Suppose $R$ is a nonseparating simple closed curve on the boundary of a genus two handlebody $H$, $R$ has a connected, positive Heegaard diagram without cut-vertices with respect to a complete set of meridian disks $\{D_A,D_B\}$ of $H$, and $H[R]$ embeds in $S^3$, $S^1 \times S^2$, or $(S^1 \times S^2)\; \#\; L(p,q)$. Then only surgery on $R$ along a horizontal wave $\omega_h$ based at $R$ will yield a representative of the meridian of $H[R]$.
\end{lem}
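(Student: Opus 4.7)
The approach is proof by contradiction. I assume that $\omega_v$ is a vertical wave based at $R$ in the positive diagram $\mathcal{D}$ and that surgery on $R$ along $\omega_v$ produces a curve $m$ representing the meridian of $H[R]$, where $H[R]$ embeds in $M\in\{S^3,\,S^1\times S^2,\,(S^1\times S^2)\#L(p,q)\}$.

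First, I would record the combinatorial structure of $m$. Each component $m_i$ of $m$ is obtained by joining a subarc of $R$ (one of the two arcs into which $R$ is cut by the endpoints of $\omega_v$) with a pushoff of $\omega_v$. Since $\omega_v$ is disjoint from $\partial D_A\cup\partial D_B$ and has endpoints on two $c$-type edges of opposite ``orientation'' (one running from $A^+$ to $B^-$, the other from $A^-$ to $B^+$), each $m_i$ meets $\partial D_A$ and $\partial D_B$ only positively and inherits the intersection pattern of the corresponding subarc of $R$. Consequently, the Heegaard diagram of $\{R,m\}$ with respect to $\{D_A,D_B\}$ is again a positive diagram.

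Next, I would exploit the persistence of the horizontal wave. In the twice-punctured torus picture of $\partial H$ cut along $R$, the arcs $\omega_h$ and $\omega_v$ can be chosen disjoint (see Figure~\ref{DPCFig8bs}b), and $\omega_h$ lies outside the pair-of-pants neighborhood $N(R\cup\omega_v)$ whose non-$R$ boundary components make up $m$. Therefore $\omega_h$ continues to be a wave based at $R$ in the enlarged Heegaard diagram of $\{R,m\}$ with respect to $\{D_A,D_B\}$. Surgery on $R$ along $\omega_h$ then yields a second curve $m'$ disjoint from both $R$ and $m$, providing, via Definition~\ref{wave-determined slope representatives}, a competing candidate slope representative on $\partial H[R]$.

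The final step is to derive the contradiction: since the meridian slope on $\partial H[R]$ is unique, the wave-determined slopes produced by $\omega_v$ and by $\omega_h$ would have to coincide. I would show this is impossible by tracking the curves $m$ and $m'$ as positive cyclic words in $\pi_1(H)=\langle A,B\rangle$ (using the positive structure imposed by $a,c,d>0$) and verifying that their classes in $\pi_1(H[R])=\langle A,B\mid R\rangle$ cannot agree even modulo powers of $[R]$ (which is what a Dehn twist along $R$ amounts to on $\partial H$). The positive structure is essential here because it forces both candidate meridians to be expressible as positive words, restricting the possible coincidences. The main obstacle will be precisely this final bookkeeping --- ruling out that $m$ and $m'$ differ merely by Dehn twists along $R$ --- and it is where the hypothesis that $\mathcal{D}$ is positive (rather than merely nonpositive) plays the decisive role, since the positivity of all three weights $a$, $c$, $d$ ensures that the two wave surgeries act on structurally different portions of $R$.
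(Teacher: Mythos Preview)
Your argument breaks down at the second step, where you assert that $\omega_h$ ``lies outside the pair-of-pants neighborhood $N(R\cup\omega_v)$'' and therefore remains a wave in the enlarged diagram of $\{R,m\}$. This is false. Trace the sides of $R$ carefully in the two non-rectangular faces: the horizontal wave and the vertical wave that sit in the \emph{same} face are based at \emph{opposite} boundary components $R^{+}$ and $R^{-}$ of the cut surface $F$, and they meet in a single point (this is exactly the intersection recorded in Proposition~\ref{Slopes determined by vertical waves and distance one}). If instead you pick $\omega_h$ and $\omega_v$ from \emph{different} faces so that they are disjoint, they are then based at the \emph{same} component, say $R^{+}$. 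In that case the curves $m_1,m_2$ produced by $\omega_v$-surgery lie on the $R^{+}$ side, so the endpoints of $\omega_h$ (also on $R^{+}$) sit inside $N(R\cup\omega_v)$ and $\omega_h$ must exit through $m_1\cup m_2$. Either way, $\omega_h$ has essential intersection with $m$ and is not a wave in the $\{R,m\}$ diagram.

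There is also a structural problem with the final step. Nothing you have written forces the $\omega_h$-determined slope to be a meridian, so ``uniqueness of the meridian'' gives no leverage. Worse, if $\omega_h$ really were disjoint from $m$, then $m$ would sit in the pair-of-pants complement of $N(R\cup\omega_h)$ and would be isotopic to one of $R,\,m_1',\,m_2'$, forcing the $\omega_v$- and $\omega_h$-slopes to \emph{coincide} --- the opposite of what you intend to prove, and without ever invoking the hypothesis that $H[R]$ embeds in $S^3$, $S^1\times S^2$, or $(S^1\times S^2)\#L(p,q)$. That the embedding hypothesis never enters is a sign that the scaffold cannot be right.

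The paper's proof is quite different. It replaces $D_B$ by a bandsum disk $D_C$ (along an edge of $\mathcal{D}$) and passes to the diagram $\mathcal{D}'$ of $\{R,m\}$ with respect to $\{D_A,D_C\}$. A direct inspection of the twice-punctured torus then shows that $\mathcal{D}'$ has no wave based at $R$ and no wave based at $m$. This is where the embedding hypothesis is finally used: by the results of \cite{HOT80} and \cite{NO85}, any connected genus-two Heegaard diagram of $S^3$, $S^1\times S^2$, or $(S^1\times S^2)\#L(p,q)$ without cut-vertices must contain a wave, so the wavelessness of $\mathcal{D}'$ gives the contradiction.
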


\begin{proof}
The hypotheses imply that $R$ has a genus two Heegaard diagram of the form shown in Figure~\ref{DPCFig8bs}a with parameters $a, c, d > 0$. By Lemma~\ref{waves provide meridians}, if $H[R]$ embeds in $S^3$, $S^1 \times S^2$, or $(S^1 \times S^2)\; \#\; L(p,q)$, a representative $m$ of the meridian of $H[R]$ can be obtained by surgery on $R$ along a wave $\omega$ in the Heegaard diagram of $R$ with respect to the set of meridian disks $\{D_A,D_B\}$ of $H$. As Figure~\ref{DPCFig8bs}b shows, there are two such waves, $\omega_v$ and $\omega_h$ along which $R$ can be surgered to obtain a candidate for the meridian of $H[R]$. We want to rule out the possibility that a representative $m$ of the meridian of $H[R]$ can be obtained by surgery on $R$ along the vertical wave $\omega_v$.

To do this, start with the enhanced version of Figure~\ref{DPCFig8bs}b displayed in Figure~\ref{DPCFig8dd1}. Figure~\ref{DPCFig8dd1} is a positive, connected Heegaard diagram $\mathcal{D}$, without cut-vertices, of a nonseparating simple closed curve $R$ with respect to a complete set of meridian disks $\{D_A,D_B\}$ of a genus two handlebody $H$. 
In addition, Figure \ref{DPCFig8dd1} displays a vertical wave $\omega_v$ based at $R$, along with the arcs in which the curves $m_1$ and $m_2$, obtained by surgery on $R$ along $\omega_v$, intersect the two major non-rectangular faces, say $F^+$ and $F^-$, of $\mathcal{D}$. Also, a curve $C$, which is a bandsum of $\partial D_A$ and $\partial D_B$ along an edge of $\mathcal{D}$, has been added to Figure~\ref{DPCFig8dd1}. 

Next, supposing $m$ has been chosen to be one of $m_1$, $m_2$ in Figure~\ref{DPCFig8dd1}, let $D_C$ be the meridian disk of $H$, such that $\partial D_C$ = $C$ in Figure \ref{DPCFig8dd1}, let $\{D_A,D_C\}$ be the complete set of meridian disks of $H$, obtained by replacing the meridian disk $D_B$ of $H$ with the meridian disk $D_C$ of $H$, and let $\mathcal{D}'$ be the Heegaard diagram of $m$ and $R$ with respect to $\{D_A,D_C\}$.

Observe that, since the curve $C$ in Figure~\ref{DPCFig8dd1} is a bandsum of $\partial D_A$ with $\partial D_B$ along an edge of $\mathcal{D}$, the subdiagram of $\mathcal{D}'$ consisting of the Heegaard diagram of $R$ with respect to $\{D_A,D_C\}$ is connected, and has no cut-vertex. It follows $\mathcal{D}'$ is also connected, and has no cut-vertex.

Next, recall that the main results of \cite{HOT80} and \cite{NO85} show that if $m$ represents the meridian of $H[R]$, then any connected, genus two Heegaard diagram, without cut-vertices, of $\{R, m\}$ must have a wave based at $R$ or $m$. We show this is not the case by observing that the Heegaard diagram $\mathcal{D}'$ of $m$ and $R$ with respect to $\{D_A,D_C\}$ does not have a wave based at $R$ or $m$.

To see that the Heegaard diagram $\mathcal{D}'$ of $m$ and $R$ with respect to $\{D_A,D_C\}$ does not have a wave based at $R$ or $m$, return to Figure~\ref{DPCFig8dd1}, and observe that $C \cap F^+$ is a single arc $e^+$, and $C \cap F^-$ is a single arc $e^-$, while each of $m_1 \cap e^+$, $m_1 \cap e^-$, $m_2 \cap e^+$, and $m_2 \cap e^-$ is a single point. 

Then consider Figure~\ref{PPFig5k1} which displays a twice-punctured torus $F$ obtained by cutting the Heegaard surface $\partial H$ of Figure \ref{DPCFig8dd1} open along the curve $R$. The two arcs $e^+$ and $e^-$ in which the curve $C$ of Figure \ref{DPCFig8dd1} intersects the two major non-rectangular faces of the Heegaard diagram $\mathcal{D}$ are also shown in this figure, along with the two curves $m_1$ and $m_2$ obtained by surgery on $R$ along the vertical wave $\omega_v$ in Figure~\ref{DPCFig8dd1}.

Next, observe that any properly embedded essential arc, say $w$, in $F$, which is a candidate for a wave based at $R$ in $\mathcal{D}'$, must be properly isotopic in $F$ to one of $e^+$, $e^-$. However, then $w$ has essential intersections with both $m_1$ and $m_2$ in $F$, so $w$ has essential intersections with $m$ in $\mathcal{D}'$, and $w$ is not a wave in $\mathcal{D}'$. Conclude there is no wave
based at $R$ in $\mathcal{D}'$.

Finally, observe that cutting $F$ open along $e^+$ and $e^-$ yields two annuli, say $\mathcal{A}_1$ and $\mathcal{A}_2$, and that $m_1$ and $m_2$ each intersect each of these annuli in a single essential arc. Any wave based at $m$ in $\mathcal{D}'$ must be based at one of $m_1$, $m_2$ and lie in one of $\mathcal{A}_1$, $\mathcal{A}_2$. Clearly no such wave exists. Conclude there is no wave based at $m$ in $\mathcal{D}'$. This finishes the proof of Theorem \ref{horizontal wave lemma}.
\end{proof}

Below are two immediate corollaries. Among other things, the first corollary implies the tunnel-number-one knot case of Gabai's result \cite{G87} that surgery on a knot in $S^3$ never yields $S^1 \times S^2$, while the second corollary implies the tunnel-number-one knot case of Gordon and Luecke's result \cite{GL89} that knots in $S^3$ are determined by their complements.

\begin{cor}
\label{At most one embedding}
$H[R]$ embeds in at most one of $S^3$, $S^1 \times S^2$, or $(S^1 \times S^2)\; \#\; L(p,q)$.
\end{cor}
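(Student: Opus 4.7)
The plan is to derive the corollary as an immediate consequence of Theorem~\ref{Only distinguished waves provide meridians}. The key observation I would use is that any embedding of $H[R]$ into a closed $3$-manifold $M$ with solid torus complement realizes $M$ as a Dehn filling $H[R](\mu)$ of $H[R]$ along the meridian slope $\mu$ of that solid torus. Since each of the three listed ambient manifolds arises in this way when $H[R]$ sits inside it as a tunnel-number-one knot exterior, if $H[R]$ embeds in such an $M$, then $M = H[R](\mu)$ for some slope $\mu$ on $\partial H[R]$.

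Concretely, I would suppose $H[R]$ embeds in both $M_1$ and $M_2$, each belonging to $\{S^3,\,S^1\times S^2,\,(S^1\times S^2)\,\#\,L(p,q)\}$, and let $\mu_1,\mu_2$ denote the corresponding meridian slopes on $\partial H[R]$. Theorem~\ref{Only distinguished waves provide meridians} forces each $\mu_i$ to coincide with the distinguished-wave-determined slope $m$ on $\partial H[R]$, which depends only on the pair $(H,R)$ and not on any embedding. Hence $\mu_1=\mu_2=m$, and therefore $M_1 = H[R](m) = M_2$.

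There is essentially no substantive obstacle: once Theorem~\ref{Only distinguished waves provide meridians} is available, uniqueness of the ambient manifold follows from the fact that Dehn filling a fixed $3$-manifold along a fixed slope yields a unique $3$-manifold up to homeomorphism. The only mild care needed is to verify that an embedding of $H[R]$ into such an $M$ really does present $H[R]$ as a knot exterior, so that the complement is a solid torus; this is consistent with the standing framework in which $H[R]$ has incompressible boundary and $R$ admits a connected Heegaard diagram without cut-vertices, which is the same framework used in Lemma~\ref{waves provide meridians}.
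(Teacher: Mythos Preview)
Your argument is correct and matches the paper's approach: the paper states Corollary~\ref{At most one embedding} as an immediate consequence of Theorem~\ref{Only distinguished waves provide meridians} without further proof, and your reasoning---that any such embedding realizes the ambient manifold as the Dehn filling of $H[R]$ along the unique distinguished-wave-determined slope---is exactly the intended one-line justification.
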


\begin{cor} 
\label{Tunnel-number-one knots are determined by their complements}
Tunnel-number-one knots in $S^3$, $S^1 \times S^2$, or $(S^1 \times S^2)\; \#\; L(p,q)$ are determined by their complements.
\end{cor}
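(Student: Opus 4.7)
The plan is to deduce this corollary directly from Theorem \ref{Only distinguished waves provide meridians} and Corollary \ref{At most one embedding}. The guiding observation is that, given a tunnel-number-one knot $K$ in one of the three listed manifolds, the meridian slope on $\partial E(K)$ is forced to coincide with the distinguished-wave-determined slope of any genus two handlebody-plus-2-handle description of $E(K)$, so the meridian slope is intrinsic to the complement.

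I would begin by setting up notation. Let $K_1$ and $K_2$ be tunnel-number-one knots in (possibly different) manifolds drawn from $S^3$, $S^1 \times S^2$, and $(S^1 \times S^2)\,\#\,L(p,q)$, and let $\phi \colon E(K_1) \to E(K_2)$ be a homeomorphism of their exteriors. Choosing an unknotting tunnel for each $K_i$ presents the exterior as $E(K_i) = H_i[R_i]$ for a genus two handlebody $H_i$ and a nonseparating simple closed curve $R_i \subset \partial H_i$. The cases in which some $H_i[R_i]$ fails to have incompressible boundary correspond to trivial knots and are handled by inspection, so I may assume both exteriors have incompressible boundary; then Lemma \ref{waves provide meridians} furnishes a complete set of meridian disks with respect to which each $R_i$ has a connected Heegaard diagram without cut-vertices, and the distinguished-wave-determined slope $s_i \subset \partial H_i[R_i]$ is defined.

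The central step is to identify $\phi(\mu_1)$ with $\mu_2$, where $\mu_i$ denotes the meridian slope of $K_i$ on $\partial E(K_i)$. Theorem \ref{Only distinguished waves provide meridians} forces $\mu_i = s_i$, since the meridian is characterized among all slopes on $\partial E(K_i)$ by producing one of the three listed manifolds under Dehn filling. Corollary \ref{At most one embedding} then ensures that $K_1$ and $K_2$ lie in the same ambient manifold $M$. Now consider the slope $\phi(\mu_1)\subset\partial E(K_2)$: Dehn filling $E(K_2)$ along it reproduces $M$, so Theorem \ref{Only distinguished waves provide meridians} applied once more yields $\phi(\mu_1) = s_2 = \mu_2$. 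Extending $\phi$ across the meridional fillings then produces a homeomorphism $\Phi \colon M \to M$ carrying $K_1$ to $K_2$, which is what is required.

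The only delicate point I anticipate is justifying that the distinguished-wave-determined slope may be treated as an invariant of the exterior in this setting, since a single $E(K)$ may a priori admit several genus two presentations whose distinguished-wave-determined slopes could differ. This potential ambiguity is precisely what Theorem \ref{Only distinguished waves provide meridians} rules out whenever $H_i[R_i]$ embeds in one of the three listed manifolds: every such slope coincides with $\mu_i$, which depends only on $E(K_i)$. With that uniqueness in hand the remainder of the argument is a formal deduction, and no further technical machinery is needed.
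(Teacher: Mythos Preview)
Your proposal is correct and follows essentially the same route the paper intends. The paper offers no separate proof, treating both Corollary~\ref{At most one embedding} and Corollary~\ref{Tunnel-number-one knots are determined by their complements} as immediate consequences of Theorem~\ref{Only distinguished waves provide meridians}; your argument simply makes explicit the deduction that, since only the distinguished-wave-determined slope can return one of the three ambient manifolds, the image $\phi(\mu_1)$ of the meridian under any homeomorphism of exteriors must coincide with $\mu_2$.
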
 

\begin{figure}[ht]
\includegraphics[width = 0.40\textwidth] {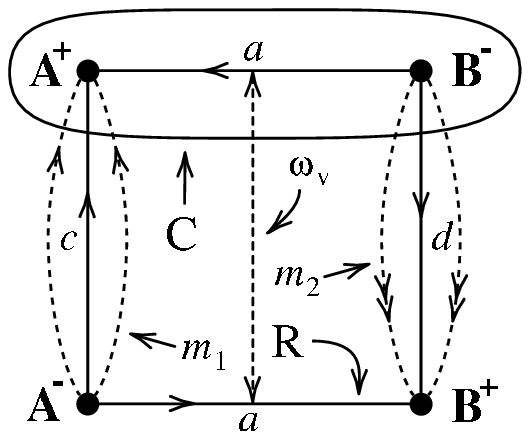}
\caption {}
\label{DPCFig8dd1}
\end{figure}

\begin{figure}[ht]
\includegraphics[width = .55\textwidth]{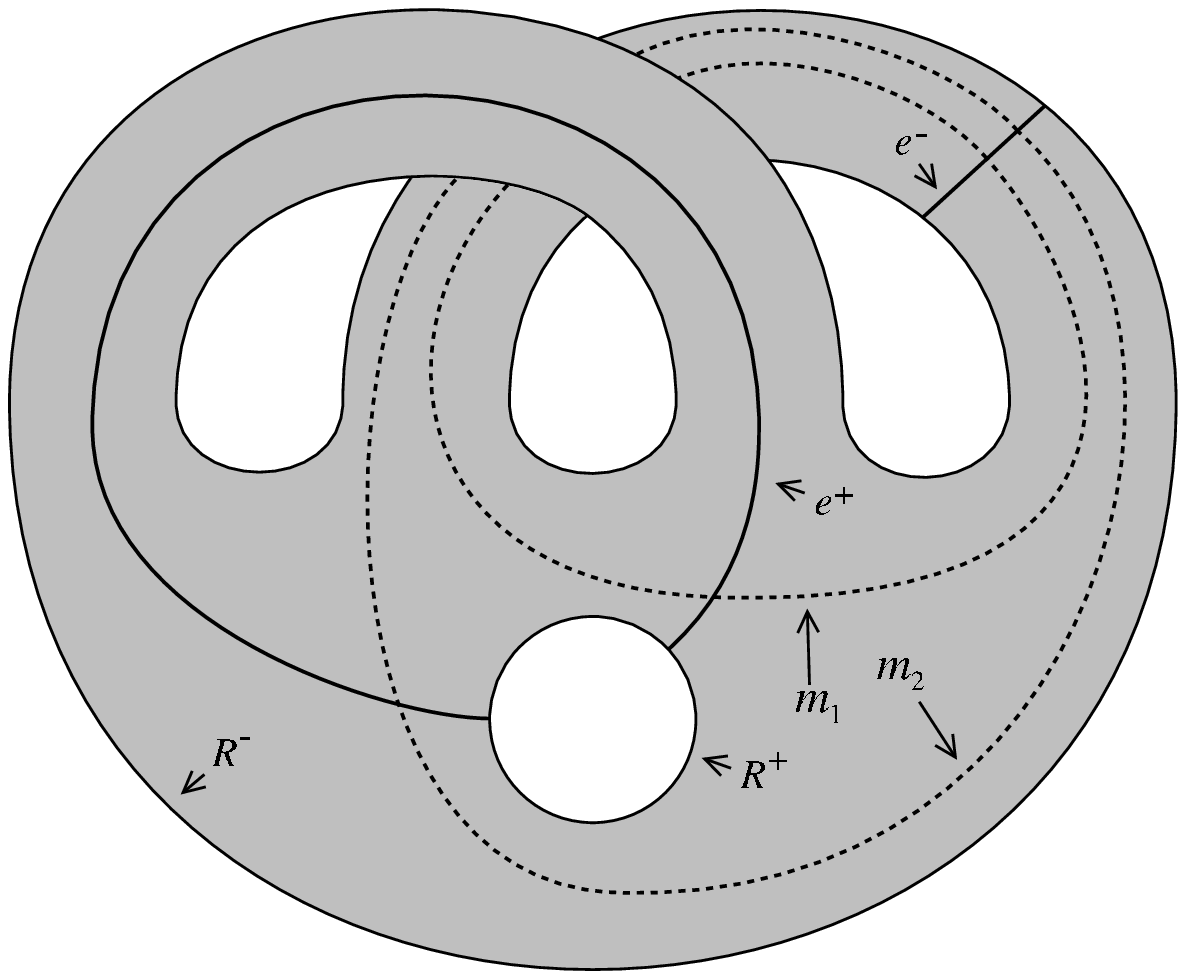}
\caption{}
\label{PPFig5k1}
\end{figure}
 

\section{Distinguished waves in genus two Heegaard diagrams of closed manifolds.}
\label{Distinguished waves in genus two Heegaard diagrams of closed manifolds}

\begin{thm}\textbf{\emph{Distinguished waves in genus two Heegaard diagrams of closed manifolds are well-defined.}}\hfill
\label{Distinguished waves in diagrams of closed manifolds are  well-defined}

Suppose $R_1$ and $R_2$ are disjoint, nonparallel, nonseparating simple closed curves in the boundary of a genus two handlebody $H$, $\Delta$ and $\Delta'$ are two complete sets of meridian disks for $H$, and both the Heegaard diagram $\mathcal{D}$ of $R_1$ and $R_2$ with respect to $\Delta$, and the Heegaard diagram $\mathcal{D}'$ of $R_1$ and $R_2$ with respect to $\Delta'$ are connected and have no cut-vertices. In addition, 
suppose the subdiagram $\mathcal{D}^*$ of $\mathcal{D}$, consisting of the edges of $R_1$ in $\mathcal{D}$, is connected, has no cut-vertex, and $\omega$ is a distinguished wave based at $R_1$ in $\mathcal{D}$. Then the subdiagram $\mathcal{D}'^*$ of $\mathcal{D}'$, consisting of the edges of $R_1$ in $\mathcal{D}'$, is also connected, has no cut-vertex, and $\omega$ is a distinguished wave based at $R_1$ in $\mathcal{D}'$.
\end{thm}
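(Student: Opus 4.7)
The plan is to generalize Theorem~\ref{Thm Distinguished waves are well-defined} by connecting $\Delta$ to $\Delta'$ through a chain of bandsums of the full diagram of $\{R_1, R_2\}$ while controlling the $R_1$-subdiagram at each step. First I would apply Lemma~\ref{Sequences of connected Heegaard diagrams without cut-vertices} to $\mathcal{C} = \{R_1, R_2\}$ to obtain a finite sequence $\{\Delta_i\}_{0 \le i \le n}$ with $\Delta_0 = \Delta$, $\Delta_n = \Delta'$, each full diagram $\mathcal{D}_i$ of $\{R_1, R_2\}$ with respect to $\Delta_i$ connected and cut-vertex-free, and each $\Delta_{i+1}$ obtained from $\Delta_i$ by a single bandsum of meridian boundaries. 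The inductive claim on $i$ is that the $R_1$-subdiagram $\mathcal{D}_i^*$ is connected, has no cut-vertex, and carries $\omega$ as its distinguished wave. The base case $i = 0$ is the hypothesis; the case $i = n$ is the desired conclusion.

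For the inductive step, Lemmas~\ref{Only bandsums along Heegaard diagram edges do not necessarily increase complexity} and~\ref{Only bandsums along diagram edges do not introduce cut-vertices}, together with the fact that $\mathcal{D}_{i+1}$ is connected and cut-vertex-free, force the bandsum $\Delta_i \to \Delta_{i+1}$ to lie along an edge of $\mathcal{D}_i$. That edge is either an edge of $R_1$ or an edge of $R_2$. When it is an edge of $R_1$, it is also an edge of $\mathcal{D}_i^*$, so Lemma~\ref{Bandsums along edges of nonpositive diagrams yield connected diagrams without cut-vertices} or Lemma~\ref{Bandsums along edges of positive diagrams yield connected diagrams without cut-vertices} applies directly to $\mathcal{D}_i^*$ (according to whether it is nonpositive or positive) and yields that $\mathcal{D}_{i+1}^*$ is connected and has no cut-vertex; Lemma~\ref{Distinguished Wave persistence under a bandsum} then carries $\omega$ over as the distinguished wave of $\mathcal{D}_{i+1}^*$.

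When the bandsum is along an edge of $R_2$, the band $\beta$ is isotopic to an arc of $R_2$ and hence may be pushed off $R_1$ and off $\omega$, using $R_1 \cap R_2 = \emptyset$ together with the fact that $\omega$ is disjoint from $R_2$. The new meridian curve $C = \partial D_A \#_\beta \partial D_B$ then agrees with $\partial D_B$ on a neighborhood of each of its intersections with $R_1$, so $R_1 \cap C = R_1 \cap \partial D_B$ as point sets, and the two-sided local labels at these points match those along $\partial D_B$ up to a single global orientation choice for $C$. Hence $\mathcal{D}_{i+1}^*$ is combinatorially isomorphic to $\mathcal{D}_i^*$ via the relabeling $B^{\pm} \mapsto C^{\pm}$, and connectedness, absence of cut-vertex, positivity status, and the identification of the distinguished wave all transport along this isomorphism; in particular $\omega$ again persists as the distinguished wave in $\mathcal{D}_{i+1}^*$.

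The delicate step I expect to wrestle with is this second case, where one must confirm that the apparent change in meridian structure on the global surface $\partial H$ really has no effect on the combinatorics of $\mathcal{D}_i^*$ beyond a relabeling of vertices. The argument rests on the locality of $\beta$ near its $R_2$-edge together with the disjointness $R_1 \cap R_2 = \emptyset$; once this case is settled, induction to $i = n$ delivers both that $\mathcal{D}'^*$ is connected and cut-vertex-free and that $\omega$ is its distinguished wave, which is exactly the conclusion of the theorem.
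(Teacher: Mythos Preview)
Your overall architecture matches the paper's: apply Lemma~\ref{Sequences of connected Heegaard diagrams without cut-vertices} to $\{R_1,R_2\}$ to get the chain of bandsums, then induct, maintaining that the $R_1$-subdiagram stays connected, cut-vertex-free, and carries $\omega$. The error is in your second inductive case. The claim ``$R_1 \cap C = R_1 \cap \partial D_B$ as point sets'' is false: since the band $\beta$ is disjoint from $R_1$, the bandsum curve $C = \partial D_A \#_\beta \partial D_B$ meets $R_1$ at (isotopes of) \emph{all} the points of $R_1 \cap \partial D_A$ together with all the points of $R_1 \cap \partial D_B$, so $|R_1 \cap C| = |R_1 \cap \partial D_A| + |R_1 \cap \partial D_B|$. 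The new $R_1$-subdiagram with respect to $\{D_A,D_C\}$ therefore has strictly more edges than $\mathcal{D}_i^*$ and is certainly not a relabeling of it. Worse, if $\beta$ were genuinely not parallel to any edge of $\mathcal{D}_i^*$, the bandsum would fall under case~(2) of the analysis in Subsection~\ref{Three types of bandsums in genus two} applied to $\mathcal{D}_i^*$, and that case always produces a cut-vertex in the resulting diagram---so the inductive hypothesis would fail outright.

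The paper closes this gap by showing your second case never actually occurs. In Lemma~\ref{Distinguished wave persistence under bandsums preserving connectivity and no cut-vertices when two relators are present} it argues that every band of parallel edges of $\mathcal{D}_i$ already contains an $R_1$-edge, so a bandsum along an edge of $\mathcal{D}_i$ is automatically a bandsum along an edge of $\mathcal{D}_i^*$. When $\mathcal{D}_i^*$ is nonpositive this comes straight from the shape of the nonpositive graph; when $\mathcal{D}_i^*$ is positive, $\omega$ is horizontal, and the fact that $R_2$ is disjoint from $\omega$ forces the edges of $R_2$ to sit in the same bands as those of $R_1$ (equivalently, forces the full diagram $\mathcal{D}_i$ to be positive). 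Once this is established, only your first case remains and Lemma~\ref{Distinguished Wave persistence under a bandsum} finishes the induction exactly as you wrote.
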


\begin{proof}
Since $\mathcal{D}$ and $\mathcal{D}'$ are both connected Heegaard diagrams without cut-vertices, Lemma \ref{Sequences of connected Heegaard diagrams without cut-vertices} applies, and it shows there exists a finite sequence $\{\Delta_i\}$, $0 \leq i \leq n$, of complete sets of meridian disks for $H$, and an associated sequence of Heegaard diagrams $\{\mathcal{D}_i\}$, where $\mathcal{D}_i$ is the Heegaard digram of $R_1$ and $R_2$ with respect to $\Delta_i$, such that: 
\begin{enumerate}
\item $\Delta_0$ = $\Delta$, $\mathcal{D}_0$ = $\mathcal{D}$, $\Delta_n$ = $\Delta'$, $\mathcal{D}_n$ = $\mathcal{D}'$;
\item For each $i$, $0 \leq i < n$, $\Delta_{i+1}$ is obtained from $\Delta_i$ by replacing a disk of $\Delta_i$ with a disk whose boundary in $\partial H$ is a bandsum of the boundaries of the two disks of $\Delta_i$ in $\partial H$;
\item For each $i$, $0 \leq i \leq n$, $\mathcal{D}_i$ is connected and has no cut-vertex.
\end{enumerate}

Now Lemma \ref{Distinguished wave persistence under bandsums preserving connectivity and no cut-vertices when two relators are present} below shows that more is true. Namely, the subdiagram of each $\mathcal{D}_i$, $0 \leq i \leq n$, consisting of the edges of $\mathcal{D}_i$ belonging to $R_1$, is connected and has no cut-vertex. This implies $\omega$ appears as a distinguished wave based at $R_1$ in each $\mathcal{D}_i$ for $0 \leq i \leq n$. In particular, $\omega$ appears as a distinguished wave based at $R_1$ in $\mathcal{D}$.
\end{proof}

\begin{lem}\textbf{\emph{Distinguished waves persist under bandsums that preserve connectivity and no cut-vertices when two relators are present.}}\hfill
\label{Distinguished wave persistence under bandsums preserving connectivity and no cut-vertices when two relators are present}

Suppose $R_1$ and $R_2$ are disjoint nonseparating simple closed curves on the boundary of a genus two handlebody $H$, $\Delta$ is a complete set of meridian disks for $H$, the Heegaard diagram $\mathcal{D}$ of $R_1 \cup R_2$ with respect to $\Delta$ is connected and has no cut-vertex, and the subdiagram $\mathcal{D}^*$ of $\mathcal{D}$ consisting of the edges of $R_1$ in $\mathcal{D}$ is also connected and has no cut-vertex. 

In addition, suppose $\omega$ is a distinguished wave based at $R_1$ in $\mathcal{D}$,
and $\Delta'$ is a complete set of meridian disks for $H$, obtained by replacing one member of $\Delta$ with a disk whose boundary is a bandsum $\beta$ in $\partial H$ of the boundaries of the disks of $\Delta$, such that the Heegaard diagram $\mathcal{D}'$ of $R_1 \cup R_2$ with respect to $\Delta'$ is connected and has no cut-vertex.

Then the subdiagram $\mathcal{D}''$ of $\mathcal{D}'$ consisting of the edges of $R_1$ in $\mathcal{D}'$ is connected, has no cut-vertex, and $\omega$ is a distinguished wave based at $R_1$ in $\mathcal{D}'$.
\end{lem}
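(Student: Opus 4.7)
The plan is to reduce this multi-relator statement to the single-relator case handled by Lemma \ref{Distinguished Wave persistence under a bandsum} applied to the subdiagram $\mathcal{D}^*$. Since the distinguished wave based at $R_1$ is determined purely by the combinatorics of $\mathcal{D}^*$ (and, analogously, of $\mathcal{D}''$), once I establish that $\mathcal{D}''$ is connected and has no cut-vertex, the persistence of $\omega$ as the distinguished wave of $R_1$ in $\mathcal{D}''$ will follow at once from Lemma \ref{Distinguished Wave persistence under a bandsum} applied to $\mathcal{D}^* \to \mathcal{D}''$. The real work is therefore to establish the no-cut-vertex property for $\mathcal{D}''$.

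First I would apply Lemmas \ref{Only bandsums along Heegaard diagram edges do not necessarily increase complexity} and \ref{Only bandsums along diagram edges do not introduce cut-vertices} to the transition $\mathcal{D} \to \mathcal{D}'$: since $\mathcal{D}'$ is connected and has no cut-vertex, the band $\beta$ must lie in a face of $\mathcal{D}$ and be properly isotopic there to some edge $e$ of $\mathcal{D}$. In particular, $\beta$ is disjoint from $R_1$, so $\beta$ lies in some face of $\mathcal{D}^*$ with no essential intersections with the edges of $\mathcal{D}^*$; hence only cases~(2) and~(3) of Figure~\ref{DPCFig8fff} can arise for the passage $\mathcal{D}^* \to \mathcal{D}''$.

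I would then split into two cases according to whether $e$ belongs to $R_1$ or to $R_2$. If $e \subset R_1$, then $e$ is also an edge of $\mathcal{D}^*$, and the bigon cobounded in $\mathcal{D}$ by $\beta$ and $e$ persists inside the (possibly enlarged) face of $\mathcal{D}^*$ containing $\beta$; so $\beta$ is along an edge of $\mathcal{D}^*$, and Lemmas \ref{Bandsums along edges of nonpositive diagrams yield connected diagrams without cut-vertices} and \ref{Bandsums along edges of positive diagrams yield connected diagrams without cut-vertices} applied to $\mathcal{D}^*$ immediately give that $\mathcal{D}''$ is connected with no cut-vertex, after which Lemma \ref{Distinguished Wave persistence under a bandsum} finishes the persistence of $\omega$.

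The hard part will be the remaining case $e \subset R_2$, where the bigon $\beta \cup e$ contains no arc of $R_1$, so $\beta$ need not be isotopic to any edge of $\mathcal{D}^*$ at all. To handle it I would analyze the face $F^*$ of $\mathcal{D}^*$ containing $\beta$, obtained by merging the $\mathcal{D}$-face containing $\beta$ with all faces of $\mathcal{D}$ reachable from it across $R_2$-edges, and argue that the joint hypotheses, that $\mathcal{D}$, $\mathcal{D}^*$, and $\mathcal{D}'$ are each connected with no cut-vertex, force $\beta$ to be properly isotopic in $F^*$ to some $R_1$-edge on $\partial F^*$ joining $\partial D_A$ to $\partial D_B$, thereby reducing this case to the first one. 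The alternative, that $\beta$ sits in $F^*$ in the manner of case~(2) of Figure~\ref{DPCFig8fff}, would by Lemma \ref{Only bandsums along diagram edges do not introduce cut-vertices} produce a cut-vertex $v$ in $\mathcal{D}''$; any $R_2$-edges of $\mathcal{D}'$ that bridge $v$ in order to keep $\mathcal{D}'$ free of a cut-vertex would then correspond, through the bandsum, to $R_2$-edges in $\mathcal{D}$ whose combinatorics, together with the $R_1$-structure of $\mathcal{D}^*$, must contradict either the no-cut-vertex property of $\mathcal{D}$ or that of $\mathcal{D}^*$. Once this reduction is completed, the first case applies and Lemma \ref{Distinguished Wave persistence under a bandsum} supplies the persistence of $\omega$.
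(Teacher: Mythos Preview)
Your high-level plan---reduce to the one-relator Lemma~\ref{Distinguished Wave persistence under a bandsum} by first showing that $\beta$ is along an edge of $\mathcal{D}^*$---matches the paper's. But your case split (by whether the $\mathcal{D}$-edge $e$ lies on $R_1$ or on $R_2$) leads you into a ``hard case'' that you only sketch, and the sketch is not convincing: the claim that $R_2$-edges bridging the putative cut-vertex of $\mathcal{D}''$ must, after undoing the bandsum, contradict the no-cut-vertex hypotheses on $\mathcal{D}$ or $\mathcal{D}^*$ is asserted but not argued, and it is not clear how to make it precise without doing the structural analysis the paper does anyway.

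The paper avoids this entirely by splitting on whether $\mathcal{D}^*$ is nonpositive or positive, and by exploiting a fact you never use: since $\omega$ is a distinguished wave in $\mathcal{D}$ (not merely in $\mathcal{D}^*$), the curve $R_2$ is disjoint from $\omega$. If $\mathcal{D}^*$ is nonpositive, the shape of its non-rectangular faces (cf.\ Figure~\ref{DPCFig8g1}) forces any properly embedded arc from an $A$-vertex to a $B$-vertex in a face of $\mathcal{D}^*$ to be parallel to an $R_1$-edge, so $\beta$ is along an edge of $\mathcal{D}^*$ regardless of whether $e$ came from $R_1$ or $R_2$. If $\mathcal{D}^*$ is positive, then $\omega=\omega_h$; since $R_2$ misses $\omega_h$, the $R_2$-arcs in the non-rectangular faces of $\mathcal{D}^*$ are confined to the same parallelism classes as the $R_1$-edges, so $\mathcal{D}$ is itself positive with the same band structure as $\mathcal{D}^*$, and again $\beta$ is along an edge of $\mathcal{D}^*$. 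In both cases Lemmas~\ref{Bandsums along edges of nonpositive diagrams yield connected diagrams without cut-vertices}, \ref{Bandsums along edges of positive diagrams yield connected diagrams without cut-vertices}, and then~\ref{Distinguished Wave persistence under a bandsum} finish the job. Your $e\subset R_2$ case simply does not arise as a separate difficulty once you organize the argument this way.
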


\begin{proof}
Lemma \ref{Only bandsums along diagram edges do not introduce cut-vertices} implies that, because $\mathcal{D}'$ is connected and has no cut-vertex, the bandsum $\beta$ must be along a band of parallel edges of $\mathcal{D}$. Now there are two possibilities depending on whether $\mathcal{D}^*$ is nonpositive or positive.

If $\mathcal{D}^*$ is nonpositive, $\beta$ is also a bandsum along an edge of $\mathcal{D}^*$, and it follows from Lemma \ref{Bandsums along edges of nonpositive diagrams yield connected diagrams without cut-vertices} that $\mathcal{D}''$ is connected and has no cut-vertex. But then $\omega$ is a distinguished wave based at $R_1$ in $\mathcal{D}'$, per Lemma \ref{Distinguished Wave persistence under a bandsum}.

So, suppose $\mathcal{D}^*$ is positive. In this case $\omega$ must be horizontal.
Then, because $R_2$ is disjoint from $\omega$, $\mathcal{D}$ must also be positive. This implies $\beta$ is again a bandsum along an edge of $\mathcal{D}^*$. Then Lemma \ref{Bandsums along edges of positive diagrams yield connected diagrams without cut-vertices} shows $\mathcal{D}''$ is connected and has no cut-vertex. So again $\omega$ is a distinguished wave based at $R_1$ in $\mathcal{D}'$, per Lemma \ref{Distinguished Wave persistence under a bandsum}.
\end{proof}

\begin{lem}\textbf{\emph{At most one distinguished-wave in a two-relator diagram.}}\hfill

Suppose $R_1$ and $R_2$ are disjoint, nonparallel, nonseparating simple closed curves on the boundary of a genus two handlebody $H$, $\Delta$ is a complete set of meridian disks for $H$, $\mathcal{D}$ is the Heegaard diagram of $R_1 \cup R_2$ on $\partial H$ with respect to $\Delta$, and the subdiagrams of $\mathcal{D}$, consisting of the edges of $R_1$ in $\mathcal{D}$, respectively the edges of $R_2$ in $\mathcal{D}$, are each connected and have no cut-vertex. Then either there is no distinguished wave based at $R_1$ in $\mathcal{D}$, or there is no distinguished wave based at $R_2$ in $\mathcal{D}$.
\end{lem}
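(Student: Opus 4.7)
The plan is to argue by contradiction: assume both $\omega_1$ and $\omega_2$ are distinguished waves based at $R_1, R_2$ in $\mathcal{D}$. The strategy is to use the existence of $\omega_2$, through the parallelism of $R_2$ with an $\omega_1$-surgery curve, to force the other distinguished slope representative coming from surgery on $R_1$ along $\omega_1$ to bound a disk in $H$ — contradicting the preceding lemma that \emph{distinguished-wave-determined slope representatives never bound disks}.

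First I would extract a pants decomposition from $\omega_1$. Let $N_1 \subset \partial H$ be a regular neighborhood of $R_1 \cup \omega_1$; it is a pair of pants whose three boundary components are parallel to $R_1$ and to the two distinguished slope representatives $m_1, m_2$ obtained by surgery on $R_1$ along $\omega_1$, and $\partial H \setminus N_1$ is also a pair of pants with boundary circles parallel to $R_1, m_1, m_2$. Because $\omega_1$ is a wave in $\mathcal{D}$, $R_2$ is disjoint from $R_1 \cup \omega_1$ and, after a small isotopy, lies in $\partial H \setminus N_1$. Since $R_2$ is nonseparating, hence essential on $\partial H$, it is parallel to one of $R_1, m_1, m_2$; the nonparallel hypothesis rules out $R_1$, so (relabeling $m_1, m_2$ if necessary) $R_2$ is parallel to $m_1$.

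Next I would chase intersection counts with $\partial\Delta$. Write $p, q$ for the endpoints of $\omega_1$ on $R_1$ and label the two arcs of $R_1 \setminus \{p,q\}$ as $R_1^a, R_1^b$ so that $m_1$ is isotopic to a push-off of $R_1^a \cup \omega_1$. Since $\omega_1$ is disjoint from $\partial\Delta$, this push-off witnesses $|m_1 \cap \partial\Delta| \leq |R_1^a \cap \partial\Delta|$, whence $|R_2| = |m_1 \cap \partial\Delta| \leq |R_1^a \cap \partial\Delta| \leq |R_1|$. The analogous argument with $\omega_2$ gives $|R_1| \leq |R_2|$, so both inequalities are equalities and $|R_1^b \cap \partial\Delta| = 0$; that is, $R_1^b$ is disjoint from $\partial\Delta$.

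Finally I would locate the bounding disk. The remaining distinguished slope representative $m_2$ is isotopic to a push-off of $R_1^b \cup \omega_1$, and since both $R_1^b$ and $\omega_1$ miss $\partial\Delta$, so does this push-off; hence $m_2$ can be isotoped into the $4$-holed sphere $S = \partial H \setminus \partial\Delta$. But $S$ sits in the $2$-sphere boundary of the $3$-ball $\mathcal{B}$ obtained by cutting $H$ open along $\Delta$, and any simple closed curve on $\partial\mathcal{B} \cong S^2$ bounds a disk in $\mathcal{B}$; after regluing the cutting disks, this disk becomes a properly embedded disk in $H$ with boundary $m_2$. This contradicts the lemma cited in the first paragraph, whose hypotheses hold because the $R_1$-subdiagram is connected and has no cut-vertex by assumption. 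The most delicate point is the bookkeeping in the intersection-count step: one has to correctly identify the arc $R_1^a$ as the one corresponding to the $R_2$-parallel boundary component $m_1$ of $N_1$, so that it is the complementary arc $R_1^b$ whose intersection-free nature then pushes $m_2$ into $S$.
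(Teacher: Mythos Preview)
Your argument is correct and uses the same ingredients as the paper's proof: the pants decomposition coming from $R_1\cup\omega_1$ forces $R_2$ to be isotopic to one of the surgery curves $m_1,m_2$, and the lemma that distinguished slope representatives never bound disks in $H$ is what ultimately closes the argument. The only difference is organizational. The paper observes directly that $|m_1|+|m_2|=|R_1|$ together with $|m_1|,|m_2|>0$ (which is exactly the ``never bounds a disk'' lemma, since $|m_i|=0$ would place $m_i$ in the four-holed sphere $\partial H\setminus\partial\Delta$) gives the \emph{strict} inequalities $|m_1|,|m_2|<|R_1|$, hence $|R_2|<|R_1|$; by symmetry this already rules out $\omega_2$. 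You instead carry the weak inequality $|R_2|\le|R_1|$, combine it with the symmetric $|R_1|\le|R_2|$, and then unwind the equality case to reach the same ``$m_2$ bounds a disk'' contradiction. So your proof is a valid contrapositive repackaging of the paper's, just a little longer; invoking the disk lemma at the outset to get strictness would let you skip the equality-case analysis entirely.
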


\begin{proof}
Suppose $\omega_1$ is a distinguished wave based at $R_1$ in $\mathcal{D}$, and let $m_1$ and $m_2$ be the two distinguished curves obtained by surgery on $R_1$ along $\omega_1$. Then the complexities $|m_1|$, $|m_2|$, and $|R_1|$ of $m_1$, $m_2$ and $R_1$ in $\mathcal{D}$ satisfy $|m_1| < |R_1|$ and $|m_2| < |R_1|$. 

Since $R_2$ and $\omega_1$ are disjoint, $R_2$ is isotopic in $\partial H$ to one of $m_1$, $m_2$. So $|R_2| < |R_1|$ in $\mathcal{D}$. This implies there is no distinguished wave $\omega_2$ based at $R_2$ in $\mathcal{D}$.
\end{proof}


\clearpage

\section{Distinguished waves in genus two Heegaard diagrams of $S^3$, $S^1 \times S^2$, or $(S^1 \times S^2)\; \#\; L(p,q)$.}
\label{Distinguished waves in genus two Heegaard diagrams of S^3, S1 X S2 etc}

\begin{thm}\textbf{\emph{A distinguished wave appears in each connected genus two Heegaard diagram, without cut-vertices, of $\boldsymbol{(S^1 \times S^2)\; \#\; L(p,q)}$, $\boldsymbol{S^3}$, or $\boldsymbol{S^1 \times S^2}$.}}
\label{A distinguished wave appears in each connected H-diagram of S^3 without cut-vertices}

Suppose $R_1$ and $R_2$ are disjoint, nonseparating simple closed curves on the boundary of a genus two handlebody $H$, $\{D_A,D_B\}$ is a complete set of meridian disks for $H$, and the Heegaard diagram $\mathcal{D}$ of $\{R_1, R_2\}$ with respect to $\{D_A,D_B\}$ is a connected Heegaard diagram, without cut-vertices, of $S^3$, $S^1 \times S^2$, or $(S^1 \times S^2)\; \#\; L(p,q)$. Then there is a distinguished wave $\omega$ in $\mathcal{D}$ based at one of $R_1, R_2$, and if $\omega$ is based at $R_1$, respectively $R_2$, the subdiagram of $\mathcal{D}$ consisting of the edges of $\mathcal{D}$ belonging to $R_1$, respectively $R_2$, is connected and has no cut-vertex.
\end{thm}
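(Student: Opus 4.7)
The plan is to invoke the Homma--Ochiai--Takahashi / Negami--Okita theorem (as used in Lemma~\ref{waves provide meridians}) to produce a wave $\omega$ in $\mathcal{D}$, and argue that since $\mathcal{D}$ is connected and cut-vertex-free, $\omega$ cannot be based at $\partial D_A$ or $\partial D_B$ (a wave at a meridian curve would force a cut-vertex, as in the proof of Lemma~\ref{waves provide meridians}); so $\omega$ is based at one of $R_1, R_2$, and without loss of generality I take $\omega$ based at $R_1$. Surgery on $R_1$ along $\omega$ produces a pair of pants $P = N(R_1 \cup \omega)$ with boundary $R_1 \cup m_1 \cup m_2$, whose complement in $\partial H$ is a second pair of pants with the same boundary. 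Since $R_2$ is disjoint from the theta-graph $R_1 \cup \omega$ and essential in $\partial H$, $R_2$ is parallel on $\partial H$ to one of $R_1, m_1, m_2$; nonparallelism of $R_1, R_2$ then gives $R_2 \sim m_i$ for some $i$, whence $|R_2| = |m_i| < |R_1|$ in $\mathcal{D}$.

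Next I would show that the subdiagram $\mathcal{D}^*$ of $\mathcal{D}$ consisting of $R_1$'s edges is both connected and cut-vertex-free. For connectedness: if $R_1 \cap \partial D_A = \emptyset$, then $m_i \subset P \subset \partial H \setminus \partial D_A$ is disjoint from $\partial D_A$, so the parallelism $R_2 \sim m_i$ forces $R_2 \cap \partial D_A = \emptyset$ in $\mathcal{D}$ (intersections in $\mathcal{D}$ being essential), making $A^\pm$ isolated in $\mathcal{D}$ and contradicting its connectedness. For cut-vertex-freeness: if $\mathcal{D}^*$ had a cut-vertex, Lemma~\ref{wave reduction} would give a wave $\omega_A$ at $\partial D_A^\pm$ in $\mathcal{D}^*$; were $\omega_A$ disjoint from $R_2$, it would be a wave at the meridian $\partial D_A$ in $\mathcal{D}$ and force a cut-vertex of $\mathcal{D}$, contradicting hypothesis. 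So $\omega_A$ must meet $R_2$ essentially; choosing $\omega_A$ to minimize $|\omega_A \cap R_2|$ in its wave-isotopy class and using side-alternation of $R_2$-crossings along $\omega_A$, the sub-arcs of $\omega_A$ between consecutive $R_2$-intersections should furnish essential waves at $R_2$ in $\mathcal{D}$ (in the single-intersection degenerate case, one combines $\omega_A^+$ with its hyperelliptic companion $\omega_A^-$ to boost the intersection count). A wave at $R_2$ in $\mathcal{D}$ then gives $|R_1| < |R_2|$ by the symmetric version of the argument from paragraph one, contradicting $|R_2| < |R_1|$.

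With $\mathcal{D}^*$ connected and cut-vertex-free, Theorem~\ref{Only distinguished waves provide meridians} applies: only Dehn filling of $H[R_1]$ at the distinguished-wave-determined slope yields one of $S^3$, $S^1 \times S^2$, or $(S^1 \times S^2)\,\#\,L(p,q)$. Since the manifold represented by $\mathcal{D}$ is precisely $H[R_1]$ Dehn-filled along $R_2$'s slope, $R_2$'s slope coincides with this distinguished slope, and hence with the wave-determined slope of $\omega$. If $\mathcal{D}^*$ is nonpositive, $\omega$ is the unique wave at $R_1$ by Lemma~\ref{nonpositive diagrams have unique waves} and is therefore the distinguished wave; if $\mathcal{D}^*$ is positive, Lemma~\ref{horizontal wave lemma} rules out $\omega$ being the vertical wave, leaving $\omega$ as the horizontal (distinguished) wave. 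The main obstacle I anticipate is the cut-vertex step above---specifically, producing the wave at $R_2$ in $\mathcal{D}$ from the essential intersections $\omega_A \cap R_2$ via the sub-arc and hyperelliptic-companion argument; the remaining steps then follow mechanically from Theorem~\ref{Only distinguished waves provide meridians} and Lemmas~\ref{nonpositive diagrams have unique waves} and \ref{horizontal wave lemma}.
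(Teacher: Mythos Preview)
Your approach differs from the paper's in a genuine way. You work directly in the given diagram $\mathcal{D}$: invoke the Homma--Ochiai--Takahashi/Negami--Okita theorem to produce a wave $\omega$ at (say) $R_1$, and then attempt to prove \emph{in place} that the $R_1$-subdiagram $\mathcal{D}^*$ of $\mathcal{D}$ is connected and cut-vertex-free, so that $\omega$ can be identified as the distinguished wave via Theorem~\ref{Only distinguished waves provide meridians} and Lemma~\ref{horizontal wave lemma}. The paper proceeds indirectly: it first uses Lemma~\ref{Connected, no cut-vertex ==> Not both primitives} to select an $R_1$ that is neither primitive nor a proper-power, passes (via Lemma~\ref{wave reduction}) to a \emph{different} complete set of meridian disks $\{D_A',D_B'\}$ for which the $R_1$-subdiagram is already connected and cut-vertex-free, locates the distinguished wave there, and then transports the entire conclusion back to $\mathcal{D}$ using the persistence machinery of Lemmas~\ref{Sequences of connected Heegaard diagrams without cut-vertices} and~\ref{Distinguished wave persistence under bandsums preserving connectivity and no cut-vertices when two relators are present}. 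That persistence step is what guarantees the $R_1$-subdiagram of the \emph{original} $\mathcal{D}$ is connected and cut-vertex-free; the paper never argues this directly. (When $R_2$ meets the distinguished wave in $\mathcal{D}'$, the paper invokes Lemma~\ref{m_1 and m_2 are the only possible primitive or proper-power meridian reps} to switch the roles of $R_1$ and $R_2$.) Your paragraphs~1 and~3 are sound and parallel what the paper does inside $\mathcal{D}'$.

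The gap is the one you flag yourself: the cut-vertex-freeness of $\mathcal{D}^*$. Your extraction of a wave at $R_2$ from sub-arcs of $\omega_A$ is not justified. A sub-arc of $\omega_A$ between consecutive $R_2$-crossings does have both endpoints on the same side of $R_2$, but to be a \emph{nontrivial} wave at $R_2$ it must be essential in the twice-punctured torus $\partial H \setminus R_2$---not boundary-parallel there. Minimizing $|\omega_A \cap R_2|$ only eliminates bigons \emph{inside the face of $\mathcal{D}^*$} containing $\omega_A$; it does not rule out the sub-arc being parallel to an arc of $R_2$ across a region of $\partial H$ that meets $R_1$ or $\partial D_A \cup \partial D_B$. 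The single-intersection case is worse: the two halves of $\omega_A$ each have one endpoint on $R_2$ and one on $\partial D_A$, and ``combining with the hyperelliptic companion $\omega_A^-$'' does not visibly produce an embedded arc with both endpoints on $R_2$ that is simultaneously disjoint from $R_1$, $\partial D_A$, and $\partial D_B$. The paper avoids this entire difficulty by never needing to argue about $\mathcal{D}^*$ directly; the cut-vertex-freeness drops out of Lemma~\ref{Distinguished wave persistence under bandsums preserving connectivity and no cut-vertices when two relators are present} applied inductively along the bandsum path from $\mathcal{D}'$ back to $\mathcal{D}$.
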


\begin{proof}
Lemma \ref{Connected, no cut-vertex ==> Not both primitives} shows that one of $R_1$, $R_2$, say $R_1$, is not primitive or a proper-power in $H$. Then it follows from Lemma \ref{wave reduction} that there exists a complete set of meridian disks $\{D_A',D_B'\}$ of $H$ and a Heegaard diagram $\mathcal{D}'$ of $\{R_1, R_2\}$ with respect to $\{D_A',D_B'\}$ such that the subdiagram of $R_1$ with respect to $\{D_A',D_B'\}$ is connected and has no cut-vertex. Next, ignoring $R_2$ for the moment, Theorem \ref{Only distinguished waves provide meridians} implies there exists a distinguished wave $\omega'$ based at $R_1$ in $\mathcal{D}'$. 

However, $R_2$ is present in $\mathcal{D}'$, and there are two possibilities: either $R_2$ is disjoint from $\omega'$, or $R_2$ and $\omega'$ have essential intersections. 

Suppose first that $R_2$ is disjoint from $\omega'$. Then, because the subdiagram of $R_1$ with respect to $\{D_A',D_B'\}$ is connected and has no cut-vertex, $\mathcal{D}'$ is also connected and has no cut-vertex. Then Lemma \ref{Sequences of connected Heegaard diagrams without cut-vertices} applies and it shows there exists a finite sequence $\{\Delta_i\}$, $0 \leq i \leq n$, of complete sets of meridian disks for $H$, and an associated sequence of Heegaard diagrams $\{\mathcal{D}_i\}$, where $\mathcal{D}_i$ is the Heegaard digram of $R_1$ and $R_2$ with respect to $\Delta_i$, such that: 
\begin{enumerate}
\item $\Delta_0$ = $\{D_A',D_B'\}$, $\mathcal{D}_0$ = $\mathcal{D}'$, $\Delta_n$ = $\{D_A,D_B\}$, $\mathcal{D}_n$ = $\mathcal{D}$;
\item For each $i$, $0 \leq i < n$, $\Delta_{i+1}$ is obtained from $\Delta_i$ by replacing a disk of $\Delta_i$ with a disk whose boundary in $\partial H$ is a bandsum of the boundaries of the two disks of $\Delta_i$ in $\partial H$;
\item For each $i$, $0 \leq i \leq n$, $\mathcal{D}_i$ is connected and has no cut-vertex.
\end{enumerate}

Now Lemma \ref{Distinguished wave persistence under bandsums preserving connectivity and no cut-vertices when two relators are present} shows that more is true. Namely, the subdiagram of each $\mathcal{D}_i$, $0 \leq i \leq n$, consisting of the edges of $\mathcal{D}_i$ belonging to $R_1$, is connected and has no cut-vertex. This implies $\omega'$ appears as a distinguished wave based at $R_1$ in each $\mathcal{D}_i$ for $0 \leq i \leq n$. In particular, $\omega'$ appears as a distinguished wave based at $R_1$ in $\mathcal{D}$. So, in this case, $\omega'$ is the desired distinguished wave $\omega$ in $\mathcal{D}$.

Next, suppose $R_2$ has essential intersections with $\omega'$ in $\mathcal{D}'$. Then Lemma~\ref{m_1 and m_2 are the only possible primitive or proper-power meridian reps} applies and shows that the subdiagram of $\mathcal{D}'$ consisting of the edges of $R_2$ in $\mathcal{D}'$ is connected and has no cut-vertex. So there exists a subarc $\omega''$ of $\omega'$, such that $\omega''$ is a wave based at $R_2$ in $\mathcal{D}'$, and $R_1$, disjoint from $\omega''$ represents the meridian of $H[R_2]$. It follows from Theorem \ref{Only distinguished waves provide meridians}, that $\omega''$ is a distinguished wave based at $R_2$ in $\mathcal{D}'$.

Finally, since $R_1$ and $\omega''$ are disjoint in $\mathcal{D}'$, the argument used above, when $\omega'$ is disjoint from $R_2$, works with $R_1$ and $\omega'$ swapped with $R_2$ and $\omega''$, to show that $\omega''$, based at $R_2$, is the desired distinguished wave $\omega$ in $\mathcal{D}$.
\end{proof}

\begin{lem}\textbf{\emph{Connected genus two Heegaard diagrams of $\boldsymbol{S^3}$, $\boldsymbol{S^1 \times S^2}$ or $\boldsymbol{(S^1 \times S^2)\; \#\; L(p,q)}$, without cut-vertices, have a relator which is not primitive or a proper-power.}} \hfill
\label{Connected, no cut-vertex ==> Not both primitives}

Suppose $R_1$ and $R_2$ are disjoint nonseparating simple closed curves on the boundary of a genus two handlebody $H$, $\{D_A,D_B\}$ is a complete set of meridian disks for $H$, the Heegaard diagram $\mathcal{D}$ of $\{R_1, R_2\}$ with respect to $\{D_A,D_B\}$ is connected, has no cut-vertex, and is a Heegaard diagram of $S^3$, $S^1 \times S^2$, or $(S^1 \times S^2)\; \#\; L(p,q)$. Then at least one of $R_1,R_2$ is not primitive  or a proper-power in $H$.
\end{lem}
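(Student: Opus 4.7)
The plan is a proof by contradiction. Suppose both $R_1$ and $R_2$ are primitive or proper powers in $H$; I will derive a violation of either the connectedness or the no-cut-vertex hypothesis on $\mathcal{D}$ by invoking the HOT80--NO85 wave theorem in a carefully chosen simplification of $\mathcal{D}$ and then analyzing the resulting wave.

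First I reduce $\mathcal{D}$ to a minimum-complexity diagram $\mathcal{D}^*$ (with some meridian system $\{D_A^*,D_B^*\}$) that is still connected and has no cut-vertex. The existence of such a reduction is Lemma~\ref{Lemma 1 of Finding minimal complexity genus two Heegaard diagrams}; Lemma~\ref{Only bandsums along Heegaard diagram edges do not necessarily increase complexity} guarantees that each complexity-reducing step is a bandsum along an edge of the current diagram, and Lemmas~\ref{Bandsums along edges of nonpositive diagrams yield connected diagrams without cut-vertices} and \ref{Bandsums along edges of positive diagrams yield connected diagrams without cut-vertices} then show these edge-bandsums preserve both connectedness and the no-cut-vertex property. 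By the main theorems of \cite{HOT80} and \cite{NO85}, $\mathcal{D}^*$ has a wave $\omega$; a wave at $\partial D_A^*$ or $\partial D_B^*$ would permit a further complexity-reducing edge-bandsum, contradicting minimality, so $\omega$ is based at $R_1$ or $R_2$, say $R_1$.

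Second, I analyze the wave surgery. Surgering $R_1$ along $\omega$ yields a pair of pants $N(R_1\cup\omega)\subset\partial H$ whose three boundary components are a copy of $R_1$ and the two wave-determined slope representatives $m_1,m_2$; since $R_2$ is disjoint from $R_1\cup\omega$ it lies in the complementary pair of pants in $\partial H$, and since $R_1,R_2$ are disjoint nonparallel relators, $R_2$ is isotopic in $\partial H$ to $m_1$ or $m_2$. Thus $R_2$ represents the wave-determined slope on $\partial H[R_1]$, and $M$ is the Dehn filling of $H[R_1]$ along this slope. Moreover, the pair-of-pants relation gives $[m_1][m_2]=[R_1]$ in $\pi_1(H)$.

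Third, I carry out the case analysis on $R_1$. If $R_1=c^n$ is a proper power ($n\ge 2$), then $H[R_1]$ is reducible — a connect sum of some $L(n,q')$ with a solid torus — so Dehn filling gives $M=L(n,q')\#L(p,q'')$. Given $M\in\{S^3, S^1\times S^2, (S^1\times S^2)\#L(p,q)\}$, prime decomposition forces $M=(S^1\times S^2)\#L(n,q')$ with the filling producing the $S^1\times S^2$ summand; equivalently $R_2$ is nullhomotopic in $H[R_1]$. Coupling this with $R_2$ primitive or power in $F_2$ and $[m_1][m_2]=c^n$ should let me identify a meridian disk of $H$ that is disjoint (up to isotopy) from both $R_1$ and $R_2$, after which an outermost-disk argument for that meridian against $\{D_A^*,D_B^*\}$ either forces one of $D_A^*,D_B^*$ to be disjoint from $R_1\cup R_2$ (contradicting connectedness of $\mathcal{D}^*$) or creates a wave at a meridian of $\mathcal{D}^*$ (contradicting minimality). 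If instead $R_1$ is primitive, then $H[R_1]$ is a solid torus and $M$ is a prime lens space, so $M\in\{S^3,S^1\times S^2\}$ has Heegaard genus one and $\mathcal{D}$ is a stabilized genus-two splitting; the analogous algebraic-then-outermost-disk argument handles this case.

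The main obstacle is making the outermost-disk step rigorous in each subcase: identifying the common meridian of $H$ disjoint (up to isotopy) from $R_1\cup R_2$ requires converting the algebraic conclusions from $[m_1][m_2]=[R_1]$ and $R_2\simeq m_i$ into a geometric statement about the pair of curves on $\partial H$. The subtlety is that a proper power $c^n$ ($n\ge 2$) realized as a simple closed curve on $\partial H$ cannot be isotoped disjoint from a single meridian in isolation, so the argument must leverage the joint structure of the pair $(R_1,R_2)$ — most likely by exploiting that $R_1$ and $R_2$ together bound a pair-of-pants-like region in $\partial H$ coming from the wave surgery, from which the desired common meridian is extracted.
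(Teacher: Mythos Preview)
Your proposal is explicitly incomplete --- you flag the ``main obstacle'' yourself: turning the relations $R_2\simeq m_i$, $[m_1][m_2]=[R_1]$, and the primitive/proper-power hypotheses into a meridian disk of $H$ disjoint from $R_1\cup R_2$. This is not a technical wrinkle but the entire content of the lemma, and the detour through the HOT80/NO85 wave theorem does not make it easier. The only information the wave gives you is that $R_2$ is one of the two wave-surgery curves of $R_1$; but any two disjoint nonparallel nonseparating curves on a genus-two surface already sit in a pants decomposition, so this is essentially free and does not locate a disjoint meridian. Your side claim that $H[R_1]$ is reducible when $R_1$ is a proper power, while true, is itself nontrivial --- it amounts to showing $R_1$ misses an essential disk in $H$, which is again exactly the kind of statement you are trying to prove.

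The paper's argument bypasses the wave theorem entirely and is much more direct. When both $R_i$ are primitive, primitivity of $R_1$ yields a meridian system $\{D_A',D_B'\}$ with $|R_1\cap\partial D_A'|=1$ and $R_1\cap\partial D_B'=\emptyset$; after minimizing $|R_2\cap\partial D_A'|$ over all such systems the diagram of $R_2$ has no loops, and then primitivity of $R_2$ forces all intersections of $R_2$ with $\partial D_B'$ to have a common sign, say $n$ of them. The hypothesis on $M$ then forces $n\le 1$, and either value of $n$ immediately exhibits an essential disk in $H$ (namely $D_B'$, or the boundary of a regular neighborhood of $R_2\cup\partial D_B'$) disjoint from $R_1\cup R_2$, contradicting the hypotheses on $\mathcal{D}$. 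When one or both $R_i$ is a proper power, the paper invokes \cite{B09}: either an essential separating disk in $H$ misses both curves, or $R_1,R_2$ cobound a nonseparating annulus in $H$, which boundary-compresses to a disjoint nonseparating disk. Your sketch would need a result of comparable strength to close its gap, at which point the wave-theorem reduction becomes superfluous.
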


\begin{proof}
Suppose first, to the contrary, that both $R_1$ and $R_2$ are primitive in $H$. Then, since $R_ 1$ is primitive in $H$, there exists a complete set of meridian disks $\{D_A',D_B'\}$ of $H$ with the property that $R_1$ intersects $\partial D_A'$ in a single point of transverse intersection, and $R_1$ is disjoint from $D_B'$. Furthermore, among all complete sets of meridian disks of $H$, with $R_1 \cap \, \partial D_A'$ a single point of transverse intersection, and $R_1 \cap \, \partial D_B'$ = $\emptyset$, suppose $R_2 \cap \partial D_A'$ is minimal. This implies that the Heegaard diagram of $R_2$ with respect to $\{\partial D_A',\partial D_B'\}$ has no loops. And then, because $R_2$ is primitive in $H$, all of the intersections of $R_2$ with $\partial D_B$ must have the same sign. 

So suppose $|R_2 \cap \partial D_B| = n \geq 0$. Then, since $\mathcal{D}$ is a Heegaard diagram of $S^3$, $S^1 \times S^2$, or $(S^1 \times S^2)\; \#\; L(p,q)$, it must be the case that $n \leq 1.$

However, if $n = 0$, $\partial D_B$ is disjoint from $R_1$ and $R_2$, which implies $\mathcal{D}$ is either not connected, or has a cut-vertex, contrary to hypothesis. While, if $n = 1$, the boundary $\Gamma$ of a regular neighborhood of $R_2 \cup \partial D_B$ in $\partial H$ is an essential separating simple closed curve in $\partial H$ bounding a disk in $H$ such that $\Gamma$ is disjoint from $R_1$ and $R_2$. This again implies $\mathcal{D}$ is either not connected, or has a cut-vertex, contrary to hypothesis.

The remaining possibilities to be ruled out are that one member of $\{R_1,R_2\}$ is primitive in $H$,  while the other member of $\{R_1,R_2\}$ is a proper-power in $H$, or that both members of $\{R_1,R_2\}$ are proper-powers in $H$. It is shown in \cite{B09} that in both of these cases, either there exists an essential separating simple closed curve $\Gamma$ in $\partial H$, disjoint from $R_1$ and $R_2$, such that $\Gamma$ bounds a disk in $H$, or $R_1$ and $R_2$ are the two boundary components of a nonseparating annulus $\mathcal{A}$ in $H$. 

In the latter case, boundary compression of $\mathcal{A}$ in $H$ yields a nonseparating disk in $H$, disjoint from $R_1$ and $R_2$. It then follows, as above, that each of these possibilities implies $\mathcal{D}$ is either not connected, or has a cut-vertex, contrary to hypothesis.
\end{proof}


\section{Some properties of meridian representatives of tunnel-number-one knots in $S^3$, $S^1 \times S^2$, or $(S^1 \times S^2)\; \#\; L(p,q)$.}
\label{Some properties of meridian representatives}

\begin{lem}\textbf{\emph{If $\boldsymbol{m}$ is a meridian representative whose subdiagram is not connected or has a cut-vertex in a connected Heegaard diagram, without cut-vertices, of $\boldsymbol{S^3}$, $\boldsymbol{S^1 \times S^2}$, or $\boldsymbol{(S^1 \times S^2)\; \#\; L(p,q)}$, then $\boldsymbol{m}$ is distinguished.}}\hfill
\label{m_1 and m_2 are the only possible primitive or proper-power meridian reps}

Suppose $m$ and $R$ are disjoint nonseparating simple closed curves in the boundary of a genus two handlebody $H$, $H[R]$ has incompressible boundary, $H[R]$ embeds in $S^3$, $S^1 \times S^2$, or $(S^1 \times S^2)\; \#\; L(p,q)$, and $m$ represents the meridian of $H[R]$. In addition, suppose $\{D_A,D_B\}$ is a complete set of meridian disks for $H$ such that the Heegaard diagram $\mathcal{D}$ of $R$ with respect to $\{D_A,D_B\}$ is connected and has no cut-vertex, $\omega$ is the distinguished wave based at $R$ in $\mathcal{D}$, $m_1$ and $m_2$ are the pair of distinguished meridian representatives of $H[R]$ obtained by surgery on $R$ along $\omega$, and the Heegaard diagram $\mathcal{D}'$ of $m$ with respect to $\{D_A,D_B\}$ is not connected, or has a cut-vertex. Then $m$ is isotopic in $\partial H$ to one of $m_1, m_2$.
\end{lem}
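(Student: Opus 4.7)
The plan is to pass to the combined Heegaard diagram $\mathcal{D}^+$ of $\{R,m\}$ with respect to $\{D_A,D_B\}$ and apply Theorem~\ref{A distinguished wave appears in each connected H-diagram of S^3 without cut-vertices}. First I would note that $H[R][m]$ is $S^3$, $S^1 \times S^2$, or $(S^1 \times S^2)\,\#\,L(p,q)$, since $m$ is a meridian of $H[R]$, so $\mathcal{D}^+$ is a Heegaard diagram of such a manifold. Because $\mathcal{D}^+$ has the same vertex set as $\mathcal{D}$ but strictly more edges, and $\mathcal{D}$ is connected and cut-vertex-free, so is $\mathcal{D}^+$. A brief homological check shows $m$ is nonseparating on $\partial H$: its image in $H_1(\partial H[R])$ is the nonzero meridian class, hence $[m]$ is not in the $[R]$-span of $H_1(\partial H)$ and in particular $[m]\neq 0$.

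Next I would invoke Theorem~\ref{A distinguished wave appears in each connected H-diagram of S^3 without cut-vertices} to extract a distinguished wave $\omega^+$ in $\mathcal{D}^+$ based at one of $R$, $m$, with the corresponding subdiagram connected and without cut-vertex. Since by hypothesis the subdiagram $\mathcal{D}'$ of $m$ is either disconnected or has a cut-vertex, $\omega^+$ must be based at $R$. The subdiagram of $R$ in $\mathcal{D}^+$ is precisely $\mathcal{D}$, and $\omega^+$ is, by the construction of that theorem, the distinguished wave at $R$ in this subdiagram; so by Theorem~\ref{Thm Distinguished waves are well-defined}, $\omega^+$ is properly isotopic in $\partial H$, keeping endpoints on $R$, to $\omega$. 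Since $\omega^+$ is a wave in $\mathcal{D}^+$, its interior is disjoint from $m$, so after isotopy I may take $\omega$ itself to be disjoint from $m$.

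With this disjointness in hand, I would choose a regular neighborhood $N$ of $R \cup \omega$ in $\partial H$ small enough to be disjoint from $m$. Then $N$ is a pair of pants with boundary $R \cup m_1 \cup m_2$, and because $\partial H$ is closed of genus two, the complement $P = \partial H \setminus \mathrm{int}(N)$ is the second pants of a pants decomposition with the same three boundary curves (confirmed by an Euler characteristic count: $\chi(P) = -2 - (-1) = -1$ with three boundary components forces $P$ to be a pair of pants). The curve $m$ lies in $P$, and any essential simple closed curve in a pair of pants is parallel to one of its boundary components, so $m$ is isotopic in $\partial H$ to $R$, $m_1$, or $m_2$. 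The first case is excluded because $R$ is null-homotopic in $H[R]$ while $m$ represents the meridian; hence $m$ is isotopic in $\partial H$ to $m_1$ or $m_2$. The main obstacle I expect is the identification $\omega^+ = \omega$ as proper isotopy classes, since the distinguished wave is initially defined for a one-relator subdiagram while Theorem~\ref{A distinguished wave appears in each connected H-diagram of S^3 without cut-vertices} produces it for the two-relator diagram; once these are reconciled using the uniqueness results of Section~\ref{Distinguished waves are well-defined.}, the remainder of the argument is a short pants-geometry calculation.
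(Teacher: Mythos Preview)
Your argument is circular. You invoke Theorem~\ref{A distinguished wave appears in each connected H-diagram of S^3 without cut-vertices} to produce the distinguished wave $\omega^+$ in the two-relator diagram $\mathcal{D}^+$, but look at the proof of that theorem: in the case where $R_2$ has essential intersections with $\omega'$, the paper explicitly appeals to Lemma~\ref{m_1 and m_2 are the only possible primitive or proper-power meridian reps}---the very lemma you are trying to prove---to conclude that the $R_2$-subdiagram is connected and without cut-vertex. So Theorem~\ref{A distinguished wave appears in each connected H-diagram of S^3 without cut-vertices} logically depends on this lemma, not the other way around.

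The paper's own proof avoids this by arguing directly from the combinatorics of $\mathcal{D}$ and $\mathcal{D}'$. The key observation you are missing is that the hypothesis on $\mathcal{D}'$ (disconnected or with a cut-vertex, and loopless since loops in $\mathcal{D}'$ would force a cut-vertex in $\mathcal{D}$) forces all signed intersections of $m$ with $\partial D_A$ to share a sign, and likewise for $\partial D_B$, with one of the two bands of $A^+A^-$ or $B^+B^-$ edges absent. Coupled with the fact that a meridian representative must meet $\omega$ with algebraic intersection number zero, a short case analysis (nonpositive $\mathcal{D}$, where $\omega$ is a subarc of $\partial D_A$ or $\partial D_B$; positive $\mathcal{D}$, where $\omega=\omega_h$ and at most two coherently-signed bands of $m$ cross it) shows $m\cap\omega=\emptyset$ geometrically. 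Your pants-decomposition endgame is then correct, but you need this direct disjointness argument to reach it.
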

 
\begin{proof}
Suppose $m$, $R$, $\partial D_A$, and $\partial D_B$ are all oriented. Then, focusing first on the Heegaard diagram $\mathcal{D}'$ of $m$ with respect to $\{D_A,D_B\}$, note that $m$ cannot have any loops in $\mathcal{D}'$, since the existence of loops in $\mathcal{D}'$ would force $\mathcal{D}$ to have a cut-vertex, contrary to hypothesis. It follows, since $\mathcal{D}'$ is loopless, but $\mathcal{D}'$ is not connected, or $\mathcal{D}'$ has a cut-vertex, that the signed intersections of $m$ with $\partial D_A$ all have the same sign, the signed intersections of $m$ with $\partial D_B$ all have the same sign, and there are either no edges of $m$ connecting vertices $A^+$ and $A^-$ of  $\mathcal{D}'$, or there are no edges of $m$ connecting vertices $B^+$ and $B^-$ of $\mathcal{D}'$.

Next, note that, since $m$ is disjoint from $R$ and $m$ is a meridian representative of $H[R]$, $m$ must intersect the distinguished wave $\omega$ based at $R$ algebraically zero times.

Now there are two cases:
\begin{enumerate}
\item $R$ has both positive and negative signed intersections with one of $\partial D_A$, $\partial D_B$.
\item The intersections of $R$ with $\partial D_A$ all have the same sign, and the intersections of $R$ with $\partial D_B$ all have the same sign.
\end{enumerate}

First, suppose Case 1 holds. Then there are intersections of $R$ with at least one of $\partial D_A$, $\partial D_B$ which have different signs and are adjacent along $\partial D_A$ or $\partial D_B$. In this case, the distinguished wave $\omega$ based at $R$ is isotopic to a subarc of $\partial D_A$ or $\partial D_B$. But then $|m \cap \,\omega| = 0$. So $m$ is disjoint from $\omega$, which means $m$ is isotopic to one of $m_1$, $m_2$ in $\partial H$.

Next, suppose Case 2 holds. In this case, the diagram $\mathcal{D}$ of $R$ is positive, and the distinguished wave $\omega$ based at $R$ is a horizontal wave $\omega_h$ in $\mathcal{D}$, as in Figure~\ref{DPCFig8bs}b. Now there are at most two bands of edges of $m$ in $\mathcal{D}'$ which intersect $\omega$, and these bands have a common vertex of $\mathcal{D}'$ as endpoint. It follows that all of the intersections of $m$ with $\omega$ have the same sign. So again $m$ must be disjoint from $\omega$, and hence isotopic to one of $m_1$, $m_2$ in $\partial H$.
\end{proof} 

\begin{cor}\textbf{\emph{At most two meridian representatives in diagrams of $\boldsymbol{S^3}$, $\boldsymbol{S^1 \times S^2}$, or $\boldsymbol{(S^1 \times S^2)\; \#\; L(p,q)}$ are primitive or proper-powers.}}\hfill
\label{primitive and proper-power meridian reps are distinguished}

Suppose $m$ and $R$ are disjoint nonseparating simple closed curves on the boundary of a genus two handlebody $H$ such that $H[R]$ has incompressible boundary, $H[R]$ embeds in $S^3$, $S^1 \times S^2$, or $(S^1 \times S^2)\; \#\; L(p,q)$, $m$ is primitive or a proper-power in $H$, and $m$ represents the meridian of $H[R]$. Then $m$ is isotopic in $\partial H$ to one of the two distinguished meridian representatives of $H[R]$. In particular, there are at most two meridian representatives of $H[R]$ that are primitive or proper-powers in $H$.
\end{cor}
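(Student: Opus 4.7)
My plan is to apply Lemma~\ref{m_1 and m_2 are the only possible primitive or proper-power meridian reps} via a dichotomy. First, since $H[R]$ has incompressible boundary, I choose a complete set $\{D_A, D_B\}$ of meridian disks for $H$ such that the Heegaard diagram $\mathcal{D}$ of $R$ with respect to $\{D_A, D_B\}$ is connected and has no cut-vertex. Let $\mathcal{D}'$ denote the diagram of $m$ with respect to the same set. If $\mathcal{D}'$ is disconnected or has a cut-vertex, Lemma~\ref{m_1 and m_2 are the only possible primitive or proper-power meridian reps} applies immediately and concludes that $m$ is isotopic in $\partial H$ to $m_1$ or $m_2$.

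Otherwise $\mathcal{D}'$ is connected without cut-vertex, and so the combined diagram $\mathcal{D}(R, m)$ is also connected without cut-vertex, since adding the edges of $m$ to a cut-vertex-free connected graph preserves both properties. Lemma~\ref{Connected, no cut-vertex ==> Not both primitives} then forces $R$ to be neither primitive nor a proper power in $H$, since $m$ is one by hypothesis. Theorem~\ref{A distinguished wave appears in each connected H-diagram of S^3 without cut-vertices} produces a distinguished wave $\omega$ in $\mathcal{D}(R, m)$ based at either $R$ or $m$.

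If $\omega$ is based at $R$, Theorem~\ref{Thm Distinguished waves are well-defined} identifies $\omega$ with the distinguished wave $\omega_R$ of $R$. The regular neighborhood $N$ of $R \cup \omega_R$ in $\partial H$ is a pair of pants with boundary curves isotopic to $R, m_1, m_2$; since these three curves are pairwise non-isotopic in $\partial H$ (the definition of $m_1, m_2$ as wave-determined slope representatives excludes their being isotopic to each other or to $R$), an Euler characteristic count shows the complement $\partial H \setminus N^\circ$ is a pair of pants with the same three boundary curves (an annulus component in the complement would force two of its boundaries to be isotopic in $\partial H$, which is ruled out). Hence the essential simple closed curve $m$, being disjoint from $R \cup \omega_R$, is isotopic to one of $R, m_1, m_2$, and since $m \not\sim R$ it must be isotopic to $m_1$ or $m_2$.

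It remains to exclude the case $\omega$ based at $m$. By the proof of Theorem~\ref{A distinguished wave appears in each connected H-diagram of S^3 without cut-vertices}, this would force $R$ to represent a meridian of $H[m]$, so $R \in \langle\langle m \rangle\rangle$ in $\pi_1(H) = F_2$ and thus $\pi_1(H[R, m]) = F_2/\langle\langle m \rangle\rangle = \pi_1(H[m])$. When $m$ is primitive, $H[m]$ is a solid torus with $R$ as its meridian, so the closed manifold $H[R, m]$ caps to $S^3$; yet $\pi_1(H[R, m]) = \mathbb{Z}$, contradicting $\pi_1(S^3) = 1$. When $m = c^n$ is a proper power with $c$ primitive and $n \ge 2$, prime decomposition combined with $\pi_1(H[m]) = \mathbb{Z}/n \ast \mathbb{Z}$ and the torus boundary of $H[m]$ forces $H[m] \cong (S^1 \times D^2) \# L(n, q)$ for some $q$; the curve $R$ is the meridian of the solid torus factor, and filling along $R$ caps $H[R, m]$ to $L(n, q)$, which is not in $\{S^3, S^1 \times S^2, (S^1 \times S^2) \# L(p, q)\}$ for $n \ge 2$, contradicting the embedding hypothesis on $H[R]$. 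The main obstacle is this final $\omega$-based-at-$m$ analysis, particularly establishing the prime decomposition of $H[m]$ in the proper-power subcase and tracing how the meridian $R$ forces $H[R, m]$ to cap to a lens space outside the hypothesized list.
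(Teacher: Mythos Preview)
The paper's proof is a single sentence: if $m$ is primitive or a proper power in $H$, then the Heegaard diagram $\mathcal{D}'$ of $m$ with respect to \emph{any} complete set of meridian disks is automatically disconnected or has a cut-vertex, so the hypothesis of Lemma~\ref{m_1 and m_2 are the only possible primitive or proper-power meridian reps} is satisfied and the conclusion is immediate. The reason is standard: a primitive or proper-power curve admits a meridian disk of $H$ disjoint from it, so its minimal-complexity diagram is disconnected; but if $\mathcal{D}'$ were connected without cut-vertex, then by Lemmas~\ref{Only bandsums along Heegaard diagram edges do not necessarily increase complexity}, \ref{Bandsums along edges of nonpositive diagrams yield connected diagrams without cut-vertices}, and \ref{Bandsums along edges of positive diagrams yield connected diagrams without cut-vertices} every complexity-reducing bandsum keeps the diagram connected without cut-vertex, forcing the minimal diagram to be connected without cut-vertex as well --- a contradiction. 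Your Case~2 is therefore vacuous, and the entire detour through Lemma~\ref{Connected, no cut-vertex ==> Not both primitives} and Theorem~\ref{A distinguished wave appears in each connected H-diagram of S^3 without cut-vertices} is unnecessary.

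More seriously, your handling of the sub-case ``$\omega$ based at $m$'' contains a genuine error. You write that ``$R$ represents a meridian of $H[m]$, so $R \in \langle\langle m \rangle\rangle$ in $\pi_1(H)$''. This implication is false: in the paper's usage, ``$R$ is a meridian of $H[m]$'' means only that Dehn filling $H[m]$ along $R$ yields the ambient closed manifold; it does \emph{not} mean $R$ is null-homotopic in $H[m]$. (For the exterior of any nontrivial knot in $S^3$, the meridian generates $H_1$ and is certainly not null-homotopic.) So the chain $\pi_1(H[R,m]) = \pi_1(H[m])$ collapses, and with it your contradictions in both the primitive and proper-power subcases. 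If you wanted to rescue this route, the clean observation is that the very notion of ``distinguished wave based at $m$'' presupposes $H[m]$ has incompressible boundary, which fails whenever $m$ is primitive or a proper power; hence Theorem~\ref{A distinguished wave appears in each connected H-diagram of S^3 without cut-vertices} can only produce a wave based at $R$, and your Step~6 already finishes. But this is just a roundabout way of arriving at the one-line fact the paper uses directly.
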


\begin{proof}
This follows immediately from Lemma \ref{m_1 and m_2 are the only possible primitive or proper-power meridian reps} since, if $m$ is primitive or a proper-power in $H$, the Heegaard diagram $\mathcal{D}'$ of $m$ with respect to $\{D_A,D_B\}$ is either not connected, or has a cut-vertex. So $m$ satisfies the hypothesis of Lemma \ref{m_1 and m_2 are the only possible primitive or proper-power meridian reps}.
\end{proof}

\begin{lem}\textbf{\emph{The two distinguished meridian representatives are the two ``shortest'' meridian representatives of a tunnel of a tunnel-number-one knot in $\boldsymbol{S^3}$, $\boldsymbol{S^1 \times S^2}$, or $\boldsymbol{(S^1 \times S^2)\; \#\; L(p,q)}$.}}\hfill
\label{The two distinguished meridian representatives are the two ``shortest'' meridian representatives of a tunnel of tunnel-number-one knot}

Suppose $m$ and $R$ are disjoint nonseparating simple closed curves on the boundary of a genus two handlebody $H$, $H[R]$ is the exterior of a knot in $S^3$, $S^1 \times S^2$, or $(S^1 \times S^2)\; \#\; L(p,q)$, and $m$ represents the meridian of $H[R]$. In addition, suppose $m_1$ and $m_2$, are the two distinguished meridian representatives of $H[R]$, obtained by surgery on $R$ along the distinguished wave $\omega$ based at $R$, and suppose $\{D_A,D_B\}$ is any complete set of meridian disks for $H$, such that the Heegaard diagram $\mathcal{D}$ of $R$ with respect to $\{D_A,D_B\}$ is connected and has no cut-vertex. Finally, suppose $m$ is not isotopic to $m_1$ or $m_2$ in $\partial H$.

Then there is a subarc $\omega'$ of $\omega$ which is a wave based at $m$ in $\mathcal{D}$. And then Lemma \emph{\ref{m_1 and m_2 are the only possible primitive or proper-power meridian reps}} implies that the Heegaard diagram of $m$ with respect to $\{D_A,D_B\}$ is connected and has no cut-vertex. It follows $\omega'$ is a distinguished wave based at $m$, and $R$ is obtained from $m$ by surgery on $m$ along $\omega'$. This means the complexity of the Heegaard diagram of $m$ with respect to $\{D_A,D_B\}$ is greater than the complexity of the Heegaard diagram of $R$ with respect to $\{D_A,D_B\}$, which in turn is greater than the complexities of the Heegaard diagrams of either $m_1$ or $m_2$ with respect to $\{D_A,D_B\}$.
\end{lem}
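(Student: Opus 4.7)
My plan is to follow the four-step outline already sketched in the lemma statement. The first task is to produce the subarc $\omega'$ of $\omega$ that is a wave based at $m$. Let $N$ be a regular neighborhood of $R\cup\omega$ in $\partial H$; it is a pair of pants with boundary components parallel to $R$, $m_1$, and $m_2$, and its complement $P':=\partial H\setminus\mathrm{int}(N)$ is another pair of pants on the same three curves. If $m$ were disjoint from $\omega$ (in addition to being disjoint from $R$), then $m$ would lie in $\partial H\setminus(R\cup\omega)$, which deformation-retracts to a pair of pants, and every essential simple closed curve in a pair of pants is boundary-parallel. Thus $m$ would be isotopic to one of $R$, $m_1$, $m_2$, contrary to the hypothesis; hence $m$ meets $\omega$. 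To upgrade to $|m\cap\omega|\ge 2$, I would pass to the torus $\partial H[R]$: there $m$ represents the meridian slope, and the arc $\overline{\omega}$ obtained by closing $\omega$ through the $2$-handle disk on the $R^+$ side is isotopic to $m_1$, which also represents the meridian slope; so the algebraic intersection $[m]\cdot[\overline{\omega}]$ on the torus vanishes, which combined with $m$ being disjoint from the $2$-handle disks gives $|m\cap\omega|\equiv 0 \pmod 2$. After isotoping $m$ to minimize $|m\cap\omega|$, I take $\omega'$ to be a subarc of $\omega$ between two consecutive points of $m\cap\omega$; it has both endpoints on $m$, interior disjoint from $m$, from $R$ (its endpoints lie interior to $\omega$, hence off $R$), and from $\partial D_A\cup\partial D_B$ (since $\omega$ is). Essentiality follows from the minimality of $|m\cap\omega|$, for otherwise $\omega'$ would cobound a bigon with an arc of $m$, permitting further reduction.

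The second assertion is immediate by contraposition of Lemma~\ref{m_1 and m_2 are the only possible primitive or proper-power meridian reps}: since $m$ is not isotopic to $m_1$ or $m_2$, its Heegaard diagram with respect to $\{D_A,D_B\}$ must be connected and cut-vertex-free. For the third assertion I run the pair-of-pants argument in reverse. Let $N'$ be a regular neighborhood of $m\cup\omega'$, a pair of pants with boundary $\{m,r_1,r_2\}$. Since $\omega'$ has endpoints interior to $\omega$, the curve $R$ is disjoint from $m\cup\omega'$, and the same retraction argument forces $R$ to be isotopic to one of $m,r_1,r_2$; since $R$ and $m$ carry different slopes on $\partial H[R]$, surgery on $m$ along $\omega'$ recovers $R$. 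To see $\omega'$ is distinguished, I would apply Theorem~\ref{Only distinguished waves provide meridians} with the roles of $R$ and $m$ swapped: by Corollary~\ref{primitive and proper-power meridian reps are distinguished}, $m$ is neither primitive nor a proper power, so $H[m]$ has incompressible boundary, and Dehn filling $H[m]$ along the $R$-slope recovers the original closed ambient manifold. Hence only a distinguished wave based at $m$ can surger $m$ into a meridian of $H[m]$; in the nonpositive case $\omega'$ is the unique wave at $m$ and is therefore distinguished by default, and in the positive case Lemma~\ref{horizontal wave lemma} eliminates the vertical alternative and forces $\omega'$ to be horizontal.

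The complexity chain follows from a straightforward counting argument. Because the interior of any wave is disjoint from $\partial D_A\cup\partial D_B$, wave-surgery partitions the surgered curve's intersections with $\partial D_A\cup\partial D_B$ among the two output curves, yielding $|r_1|+|r_2|=|m|$ and $|m_1|+|m_2|=|R|$; combined with the earlier lemma that distinguished slope representatives never bound disks in $H$ (forcing $|r_2|,|m_1|,|m_2|>0$), this gives the chain $|m_i|<|R|<|m|$. The principal technical obstacle I anticipate is the first paragraph — pinning down that $|m\cap\omega|\ge 2$ (not merely $\ge 1$) via the torus-slope parity argument, and ensuring $\omega'$ is essential after $|m\cap\omega|$ is minimized — both of which require toggling between $\partial H$ and the filled-in torus $\partial H[R]$.
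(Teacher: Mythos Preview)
Your proof is correct and follows essentially the same four-step architecture as the paper's own (terse) argument: algebraic intersection zero forces a wave subarc of $\omega$ on $m$; Lemma~\ref{m_1 and m_2 are the only possible primitive or proper-power meridian reps} gives the connected, cut-vertex-free diagram for $m$; Theorem~\ref{Only distinguished waves provide meridians} identifies $\omega'$ as distinguished; and the complexity chain follows. You supply considerably more detail than the paper does (the pair-of-pants retraction, the explicit $|m_1|+|m_2|=|R|$ bookkeeping), which is all sound.

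One small tightening: in your first paragraph, after establishing that the algebraic intersection of $m$ with $\omega$ vanishes, you should select $\omega'$ between two \emph{consecutive intersection points of opposite sign} along $\omega$, not merely any two consecutive points. The vanishing algebraic intersection guarantees such a pair exists, and opposite signs are exactly what places both endpoints of $\omega'$ on the same side of $m$ (i.e., on $m^+$ or on $m^-$), which is the wave condition. Your parity conclusion $|m\cap\omega|\equiv 0\pmod 2$ is a correct corollary but by itself does not ensure an arbitrary consecutive pair works. The paper glosses over this point as well, so your treatment is already at least as careful as the original.
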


\begin{proof}
By Theorem \ref{Thm Distinguished waves are well-defined}, the distinguished  wave $\omega$ based at $R$ depends only on $R$ and $H$, and $\omega$ appears as a wave based at $R$ in any connected Heegaard diagram $\mathcal{D}$ without cut-vertices of $R$ on $\partial H$. If $m$ is any meridian representative of $H[R]$ in $\partial H$, disjoint from $R$, then $m$ must have algebraic intersection number zero with $\omega$. So, if $m$ is not disjoint from $\omega$, there exists a subarc $\omega'$ of $\omega$ which appears in $\mathcal{D}$ as a wave based at $m$. Now Lemma \ref{m_1 and m_2 are the only possible primitive or proper-power meridian reps} shows that, because $m$ is not isotopic to $m_1$, or $m_2$, the Heegaard diagram of $m$ with respect to $\{D_A,D_B\}$ is connected and has no cut-vertex. By Theorem \ref{Only distinguished waves provide meridians}, $\omega'$ is the distinguished wave based at $m$, and $R$ is obtained from $m$ by surgery on $m$ along $\omega'$. This implies the complexities of the Heegaard diagrams of $m$, $R$, $m_1$ and $m_2$, with respect to $\{D_A,D_B\}$ are related as claimed.
\end{proof}

\subsection{\textbf{The ``shortest'' meridian representative of a tunnel of a tunnel-number-one knot in $\boldsymbol{S^3}$, $\boldsymbol{S^1 \times S^2}$, or $\boldsymbol{(S^1 \times S^2)\; \#\; L(p,q)}$.}} \hfill

When at least one of the two distinguished meridian representatives $m_1$, $m_2$ in $\partial H$ of a tunnel of a tunnel-number-one knot $k$ in a 3-manifold $M$ homeomorphic to $S^3$, $S^1 \times S^2$, or $(S^1 \times S^2)\; \#\; L(p,q)$ is not primitive or a proper-power in $H$, more can be said. Namely the ``shortest'' meridian representative of $k$ exists and is well-defined.

For, if one of $m_1$, $m_2$, say $m_1$, is not primitive or a proper-power in $H$, then there exists a complete set $\Delta$ of meridian disks of $H$ and Heegaard diagram $\mathcal{D}$ of $m_1$ and $m_2$ with respect to $\Delta$ such that the subdiagram of $\mathcal{D}$ consisting of the edges of $m_1$ in $\mathcal{D}$ is connected and has no cut-vertex. So $\mathcal{D}$ itself is connected, has no cut-vertex, and is a Heegaard diagram of $M$. Then Theorem \ref{A distinguished wave appears in each connected H-diagram of S^3 without cut-vertices} applies and shows that there is a distinguished wave $\omega$ based at one of $m_1$, $m_2$ in $\mathcal{D}$. Finally, the results of Section \ref{Distinguished waves in genus two Heegaard diagrams of closed manifolds} show that $\omega$ appears in each Heegaard diagram of $m_1$ and $m_2$ which is connected and has no cut-vertex. This justifies the following definition.

\begin{defn}\textbf{The ``shortest'' meridian representative of a tunnel of a tunnel-number-one knot in $\boldsymbol{S^3}$, $\boldsymbol{S^1 \times S^2}$, or $\boldsymbol{(S^1 \times S^2)\; \#\; L(p,q)}$.} \hfill
\label{Shortest meridian rep}

Suppose $R$ is a nonseparating simple closed curve on the boundary of a genus two handlebody, $H[R]$ has incompressible boundary, $H[R]$ is the exterior of a knot in a 3-manifold $M$ homeomorphic to $S^3$, $S^1 \times S^2$, or $(S^1 \times S^2)\; \#\; L(p,q)$, $\omega$ is the distinguished wave based at $R$, and curves $m_1$ and $m_2$, obtained by surgery on $R$ along $\omega$, are the two distinguished meridian representatives in $\partial H$ of the meridian of $H[R]$ in $M$. 

If at least one of $m_1, m_2$ is not primitive or a proper-power in $H$, there exists a connected Heegaard diagram $\mathcal{D}$, without cut-vertices, of $m_1$ and $m_2$ on $\partial H$, such that a distinguished wave $\omega'$, based at one of $m_1,m_2$, appears in $\mathcal{D}$. Then the \emph{shortest} meridian representative of $H[R]$ is the member of $\{m_1,m_2\}$ at which $\omega'$ is not based.
\end{defn}
 
 
\section{Unknotting tunnels and canonical constituent knots of theta curves in $S^3$, $S^1 \times S^2$, or $(S^1 \times S^2)\; \#\; L(p,q)$.}\hfill
\label{Unknotting tunnels and canonical constituent knots of theta curves}

Suppose $M$ is a 3-manifold homeomorphic to one of $(S^1 \times S^2)\; \#\; L(p,q)$, $S^3$ or $S^1 \times S^2$. If $k$ is a knot in $M$, an unknotting tunnel for $k$ is an arc $\tau$ in $M$ meeting $k$ only in its endpoints such that the exterior of $k \, \cup \, \tau$ in $M$ is a genus two handlebody. When $\tau$ is an unknotting tunnel for a knot $k$, $k \cup \tau$ is a \emph{$\theta$ curve} in $M$, a regular neighborhood of $k \cup \tau$ in $M$ is a genus two handlebody $H'$, and the exterior of $k \cup \tau$ is also a genus two handlebody $H$ such that $H$ and $H'$ together form a genus two Heegaard splitting of $M$ in which the exterior of $k$ in $M$ is obtained by gluing the co-core of $\tau$ to $\partial H$ along a nonseparating simple closed curve $R$ in $\partial H$. 

The endpoints of $\tau$ on $k$ cut $k$ into a pair of arcs, say $\alpha_1$ and $\alpha_2$, in which case, $\tau \cup \alpha_1$ and $\tau \cup \alpha_2$ are said to be \emph{constituent knots} $k_1$ and $k_2$ of the $\theta$ curve $k \cup \tau$. However, in general, constituent knots $k_1$ and $k_2$ of a $\theta$ curve are not well defined, as isotopies of $\tau$ which slide the endpoints of $\tau$ around on $k$ and perhaps pass each other on $k$ can change $k_1$ and $k_2$.

If $H[R]$ has incompressible boundary, i.e. $R$ has essential intersections with each meridional disk in $H$, the following procedure can be used to find a pair of canonical constituent knots $k_1$ and $k_2$ of the $\theta$ curve $k_\tau$ such that $k_1$ and $k_2$ depend only on $k$ and $\tau$.

\begin{enumerate}
\item Locate $R$ on $\partial H$.
\item Locate a complete set, say $\{D_A, D_B\}$, of meridian disks for $H$ such that the Heegaard diagram $\mathcal{D}$ of $R$ with respect to $\{D_A, D_B\}$ is connected and has no cut-vertex.
\item Locate the  distinguished wave $\omega$ based at $R$ in $\mathcal{D}$.
\item Let $N(R \cup \omega)$ be a regular neighborhood of $R \cup \omega$ in $\partial H$, and let $m_1$ and $m_2$ be the two boundary components of $N$ not isotopic to $R$ in $\partial H$.
\item Then $m_1$ and $m_2$ are representatives of the meridian of $H[R]$ in $M$. So $m_1$ and $m_2$ bound meridian disks $D_{m_1}$ and $D_{m_2}$ of $H'$ such that $H'$ cut open along $D_{m_1}$ is a solid torus $T_2$ with meridional disk $D_{m_2}$, and $H'$ cut open along $D_{m_2}$ is a solid torus $T_1$ with meridional disk $D_{m_1}$. Then the core curves $k_1$ of $T_1$, transverse to $D_{m_1}$ at a point, and $k_2$ of $T_2$, transverse to $D_{m_2}$ at a point, are a canonical pair of constituent knots of the $\theta$ curve $k \cup \tau$ since the pair $(k_1, k_2)$ depends only on $k$ and $\tau$.
\end{enumerate}
  
\begin{cor}
\label{(1,1) iff a canonical constituent knot is unknotted}
An unknotting tunnel $\tau$ is a \emph{(1,1)} tunnel of $k$ if and only if one of the canonical constituent knots of the $\theta$ curve $k \cup \tau$ is unknotted.
\end{cor}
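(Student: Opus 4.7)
The plan is to prove the biconditional via the intermediate condition that one of the distinguished meridian representatives $m_1,m_2$ is primitive in $H$. Concretely, I would prove: (a) $\tau$ is a $(1,1)$ tunnel of $k$ iff some $m_i$ is primitive in $H$; and (b) $m_j$ is primitive in $H$ iff the canonical constituent knot $k_i$ (with $\{i,j\}=\{1,2\}$) is unknotted. The corollary follows by combining (a) and (b).

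For (b): $T_i$ is the solid torus obtained by cutting $H'$ along $D_{m_j}$, so its complement in $M$ is $\overline{T_i}=H\cup N(D_{m_j})$. This is precisely $H[m_j]$, the result of attaching a $2$-handle to $H$ along $m_j$. It is standard that $H[m_j]$ is a solid torus iff $m_j$ is primitive in $H$. When this holds, $M=T_i\cup H[m_j]$ is a genus-one Heegaard splitting of $M$, and $k_i$ (the core of $T_i$) is a core of this splitting, hence unknotted in $M$. Conversely, if $k_i$ is unknotted then $k_i$ is isotopic to the core of one side of a genus-one Heegaard splitting of $M$; arranging the isotopy to carry $T_i$ (a regular neighborhood of $k_i$) onto one side of that splitting forces the other side to be $H[m_j]$, which is therefore a solid torus, so $m_j$ is primitive in $H$.

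For (a): If $m_i$ is primitive in $H$ with dual meridian disk $D\subset H$, then $(D,D_{m_i})$ is a canceling disk pair for the Heegaard splitting $M=H\cup H'$; because $D_{m_i}$ is canonically dual to the knot arc $\alpha_i$ (rather than to $\tau$), destabilizing along $(D,D_{m_i})$ yields a genus-one Heegaard splitting of $M$ in which $\alpha_i$ becomes a trivial arc in one destabilized solid torus, $\alpha_j$ ($j\ne i$) becomes a trivial arc in the other, and $\tau$ lies on the new Heegaard torus as the trivializing boundary arc, i.e., $\tau$ is a $(1,1)$ tunnel. Conversely, a $(1,1)$ decomposition of $(M,k,\tau)$ realizes the meridian of the $(1,1)$-torus as a simple closed curve on $\partial H$ disjoint from $R$ and primitive in $H$; by Corollary~\ref{primitive and proper-power meridian reps are distinguished}, this primitive meridian representative must coincide with one of $m_1,m_2$.

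The main obstacle is the destabilization step in the implication ``$m_i$ primitive $\Rightarrow\tau$ is $(1,1)$'' of (a): verifying that cancellation of $(D,D_{m_i})$ actually yields the $(1,1)$ structure on $(k,\tau)$ rather than merely some genus-one splitting of $M$. The essential point, which is where the paper's machinery feeds in, is that the distinguished wave identifies $D_{m_1},D_{m_2}$ canonically as the two meridian disks of $H'$ dual to the two arcs of $k$, so cancellation of one of them respects the knot structure; the arc $\alpha_i$ dual to the canceled disk becomes a trivial arc in one new torus, and the remaining structure places $\tau$ as the trivializing boundary arc in the destabilized splitting.
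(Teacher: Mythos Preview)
Your approach is essentially the same as the paper's. The paper's proof is the single line ``This follows directly from Theorem~\ref{(1,1) tunnel recognition},'' which is exactly your step (a); your step (b), translating ``$m_j$ primitive in $H$'' into ``$k_i$ unknotted'' via $\overline{T_i}\cong H[m_j]$, is the easy link the paper leaves implicit. Your re-derivation of (a), including the destabilization you flag as the main obstacle, closely parallels the paper's own proof of Theorem~\ref{(1,1) tunnel recognition} (which places $k$ on $\Sigma$, finds $\tau$ meeting $R$ once, and invokes Lemma~\ref{m_1 and m_2 are the only possible primitive or proper-power meridian reps} for the converse), so there is no genuinely different idea here.
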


\begin{proof}
This follows directly from Theorem \ref{(1,1) tunnel recognition}.
\end{proof}

\begin{rem}
Corollary \ref{(1,1) iff a canonical constituent knot is unknotted} is not true if the restriction to canonical constituent knots is removed. Compare \cite{MS91}
[Proposition (1.3)], which is wrong because the restriction to canonical constituent knots is omitted. 
\end{rem}


\section{Recognizing (1,1) tunnels of (1,1) knots in $S^3$ or $S^1 \times S^2$.}\hfill
\label{Recognizing (1,1) tunnels of (1,1) knots}

Suppose $M$ is a manifold homeomorphic to $S^3$ or $S^1 \times S^2$. A knot $k$ in $M$ is in (1,1) position if $k$ intersects each solid torus $V$, $V'$ of a genus one Heegaard splitting of $M$ in a single unknotted arc. A knot in $M$ which has a (1,1) position is a (1,1) knot. If $k$ is in (1,1) position, then $k$ has an unknotting tunnel $t_V$, respectively $t_{V'}$, such that $t_V$, respectively $t_{V'}$, is a core curve of $V$, respectively $V'$. Such unknotting tunnels $t_V$ and $t_{V'}$ of $k$ form a \emph{dual pair} of (1,1) unknotting tunnels of $k$. 

A (1,1) knot may have unknotting tunnels which are not (1,1) tunnels. A tunnel of a tunnel-number-one knot $k$ which is not a (1,1) tunnel of $k$ is a \emph{regular} tunnel of $k$.

If $k$ is in (1,1) position, drilling the unknotted arc $k \,\cap \,V$ out of $V$, or the unknotted arc $k \cap V'$ out of $V'$, converts $V$, respectively $V'$, into a genus two handlebody $H$, respectively $H'$, in which the boundary of the co-core of each drilled out arc is a meridian of $k$ that is primitive in $H$, respectively $\pi_1(H')$. 

The following theorem provides a way to recognize that a tunnel is a (1,1) tunnel of a (1,1) knot.

\begin{thm}\textbf{\emph{(1,1) tunnel recognition.}} \hfill
\label{(1,1) tunnel recognition}

Suppose $R$ is a nonseparating simple closed curve in the boundary of a genus two handlebody $H$ such that $H[R]$ has incompressible boundary and $H[R]$ is the exterior of a knot $k$ in a 3-manifold $M$ homeomorphic to $S^3$ or $S^1 \times S^2$. In addition, let $m_1$ and $m_2$, be the two simple closed curve distinguished meridian representatives obtained from $R$ by surgery on $R$ along the distinguished wave $\omega$ based at $R$ in $\partial H$. Then $R$ is the boundary of a co-core of a \emph{(1,1)} tunnel for $k$, if and only if one of $m_1$, $m_2$ is primitive in $H$.
\end{thm}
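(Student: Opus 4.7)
I would treat the two directions separately, proving the forward direction by invoking Corollary \ref{primitive and proper-power meridian reps are distinguished} and the reverse direction by a Heegaard destabilization argument.

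For the forward direction ($\Rightarrow$), assume $R$ bounds the co-core of a (1,1) tunnel $\tau$ for $k$. Then $k$ lies in (1,1) position with respect to some genus one Heegaard splitting $M = V \cup V'$, with $\tau$ realized as the (1,1) tunnel associated to $V'$. As noted in the paragraph preceding Theorem \ref{(1,1) tunnel recognition}, drilling the unknotted arc $k \cap V$ out of $V$ yields $H$ and presents the boundary of the cocore of the drilled arc as a meridian representative $m$ of $k$ on $\partial H$ that is primitive in $H$. Corollary \ref{primitive and proper-power meridian reps are distinguished} then says $m$ is isotopic in $\partial H$ to one of $m_1, m_2$, whence one of these is primitive in $H$.

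For the reverse direction ($\Leftarrow$), assume without loss of generality that $m_1$ is primitive in $H$, witnessed by a meridian disk $D \subset H$ with $|\partial D \cap m_1| = 1$. As the meridian of $k$, $m_1$ also bounds the meridian disk $D_{m_1}$ of the solid-torus neighborhood $N(k) \subset H' := \overline{M \setminus H}$, and the disks $D, D_{m_1}$ meet in a single point of $\partial H$. Thus the splitting $M = H \cup H'$ is a stabilization of a genus one splitting $M = V \cup V'$, where $V := H \cup N(D_{m_1}) = H[m_1]$ is a solid torus (by primitivity of $m_1$) and $V' := H' \setminus N(D_{m_1})$ is a solid torus (since cutting $H' = N(k \cup \tau)$ along the meridian disk $D_{m_1}$ of $N(k)$ opens $N(k)$ into a 3-ball while the 1-handle $N(\tau)$ remains attached). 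Writing $\alpha_1 := k \cap N(D_{m_1}) \subset V$ and $\alpha_2 := \overline{k \setminus \alpha_1} \subset V'$, the arc $\alpha_1$ is the cocore of the 2-handle $N(D_{m_1})$, which is unknotted in $V$ because 2-handle cocores attached along primitive curves produce unknotted arcs in the resulting handlebody; and drilling $\alpha_2$ from $V'$ yields the 3-ball $N(\tau)$, showing $\alpha_2$ is an unknotted arc in $V'$. Hence $k = \alpha_1 \cup \alpha_2$ is in (1,1) position with respect to $V \cup V'$, and $\tau \subset V'$, realized as the arc of $V'$'s core complementary to $k \cap V'$, is a (1,1) tunnel of $k$ whose co-core boundary is $R$.

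\textbf{Main obstacle.} The main work is in the reverse direction, specifically translating the algebraic primitivity of $m_1$ into the geometric (1,1) decomposition via destabilization of the genus two splitting using the pair $(D, D_{m_1})$ and verifying that the subarcs $\alpha_1, \alpha_2, \tau$ land in the correct positions in the resulting genus one solid tori to certify the (1,1) structure.
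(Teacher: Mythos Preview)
Your proposal is correct and follows essentially the same approach as the paper: both directions match the paper's argument, with the order reversed. For the implication ``(1,1) tunnel $\Rightarrow$ primitive,'' the paper invokes Lemma~\ref{m_1 and m_2 are the only possible primitive or proper-power meridian reps} directly rather than its Corollary~\ref{primitive and proper-power meridian reps are distinguished}, but this is immaterial; for ``primitive $\Rightarrow$ (1,1) tunnel,'' your destabilization via the pair $(D, D_{m_1})$ and explicit verification that $\alpha_1, \alpha_2$ are unknotted in $V, V'$ is exactly the paper's argument, carried out with slightly more detail about the arcs.
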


\begin{proof}
Suppose one of $m_1$, $m_2$, say $m_1$, is primitive in $H$. Then filling $H[R]$ along $m_1$ yields a genus two Heegaard splitting $(\Sigma;H,H')$ of $M$ in which $m_1$ and $R$ bounding a complete set of meridian disks of the complementary genus two handlebody $H'$. Furthermore, since $m_1$ is primitive in $H$, there exists a meridian disk $D$ of $H$ such that $m_1 \cap \, D$ is a single point of transverse intersection. Then the meridian disk bounded by $m_1$ in $H'$ and the disk $D$ in $H$ form a trivial handle pair in $(\Sigma;H,H')$. So $(\Sigma;H,H')$ destabilizes to a genus one Heegaard splitting $(\Sigma';V,V')$ of $M$.

Next, the knot $k$ can be isotoped so it lies in $\Sigma$, is disjoint from $R$, and has $k \cap \, m_1$ a single point of transverse intersection. Then $k$ is clearly a (1,1) knot in $(\Sigma';V,V')$, and there exists an unknotting tunnel $\tau$ for $k$ in $\Sigma$ such that $\tau \cap R$ is a single point of transverse intersection. Since the meridian disk of $H'$ bounded by $R$ becomes the meridian disk of $V'$, $\tau$ becomes a core curve in $V'$. So $\tau$ is a (1,1) unknotting tunnel for $k$. This proves one direction of Theorem \ref{(1,1) tunnel recognition}.

For the other direction of Theorem \ref{(1,1) tunnel recognition}, suppose $k$ is in (1,1) position with respect to a genus one Heegaard splitting $(\Sigma';V,V')$ of $M$, and $\tau$ is a (1,1) tunnel of $k$ in $(\Sigma';V,V')$. Then $k\, \cap \, V$, respectively $k \, \cap \, V'$, is an unknotted arc in $V$, respectively $V'$. So drilling the unknotted arc $k \,\cap \, V$ out of $V$ turns $V$ into a genus two handlebody $H$ in a genus two stabilization  $(\Sigma;H,H')$ of $(\Sigma';V,V')$. 

When $(\Sigma';V,V')$ is stabilized to $(\Sigma;H,H')$, the curve $R$ in $\Sigma'$, bounding the meridian disk of $V'$, appears in $\Sigma$ as a curve bounding a meridian disk of $H'$. And, since the arc $k \, \cap \, V'$ is unknotted in $V'$, $k \, \cap \, V'$ can be isotoped, keeping its endpoints fixed, into $\Sigma'$ missing $R$. Then in $(\Sigma;H,H')$, $k$ and $R$ are disjoint curves in $\partial H$, the boundary $m$ of the co-core of the drilled out arc $k \, \cap \, V$ lies in $\partial H$, $m$ is disjoint from $R$, $m$ is primitive in $H$, and $m$ meets $k$ in a single point of transverse intersection.

Now Lemma \ref{m_1 and m_2 are the only possible primitive or proper-power meridian reps} shows that if $m$ is primitive in $H$, then $m$ must be isotopic in $\partial H$ to one of the two distinguished meridian representatives $m_1$ and $m_2$ of $H[R]$ obtained by surgery on $R$ along the distinguished wave $\omega$ based at $R$. So either $m_1$ or $m_2$ must be primitive in $H$.
\end{proof}
 
 
\section{Recognizing genus two Heegaard diagrams of $S^3$, $S^1 \times S^2$, or $(S^1 \times S^2)\; \#\; L(p,q)$.}\hfill
\label{Recognizing S^3 etc}

The following procedure uses distinguished waves and shortest meridian representatives, along with the major results of \cite{HOT80} and \cite{NO85} that every connected genus two Heegaard diagram of $S^3$, $S^1 \times S^2$, or $(S^1 \times S^2)\; \#\; L(p,q)$ has a wave, to determine if a genus two Heegaard diagram of a closed 3-manifold is a Heegaard diagram of $S^3$, $(S^1 \times S^2)\; \#\; L(p,q)$ or $S^1 \times S^2$. 

\subsection{A procedure to recognized genus two Heegaard diagrams of $\boldsymbol{S^3}$, $\boldsymbol{S^1 \times S^2}$, or $\boldsymbol{(S^1 \times S^2)\; \#\; L(p,q)}$}\hfill
\label{S^3, S1 X S2, and (S1 X S2) + L(p,q) recognition}

Suppose $R_1$ and $R_2$ are disjoint, nonparallel, nonseparating simple closed curves on the boundary of a genus two handlebody $H$, and let $M$ be the closed 3-manifold  obtained by first adding 2-handles to $\partial H$ along $R_1$ and $R_2$ to obtain $H[R_1,R_2]$, and then filling the 2-sphere boundary of $H[R_1,R_2]$ with a 3-ball. Then, leaving aside the case in which $R_1$ or $R_2$ bounds a disk in $H$, the following procedure can be used to determine if $M$ is homeomorphic to $S^3$, $S^1 \times S^2$, or $(S^1 \times S^2)\; \#\; L(p,q)$.

\begin{enumerate}
\item Check, using say Theorem \ref{recognizing primitives}, whether $R_1$ or $R_2$ is primitive or a proper-power in $H$.
\item If $R_1$ or $R_2$ is primitive or a proper-power in $H$, then:
\begin{enumerate}
\item If $H_1(M,\mathbb{Z}) \neq 0, \, \mathbb{Z}$ or $\mathbb{Z} \oplus \mathbb{Z}/p, \; p > 1$, $M$ is not homeomorphic to $S^3$, $S^1 \times S^2$, or $(S^1 \times S^2)\; \#\; L(p,q)$. Stop!
\item If $H_1(M,\mathbb{Z}) = 0$, $M \simeq S^3$; 
\item If $H_1(M,\mathbb{Z}) = \mathbb{Z}$, $M \simeq S^1 \times S^2$;
\item If $H_1(M,\mathbb{Z}) = \mathbb{Z} \oplus \mathbb{Z}/p, \; p > 1$, $M \simeq (S^1 \times S^2)\; \#\; L(p,q)$.
\item Stop!
\end{enumerate}
\item Locate a complete set $\Delta$ of meridional disks of $H$ such that the Heegaard diagram $\mathcal{D}$ of $\{R_1,R_2\}$ with respect to $\Delta$ is connected and has no cut-vertex.
\item Determine if there is a wave $\omega$ in $\mathcal{D}$ based at $R_1$ or $R_2$.
\item If no wave based at $R_1$ or $R_2$ in $\mathcal{D}$ exists, $M$ is not homeomorphic to $S^3$, $S^1 \times S^2$, or $(S^1 \times S^2)\; \#\; L(p,q)$. Stop!
\item If there is a wave $\omega$ based at $R_1$ or $R_2$ in $\mathcal{D}$, and $\omega$ is based at $R_1$, respectively $R_2$, take $R_2$, respectively $R_1$, to be the shortest member of $\{R_1,R_2\}$ in $\mathcal{D}$.
\item If, say $R_1$, is the shortest member of $\{R_1,R_2\}$, look for a complete set of meridian disks $\Delta'$ of $H$ such that the Heegaard diagram $\mathcal{D}'$ of $R_1$ with respect to $\Delta'$ is connected and has no cut-vertex.
\item Locate the distinguished wave $\omega'$ based at $R_1$ in $\mathcal{D}'$.
\item If $R_2$ does not have algebraic intersection number zero with $\omega'$ in $\mathcal{D}'$, $R_2$ does not represent the meridian of $H[R_1]$ and $\mathcal{D}'$ is not a Heegaard diagram of $S^3$, $S^1 \times S^2$, or $(S^1 \times S^2)\; \#\; L(p,q)$. Stop!
\item If $R_2$ has algebraic intersection number zero with $\omega'$ in $\mathcal{D}'$, perform surgery on $R_1$ along $\omega'$ to obtain the distinguished meridian representatives $m_1$ and $m_2$ of $H[R_1]$. Replace $\{R_1,R_2\}$ with $\{m_1,m_2\}$, and go to (1) above.
\end{enumerate}

\begin{thm}Procedure \emph{\ref{S^3, S1 X S2, and (S1 X S2) + L(p,q) recognition}} correctly determines if genus two Heegaard diagrams are Heegaard diagrams of $S^3$, $S^1 \times S^2$ or $(S^1 \times S^2)\; \#\; L(p,q)$.
\end{thm}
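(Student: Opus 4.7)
The plan is to verify the procedure step by step and then argue termination. Three separate concerns must be addressed: (i) each stopping clause correctly identifies $M$ or correctly rejects it; (ii) the substitution $\{R_1,R_2\}\mapsto\{m_1,m_2\}$ in step (10) preserves the closed manifold $M$; and (iii) the iteration halts after finitely many rounds.

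For (i), the base case in step (2) proceeds when one of $R_1$, $R_2$, say $R_1$, is primitive or a proper power in $H$. When $R_1$ is primitive, a dual meridian disk destabilizes the genus-two Heegaard splitting to a genus-one splitting, forcing $M$ to be a lens space (including $S^3$ and $S^1\times S^2$ as degenerate cases), whose first homology lies in $\{0,\mathbb{Z},\mathbb{Z}/p\}$; when $R_1$ is a proper power, the structural results of \cite{B09} together with a $\pi_1$-calculation from the two-relator presentation show that $M$ is homeomorphic to $(S^1\times S^2)\#L(p,q)$ precisely when its $H_1$ has the form $\mathbb{Z}\oplus\mathbb{Z}/p$, and is outside the target list otherwise. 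Step (5) then invokes \cite{HOT80} and \cite{NO85}, which guarantee that every connected genus-two Heegaard diagram of $S^3$, $S^1\times S^2$, or $(S^1\times S^2)\#L(p,q)$ without cut-vertex has a wave based at one of its relators. Step (9) is immediate from Theorem~\ref{Only distinguished waves provide meridians}: if $R_2$ is the meridian of $H[R_1]$ for $M$ in the target class, then $R_2$ represents the distinguished-wave-determined slope on $\partial H[R_1]$ and in particular has algebraic intersection number zero with $\omega'$ in $\mathcal{D}'$.

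For (ii), the curves $R_1$, $m_1$, $m_2$ cobound a pair of pants $P\subset\partial H$ by construction, namely a regular neighborhood of $R_1\cup\omega'$. In the 3-manifold $H[R_1,m_1]$, the 2-handle attached along $R_1$ can be slid over the 2-handle attached along $m_1$ through a band in $P$; because $P$ is a pair of pants, the band-sum $R_1\#m_1$ along an arc in $P$ is the third boundary component of $P$, namely $m_2$. Hence $H[R_1,m_1]\cong H[m_1,m_2]$. Since $m_1$ and $R_2$ represent the same unoriented slope on the torus $\partial H[R_1]$, a further isotopy gives $H[R_1,R_2]\cong H[R_1,m_1]$, and capping the $S^2$-boundary with a $3$-ball leaves the closed manifold $M$ unchanged.

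For (iii), after each iteration reaching step (10), the total edge-complexity of the two relators strictly decreases. In the diagram $\mathcal{D}'$ in which $\omega'$ appears, the points of $R_1\cap\partial\Delta'$ partition across the two sides of $\omega'$, yielding $|m_1|+|m_2|=|R_1|<|R_1|+|R_2|$; after re-normalizing to a minimum-complexity meridian-disk set via Lemma~\ref{Lemma 1 of Finding minimal complexity genus two Heegaard diagrams}, this bound only improves. Since complexity is a nonnegative integer, the process must terminate at one of the explicit stopping clauses. The principal obstacle is the bookkeeping in (iii): at each stage one must invoke Lemma~\ref{Sequences of connected Heegaard diagrams without cut-vertices} to find a complete set of meridian disks giving a connected, no-cut-vertex diagram of the current pair of relators, so that either the base case or a further wave-reduction is available; and one must check throughout that the meridian-disk changes used to locate $\omega'$ in step (8) do not inflate the complexity in a way that defeats the decrease above, which is ensured by Lemmas~\ref{Only bandsums along Heegaard diagram edges do not necessarily increase complexity} and~\ref{Distinguished Wave persistence under a bandsum}.
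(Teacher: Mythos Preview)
Your argument follows the same overall strategy as the paper's proof: invoke \cite{HOT80} and \cite{NO85} for the existence of waves, show that the substitution in step~(10) preserves the closed manifold, and observe that complexity drops. Your treatment is in fact more thorough than the paper's on termination, and your handle-slide argument for $H[R_1,m_1]\cong H[m_1,m_2]$ is a clean alternative to the paper's normal-closure remark.

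There is, however, one genuine gap in part~(ii). You assert that ``$m_1$ and $R_2$ represent the same unoriented slope on the torus $\partial H[R_1]$'' and use this to conclude $H[R_1,R_2]\cong H[R_1,m_1]$. But at step~(10) the only information available is that $R_2$ has algebraic intersection number zero with $\omega'$; you have not shown that this forces $R_2$ to have the distinguished slope. Your discussion in~(i) only establishes the forward implication (if $M$ is in the target class then $R_2\cdot\omega'=0$), whereas~(ii) must hold for \emph{every} $M$ reaching step~(10), including those not in the target class, or else the substitution could change the manifold and the procedure could give a false positive. The fix is short: observe that when $R_1$ is capped off in $\partial H[R_1]$, the arc $\omega'$ closes up to a simple closed curve isotopic to $m_1$ (it is a core of the annulus obtained from the pair of pants $N(R_1\cup\omega')$ by capping $R_1$); hence for any curve $R_2$ disjoint from $R_1$, the algebraic intersection $R_2\cdot\omega'$ equals the intersection pairing $R_2\cdot m_1$ on the torus, and this vanishes precisely when $R_2$ has the same slope as $m_1$. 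With that sentence inserted, your argument is complete.
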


\begin{proof}
Most of the proof that Procedure \ref{S^3, S1 X S2, and (S1 X S2) + L(p,q) recognition} correctly recognizes genus two Heegaard diagram of $S^3$, $S^1 \times S^2$ or $(S^1 \times S^2)\; \#\; L(p,q)$ follows directly from the major results of \cite{HOT80} and \cite{NO85} that every connected genus two Heegaard diagram of $S^3$, $S^1 \times S^2$ or $(S^1 \times S^2)\; \#\; L(p,q)$ has a wave. However, we mention two points.

(1) Continuing with the notation of Procedure \ref{S^3, S1 X S2, and (S1 X S2) + L(p,q) recognition}, observe that if $\omega'$ is the distinguished wave based at $R_1$ in Step (8) of Procedure \ref{Recognizing S^3 etc} and $R_2$ intersects $\omega$ algebraically zero times in Step (9) of Procedure \ref{S^3, S1 X S2, and (S1 X S2) + L(p,q) recognition}, then the pair of curves $(R_1,R_2)$ and the pair of curves $(m_1,m_2)$ have the same normal closure in $\pi_1(\partial H)$. It follows that $R_1$ and $R_2$ are the curves of a genus two Heegaard diagram of $M$ if and only if $m_1$ and $m_2$ are the curves of a genus two Heegaard diagram of $M$. Furthermore, the Heegaard diagram of $\{m_1,m_2\}$ with respect to the complete set of meridian disks $\Delta'$ for $H$ in Step (7) of Procedure \ref{S^3, S1 X S2, and (S1 X S2) + L(p,q) recognition} has smaller complexity than the Heegaard diagram of $\{R_1,R_2\}$ with respect to the complete set of meridian disks $\Delta'$ for $H$.

(2) If $R_2$ does not intersect $\omega'$ algebraically zero times in Step (9) of Procedure ~\ref{S^3, S1 X S2, and (S1 X S2) + L(p,q) recognition}, Lemma \ref{waves provide meridians} together with Lemma \ref{horizontal wave lemma} imply that the Heegaard diagram $\mathcal{D}'$ of $\{R_1,R_2\}$ on $\partial H$ in Step (7) of Procedure \ref{S^3, S1 X S2, and (S1 X S2) + L(p,q) recognition} is not a Heegaard diagram of $S^3$, $S^1 \times S^2$ or $(S^1 \times S^2)\; \#\; L(p,q)$.
\end{proof}

 
\section{Computing the depth of a tunnel of a tunnel-number-one knot in $S^3$, $S^1 \times S^2$, or $(S^1 \times S^2)\; \#\; L(p,q)$.}
\label{Computing depth}

Suppose $M$ is a manifold homeomorphic to one of $(S^1 \times S^2)\; \#\; L(p,q)$, $S^3$ or $S^1 \times S^2$, and suppose $R$ is a nonseparating simple closed curve on the boundary of a genus two handlebody $H$ such that the manifold $H[R]$, obtained by adding a 2-handle to $H$ along $R$, is the exterior of a knot $k$ in $M$. In addition, let $\tau$ be the unknotting tunnel of $k$ dual to $R$ on $\partial H$. In \cite{CM1}--\cite{CM9} Cho and McCullogh described procedures for computing the ``depth'' of an unknotting tunnel of a knot in $S^3$, along with procedures for computing sets of ``rational invariants'' of unknotting tunnels of knots in $S^3$. Later, in \cite{I11}, Ishihara adapted these procedures to compute the depth and rational invariants of knots in $S^3$ using Heegaard diagrams of $S^3$. The procedure described in \ref{Depth computation} below uses distinguished waves in genus two Heegaard diagrams of $M$ to compute the depth of tunnel-number-one knots in $M$, and so generalizes the depth computation procedures of Cho, McCullogh and Ishihara from $S^3$ to $S^1 \times S^2$ and $(S^1 \times S^2)\; \#\; L(p,q)$.

\begin{defn}\textbf{Unknotting paths of an unknotting tunnel.}\hfill
\label{Unknotting paths of an unknotting tunnel}

Suppose $R$ is a nonseparating simple closed curve on the boundary of a genus two handlebody $H$ such that the manifold $H[R]$, obtained by adding a 2-handle to $H$ along $R$, is the exterior of a knot $k$ in a manifold $M$ homeomorphic to $S^3$, $S^1 \times S^2$, or $(S^1 \times S^2)\; \#\; L(p,q)$. Then $R$ determines an unknotting tunnel $\tau$ for $k$, and there is an \emph{unknotting path} of $\tau$ consisting of a finite sequence $R_i$, $0 \leq i \leq n$, of simple closed curves on $\partial H$, starting with $R_0$ = $R$ and ending with $R_n$ primitive or a proper-power in $H$, such that for $0 \leq i < n$, $R_i$ and $R_{i+1}$ are disjoint and $R_{i+1}$ represents the meridian of $H[R_i]$ in $M$.
\end{defn}

\begin{defn}\textbf{Depth of an unknotting tunnel.} \hfill

The \emph{depth} of a tunnel $\tau$ of a tunnel-number-one knot $k$ in $S^3$, $S^1 \times S^2$, or $(S^1 \times S^2)\; \#\; L(p,q)$ is the minimal value of $n$ that occurs over all unknotting paths of $\tau$.
\end{defn}


\subsection{A procedure to compute the depth of an unknotting tunnel.}\hfill
\label{Depth computation}

Suppose $R$ is a nonseparating simple closed curve on the boundary of a genus two handlebody $H$ such that the manifold $H[R]$, obtained by adding a 2-handle to $H$ along $R$, is the exterior of a knot $k$ in $S^3$, $S^1 \times S^2$, or $(S^1 \times S^2)\; \#\; L(p,q)$, and $\tau$ is the unknotting tunnel of $k$ dual to $R$ on $\partial H$. Then the following procedure computes the depth of $\tau$ by producing a well-defined unique minimal length sequence $\mathcal{P} = \{R_i\}$, $0 \leq i \leq n$, of simple closed curves on $\partial H$, starting with $R_0$ = $R$ and ending with $R_n$ primitive or a proper-power in $H$, such that for $0 \leq i < n$, $R_i$ and $R_{i+1}$ are disjoint and $R_{i+1}$ represents the meridian of $H[R_i]$.

\begin{enumerate}
\item Set depth = 0.
\item Determine if $R$ is primitive or a proper-power in $H$, using, say, Theorem \ref{recognizing primitives}.
\item If $R$ is primitive or a proper-power in $H$, stop.
\item Locate a complete set $\{D_A, D_B \}$ of meridional disks of $H$ such that the Heegaard diagram $\mathcal{D}$ of $R$ with respect to $\{D_A, D_B \}$ is connected and has no cut-vertex.
\item Locate the distinguished wave $\omega$ based at $R$ in $\mathcal{D}$, and perform surgery on $R$ along $\omega$ to obtain the distinguished meridian representatives $m_1$ and $m_2$ of $H[R]$.
\item  If the Heegaard diagram $\mathcal{D}'$ of $\{m_1, m_2\}$ with respect to $\{D_A, D_B \}$ is not connected, both $m_1$ and $m_2$ are primitive or proper-powers in $H$. In this case, increment depth by 1, replace $R$ with either $m_1$ or $m_2$, and go to step (2) above.
\item Otherwise, determine which member of $\{m_1,m_2\}$ is shortest, replace $R$ with the shortest member of $\{m_1,m_2\}$, increment depth by 1, and go to step (2) above.
\end{enumerate}

Note that each time the current curve $R$ is replaced in Procedure \ref{Depth computation}, it is replaced by a curve of smaller complexity. So Procedure \ref{Depth computation} terminates in a finite number of steps.

Next, Lemma \ref{Minimal length unknotting paths always pass from a curve to one of the curves distinguished meridian representatives} shows that minimal length unknotting paths always pass from a curve $c$ to one of the two distinguished meridian representatives of $H[c]$. This implies there exists a finite graph $G$ such that $G$ carries all possible minimal length unknotting paths, as per Definition \ref{An unknotting graph G}, of a given initial curve. Then analysis of the minimal length unknotting paths in $G$ will establish Theorem \ref{Depth computation works} below.

\begin{thm}
\label{Depth computation works}
Procedure \emph{\ref{Depth computation}} correctly computes depth.
\end{thm}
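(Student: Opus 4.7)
The plan splits into two parts: verify that the sequence $\mathcal{P} = \{R_i\}$ produced by Procedure \ref{Depth computation} is a valid unknotting path of $\tau$ in the sense of Definition \ref{Unknotting paths of an unknotting tunnel}, and then show by induction on depth that $|\mathcal{P}| = \mathrm{depth}(\tau)$.

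The first part is essentially bookkeeping. At every iteration, $R_{i+1}$ is one of the two distinguished meridian representatives $m_1, m_2$ produced by surgery on $R_i$ along its distinguished wave, so by Definition \ref{wave-determined slope representatives} $R_{i+1}$ lies on $\partial H$, is disjoint from $R_i$, and represents the meridian of $H[R_i]$. Termination is the observation already recorded after the statement of Procedure \ref{Depth computation}: each step strictly decreases the complexity of the current curve in the ambient Heegaard diagram.

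For the minimality part, the forward-referenced Lemma \ref{Minimal length unknotting paths always pass from a curve to one of the curves distinguished meridian representatives} lets me restrict attention to unknotting paths whose first step passes to a distinguished meridian representative, so that
\[
\mathrm{depth}(R) \; = \; 1 + \min\bigl(\mathrm{depth}(m_1), \mathrm{depth}(m_2)\bigr).
\]
Definition \ref{An unknotting graph G} organizes the aggregate of such branchings into a finite rooted directed graph $G$, in which $\mathrm{depth}(\tau)$ is the length of a shortest path from $R$ to a vertex primitive or a proper-power in $H$. Induction on $\mathrm{depth}(R)$ reduces everything to showing that, at each step, the procedure's choice between $m_1$ and $m_2$ lies on a shortest path in $G$. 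When $\mathcal{D}'$ is disconnected (Step (6)), an argument patterned on Lemma \ref{Connected, no cut-vertex ==> Not both primitives} forces both $m_1$ and $m_2$ to be primitive or proper-powers in $H$, so either choice is optimal and adds exactly one terminal step. When $\mathcal{D}'$ is connected and without cut-vertices (Step (7)), Theorems \ref{A distinguished wave appears in each connected H-diagram of S^3 without cut-vertices} and \ref{Distinguished waves in diagrams of closed manifolds are  well-defined} produce a unique distinguished wave $\omega'$ in $\mathcal{D}'$ based at one of $m_1, m_2$, say $m_\ell$, and Definition \ref{Shortest meridian rep} designates the other curve $m_s$ as the shortest meridian representative. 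I would then show $\mathrm{depth}(m_s) \leq \mathrm{depth}(m_\ell)$ by noting that surgery on $m_\ell$ along $\omega'$ recovers $m_s$, so $m_s$ is a distinguished meridian representative of $H[m_\ell]$, producing the edge $m_\ell \to m_s$ in $G$ and thereby giving an unknotting path from $m_\ell$ through $m_s$ of length $1 + \mathrm{depth}(m_s)$.

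The main obstacle I anticipate is sharpening this last inequality: the edge $m_\ell \to m_s$ must not be dominated in $G$ by any competing edge out of $m_\ell$, that is, the partner of $m_s$ among the distinguished meridian representatives of $H[m_\ell]$ must not beat $m_s$ in depth. I would tackle it by invoking Lemma \ref{The two distinguished meridian representatives are the two ``shortest'' meridian representatives of a tunnel of tunnel-number-one knot} together with the two-relator persistence results of Section \ref{Distinguished waves in genus two Heegaard diagrams of closed manifolds}, which together guarantee that the distinguished wave $\omega'$ at $m_\ell$ in the two-relator diagram $\mathcal{D}'$ is the same wave that governs the one-relator distinguished meridian representatives of $H[m_\ell]$, and that the complexity order $|m_s| < |m_\ell|$ in $\mathcal{D}'$ propagates to the ambient graph $G$. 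With that coincidence and complexity comparison in hand, the induction closes and $|\mathcal{P}| = \mathrm{depth}(\tau)$, proving the theorem.
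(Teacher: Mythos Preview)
Your outline is reasonable and you correctly isolate the crux: showing that passing to the \emph{shortest} distinguished meridian representative $m_s$ (rather than the other one $m_\ell$) never costs you depth. You also correctly note that the edge $m_\ell \to m_s$ in $G$ only gives $\mathrm{depth}(m_\ell) \le 1 + \mathrm{depth}(m_s)$, which is the wrong direction, and that the real issue is controlling the \emph{other} distinguished meridian representative $m'$ of $H[m_\ell]$.

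The gap is that your proposed fix---``the complexity order $|m_s| < |m_\ell|$ propagates to the ambient graph $G$''---does not actually pin down where $m'$ sits in $G$. Complexity inequalities among $m_s$, $m_\ell$, $m'$ in a particular Heegaard diagram say nothing a priori about their depths in $G$, and an induction on complexity would require exactly the monotonicity you are trying to prove. What is needed, and what the paper supplies, is a structural analysis of $G$: one must show that the descendants of the off-path vertex $m_\ell$ always feed back into the procedure's main path at the very next level (at the vertices the paper calls $C$ or $C'$), possibly after a chain of intermediate vertices each of which also points back to $m_s$. This is the content of the case analysis in Subsection~\ref{Drawing G and G^*} (the three cases and Figures~\ref{WaveDefFig23aa},~\ref{WaveDefFig23ee}), and it is not a consequence of Lemma~\ref{The two distinguished meridian representatives are the two ``shortest'' meridian representatives of a tunnel of tunnel-number-one knot} or the Section~\ref{Distinguished waves in genus two Heegaard diagrams of closed manifolds} persistence results alone.

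Once that structure is in hand, the paper finishes differently from your plan: rather than proving $\mathrm{depth}(m_s) \le \mathrm{depth}(m_\ell)$, it works with $L(\mathrm{V})$, the distance from the root, and shows inductively (Lemma~\ref{Minimal path length equalities in G}) that $L(\mathrm{B}) = L(\mathrm{B'})$ at every level---so the two branches are in fact \emph{equidistant} from the root, and the procedure's choice is automatically optimal. Your inductive scheme could be completed from the same structural analysis, but not without it.
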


\begin{lem}
Minimal length unknotting paths always pass from a curve $c$ to one of the two distinguished meridian representatives of $H[c]$.
\label{Minimal length unknotting paths always pass from a curve to one of the curves distinguished meridian representatives}
\end{lem}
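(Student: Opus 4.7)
I would prove the lemma by contradiction. Suppose $(R_0, R_1, \ldots, R_n)$ is an unknotting path of $\tau$ of minimal length, and suppose for contradiction that some step $R_i \to R_{i+1}$ goes to a curve $R_{i+1}$ that is not one of the two distinguished meridian representatives of $H[R_i]$; call such a step \emph{bad}. The plan is to produce a strictly shorter unknotting path, violating minimality.

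A first observation rules out bad terminal steps: since $R_n$ is primitive or a proper-power in $H$ by definition of an unknotting path, Corollary \ref{primitive and proper-power meridian reps are distinguished} forces $R_n$ to be isotopic in $\partial H$ to one of the two distinguished meridian representatives of $H[R_{n-1}]$, so the step $R_{n-1} \to R_n$ cannot be bad. Let $j$ be the index of the \emph{last} bad step; then $j \le n-2$ and the subsequent step $R_{j+1} \to R_{j+2}$ is distinguished. Now apply Lemma \ref{The two distinguished meridian representatives are the two ``shortest'' meridian representatives of a tunnel of tunnel-number-one knot} to the bad step: a subarc $\omega'$ of the distinguished wave $\omega$ at $R_j$ is the distinguished wave at $R_{j+1}$, and surgery on $R_{j+1}$ along $\omega'$ yields $R_j$ together with a second curve $S$, so that $\{R_j, S\}$ is the full pair of distinguished meridian representatives of $H[R_{j+1}]$. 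Hence $R_{j+2} \in \{R_j, S\}$.

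If $R_{j+2} = R_j$, the path backtracks, and deleting $R_{j+1}$ and $R_{j+2}$ produces a valid unknotting path of length $n-2$: the next curve $R_{j+3}$ (or $R_n$, if $j+2 = n$) is a meridian of $H[R_{j+2}] = H[R_j]$ disjoint from $R_j$, so $R_j \to R_{j+3}$ is a legal step. If instead $R_{j+2} = S$, I claim $S$ is already a meridian representative of $H[R_j]$, so the shortened path $R_0 \to \cdots \to R_j \to S \to R_{j+3} \to \cdots \to R_n$ of length $n-1$ is valid. To verify the claim, note that $R_j$, $S$ and $R_{j+1}$ cobound a pair of pants $P$ in $\partial H$, namely a regular neighborhood of $R_{j+1} \cup \omega'$. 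Capping off the boundary component $R_j$ of $P$ with the 2-handle disk of $H[R_j]$ converts $P$ into an annulus in $\partial H[R_j]$ with boundary $S \cup R_{j+1}$, so $S$ is isotopic to $R_{j+1}$ on $\partial H[R_j]$ and represents the meridian slope there. Since $S$ is disjoint from $R_j$ in $\partial H$, the step $R_j \to S$ is itself a legal unknotting step.

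In both cases the constructed path is a strictly shorter unknotting path of $\tau$, contradicting the minimality of $(R_0, \ldots, R_n)$. The main obstacle is the pants-and-2-handle verification of the second case, together with the bookkeeping ensuring that the replacement step $R_j \to S$ is compatible with the remainder of the path at the endpoints (when $j = 0$ or $j+2 \ge n$); these are routine once the geometric picture of $P$ is set up. The conclusion is that no bad step can occur in a minimal unknotting path, which is exactly the statement of the lemma.
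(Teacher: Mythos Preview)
Your proof is correct and follows essentially the same route as the paper's: take the last non-distinguished step, observe via Lemma~\ref{The two distinguished meridian representatives are the two ``shortest'' meridian representatives of a tunnel of tunnel-number-one knot} that $R_j$ is itself one of the two distinguished meridian representatives of $H[R_{j+1}]$, and shortcut from $R_j$ directly to $R_{j+2}$. Your pair-of-pants argument verifying that $S$ represents the meridian of $H[R_j]$ is a point the paper leaves implicit, so your write-up is in fact slightly more careful there.
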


\begin{proof}
Suppose, to the contrary, that there exists a minimal length unknotting path, say $\mathcal{P}$, which at some point does not pass from a curve $c$ to one of the two distinguished meridian representatives of $H[c]$. 

Then, consider Figure \ref{WaveDefFig23b}, and suppose Figure \ref{WaveDefFig23b}a has the following properties:
\begin{enumerate}
\item Vertices $\mathrm{A}$ and $\mathrm{B}$ of Figure \ref{WaveDefFig23b}a are successive vertices of $\mathcal{P}$.
\item The curve $R_B$ in $\partial H$, represented by vertex $\mathrm{B}$ of $\mathcal{P}$, is disjoint from the curve $R_A$ in $\partial H$, represented by vertex $\mathrm{A}$ of $\mathcal{P}$, $R_B$ is a meridian of $H[R_A]$, but $R_B$ is not one of the two distinguished meridian representatives of $H[R_A]$.
\item $\mathrm{A}$ and $\mathrm{B}$ are the last pair of successive vertices in $\mathcal{P}$ which satisfy (2).
\end{enumerate}
Lemma \ref{m_1 and m_2 are the only possible primitive or proper-power meridian reps} shows that if $c$ is a curve disjoint from $R_A$, $c$ represents a meridian of $H[R_A]$, and $c$ is primitive or a proper-power in $H$, then $c$ is one of the two distinguished meridian representatives of $H[R_A]$. It follows the curve $R_B$ is not primitive or a proper-power in $H$. Thus $B$ is not the terminal vertex of $\mathcal{P}$, and so $B$ is followed immediately in $\mathcal{P}$ by a vertex $\mathrm{C}$.

Next, since (3) holds, $R_C$ must be one of the two distinguished meridian representatives of $H[R_B]$. In addition, since $R_A$ and $R_B$ are disjoint, and neither $R_A$ nor $R_B$ is primitive or a proper-power in $H$, one of $R_A$, $R_B$ must have a distinguished wave missing the other. Since $R_B$ is not one of the two distinguished meridian representatives of $R_A$, there must be a distinguished wave based at $R_B$ missing $R_A$. But then $R_A$ and $R_C$ are both distinguished meridian representatives of $H[R_B]$. Now $R_C$ can't be isotopic to $R_A$ in $\partial H$, since otherwise $\mathcal{P}$ backtracks from $A \rightarrow B \rightarrow A$. So $R_A$ and $R_C$ are disjoint, $R_C$ is a meridian representative of $H[R_A]$, and $\mathcal{P}$ can be shortened by going directly from $A$ to $C$, as suggested by Figure~\ref{WaveDefFig23b}b.
\end{proof}

\begin{figure}[ht]
\includegraphics[width = 0.5\textwidth]{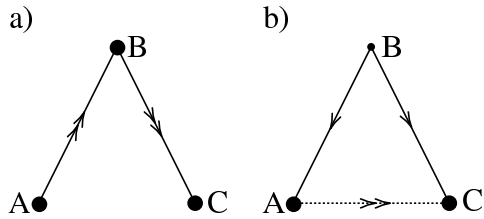}
\caption{\textbf{Minimal length unknotting paths of a tunnel-number-one
knot in $\boldsymbol{S^3}$, $\boldsymbol{S^1 \times S^2}$, or $\boldsymbol{(S^1 \times S^2)\; \#\; L(p,q)}$ always pass from a curve $\boldsymbol{c}$ to a curve which is one of the two distinguished meridian representatives of $\boldsymbol{H[c]}$.}}
\label{WaveDefFig23b}
\end{figure}

The following result of Cohen, Metzler, and Zimmerman makes it possible to determine if a cyclically reduced word in a free group of rank two is primitive or a proper-power of a primitive. 

\begin{thm}[CMZ81]
\label{recognizing primitives}
Suppose a cyclic conjugate of 
\[w = A^{m_1}B^{n_1} \dots A^{m_j}B^{n_j}\]
 is a member of a basis of $F(A,B)$, where $j \geq 1$ and each indicated exponent is nonzero. Then, after perhaps replacing $A$ by $A^{-1}$ or $B$ by $B^{-1}$, there exists $e > 0$ such that:
\[
m_1 = \dots = m_j = 1,
\quad
\text{and}
\quad
\{n_1, \dots ,n_j\} = \{e, e+1\},
\]
or
\[
\{m_1, \dots ,m_j\} = \{e, e+1\},
\quad
\text{and}
\quad
n_1 = \dots = n_j = 1.
\]
\end{thm}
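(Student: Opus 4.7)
My plan is to prove this characterization by induction on the cyclic word
length $L(w) := \sum_{i=1}^{j}(|m_i|+|n_i|)$, using Whitehead's peak-reduction
lemma together with an analysis of two specific Nielsen automorphisms of
$F(A,B)$.

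I would first reduce to the positive-exponent case via a Whitehead-graph
argument. The claim is that if some of the $m_i$ carried opposite signs (or
similarly the $n_i$), a suitable Whitehead automorphism would strictly
shorten $L(w)$ while preserving primitivity and the alternating structure;
iterating would produce a primitive in which the $m_i$ share a common sign
and the $n_i$ share a common sign. After replacing $A$ by $A^{-1}$ or $B$ by
$B^{-1}$ as needed, I may therefore assume $m_i, n_i > 0$ for all $i$. The
base case $L(w) \le 2$ is immediate: $w \in \{A, B, AB\}$, each satisfying
the normal form.

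For the inductive step with positive exponents, I would consider the Nielsen
automorphisms $\varphi\colon A \mapsto A,\ B \mapsto A^{-1}B$ and
$\psi\colon A \mapsto B^{-1}A,\ B \mapsto B$. A direct cancellation count
(between adjacent blocks and at the cyclic interface) gives
\[
L(\varphi(w)) = L(w) + \sum_{i=1}^{j}(n_i - 2), \qquad
L(\psi(w)) = L(w) + \sum_{i=1}^{j}(m_i - 2),
\]
so $\varphi$ shortens $w$ iff $\sum_i n_i < 2j$ and $\psi$ shortens $w$ iff
$\sum_i m_i < 2j$. Whitehead's peak-reduction lemma guarantees that some
Whitehead automorphism strictly shortens $w$; a case analysis of the
possible shortening moves on a positive alternating word of length $>2$ would
force the move to be $\varphi$ or $\psi$, whence either $\sum_i n_i < 2j$ or
$\sum_i m_i < 2j$. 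Say $\varphi$ shortens $w$ (the other case is symmetric),
and set $e := \min_i n_i \geq 1$. Then $\varphi^{e-1}(w)$ is a shorter
positive-alternating primitive with the same $m_i$'s and $B$-exponents
$n_i - (e-1) \geq 1$; applying the inductive hypothesis and pulling back
through $\varphi^{1-e}$ would force $m_i = 1$ for every $i$ and
$n_i \in \{e, e+1\}$, the desired normal form.

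The main obstacle will be the Whitehead case analysis that pins the
shortening move down to $\varphi$ or $\psi$: one must verify that every
other Whitehead second-type automorphism either lengthens $w$ on a positive
alternating form or, after cyclic reduction, fails to shorten it. The
peak-reduction lemma then packages the remaining combinatorics into the
clean inductive step outlined above.
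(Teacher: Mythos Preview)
The paper does not prove this theorem; it is quoted from \cite{CMZ81} (Cohen--Metzler--Zimmermann) and used as a black box, with only a one-sentence remark afterward on how to apply it. So there is no proof in the paper to compare against, and I comment only on the soundness of your sketch.

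Your inductive step contains a concrete error. With $\varphi\colon A\mapsto A,\ B\mapsto A^{-1}B$ and all exponents positive, the cyclically reduced form of $\varphi(w)$ is
\[
A^{m_1-1}B\,(A^{-1}B)^{n_1-1}\,A^{m_2-1}B\,(A^{-1}B)^{n_2-1}\cdots A^{m_j-1}B\,(A^{-1}B)^{n_j-1},
\]
which is \emph{not} of the form $A^{m_1}B^{n_1-1}\cdots A^{m_j}B^{n_j-1}$ unless every $n_i=1$. So $\varphi$ does not simply ``decrement each $n_i$ while keeping the $m_i$ fixed,'' and your description of $\varphi^{e-1}(w)$ as a positive alternating word with the same $m_i$'s and $B$-exponents $n_i-(e-1)$ is false in general. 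There is a second, related problem: your (correct) length formula $L(\varphi(w))=L(w)+\sum_i(n_i-2)$ shows that if $\varphi$ strictly shortens $w$ then the average of the $n_i$ is below $2$, hence $e=\min_i n_i=1$; but then $\varphi^{e-1}=\varphi^{0}$ is the identity and you have produced nothing shorter to feed to the induction hypothesis.

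The overall strategy---peak reduction plus a classification of which Whitehead moves can shorten a positive alternating word in $F(A,B)$---is indeed in the spirit of the Cohen--Metzler--Zimmermann argument, but you must track the full image $\varphi(w)$ (with its new $A^{-1}$ syllables) and argue from \emph{its} normal form back to constraints on the original $m_i,n_i$, rather than assuming $\varphi$ preserves the positive alternating class. As written, the induction does not close.
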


Note that if $w$ in $F(A,B)$ has the form $w = AB^{n_1} \dots AB^{n_j}$, say, with $j \geq 1$ and $\{n_1, \dots ,n_j\} = \{e, e+1\}$, then the automorphism $A \mapsto AB^{-e}$ of $F(A,B)$ reduces the length of $w$, so repeated applications of such automorphisms can be used to determine if a given word $w$ in $F(A,B)$ is a primitive.

\subsection{A graph carrying all minimal length unknotting paths.}\hfill
\label{An unknotting graph G}

Lemma \ref{Minimal length unknotting paths always pass from a curve to one of the curves distinguished meridian representatives} shows that minimal length unknotting paths always pass from a curve $c$ to one of the two distinguished meridian representatives of $H[c]$. This implies there exists a finite graph $G$ with the property that $G$ carries all possible minimal length unknotting paths, as per Definition \ref{An unknotting graph G}, of a given initial curve.

The graph $G$ can be produced by starting with an initial vertex representing the initial curve whose minimal length unknotting paths are being sought, and applying the following procedure recursively to add new vertices and edges to $G$ until the procedure fails to add new vertices to $G$.
\begin{enumerate}
\item If $V$ is a vertex of $G$, there are no directed edges of $G$ leaving $V$, and the curve $R_V$ in $\partial H$ represented by $V$, is not primitive in $H$, then:
\begin{enumerate}
\item Let $m_1$ and $m_2$ be the two distinguished meridian representatives of $H[R_V]$ obtained from $R_V$ by performing surgery on $R_V$ along the distinguished wave based at $R_V$ in $\partial H$, and if $m_1$, respectively $m_2$, is not represented by a vertex of $G$, add a vertex, $V_1$ representing $m_1$, respectively, $V_2$ representing $m_2$, to $G$.
\item Add a directed edge to $G$ which leaves $V$ and connects $V$ to the vertex of $G$ representing $m_1$. Similarly, add a directed edge to $G$ which leaves $V$ and connects $V$ to the vertex of $G$ representing $m_2$.
\end{enumerate}
\item If $V$ is a vertex of $G$, there are no directed edges of $G$ leaving $V$, and the curve $R_V$ in $\partial H$ represented by $V$ is primitive in $H$, do nothing.
\end{enumerate}

\subsection{A subgraph $\boldsymbol{G^*}$ of $\boldsymbol{G}$ which carries the unknotting path described in Subsection \ref{Depth computation}}\hfill
\label{Subgraph G^* of G}

Subsection \ref{Depth computation} describes a procedure for computing a particular
unknotting path $\mathcal{P}$ of a given initial curve. This subsection describes a subgraph $G^*$ of $G$ whose vertices are the simple closed curves in $\partial H$ examined while computing $\mathcal{P}$. 

The unknotting path $\mathcal{P}$, as described in \ref{Depth computation}, is a sequence $\mathcal{P} = \{R_i\}$, $0 \leq i \leq n$, of simple closed curves on $\partial H$ which starts with an initial curve $R_0$ and ends with a curve $R_n$, primitive or a proper-power in $H$, such that for $0 \leq i < n$, $R_i$ and $R_{i+1}$ are disjoint, and at each step with $0 \leq i < n$, $R_{i+1}$ is obtained from $R_i$ by choosing one of the two distinguished meridian representatives of $H[R_i]$ in $\partial H$ as $R_{i+1}$. 

Given $\mathcal{P}$, let the vertices of $G^*$ be the vertices of $G$ which represent $R_0$ and both distinguished meridian representatives of each $R_i$ in $\mathcal{P}$, for $0 \leq i < n$. In addition, if $V_i$, $0 \leq i < n$, is a vertex of $G^*$ which represents a curve in $\mathcal{P}$, add the pair of directed edges of $G$ leaving $V_i$ to the set of directed edges of $G^*$. 

Next, recall that, when $R_i$, $0 \leq i < n$, is a curve in $\mathcal{P}$, one member of the pair of distinguished meridian representatives of $H[R_i]$ lies in $\mathcal{P}$, the other member of the pair of distinguished meridian representatives of $H[R_i]$ does not lie in $\mathcal{P}$. In particular, if $m_{i,1}$ and $m_{i,2}$, are the two distinguished meridian representatives of $H[R_i]$, $m_{i,1} \in \mathcal{P}$, $m_{i,2} \notin \mathcal{P}$, $m_{i,2}$ is not primitive or a proper-power in $H$, $V_{i,1}$ is the vertex of $G^*$ representing $m_{i,1}$, and $V_{i,2}$ is the vertex of $G^*$ representing $m_{i,2}$, add a directed edge to $G^*$ pointing from $V_{i,2}$ to $V_{i,1}$.

\subsection{Drawing $\boldsymbol{G}$ And its subgraph $\boldsymbol{G^*}$.}\hfill
\label{Drawing G and G^*}

The graphs $G^*$ and $G$ have relatively simple structures which become apparent when $G^*$ and $G$ are drawn. Since $G^*$ is planar, we begin by describing a planar embedding of $G^*$, and then indicate how to add edges and perhaps vertices to the embedding of $G^*$ to obtain $G$.

The vertices of $G^*$ and $G$ can be embedded in the plane so that they lie in two disjoint vertical columns, with the vertices of $G^*$ representing curves in $\mathcal{P}$ lying in the first column, and all other vertices of $G^*$ and $G$ lying in the second column.
To wit, suppose $\mathrm{X}$, $\mathrm{Y}$ and $\mathrm{Y'}$ are three vertices of $G^*$ such that $\mathrm{X}$ and $\mathrm{Y}$ represent successive members of $\mathcal{P}$, the curve $\mathrm{Y'}$ represents is not in $\mathcal{P}$, the curve $R_X$ in $\partial H$ that $\mathrm{X}$ represents is not primitive or a proper-power in $H$, and the curves $R_Y$ and $R_{Y'}$ in $\partial H$ that $\mathrm{Y}$ and $\mathrm{Y'}$ represent are the two distinguished meridians of $H[R_X]$. Then embed $\mathrm{X}$ and $\mathrm{Y}$ in the first column with $\mathrm{Y}$ directly below $\mathrm{X}$, and embed $\mathrm{Y'}$ in the second column at a height between the heights of $\mathrm{X}$ and $\mathrm{Y}$. Then add appropriate directed edges of $G^*$, after which a generic local embedded subgraph of $G^*$ should have the form of Figure \ref{WaveDefFig23}a.

With $G^*$ embedded, the edges and vertices of $G$ which do not belong to $G^*$ can be added to the embedding of $G^*$. These edges and vertices of $G$ are the edges and vertices of $G$ which can only be traversed or reached via a directed path from the initial vertex of $G^*$ that passes through a ``primed'' vertex of $G^*$ which represents a curve in $\partial H$ that is not primitive or a proper-power in $H$.

For example, consider vertex $\mathrm{B'}$ of Figure \ref{WaveDefFig23}, and let $R$ be the curve in $\partial H$ which $\mathrm{B'}$ represents. Since there is a directed edge of $G^*$ leaving vertex $\mathrm{B'}$, $R$ is not primitive or a proper-power in $H$, and so surgery on $R$ along the distinguished wave based at $R$ in $\partial H$ yields two distinguished meridian representatives, say $m_1$ and $m_2$, of $H[R]$ in $\partial H$. Furthermore, since the directed edge of $G^*$ leaving vertex $\mathrm{B'}$ in $G^*$ ends at vertex $\mathrm{B}$ of $G^*$, vertex $\mathrm{B}$ of $G^*$ represents one of $m_1$, $m_2$. So, without loss, let vertex $\mathrm{B}$ of $G^*$ represent $m_1$. Then one of the following occurs:
\begin{enumerate}
\item Both $m_1$ and $m_2$ are primitives or proper-powers in $H$.\\
In this case Figure \ref{WaveDefFig23ee}b illustrates the situation. In Figure \ref{WaveDefFig23ee}, vertices of $G$ representing curves in $\partial H$ that are primitive or proper-powers in $H$ are shown as black squares, vertex $\mathrm{C'}$ of Figure \ref{WaveDefFig23ee}b represents the curve $m_2$, and there are no directed edges of $G$ leaving either vertex $\mathrm{B}$ or vertex $\mathrm{C'}$ of $G$. Hence no vertices or edges of $G$ are embedded in either column below $\mathrm{B}$ or $\mathrm{C'}$.

\item The distinguished wave based at $m_1$ in $\partial H$ is disjoint from $m_2$.\\
In this case, vertex $\mathrm{B}$ represents a curve, say $S$ in $\partial H$, which is not primitive or a proper-power in $H$, so surgery on $S$ along the distinguished wave based at $S$ in $\partial H$ yields two distinguished meridian representatives, say $m_{S1}$ and $m_{S2}$ of $H[S]$ in $\partial H$, one of which must be isotopic to $m_2$ in $\partial H$. Thus $m_2$ is represented either by vertex $\mathrm{C}$ or vertex $\mathrm{C'}$ of $G^*$, and there is a directed edge of $G$ connecting vertex $\mathrm{B'}$ to either $\mathrm{C}$ or vertex $\mathrm{C'}$ of $G^*$. Figures \ref{WaveDefFig23aa}a and \ref{WaveDefFig23aa}b depict these two possibilities.

\item The distinguished wave based at $m_2$ in $\partial H$ is disjoint from $m_1$.\\
In this case, there may exist several vertices of $G$ which are only accessible via directed paths in $G$ that start at the initial vertex of $G$ and pass through vertex $\mathrm{B'}$ of $G^*$. The simplest case occurs when there is only one such vertex, say $\mathrm{E'}$. In which case, $\mathrm{E'}$ represents the curve $m_2$ in $\partial H$. 

Then there must be two directed edges of $G$ leaving $\mathrm{E'}$, one of which must connect $\mathrm{E'}$ to vertex $\mathrm{B}$ of $G$, while the second edge of $G$ leaving $\mathrm{E'}$ must connect $\mathrm{E'}$ to either vertex $\mathrm{C}$ or vertex $\mathrm{C'}$ of $G^*$, as indicated by Figures \ref{WaveDefFig23aa}c and \ref{WaveDefFig23aa}d.
(Note the curve in $\partial H$ which vertex $\mathrm{B}$ represents is not primitive or a proper-power in $H$ in this case, since otherwise there exists more than one vertex of $G$ which is only accessible via directed paths in $G$ that start at the initial vertex of $G$ and pass through vertex $\mathrm{B'}$ of $G^*$. Hence there are directed edges of $G^*$ connecting vertex $\mathrm{B}$ of $G^*$ to vertices $\mathrm{C}$ and $\mathrm{C'}$ of $G^*$.)

In the general case, there is more than one vertex of $G$ which is only accessible via directed paths in $G$ that start at the initial vertex of $G$ and pass through vertex $\mathrm{B'}$ of $G^*$. 

In this case, there exists a sequence of pairs of disjoint curves $(m_{i,1},m_{i,2})$, $0 \leq i \leq n$, in $\partial H$ with each $m_{i,1}$ isotopic to $m_1$ in $\partial H$, for $0 \leq i \leq n$, and $m_{0,2}$ = $m_2$. Furthermore, no $m_{i,2}$ is primitive or a proper-power in $H$ for $0 \leq i < n$, and there is a distinguished wave $\omega_i$ based at each $m_{i,2}$ in $\partial H$, for $0 \leq i < n$, with $\omega_i$ disjoint from $m_{i,1}$, such that $m_{i+1,1}$ and $m_{i+1,2}$ are the two distinguished meridian representatives of $H[m_{i,2}]$ obtained by surgery on $m_{i,2}$ along $\omega_i$.

We mention that each curve $m_{i+1,2}$, $0 \leq i < n$, can be obtained by surgery on $m_{i+1,2}$ along $\omega_i$ in a Heegaard diagram of $m_{i,2}$ that has minimal complexity. It follows $i$ reaches a value $n$ such that either $m_{n,2}$ is primitive or a proper-power in $H$, or $m_{n,2}$ is not primitive or a proper-power in $H$, but the distinguished wave $\omega_n$ based at $m_{n,2}$ in $\partial H$ is not disjoint from $m_{n,1}$.

Suppose $\mathrm{V_i'}$ is the vertex of $G$ representing $m_{i,2}$ for $0 \leq i \leq n$. Then, for $0 \leq i < n$, one of the two directed edges of $G$ leaving $\mathrm{V_i'}$ connects $\mathrm{V_i'}$ to vertex $\mathrm{B}$ of $G$, while the other directed edge leaving $\mathrm{V_i'}$ connects $\mathrm{V_i'}$ to $\mathrm{V_{i+1}'}$.

The location of $\mathrm{V_n'}$ in $G$ depends upon whether the curve $m_1$, which vertex $\mathrm{B}$ represents in $G^*$, is primitive or a proper-power in $H$, and upon whether the curve $m_{n,2}$, which vertex $\mathrm{V_n'}$ represents in $G$, is primitive or a proper-power in $H$. In particular, there are the following cases:
\begin{enumerate}
\item The curve $m_1$ is primitive in $H$.\\
In this case, the subgraph of $G$ only accessible via vertex $\mathrm{B'}$ of $G^*$, must appear as in Figure \ref{WaveDefFig23ee}c with vertex $\mathrm{V_{n-1,2}}$ of $G$ appearing as vertex $\mathrm{E'}$ in Figure \ref{WaveDefFig23ee}c, and vertex $\mathrm{V_{n,2}}$ of $G$ appearing as vertex $\mathrm{C'}$ in Figure \ref{WaveDefFig23ee}c. 
\item The curve $m_1$ is not primitive or a proper-power in $H$.\\
In this case, the subgraph of $G$ only accessible via vertex $\mathrm{B'}$ of $G^*$, must appear as in Figure \ref{WaveDefFig23aa}c or Figure \ref{WaveDefFig23aa}d with vertex $\mathrm{V_{n-1,2}}$ of $G$ appearing as vertex $\mathrm{E'}$ in Figure \ref{WaveDefFig23aa}c or Figure \ref{WaveDefFig23aa}d, and vertex $\mathrm{V_{n,2}}$ of $G$ appearing as vertex $\mathrm{C'}$ in Figure \ref{WaveDefFig23aa}c or as vertex $\mathrm{C}$ in Figure~\ref{WaveDefFig23aa}d, depending upon whether the curve $m_{n,2}$ is isotopic in $\partial H$ to the curve  which vertex $\mathrm{C'}$ of $G^*$ represents or to the curve which vertex $\mathrm{C}$ of $G^*$ represents.
\end{enumerate}
\end{enumerate}

\begin{figure}[ht]
\includegraphics[width = 0.35\textwidth]{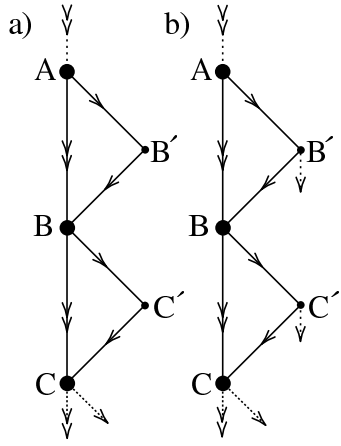}
\caption{\textbf{Representative local generic subgraphs of $\boldsymbol{G^*}$.} (Figure \ref{WaveDefFig23}b is meant to indicate that there are additional edges and perhaps additional vertices of $G$ accessible from the initial vertex of $G^*$ only via directed paths in $G^*$ passing through a ``primed'' vertex of $G^*$, e.g. vertex $\mathrm{B'}$ or vertex $\mathrm{C'}$, of Figure \ref{WaveDefFig23}a.)} 
\label{WaveDefFig23}
\end{figure}

\begin{figure}[ht]
\includegraphics[width = 0.70\textwidth]{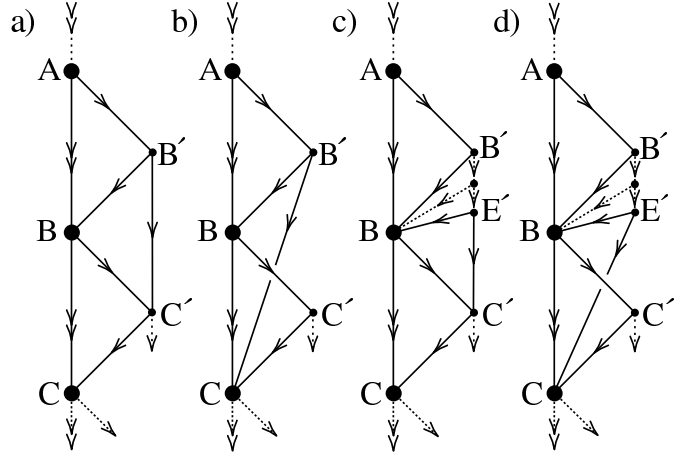}
\caption{\textbf{Four types of subgraphs of $\boldsymbol{G}$.} The four types of subgraphs of $G$ which can arise from the process of adding edges to Figure \ref{WaveDefFig23}b that are only accessible from vertex $\mathrm{B'}$ in Figure \ref{WaveDefFig23}b when the curve which vertex $\mathrm{B}$ represents in $\partial H$ is not primitive or a proper-power in $H$. (There may be several descendants of $\mathrm{B'}$ which appear as intervening vertices on the line from vertex $\mathrm{B'}$ to vertex $\mathrm{E'}$, as suggested by the dotted lines in Figure \ref{WaveDefFig23aa}c and Figure \ref{WaveDefFig23aa}d.)}
\label{WaveDefFig23aa}
\end{figure}


\subsection{Minimal length directed paths in $\boldsymbol{G}$.}\hfill
\label{Computing min length paths in G}

\begin{defn}\textbf{Minimal directed path length in $\boldsymbol{G}$.}
\label{Minimal path length defn}

If $\mathrm{V}$ is a vertex of $G$, and $P$ is a directed path in $G$ from the initial vertex of $G$ to $\mathrm{V}$, let $L(P)$ be the number of edges of $G$ which occur in $P$. Then let $L(\mathrm{V})$ be the minimum value of $L(P)$ over the set of all directed paths in $G$ from the initial vertex of $G$ to $\mathrm{V}$.
\end{defn}

\begin{lem}\emph{\textbf{Minimal path length equalities in $\boldsymbol{G}$.}} \hfill
\label{Minimal path length equalities in G}

Suppose $\mathrm{A}$, $\mathrm{B}$ and $\mathrm{C}$ are three successive vertices of $G^*$ representing three successive curves lying in the particular unknotting path $\mathcal{P}$ described in Subsection \emph{\ref{Depth computation}}. In addition, suppose $\mathrm{B'}$ and $\mathrm{C'}$ are vertices of $G^*$ such that the two directed edges of $G^*$ leaving vertex $\mathrm{A}$ point to vertices $\mathrm{B}$ and $\mathrm{B'}$, while the two directed edges of $G^*$ leaving vertex $\mathrm{B}$ point to vertices $\mathrm{C}$ and $\mathrm{C'}$. Then $L(\mathrm{B}) = L(\mathrm{B'})$ implies $L(\mathrm{C}) = L(\mathrm{C'})$.
\end{lem}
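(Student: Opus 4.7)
The plan is to compute $L(\mathrm{C})$ and $L(\mathrm{C'})$ as minima over their predecessors in $G$ and verify that the two values coincide. Because $\mathrm{B}\to \mathrm{C}$ and $\mathrm{B}\to \mathrm{C'}$ are edges of $G^*$, I have immediately $L(\mathrm{C}), L(\mathrm{C'}) \leq L(\mathrm{B})+1$, so the task reduces to showing that each is actually equal to $L(\mathrm{B})+1$ under the hypothesis $L(\mathrm{B})=L(\mathrm{B'})$.

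To enumerate all remaining predecessors of $\mathrm{C}$ and $\mathrm{C'}$ in $G$, I would invoke Subsection \ref{Drawing G and G^*} and split into the four configurations of Figure \ref{WaveDefFig23aa}. Besides $\mathrm{B}$, the predecessors of $\mathrm{C}$ include $\mathrm{C'}$ (via the $G^*$ edge $V_{1,2}\to V_{1,1}$) and, depending on the case, either $\mathrm{B'}$ (Figure \ref{WaveDefFig23aa}a) or the terminal vertex $\mathrm{E'}$ of the chain of primed vertices $\mathrm{B'}=\mathrm{V_0'},\mathrm{V_1'},\dots,\mathrm{V_{n-1}'}=\mathrm{E'}$ (Figure \ref{WaveDefFig23aa}d); the predecessors of $\mathrm{C'}$ similarly include $\mathrm{B'}$ in Figure \ref{WaveDefFig23aa}b and $\mathrm{E'}$ in Figure \ref{WaveDefFig23aa}c. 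For the chain cases I would use the fact that the only edge of $G$ into each $\mathrm{V_i'}$ with $1\le i \le n-1$ is the chain edge $\mathrm{V_{i-1}'}\to \mathrm{V_i'}$, since every $\mathrm{V_j'}$ has outgoing edges only to $\mathrm{B}$ and $\mathrm{V_{j+1}'}$ and the $\mathrm{V_i'}$ are accessible only through $\mathrm{B'}$; this yields inductively $L(\mathrm{V_i'}) = L(\mathrm{B'})+i$ and in particular $L(\mathrm{E'}) = L(\mathrm{B'})+n-1$.

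Putting the pieces together, the extra predecessor $\mathrm{B'}$ contributes the bound $L(\mathrm{B'})+1 = L(\mathrm{B})+1$ by hypothesis; the extra predecessor $\mathrm{E'}$ contributes $L(\mathrm{B'})+n = L(\mathrm{B})+n \geq L(\mathrm{B})+1$ since $n\ge 1$; and once $L(\mathrm{C'}) = L(\mathrm{B})+1$ is established, the predecessor $\mathrm{C'}$ of $\mathrm{C}$ contributes $L(\mathrm{C'})+1 = L(\mathrm{B})+2$. None of these strictly beats the bound $L(\mathrm{B})+1$ coming from $\mathrm{B}$, so $L(\mathrm{C}) = L(\mathrm{C'}) = L(\mathrm{B})+1$ in all four cases. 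The main obstacle will be justifying the completeness of these predecessor lists, i.e., ruling out edges into $\mathrm{C}$, $\mathrm{C'}$, or any $\mathrm{V_i'}$ coming from parts of $G$ not displayed in Figure \ref{WaveDefFig23aa}; to close this gap I would make precise the ``only accessible via $\mathrm{B'}$'' description in Subsection \ref{Drawing G and G^*} by appealing to Lemma \ref{m_1 and m_2 are the only possible primitive or proper-power meridian reps} to control which curves can be distinguished meridian representatives of which others.
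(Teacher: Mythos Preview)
Your proposal is correct and follows essentially the same approach as the paper: both compute $L(\mathrm{C})$ and $L(\mathrm{C'})$ as minima over their incoming edges in $G$, perform a case analysis on the local structure near $\mathrm{B'}$, and conclude that every predecessor contributes at least $L(\mathrm{B})+1$. The paper organizes the cases via the two configurations of Figure~\ref{WaveDefFig23ff} (with the chain cases dispatched by the remark that the equalities ``still hold when $\mathrm{B'}$ and $\mathrm{C'}$ are connected by a path in $G$ containing vertices of $G$''), whereas you use the four configurations of Figure~\ref{WaveDefFig23aa} and spell out $L(\mathrm{V_i'})=L(\mathrm{B'})+i$ explicitly; the content is the same, and your acknowledged obstacle about completeness of the predecessor lists is exactly what the paper handles by appeal to the structural description in Subsection~\ref{Drawing G and G^*}.
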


\begin{proof}
First, consider Figure \ref{WaveDefFig23ff}a. Then $L(\mathrm{C}) = \text{min}\{L(\mathrm{B}) + 1, L(\mathrm{C'}) + 1\}$, and $L(\mathrm{C'}) = \text{min}\{L(\mathrm{B}) + 1, L(\mathrm{B'}) + 1\}$.  By hypothesis, $L(\mathrm{B}) = L(\mathrm{B'})$. So we have $L(\mathrm{C'}) = L(\mathrm{B}) + 1$, $L(\mathrm{C}) = L(\mathrm{B}) + 1$, and $L(\mathrm{C}) = L(\mathrm{C'})$. Next, observe that $L(\mathrm{C}) = L(\mathrm{B}) + 1$, and $L(\mathrm{C}) = L(\mathrm{C'})$ still hold when $\mathrm{B'}$ and $\mathrm{C'}$ are connected by a path in $G$ containing vertices of $G$. 

Second, consider Figure \ref{WaveDefFig23ff}b. Then $L(\mathrm{C}) = \text{min}\{L(\mathrm{B}) + 1, L(\mathrm{B'}) + 1,L(\mathrm{C'}) + 1\}$, and $L(\mathrm{C'}) = L(\mathrm{B}) + 1$. By hypothesis, $L(\mathrm{B}) = L(\mathrm{B'})$. So $L(\mathrm{C'}) = L(\mathrm{B}) + 1$, $L(\mathrm{C}) = L(\mathrm{B}) + 1$, and $L(\mathrm{C}) = L(\mathrm{C'})$. Next, observe that $L(\mathrm{C}) = L(\mathrm{B}) + 1$, and $L(\mathrm{C}) = L(\mathrm{C'})$ still hold when $\mathrm{B'}$ and $\mathrm{C}$ are connected by a path in $G$ containing vertices of $G$.

Finally, note the conclusions above still hold when:
\begin{enumerate}
\item There are directed edges of $G$ ending at $\mathrm{C}$ which originate at vertices of $G$, distinct from $\mathrm{C'}$, that are only accessible from the initial vertex of $G$ via directed paths in $G$ which pass through $\mathrm{C'}$.
\item There is no edge of $G$ connecting $\mathrm{C'}$ and $\mathrm{C}$.
\end{enumerate}
\end{proof}

\begin{figure}[ht]
\includegraphics[width = 0.45\textwidth]{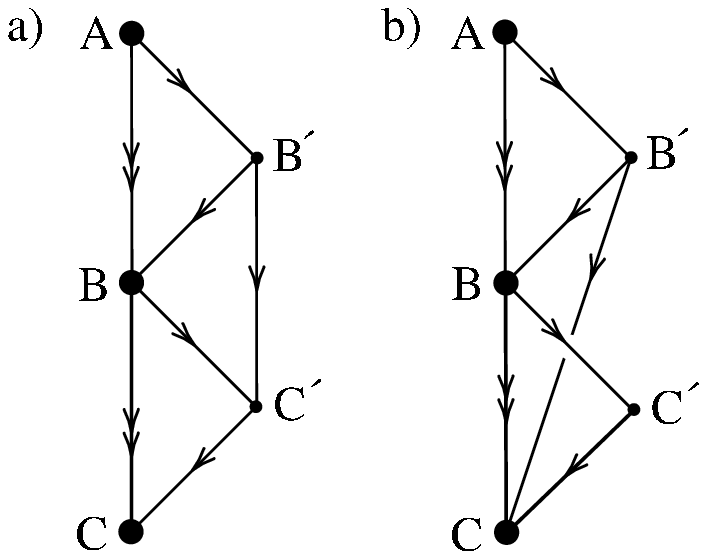}
\caption{\textbf{Two graphs used in showing that $\mathbf{L(B) \boldsymbol{=} L(B')}$ implies $\mathbf{L(C) \boldsymbol{=} L(C')}$.}}
\label{WaveDefFig23ff}
\end{figure}


\begin{figure}[ht]
\includegraphics[width = 0.55\textwidth]{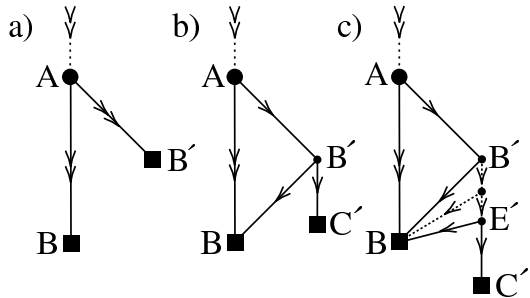}
\caption{\textbf{Local directed paths in $\boldsymbol{G}$ which terminate with vertices---shown as filled rectangles---representing curves in $\boldsymbol{\partial H}$ that are primitives or proper-powers in $\boldsymbol{H}$.}}
\label{WaveDefFig23ee}
\end{figure}

\begin{proof}[Proof of Theorem \emph{\ref{Depth computation works}}] \hfill

It follows readily from the forms of $G^*$ and $G$ as detailed above, that there is only one `unprimed' vertex of $G$ which represents a curve in $\partial H$ that is primitive or a proper-power in $H$, and only one `primed' vertex of $G$ which represents a curve in $\partial H$ that is primitive or a proper-power in $H$, and that these two vertices must arise in $G$ in one of the three configurations illustrated in Figure \ref{WaveDefFig23ee}. Then Lemma \ref{Minimal path length equalities in G}, applied inductively, shows that $L(\mathrm{B}) = L(\mathrm{B'})$ in each subfigure of Figure \ref{WaveDefFig23ee}. It follow that, in all cases, Procedure \ref{Depth computation} produces a minimal length unknotting path for a given curve.
\end{proof}


\section{Other waves based at a fixed curve.}
\label{Other waves based at a fixed curve}

The results of the previous sections show that if $R$ is a nonseparating simple closed curve on the boundary of a genus two handlebody $H$ such that $H[R]$ has incompressible boundary, then there exists a distinguished wave $\omega$, based at $R$ in $\partial H$, such that $\omega$ appears in each connected Heegaard diagram $\mathcal{D}$, without cut-vertices, of $R$ on boundary $H$. However, a nonseparating simple closed curve $R$ on the boundary of a genus two handlebody may also have additional waves which appear only in some Heegaard diagrams of $R$ on $\partial H$. 
This section provides some examples in which such additional ephemeral waves appear, and also provides some results which restrict the appearance of such ephemeral waves.

\begin{defn}\textbf{Positive and Nonpositive genus two curves.}\hfill

A nonseparating simple closed curve $R$ on the boundary of a genus two handlebody $H$ such that $H[R]$ has incompressible boundary is \emph{positive} if there exists a complete set of meridian disks $\Delta$ for $H$ such that the Heegaard diagram $\mathcal{D}$ of $R$ with respect to $\Delta$ is connected, has no cut-vertices, and is positive. Otherwise $R$ is \emph{nonpositive}.
\end{defn}

\begin{prop}\emph{\textbf{Nonpositive curves have only distinguished waves.}}\hfill

Suppose $R$ is a nonseparating, nonpositive, simple closed curve on the boundary of a genus two handlebody $H$, $H[R]$ has incompressible boundary, $\mathcal{D}$ is a Heegaard diagram of $R$ on $\partial H$---which may have cut-vertices---and $\omega$ is a wave based at $R$ in $\mathcal{D}$. Then $\omega$ is a distinguished wave based at $R$.
\end{prop}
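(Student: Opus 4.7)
The plan is to reduce to the case where $\mathcal{D}$ has no cut-vertex---which is immediate---and then handle the general cut-vertex case by a bandsum reduction that carries $\omega$ along.

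First, suppose $\mathcal{D}$ is connected and has no cut-vertex. Then the hypothesis that $R$ is nonpositive forces $\mathcal{D}$ itself to be nonpositive, directly from the definition. Lemma \ref{nonpositive diagrams have unique waves} then guarantees a unique wave based at $R^+$ (and one at $R^-$) in $\mathcal{D}$. By the definition of the distinguished wave in the nonpositive case, this unique wave is precisely the distinguished wave, so $\omega$ is as claimed.

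Next, suppose $\mathcal{D}$ has a cut-vertex. I plan to induct on the complexity $|\mathcal{D}|$. Lemma \ref{wave reduction} produces a bandsum replacing $D_A$ by a new disk $D_C$ whose boundary $C$ is built from $\partial D_A \cup \partial D_B$ via a band $\beta$ located at the cut-vertex, yielding a diagram $\mathcal{D}'$ with $|\mathcal{D}'| < |\mathcal{D}|$ (the degenerate subcase $a=b=0$ cannot arise here, since it would force a separating disk disjoint from $R$, contradicting $H[R]$ having incompressible boundary). Since $\omega$ is already disjoint from $\partial D_A \cup \partial D_B$, and $\partial D_C$ differs from $\partial D_A \cup \partial D_B$ only along the band $\beta$, the wave $\omega$ will persist as a wave at $R$ in $\mathcal{D}'$ as soon as it can be arranged disjoint from $\beta$.

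The main obstacle is this last point---arranging $\omega \cap \beta = \emptyset$. I would approach it by a case-analysis based on the cut-vertex configuration of Figure \ref{DPCFig8cs}a: the band $\beta$ lies in a specific face of $\mathcal{D}$ adjacent to the cut-vertex, while $\omega$, being disjoint from $\partial D_A \cup \partial D_B$, lies in some face of $\mathcal{D}$. When these faces are distinct, disjointness is immediate; when they coincide, I would either isotope $\omega$ in $\partial H$ rel $R$ off $\beta$---using that in a single face any intersection of $\omega$ with $\beta$ must be inessential---or invoke Remark \ref{wave pairs} to trade $\beta$ for its hyperelliptic partner $\beta'$, which provides an equivalent complexity-reducing bandsum whose band is disjoint from the corresponding representative of $\omega$. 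Once $\omega$ is verified to be a wave at $R$ in $\mathcal{D}'$, the inductive hypothesis finishes the argument, and the consistency of the resulting ``distinguished wave'' across the various reduced diagrams reached in the induction is guaranteed by Theorem \ref{Thm Distinguished waves are well-defined}.
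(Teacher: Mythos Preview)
Your inductive approach via Lemma~\ref{wave reduction} is genuinely different from the paper's argument, and the ``inessential intersection'' claim you flag as the main obstacle is indeed where your version is shakiest. In a face of $\mathcal{D}$, a wave $\omega$ based at $R$ must separate the vertex arcs on $\partial(\text{face})$ into two nonempty groups (else $\omega$ is inessential in $F$), while the band $\beta$ connects an $A$--vertex arc to a $B$--vertex arc; if those lie on opposite sides of $\omega$, the crossing is essential in the face and cannot be isotoped away rel endpoints. Your hyperelliptic fallback does repair this---$\omega$ lies in a single face, the two bands $\beta,\beta'$ lie in distinct $\iota$--related faces, and either bandsum yields isotopic $D_C$---but you should drop the first alternative and argue the fallback carefully.

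The paper avoids this altogether by working directly in the twice--punctured torus $F=\partial H$ cut along $R$. The key observation is that nonpositivity of $R$ forces \emph{every} complete set $\Delta$ to contribute arcs $b^+\subset F\cap\partial\Delta$ with both ends on $R^+$ (and $b^-$ on $R^-$): if not, all intersections of $R$ with $\partial\Delta$ would carry one sign, producing a positive diagram. The isotopy class of $b^+$ in $F$ is then shown to be invariant along any bandsum sequence of complete sets: the new disk in $\Delta_{i+1}$ is disjoint from $\partial\Delta_i$, so its $b^+$--type arcs are forced parallel to the old $b^+$ by the uniqueness in Lemma~\ref{nonpositive diagrams have unique waves} (whose proof in Figure~\ref{PPFig5ja} only uses the presence of $b^\pm$, not the absence of cut--vertices). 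Any wave $\omega$ in any diagram, being an essential arc in $F$ disjoint from $\partial\Delta$, is then automatically parallel to $b^+$---no chasing of $\omega$ through reductions is needed. What the paper's route buys is that the wave is pinned down intrinsically in $F$ once and for all; what your route buys is a more hands--on picture tied to the reduction process, at the cost of the band--disjointness verification.
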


\begin{proof}
Recall that Lemmas \ref{wave reduction} and \ref{Sequences of connected Heegaard diagrams without cut-vertices} together show that any complete set of meridian disks of a genus two handlebody $H$ can be transformed into any other complete set of meridian disks of $H$ via a finite sequence $\{\Delta_i\}$, $0 \leq i \leq n$, of complete sets of meridian disks for $H$ in which at each step the current complete set of meridian disks, say $\Delta_i$, is first augmented by the addition of a meridian disk of $H$ whose boundary  is a bandsum in $\partial H$ of the boundaries of the two disks in $\Delta_i$, and then one of the original disks in $\Delta_i$ is discarded, leaving a new complete set of meridian disks $\Delta_{i+1}$ for $H$.

Now, suppose $\Delta_i$, $0 \leq i < n$, is a complete set of meridian disks for $H$ in such a sequence of complete sets of meridian disks for $H$, and consider Figure~\ref{PPFig5jaa} which shows a twice-punctured torus $F$ obtained by cutting $\partial H$ open along $R$. Since $H[R]$ has incompressible boundary, we may assume the set $F \cap \partial \Delta_i$ consists of properly embedded essential arcs in $F$. And then, since $R$ is nonpositive, $F \cap \partial \Delta_i$ must contain a nonempty subset of properly embedded arcs $b^+$, each with both ends on $R^+$, and a nonempty subset of properly embedded arcs $b^-$, each with both ends on $R^-$, such that, letting $|b^+|$ and $|b^-|$ denote the number of arcs in $b^+$ and $b^-$ respectively, $|b^+| > 0$, $|b^-| > 0$, and $|b^+| = |b^-|$.

Next, let $\Delta'$ = $\Delta_{i+1}$. Then, since $R$ is nonpositive, $F \cap \partial \Delta'$ must likewise contain a nonempty subset of properly embedded arcs $b'^+$, each with both ends on $R^+$, and a nonempty subset of properly embedded arcs $b'^-$, each with both ends on $R^-$, such that, letting $|b'^+|$ and $|b'^-|$ denote the number of arcs in $b'^+$ and $b'^-$ respectively, $|b'^+| > 0$, $|b'^-| > 0$, and $|b'^+| = |b'^-|$.

Finally, let $D'$ be the meridian disk of $\Delta'$ that is not a member of $\Delta$. Then the set of arcs $F \cap D'$ is disjoint from the arcs in $F \cap \partial \Delta$, and Lemma \ref{nonpositive diagrams have unique waves} implies each arc of $F \cap D'$ with both endpoints on $R^+$, respectively $R^-$, is properly isotopic in $F$ to an arc of $b^+$, respectively $b^-$. The conclusion follows.
\end{proof}

\begin{figure}[ht]
\includegraphics[width = 0.50\textwidth]{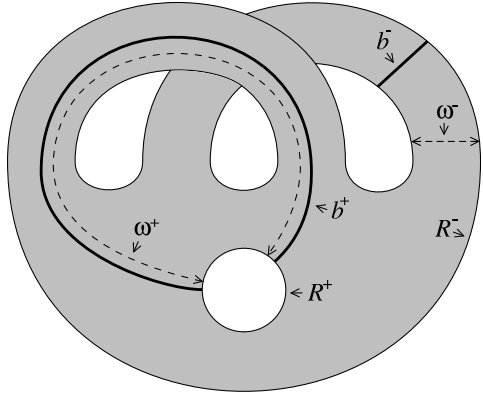}
\caption{\textbf{Nonpositive, nonseparating, simple closed curves have unique waves.} Figure \ref{PPFig5jaa} displays a twice-punctured torus $F$ obtained by cutting the boundary of a genus two handlebody $H$ open along a nonseparating simple closed curve $R$ in $\partial H$ along with bands of arcs $b^+$ and $b^-$ in $F \cap \partial \Delta$, and representative waves $\omega^+$ and $\omega^-$ based at $R^+$ and $R^-$ respectively.}
\label{PPFig5jaa}
\end{figure}

Here is a simple test which can be used to verify many nonseparating simple closed curves on the boundary of a genus two handlebody are nonpositive.

\begin{lem}\emph{\textbf{A nonpositivity test.}}\hfill
\label{A nonpositivity test}

If an oriented nonseparating simple closed curve $R$ on the boundary of a genus two handlebody $H$ has a Heegaard diagram $\mathcal{D}$ whose graph has the form of Figure~\emph{\ref{DPCFig8as}a} with edges of opposite orientation connecting vertices $A^+$ and $A^-$ of Figure \emph{\ref{DPCFig8as}a}, and edges of opposite orientation connecting vertices $B^+$ and $B^-$ of Figure \emph{\ref{DPCFig8as}a}, then $R$ is nonpositive.
\end{lem}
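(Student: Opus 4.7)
The plan is to argue by contradiction. Suppose $R$ were positive; then there would exist a complete set $\Delta' = \{D_A',D_B'\}$ of meridian disks for $H$ whose associated Heegaard diagram $\mathcal{D}'$ of $R$ is connected, free of cut-vertices, and positive. The first observation is that $\mathcal{D}$ itself must be nonpositive: in any positive Heegaard diagram, every edge of the graph between a fixed pair of fat-vertices $X^+, X^-$ is traversed by the oriented $R$ in one and the same direction, so the presence in $\mathcal{D}$ of edges of \emph{both} orientations between $A^+$ and $A^-$, and also between $B^+$ and $B^-$, directly rules out $\mathcal{D}$ being positive.

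The remaining task is to exclude the possibility of \emph{some other} complete set $\Delta'$ making the diagram of $R$ positive. To do this I would use Lemma \ref{Sequences of connected Heegaard diagrams without cut-vertices} to interpolate between $\Delta$ and the hypothetical $\Delta'$ by a sequence $\Delta = \Delta_0, \ldots, \Delta_n = \Delta'$ of complete sets of meridian disks for $H$, each $\Delta_{i+1}$ obtained from $\Delta_i$ by a bandsum of its two disk-boundaries along an edge of $\mathcal{D}_i$, with every intermediate $\mathcal{D}_i$ connected and cut-vertex free. The core of the proof is then to establish by induction on $i$ that the opposite-orientation hypothesis propagates along this sequence: each $\mathcal{D}_i$ continues to exhibit edges of both orientations between the two vertices arising from each of its two fat-vertex pairs. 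The inductive step would be carried out by inspection of the local pictures in Figures \ref{DPCFig8g1} and \ref{DPCFig8hs} together with Lemmas \ref{Bandsums along edges of nonpositive diagrams yield connected diagrams without cut-vertices} and \ref{Bandsums along edges of positive diagrams yield connected diagrams without cut-vertices}, tracking how the crossings of $R$ with the newly introduced disk $\partial D_C$ arise from its crossings with the disk being replaced together with the transverse arcs of $R$ near the banding arc $\beta$.

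The main obstacle I anticipate is precisely this inductive step: ensuring that a bandsum along an edge does not convert a pair of opposite-orientation edges at a single $X^+X^-$ pair into edges of a common orientation at either the surviving vertex pair or the newly introduced $C^+C^-$ pair. This will require a case-by-case
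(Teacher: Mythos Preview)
Your approach is genuinely different from the paper's, and it is also incomplete: the proposal breaks off before the inductive step is actually carried out, and you yourself identify that step as the main obstacle.

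The paper avoids any induction or bandsum sequence entirely. Its argument is direct: to show $R$ is nonpositive it suffices to show that \emph{every} meridian disk $D$ of $H$ meets $R$ with both signs. Two cases. If $\partial D$ is disjoint from $\partial D_A\cup\partial D_B$, then $\partial D$ is isotopic to $\partial D_A$, $\partial D_B$, or a bandsum of the two, and the hypothesis on $\mathcal{D}$ immediately gives intersections of both signs. If $\partial D$ meets $\partial D_A\cup\partial D_B$ essentially, then in the planar graph of $\mathcal{D}$ the curve $\partial D$ contributes loop-arcs based at some vertex $A^\pm$ or $B^\pm$, and any such loop necessarily crosses $R$ with both signs because of the opposite-orientation edges at that vertex pair. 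That is the whole proof.

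Beyond being unfinished, your route has a structural difficulty you should be aware of. The invariant you propose to propagate---opposite-orientation edges at \emph{both} vertex pairs---is strictly stronger than nonpositivity, and it is not clear it survives a bandsum along an edge. When $D_B$ is replaced by $D_C$, the edges of $\mathcal{D}_{i+1}$ joining $A^+$ to $A^-$ are arcs of $R$ cut by $\partial D_A\cup\partial D_C$, not by $\partial D_A\cup\partial D_B$; an old $A^+A^-$ edge may now be subdivided by $C$ and no longer run directly between $A^+$ and $A^-$. So even if the scheme can be made to work, it requires a real case analysis you have not supplied, whereas the paper's single-disk argument sidesteps all of this.
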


\begin{proof}
It is enough to show that if $D$ is a meridian disk of $H$, and $\partial D$ is oriented, then $R$ and $\partial D$ have essential intersections with opposite signs. 

So, let $\{D_A,D_B\}$ be the complete set of meridian disks for $H$ in $\mathcal{D}$, and suppose first that $\partial D$ is disjoint from $\partial D_A$ and $\partial D_B$ in $\mathcal{D}$. Then either $\partial D$ is isotopic in $\partial H$ to $\partial D_A$ or $\partial D_B$, or $\partial D$ is a bandsum of $\partial D_A$ with $\partial D_B$ in $\partial H$. And clearly, in each of these cases, $\partial D$ and $R$ have essential intersections in $\partial H$ with opposite signs.

So, suppose $\partial D$ has essential intersections with $\partial D_A \cup \, \partial D_B$. Then there are arcs of $\partial D$ which are loops based at vertices $A^+$ and $A^-$ or at vertices $B^+$ and $B^-$ in the graph of $\mathcal{D}$ in Figure \ref{DPCFig8as}a. But, each such loop has essential intersections of opposite signs with $R$. It follows $R$ is nonpositive.
\end{proof}

\begin{rem}
\label{B19}
\cite{B19} describes an algorithm which determines whether a given curve $R$ on the boundary of a genus two handlebody $H$ is positive or nonpositive, and, when $H[R]$ has incompressible boundary and $R$ is positive, produces a complete set of meridian disks $\{D_A,D_B\}$ for $H$ such that the Heegaard diagram of $R$ with respect to $\{D_A,D_B\}$ is connected, positive, and has no cut-vertex. It is also shown there that, if $H[R]$ is the exterior of a tunnel-number-one knot in a lens space or a connected sums of lens spaces and $H[R]$ has a non-trivial lens space or reducible surgery, then $R$ is positive. This leads, for example, to an easy recovery of the known result that non-torus 2-bridge knots in $S^3$ have no cyclic or reducing exceptional surgeries.
\end{rem}

Although, as following examples show, positive curves may have numerous distinct wave-determined slopes, there are some restrictions, as the next proposition shows.

\begin{prop}\textbf{\emph{Slopes determined by vertical waves in connected, positive diagrams, without cut-vertices, are distance one from the slope determined by the distinguished horizontal wave.}} \hfill
\label{Slopes determined by vertical waves and distance one}

Suppose $R$ is a nonseparating simple closed curve on the boundary of a genus two handlebody $H$, $\{D_A,D_B\}$ and $\{D_A',D_B'\}$ are two complete sets of meridional disks for $H$, while $\mathcal{D}$ and $\mathcal{D}'$ are Heegaard diagrams of $R$ with respect to $\{D_A,D_B\}$ and $\{D_A',D_B'\}$ such that $\mathcal{D}$ and $\mathcal{D}'$ are each connected, positive, and have no cut-vertices. In addition, let $\omega_h$ and $\omega_v$ be the horizontal and vertical waves of $\mathcal{D}$, and let $\omega_h'$ and $\omega_v'$ be the horizontal and vertical waves of $\mathcal{D}$. Then $\omega_h$ and $\omega_h'$ are each distinguished waves based at $R$, and so determine the same slope, say $m$, on $\partial H[R]$. So the slopes on $\partial H[R]$ determined by $\omega_v$ and $\omega_v'$ are each distance one from $m$.

In particular, if $H[R]$ is the exterior of a knot in $S^3$, then $m$ represents the meridian of $H[R]$, and $\omega_v$ and $\omega_v'$ determine integral slopes on $\partial H[R]$.
\end{prop}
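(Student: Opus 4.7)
The plan is to verify the three assertions in turn. First, because $\mathcal{D}$ and $\mathcal{D}'$ are each connected, positive, and without cut-vertices, the horizontal waves $\omega_h$ and $\omega_h'$ are, by definition, the distinguished waves based at $R$ in their respective diagrams. Theorem \ref{Thm Distinguished waves are well-defined} then guarantees that $\omega_h$ and $\omega_h'$ are properly isotopic in $\partial H$ keeping their endpoints on $R$, so they determine the same slope $m$ on the torus $\partial H[R]$.

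For the distance-one statement, it is enough to show $\Delta(m, m_v) = 1$ in the single diagram $\mathcal{D}$, because the same argument applies verbatim in $\mathcal{D}'$. I would work on the torus $T = \partial H[R]$, which is obtained from the twice-punctured torus $F = \partial H \setminus \mathrm{int}\,N(R)$ by capping the two boundary circles $R^+$ and $R^-$ with disks $D^+$ and $D^-$. Any wave $\omega$ in $F$ extends to a simple closed curve $\tilde\omega$ on $T$ by joining its two endpoints with a chord across the capping disk containing those endpoints, and the surgery-determined slope is the unoriented isotopy class of $\tilde\omega$. Consequently $\Delta(m, m_v)$ equals the minimal geometric intersection number of $\tilde\omega_h$ and $\tilde\omega_v$ on $T$.

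I would then compute this intersection number directly from Figure \ref{DPCFig8bs}b. The endpoints of $\omega_h$ lie on a $c$-edge (joining $A^+$ to $A^-$) and on a $d$-edge (joining $B^+$ to $B^-$), while the endpoints of $\omega_v$ lie on the two $a$-edges (joining $B^-$ to $A^+$ and $A^-$ to $B^+$). Reading off the cyclic order of these four edges along $R^\pm$ shows that the two pairs of endpoints interleave, so any chords joining them in the capping disks must cross in at least one point; since the two large non-rectangular faces of $\mathcal{D}$ allow $\omega_h$ and $\omega_v$ to be isotoped disjointly in $F$, the chords can be chosen so that the total intersection is exactly one. Hence $\Delta(m, m_v) = 1$.

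Finally, when $H[R]$ is the exterior of a knot in $S^3$, Theorem \ref{Only distinguished waves provide meridians} identifies $m$ with the meridian, and slopes on $\partial H[R]$ at distance one from the meridian are precisely the integral slopes in the standard meridian--longitude framing, so $\omega_v$ and $\omega_v'$ determine integral slopes. The main obstacle in the plan is the combinatorial verification that the wave-endpoints interleave on $R^\pm$ and that $\omega_h, \omega_v$ can be made disjoint in $F$; both facts are straightforward from the conventions of Figure \ref{DPCFig8bs}b but require careful bookkeeping of the edges of $\mathcal{D}$ that meet each wave endpoint.
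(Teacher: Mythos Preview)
Your overall strategy matches the paper's: use Theorem~\ref{Thm Distinguished waves are well-defined} for the first assertion, read off the single intersection of $\omega_h$ and $\omega_v$ from Figure~\ref{DPCFig8bs}b for the distance-one claim, and invoke Theorem~\ref{Only distinguished waves provide meridians} for the $S^3$ statement. The paper's proof, however, is a one-liner: it simply observes that $\omega_v \cap \omega_h$ (and likewise $\omega_v' \cap \omega_h'$) is a single transverse point, and that is already enough.

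Where your argument diverges is in the decomposition of that single intersection on $T$. You assert that $\omega_h$ and $\omega_v$ can be isotoped to be \emph{disjoint} in $F$, with the unique intersection arising from interleaving chords in the capping disk. The paper takes the opposite view: the arcs themselves meet once in the non-rectangular face (as drawn in Figure~\ref{DPCFig8bs}b), and that is the intersection point. In fact, within that face (a disk) the four endpoints of $\omega_h$ and $\omega_v$ alternate along the boundary, so the crossing there is essential and cannot be removed by isotopy rel endpoints inside the face; your claim that the two faces ``allow $\omega_h$ and $\omega_v$ to be isotoped disjointly in $F$'' would require pushing one arc across $\partial D_A \cup \partial D_B$ and needs justification you have not supplied. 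Conversely, once one accepts the single crossing in the face, one should check that the closing chords in $D^\pm$ can be taken disjoint---which is immediate from the face picture---rather than crossing as you assert. Either bookkeeping, done correctly, yields intersection number one, but your version reverses where the intersection lives and thereby introduces an unneeded and unverified step. The paper's route is both shorter and avoids this issue entirely.
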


\begin{proof}
This follows immediately from the fact that $\omega_h'$ is isotopic in $\partial H$, keeping its endpoints on $R$, to $\omega_h$, up to the action of the hyperelliptic involution on $H$, together with the fact that $\omega_v \cap \omega_h$ and $\omega_v' \cap \omega_h'$ are each a single point of transverse intersection.
\end{proof}


\subsection{Positive curves with more than 2 distinct wave-determined slopes.}\hfill
\label{Positive curves yielding more than two distinct wave-determined slopes}

Unlike nonpositive nonseparating curves, which yield only one wave-determined slope, positive nonseparating curves always yield at least two distinct wave-determi-ned slopes, may yield at least three distinct wave-determined slopes, as example \ref{At least three distinct wave-determined slopes} shows, or may yield an infinite number of distinct wave-determined slopes, as example \ref{An infinite number of wave-determined slopes} shows.

\begin{ex}\textbf{A positive curve whose connected Heegaard diagrams, without cut-vertices, yield at least three distinct wave-determined slopes.}\hfill
\label{At least three distinct wave-determined slopes}

Figure \ref{WaveDefFig5}a is an R-R diagram of a positive curve R with respect to a complete set $\Delta$ of meridian disks of the underlying handlebody $H$ such that R = $A^7B^2A^4B^2$ in $\pi_1(H)$. In addition, Figure \ref{WaveDefFig5}b is an R-R diagram of the same curve with respect to a complete set $\Delta'$ of meridian disks of the underlying handlebody $H$ such that R = $A^7BA^3BA^7B^2$ in $\pi_1(H)$. 

In Figure \ref{WaveDefFig5}a filling $H[\mathrm{R}]$ along the $\omega_v$-determined slope yields $L(11,3) \# L(2,1)$, while filling $H[\mathrm{R}]$ along the $\omega_v$-determined slope in Figure \ref{WaveDefFig5}b yields $L(23,7)$. (In Figure \ref{WaveDefFig5}a, $A^3B$ represents the distinguished invariant $\omega_h$-determined slope, while in Figure \ref{WaveDefFig5}b, $A^4B$ represents the distinguished invariant $\omega_h$-determined slope. And, in each case, filling $H[R]$ along the $\omega_h$-determined slope yields $S^3$.)

\begin{figure}[ht]
\includegraphics[width = 1.0\textwidth]{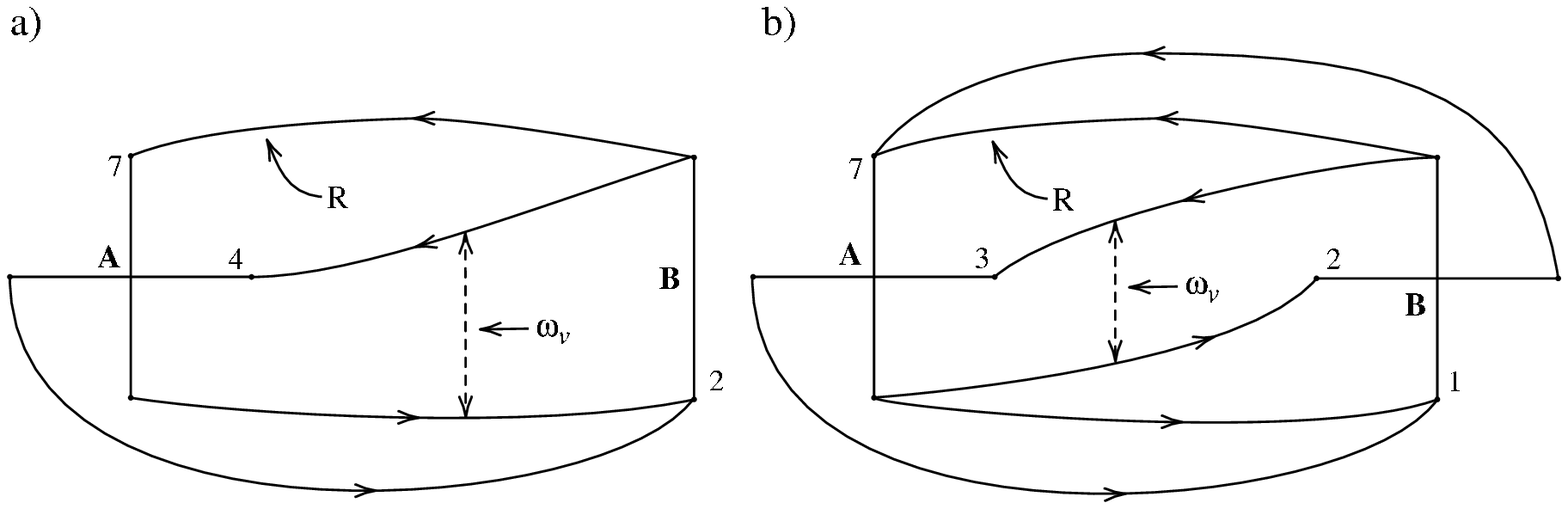}
\caption{}
\label{WaveDefFig5}
\end{figure}
\end{ex}


\begin{ex}\textbf{Trefoil knot exteriors have an infinite number of distinct wave-determined slopes.}\hfill
\label{An infinite number of wave-determined slopes}

Suppose $S > 3$ and $\gcd(S,3) = 1$. Then Figure \ref{WaveDefFig6} is an R-R diagram of a simple closed curve $R$ on $\partial H$, such that R = $AB^SAB^{S+3}$ in $\pi_1(H)$, $R$ is an automorph of the trefoil relator $A^2B^3$, and $H[R]$ embeds in $S^3$ with meridional simple closed curve $m$, disjoint from $R$, such that $m = AB^{3(j+1)}$ in $\pi_1(H)$ if $S = 3j + 1$, and $m = AB^{3j}$ in $\pi_1(H)$ if $S = 3j -1$. 

Figure \ref{WaveDefFig6} shows surgery on $R$ along $\omega_1$, $\omega_2$, and $\omega_3$ yields $m_1 = A$, $m_2 = B^3$, and $m_3 = B^3A$ respectively. Then filling $H[R]$ along $m_1$, $m_2$, and $m_3$ yields respectively $L(2S+3,S)$, $L(2,1) \# L(3,1)$, and $L(2S-3,S-3)$. (Note neither $m_1, m_2$ nor $m_3$ is a meridian of $H[R]$ when $S > 3$.)

\begin{figure}[ht]
\includegraphics[width = 0.55\textwidth]{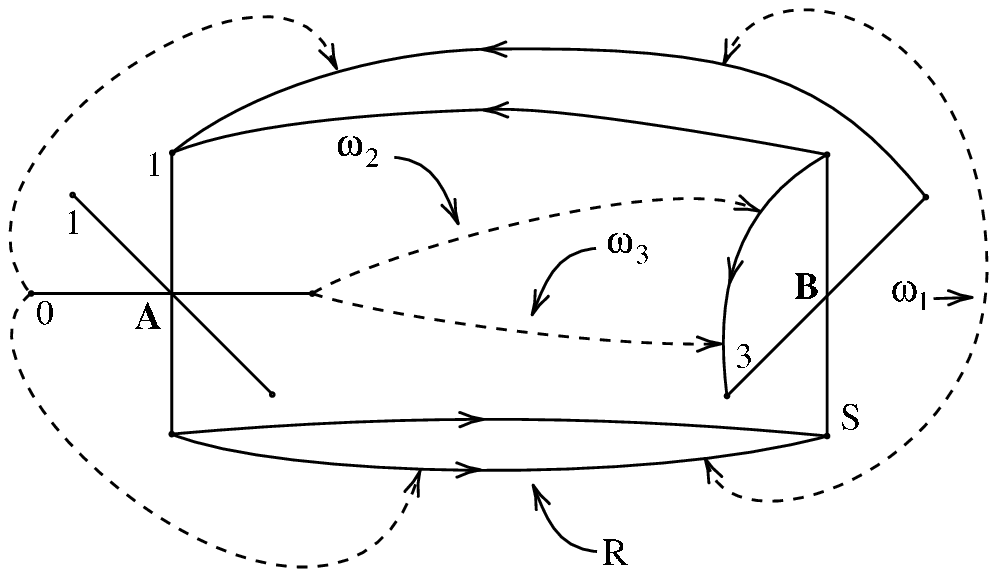}
\caption{}
\label{WaveDefFig6}
\end{figure}
\end{ex}


\begin{ex} \textbf{Horizontal and vertical waves in positive Heegaard diagrams without cut-vertices do not necessarily appear as waves in Heegaard diagrams with cut-vertices.}\hfill

Suppose a curve $R$ has a positive connected genus two Heegaard diagram, without cut-vertices, with respect to a complete set of meridional disks $\{D_A,D_B\}$ of an underlying handlebody $H$. Then $R$ has horizontal and vertical waves $\omega_h$ and $\omega_v$. 

However, Figure \ref{DPCFig89an}a and Figure \ref{DPCFig89an}b show that $\omega_h$, respectively $\omega_v$, does not appear as a wave in the Heegaard diagram with cut-vertices obtained when one member of $D_A,D_B$ is replaced by a bandsum of $D_A$ with $D_B$ along the arc connecting $A^-$ with $B^-$ in Figure \ref{DPCFig89an}a, respectively Figure \ref{DPCFig89an}b.

\begin{figure}[ht]
\includegraphics[width = .70\textwidth]{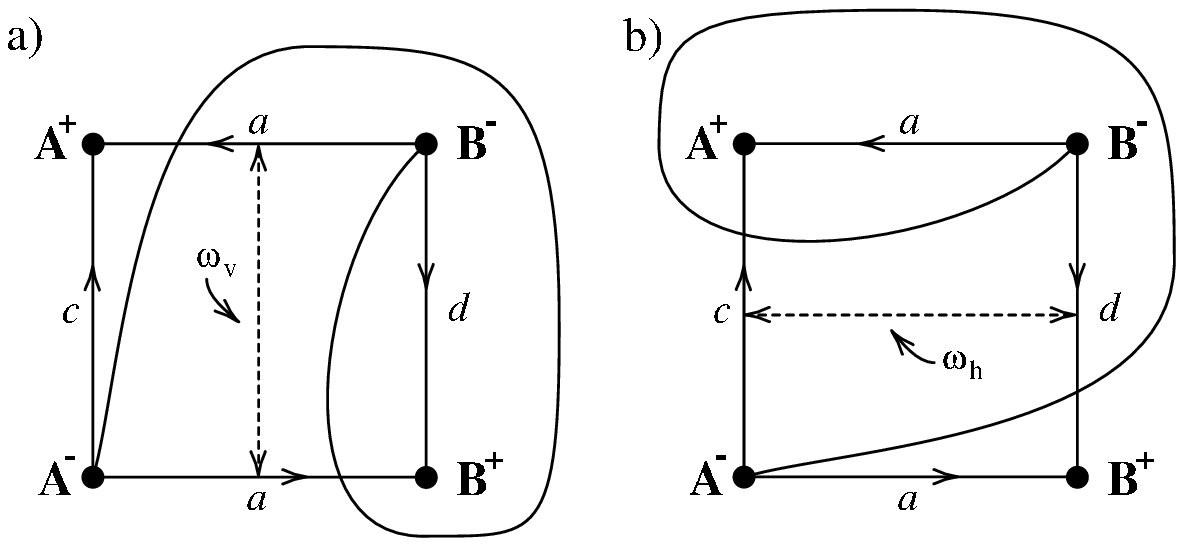}
\caption{}
\label{DPCFig89an}
\end{figure}
\end{ex}


\begin{ex}\textbf{The `unique' waves in different nonpositive genus two diagrams with cut-vertices of a relator of a knot in $\boldsymbol{S^3}$ may be distinct.}\hfill
\label{Nonpositive distinct waves}

Consider Figure \ref{WaveDefFig6}. This figure displays a parameterized R-R diagram of a trefoil knot relator $R$ on the boundary of a genus two handlebody $H$ with respect to various complete sets $\{D_{A_S},D_{B_S}\}$ of meridional disks of $H$, such that, in each case, the graph of the Heegaard diagram of $R$ with respect to $\{D_{A_S},D_{B_S}\}$ has the form of Figure \ref{DPCFig89an}a with $(a,c,d)$ = $(2,0,2S+1)$. Hence, for given $S$, if $D_{A_S}$ is replaced by a disk $D_{C_S}$ such that $\partial D_{C_S}$ is a bandsum of $\partial D_{A_S}$ with $\partial D_{B_S}$ along a band, as depicted in Figure \ref{DPCFig89an}a, then the Heegaard diagram $\mathcal{D}_S$ of $R$ with respect to $\{D_{C_S},D_{B_S}\}$ has a cut-vertex, has loops, and is nonpositive.

Since $\mathcal{D}_S$ is nonpositive, there is a unique wave $\omega_{v_S}$ based at $R$ in each $\mathcal{D}_S$, as in Figure \ref{DPCFig89an}a. However, $\omega_{v_S}$ is properly isotopic, keeping its endpoints on $R$ to the wave $m_1$ in Figure \ref{WaveDefFig6}. So filling $H[R]$ along the slope determined by $\omega_{v_S}$ yields $L(2S+3,S)$, which depends on $S$. It follows that $\omega_{v_S}$ and $\omega_{v_{S'}}$ are distinct waves based at $R$ when $S,S' > 3$, $\gcd(S,3) = \gcd(S',3) = 1$, and $S \neq S'$.
\end{ex}


\subsection{Restricting the set of possible waves based at a positive curve.}\hfill

\begin{ex}\textbf{A family of positive curves whose Heegaard diagrams yield at most two distinct wave-determined slopes.}\hfill
\label{At most two distinct wave-determined slopes}

Consider Figure \ref{WaveDefFig3}, which is an R-R diagram $\mathbb{D}$ of a nonseparating simple closed curve $R$ on the boundary of a genus two handlebody $H$. In addition, let $\mathcal{D}$ be the Heegaard diagram of $R$ on $\partial H$ corresponding to $\mathbb{D}$. Note that if the parameters of Figure \ref{WaveDefFig3} satisfy $P,R,S,U > 1$, $\gcd(P,R) = \gcd(S,U) =~1$, $Q = P+R$, and $T = S+U$, then $\mathcal{D}$ is positive, connected, has no cut-vertices, and $\mathcal{D}$ has two non-rectangular faces $F_1$ and $F_2$, which also appear in $\mathbb{D}$.

\begin{prop}
Suppose $\omega$ is an arc in $\partial H$ meeting $R$ only in its endpoints, and there exists a Heegaard diagram $\mathcal{D}'$ of $R$ on $\partial H$ in which $\omega$ appears as a wave based at $R$. Then $\omega$ can be properly isotoped in $\partial H$ into one of $F_1$, $F_2$.
\end{prop}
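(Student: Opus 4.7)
The plan is to place $\omega$ in minimal position with respect to the graph $G = (\partial D_A \cup \partial D_B) \cap F$ on the twice-punctured torus $F = \partial H \setminus R$, and then to show that any such minimizing representative must in fact be disjoint from $G$ in its interior. Since the only faces of $\mathcal{D}$ whose closures admit an essential properly embedded arc with both endpoints on a single boundary component of $F$ are the non-rectangular faces $F_1$ and $F_2$, this will force $\omega$ into the closure of one of them, after which a small final isotopy pushes it into the open face.

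First I would put $\omega$ in general position with respect to $\partial D_A \cup \partial D_B$ and choose a representative, among those obtained from $\omega$ by proper isotopy in $\partial H$ keeping $\partial \omega \subset R$, that minimizes $n = |\omega \cap (\partial D_A \cup \partial D_B)|$. The hypothesis supplies a complete set of meridian disks $\Delta' = \{D_A', D_B'\}$ for $H$ with $\omega$ disjoint from $\partial D_A' \cup \partial D_B'$, which bounds $n$ above and guarantees such a minimum exists. Next, assume for contradiction that $n > 0$ and pick a subarc $\alpha \subset \omega$ which is a component of $\omega \setminus (\partial D_A \cup \partial D_B)$ lying in a rectangular face $\mathcal{R}$ of $\mathcal{D}$. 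The positivity of $\mathcal{D}$ forces the two $\partial D$-edges of $\partial \mathcal{R}$ to be coherently oriented with respect to $R$, so the endpoints of $\alpha$ lie on parallel, equally oriented arcs of $R$-intersection. I would then invoke the parameter hypotheses $P, R, S, U > 1$ and $\gcd(P,R) = \gcd(S,U) = 1$ to rule out any rectangular face whose boundary wraps nontrivially around $F$, and conclude that $\alpha$ must cobound a bigon with a subarc of $\partial D_A \cup \partial D_B$; pushing $\omega$ across this bigon produces a proper isotopy of $\omega$ reducing $n$, contradicting minimality.

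The main obstacle will be the combinatorial case analysis classifying the rectangular faces of the positive R-R diagram of Figure \ref{WaveDefFig3} and verifying in each case that the bigon-reduction above applies. The coprimality and inequality conditions must enter in an essential way, since Examples \ref{At least three distinct wave-determined slopes} and \ref{An infinite number of wave-determined slopes} show that the conclusion can fail for other positive curves. Once $n = 0$ is established, $\omega$ lies in the closure of a single face of $\mathcal{D}$, and the rectangular faces are ruled out by the observation that their $R$-edges are arranged so that any arc joining two points on a common $R$-side is boundary-parallel in that face, hence not essential; consequently $\omega$ lies in $\bar{F_1}$ or $\bar{F_2}$, and since $\omega$ is connected and $F_1, F_2$ are disjoint open disks in $\partial H$, $\omega$ lies in exactly one of them, finishing the proof.
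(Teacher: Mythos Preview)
Your argument has a genuine gap: the bigon-reduction step cannot do what you ask of it. Once $\omega$ is in minimal position with respect to $\partial D_A \cup \partial D_B$, there are by definition no bigons left to push across, yet there is no reason the minimal intersection number should be zero. Concretely, an interior subarc $\alpha$ of $\omega$ lying in a rectangular face $\mathcal{R}$ can run from one $\partial D$-edge of $\mathcal{R}$ to the other; such an $\alpha$ is essential in $\mathcal{R}$ and cobounds no bigon. Your assertion that the parameter hypotheses ``rule out any rectangular face whose boundary wraps nontrivially around $F$'' and hence force a bigon is not justified and is in fact false. Notice also that you never really use the hypothesis that $\omega$ is a wave in some $\mathcal{D}'$ (beyond remarking that a minimum exists, which is automatic). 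If your argument worked, it would show that \emph{every} arc in $\partial H$ meeting $R$ only in its endpoints can be isotoped into $F_1$ or $F_2$, which is absurd.

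The paper's proof uses the hypothesis in an essential way and proceeds quite differently. If $\omega$ is a wave in some diagram $\mathcal{D}'$, then the complete set of meridian disks underlying $\mathcal{D}'$ is disjoint from $\omega$. Now suppose $\omega$ cannot be isotoped into $F_1$ or $F_2$; then $\omega$ has essential intersections with $\partial D_A \cup \partial D_B$, and the interior subarcs of $\omega$ form a graph $\mathcal{D}^*$ on the four vertices $A^\pm, B^\pm$. The existence of a complete set of meridian disks disjoint from $\omega$ forces $\mathcal{D}^*$ to be disconnected or to have a cut-vertex. On the other hand, the conditions $P,Q,R,S,T,U>1$ guarantee that $\mathcal{D}^*$ contains edges joining $A^+$ to $A^-$ and edges joining $B^+$ to $B^-$, so $\mathcal{D}^*$ is connected with no cut-vertex---a contradiction. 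The key idea you are missing is to translate ``$\omega$ is a wave in \emph{some} diagram'' into ``$\omega$ misses \emph{some} complete set of meridian disks,'' and then to analyze the graph of $\omega$ in the \emph{given} diagram $\mathcal{D}$.
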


\begin{proof}
Suppose $\{D_A,D_B\}$ is the complete set of meridian disks of $H$ in diagrams $\mathbb{D}$ and $\mathcal{D}$, and $\omega$ is an arc in $\partial H$, disjoint from $R$, except at its endpoints, such that $\omega$ appears as a wave based at $R$ in $\mathcal{D}'$, but $\omega$ cannot be properly isotoped in $\partial H$, keeping its endpoints on $R$, into $F_1$ or $F_2$. Then $\omega$ has essential intersections with $D_A \cup D_B$, such that the points of $|\omega \cap (D_A \cup D_B)|$ cut $\omega$ into a set of arcs $\mathcal{A}$, and the arcs in $\mathcal{A}$, which do not contain the endpoints of $\omega$, form the edges of a Heegaard diagram $\mathcal{D}^*$ of $\omega$ with respect to $\{D_A,D_B\}$. 

If there exists a Heegaard diagram $\mathcal{D}'$ of $R$ in which $\omega$ appears as a wave based at $R$, then the complete set of meridian disks of $H$ used to form $\mathcal{D}'$ must be disjoint from $\omega$. Hence $\mathcal{D}^*$ must not be connected, or have a cut-vertex. 

To see that, to the contrary, $\mathcal{D}^*$ is actually connected and does not have a cut-vertex, let $A^+$, $A^-$, $B^+$ and $B^-$ be the vertices of $\mathcal{D}$ and $\mathcal{D}^*$, and observe that because $P,Q,R,S,T,U > 1$, $\mathcal{D}^*$ must contain edges connecting $A^+$ to $A^-$ and edges connecting $B^+$ to $B^-$. This suffices to show that $\mathcal{D}^*$ is connected and has no cut-vertex. It follows $\omega$ does not exist.
\end{proof}

\begin{figure}[ht]
\includegraphics[width = 0.55 \textwidth]{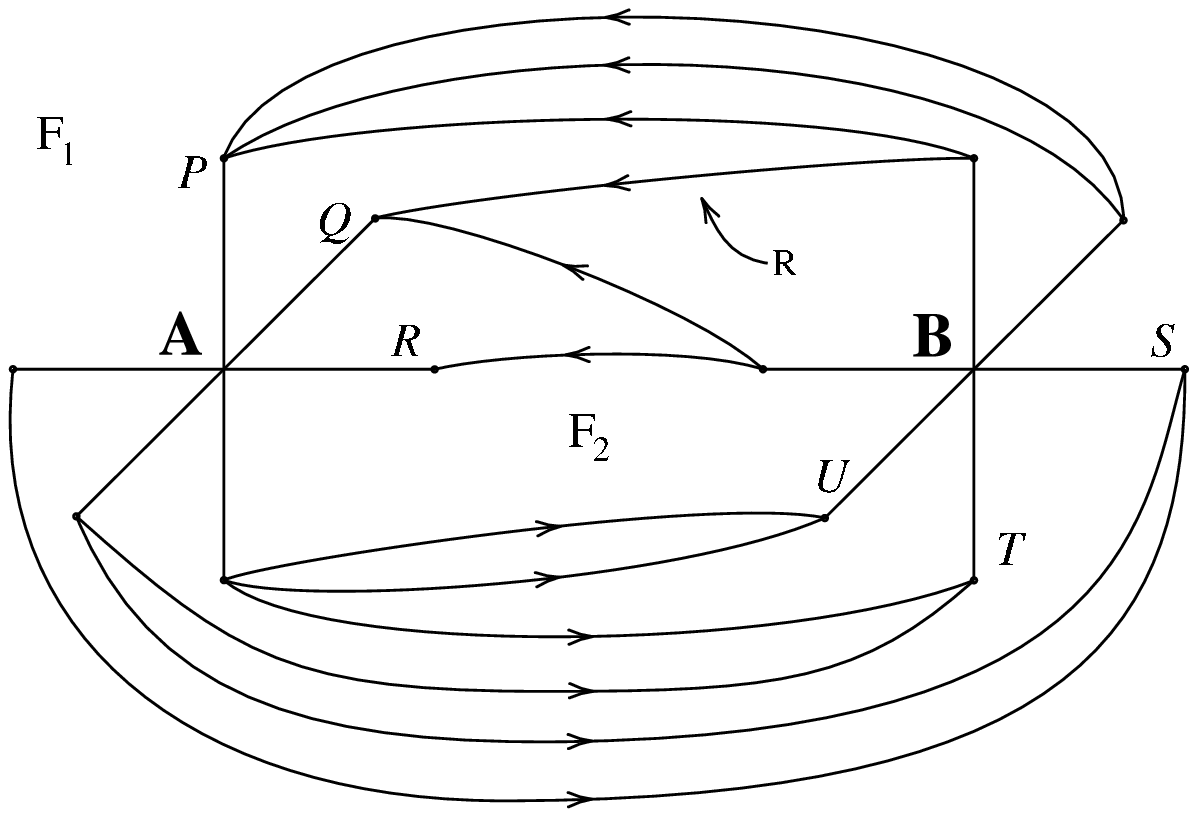}
\caption{}
\label{WaveDefFig3}
\end{figure}
\end{ex}


\newpage
\section{Distinguished-wave-determined slope dependence on tunnels in tunnel-number-one manifolds which do not embed in $S^3$, $S^1 \times S^2$, or $(S^1 \times S^2)\; \# \; L(p,q)$.}
\label{Distinguished-wave-determined slope dependence on tunnels}

Before describing some examples of tunnel-number one manifolds which exhibit distinct distinguished-wave-deter-mined slopes, we recall the following result, which shows tunnel-number-one manifolds have unique distinguished-wave-determined slopes.

\subsection{Tunnels of high-distance tunnel-number-one manifolds are unique.}
\hfill
\label{tunnels of high-distance tunnel-number-one manifolds are unique}

Johnson proved the following result for knots in $S^3$ in \cite{J06}. The general case follows from results of Scharlemann and Tomova in \cite{ST06}.

\begin{thm}[\cite{J06}, \cite{ST06}] \textbf{\emph{High-distance implies a unique tunnel.}} \hfill
\label{high-distance implies a unique tunnel}

If $R$ and $R'$ are nonseparating simple closed curves on the boundaries of genus two handlebodies $H$ and $H'$ respectively, $H[R]$ and $H'[R']$ are homeomorphic, and $R$ is distance greater than five in the curve-complex of $\partial H$ from each essential simple closed curve in $\partial H$ that bounds a disk in $H$, then the pairs $(H,R)$ and $(H',R')$ are homeomorphic.
\end{thm}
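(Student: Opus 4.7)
The plan is to interpret $(H,R)$ via the unknotting tunnel of $M := H[R]$ that it encodes, and then to invoke the cited high-distance rigidity theorems. First, let $\tau \subset M$ be the co-core arc of the attached $2$-handle: $\tau$ is properly embedded in $M$ with both endpoints on the torus $\partial M$, and the complement of a regular neighborhood $N(\tau)$ in $M$ recovers $H$, so $\tau$ is an unknotting tunnel for $M$. The curve $R \subset \partial H$ is, up to isotopy, the meridian of $N(\tau)$, i.e.\ the boundary of a compressing disk of $N(\tau)$ appearing on $\partial H$ after drilling. Applying the same construction to $(H',R')$ produces a tunnel $\tau' \subset M' := H'[R']$ with meridian $R'$. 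Using the given homeomorphism $M \cong M'$, I would identify the two manifolds so that $\tau$ and $\tau'$ become two unknotting tunnels of a single manifold $M$; the theorem then reduces to showing that $(M,\tau)$ and $(M,\tau')$ are homeomorphic as pairs.

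Second, the tunnel $\tau$ determines a genus-two Heegaard-type splitting $M = H \cup_{\partial H} V$, where $V = N(\partial M \cup \tau)$ is a compression body with $\partial_+ V = \partial H$ and $\partial_- V = \partial M$. The curve $R$ is the meridian of the $1$-handle $N(\tau) \subset V$ and so lies in the disk set $\mathcal{D}(V)$. The hypothesis $d(R,\mathcal{D}(H)) > 5$ in the curve complex $\mathcal{C}(\partial H)$ is precisely the tunnel-distance assumption used in Johnson \cite{J06} for knots in $S^3$ and in Scharlemann--Tomova \cite{ST06} in general. Their rigidity theorems then imply that the splitting $(H,V)$ is determined by $M$ up to ambient isotopy; in particular, the splitting $(H',V')$ arising from $\tau'$ must be ambient-isotopic to $(H,V)$ inside $M$. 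The time-one map of such an isotopy is a self-homeomorphism of $M$ carrying $\tau'$ to $\tau$, and passing to the complements of tunnel neighborhoods yields the desired homeomorphism of pairs $(H',R') \cong (H,R)$.

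The principal obstacle, which I would not attempt to reprove, is the Johnson--Scharlemann--Tomova rigidity itself. Their arguments use thin position of a competing Heegaard surface together with curve-complex distance estimates to rule out alternative splittings of the same or smaller genus; the specific threshold ``$>5$'' (as opposed to the Scharlemann--Tomova bound $>2\cdot\mathrm{genus}(\partial H)=4$) reflects the tunnel-number-one setting, where the torus component of $\partial V$ contributes a small amount of extra isotopy slack that the proof must absorb. Taking those results as black boxes, the translation above completes the argument.
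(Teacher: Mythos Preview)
The paper does not prove this theorem at all; it merely quotes it, with the one-line attribution ``Johnson proved the following result for knots in $S^3$ in \cite{J06}. The general case follows from results of Scharlemann and Tomova in \cite{ST06}.'' Your proposal actually supplies the translation the paper leaves implicit, and that translation is correct: the pair $(H,R)$ encodes a genus-two compression-body splitting $M = H \cup_{\partial H} V$ of $M = H[R]$, with $R$ the unique nonseparating meridian disk of $V$; the distance hypothesis then feeds into Scharlemann--Tomova to force any competing genus-two splitting to be isotopic, and since a homeomorphism carrying $V$ to $V'$ must carry the unique nonseparating meridian to the unique nonseparating meridian, one recovers $(H,R)\cong(H',R')$.

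One small sharpening of your closing remark: the reason the threshold is $>5$ rather than the Scharlemann--Tomova bound $>2g(\partial H)=4$ is not ``isotopy slack'' but a bookkeeping off-by-one. The hypothesis bounds $d(R,\mathcal{D}(H))$, whereas Scharlemann--Tomova requires a bound on the Heegaard distance $d(\mathcal{D}(V),\mathcal{D}(H))$. The disk set $\mathcal{D}(V)$ contains, besides $R$, all the separating compressing disks of $V$, each of which is disjoint from $R$ and hence at distance $1$ from it in $\mathcal{C}(\partial H)$. Thus $d(R,\mathcal{D}(H))>5$ guarantees $d(\mathcal{D}(V),\mathcal{D}(H))>4$, which is precisely the Scharlemann--Tomova input in genus two.
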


\subsection{One-cusped manifolds in the SnapPy census of the set of all orientable hyperbolic manifolds with triangulations using at most 9 ideal tetrahedra which have tunnels with distinct distinguished-wave-deter-mined slopes.} \hfill
\smallskip

Among the data files accompanying the 3-manifold computer program SnapPy \cite{CDGW} is a census, the OrientableCuspedCensus or `OCC', of the set of all  orientable cusped hyperbolic 3-manifolds which have ideal triangulations with at most 9 ideal tetrahedra.

A search of the 59,107 1-cusped manifolds in the OCC was conducted using the author's \emph{Heegaard} program \ref{OCC data}, which looked for tunnels of these manifolds. This resulted in the discovery of 113,378 tunnels for 49,933 of the manifolds in the OCC, so that at least 49,933, or approximately 84.5\% of the 59,107 1-cusped manifolds in the OCC are tunnel-number-one manifolds. 

Continuing, Heegaard refined this list of 49,933 manifolds by discarding those which embed in $S^3$, $S^1 \times S^2$, or $(S^1 \times S^2)\;\#\;L(p,q)$, or for which Heegaard found only one tunnel. This left a set of 38,287 OCC manifolds, for each of which Heegaard found non-homeomorphic tunnels. Finally, Heegaard found that 7,917 of these remaining 38,287 manifolds, or about $ 20.7\%$, have distinct distinguished-wave-determined slopes. The following tables provide some details.

\begin{table}[ht]
\label{Table 1}
\caption{Table 1 below lists some statistics for the OCC manifolds partitioned according to the number of non-homeomorphic tunnels Heegaard found for each manifold. For example, line 4 of Table 1 shows that Heegaard found 4 non-homeomorphic tunnels for each of 4,632 of the 49,933 tunnel-number-one manifolds in the OCC, of which 151 embed in $S^3$, 15 embed in $S^1 \times S^2$, and none embed in $(S^1 \times S^2)\;\#\;L(p,q)$. So 913/(4,632-151-15), or about 20,4\% of these manifolds have distinct distinguished-wave-determined slopes.
Finally, the bottom line of the table shows that 7,917/48,688, or about 16.3\% of the OCC manifolds, which do not embed in $S^3$, $S^1 \times S^2$, or $(S^1 \times S^2)\;\#\;L(p,q)$, have tunnels with distinct distinguished-wave-determined slopes.}
\renewcommand{\arraystretch}{1.1}
\begin{tabular}{|c|c|c|c|c|c|c|}
\hline
Tunnels & Mflds  & $S^3$ & $S^1 \times S^2$ & $(S^1 \times S^2)\; \#\; L(p,q)$ & $> 1\; \omega$-Slope & $\%$ \\ \hline 
1 	 &10,622	& 151 	& 57 		& 13		& 0		& 0\%           \\ \hline
2 	 &20,243	& 441 	& 68   	& 0		& 3,003	& $15.2 \%$ \\ \hline
3 	 &14,219	& 276 	& 45   	& 0		& 3,979	& $28.6 \%$ \\ \hline
4 	 &4,632	& 151 	& 15   	& 0		& 913	& $20.4 \%$ \\ \hline
5 	 &217	& 28	  	& 0 	    	& 0		& 22		& $11.6 \%$ \\ \hline
Totals &49,933 & 1,047   	& 185	& 13	        & 7,917	& $16.3 \%$ \\ \hline
\end{tabular}
\end{table}

\begin{table}[ht]
\label{Table 2}
\caption{Table 2 lists the number of times a specific unordered pair of distinguished-wave-determined slopes occurs among the 7,917 manifolds with distinct distinguished-wave-determined slopes. For instance, the unordered pair ((1,0), (-1,1)) occurs 1,392 times. (Note the members of each such pair are distance one apart.)}
\renewcommand{\arraystretch}{1.1}
\begin{tabular}{|c|c|c|c|c|c|c|}
\hline
& (1,0) & (-2,1) & (-1,1) & (0,1) & (1,1) & (2,1) \\ \hline 
(1,0) 	 & 		& 97 		& 1,392 	& 151	& 1,122	& 48 \\ \hline
(-2,1) & 97	&  		& 0   		& 0		& 0		& 0 \\ \hline
(-1,1) & 1,392	& 0 		&    		& 2,196	& 0		& 0 \\ \hline
(0,1) 	 & 151	& 0 		& 2,196   	& 		& 2,911	& 0 \\ \hline
(1,1)	 & 1,122	& 0	  	& 0 	    	& 2,911	& 		& 0 \\ \hline
(2,1)  &48		& 0	  	& 0     	& 0		& 0		&  \\ \hline
\end{tabular}
\end{table}

\newpage

\begin{table}[ht]
\label{Table 3}
\caption{Distribution of distinguished-wave-determined slopes among the 39,002 OCC manifolds that do not embed in $S^3$, $S^1 \times S^2$, or $(S^1 \times S^2)\;\#\;L(p,q)$, and whose tunnels exhibit only one distinguished-wave-determined slope.}
\renewcommand{\arraystretch}{1.1}
\begin{tabular}{|c|c|c|}
\hline
Mflds 	& Slope  	& \%			\\ \hline 
26,593 	&(1,0) 	& $68.2\%$	\\ \hline
7,599 	&(0,1) 	& $19.5\%$	\\ \hline
2,411 	&(1,1) 	& $6.2\%$		\\ \hline
2,399 	&(-1,1)	& $6.2\%$		\\ \hline
\end{tabular}
\end{table}

\begin{table}[ht]
\label{Table 4}
\caption{Distribution of the distinguished-wave-determined meridional slopes for tunnels of the OCC manifolds which embed in $S^3$, $S^1 \times S^2$, or $(S^1 \times S^2)\;\#\;L(p,q)$. (This shows that about 9.2\% of the time the shortest geodesic on the cusp of a tunnel-number-one OCC manifold that embeds in $S^3$, $S^1 \times S^2$, or $(S^1 \times S^2)\;\#\;L(p,q)$ is not the meridian.)}
\renewcommand{\arraystretch}{1.1}
\begin{tabular}{|c|c|c|c|} \hline
Slope	& $S^3$	& $S^1 \times S^2$	& $(S^1 \times S^2)\;\#\;L(p,q)$	 \\ \hline 
(1,0)		& 2,382	& 347			& 24			 			 \\ \hline
(0,1) 		& 168	& 37				& 2						 \\ \hline
(1,1)		& 29		& 9				& 0	 					 \\ \hline
(-1,1)	& 26		& 7				& 0						 \\ \hline
(1,0)$\%$	& $91.4 \%$	& $86.8 \%$	& $92.3 \%$				 \\ \hline
\end{tabular}
\end{table}

\begin{rem} \textbf{{Count discrepancies due to isometries in OCC manifolds.}} \hfill

In Heegaard's survey data, each of the five manifolds below appears with the two distinguished-wave-determined slopes shown. For instance, m003 appears with distinguished-wave-determined slopes (0,1) and (-1,1). (SnapPy shows each of these manifolds has an isometry exchanging the two distinguished-wave-determined slopes.)
\begin{enumerate}
\item m003(0,1) $\simeq$ m003(-1,1);
\item s725(1,0) $\simeq$ s725(0,1);
\item s911(0,1) $\simeq$ s911(-1,1);
\item s955(0,1) $\simeq$ s955(-1,1);
\item t07733(1,0) $\simeq$ t07733(0,1).
\end{enumerate}
\end{rem}

\subsection{Examples of OCC manifolds with tunnels exhibiting distinct distin-guished-wave-determined slopes.}
\hfill
\label{Example distinct distinguished-wave-determined slopes}

\begin{ex}\textbf{The first tunnel-number-one manifold in the OCC that has tunnels with distinct distinguished-wave-determined slopes.} \hfill
\smallskip

The manifold m011(0,0) is the first tunnel-number-one manifold in the OCC that has tunnels with different distinguished-wave-determined slopes. It also provides an example for which it is easy to verify by hand that a pair of tunnels yields distinct distinguished-wave-determined slopes. 

Some details. According to SnapPy, $\pi_1(m011(0,0))$ has a presentation $\mathcal{P}$ of the form:
$$\mathcal{P} = \langle \, A,B,C \mid AABCB, ACCAb \, \rangle,$$
which Heegaard readily verifies has a unique realization.
Since each of the two relators of $\mathcal{P}$ is primitive in the underlying genus three handlebody, the realization of $\mathcal{P}$ destabilizes in two obvious ways. This yields the following pair of realizable presentations: 
$$\mathcal{P}_1 = \langle \, A,B \mid R_1 = AAABAbbbAB \, \rangle, \quad
\mathcal{P}_2 = \langle \, A,B \mid R_2 = AABBAABaBaB \, \rangle.$$
Next, from the realization of $\mathcal{P}_1$ on the boundary of a genus two handlebody $H$, one finds the curves $m_{1,1}$ and $m_{1,2}$ obtained by surgery on $R_1$ along the distinguished wave based at $R_1$ are $m_{1,1} = Abb$ and $m_{1,2} = AAAAB$. Similarly, from the realization of $\mathcal{P}_2$ on the boundary of a genus two handlebody $H$, one finds the curves $m_{2,1}$ and $m_{2,2}$ obtained by surgery on $R_2$ along the distinguished wave based at $R_2$ are $m_{2,1} = ABaB$ and $m_{2,2} = AABBB$.

Finally, $H_1(H[R_1,m_1];\mathbb{Z}) = \mathbb{Z}/9$, while $H_1(H[R_2,m_2];\mathbb{Z}) = \mathbb{Z}/4$. So the distin-guished-wave-determined slopes of the two tunnels corresponding to $H[R_1]$ and $H[R_2]$ are different. In particular, Heegaard identifies $H[R_1,m_1]$ as the lens space $m011(0,1) \simeq L(9,2)$, and Heegaard identifies $H[R_2,m_2]$ as the Seifert fibered space $m011(1,1) \simeq SF(0;1;1/2,1/2,1/3)$. 
\end{ex}

\begin{ex}\textbf{An OCC manifold with exceptional fillings but no exceptional distinguished-wave-determined slope.}\hfill

Heegaard's survey of distinguished-wave-determined slopes for OCC manifolds suggests that when a tunnel-number-one manifold in the OCC census has exceptional fillings it is often the case that there is a tunnel for that manifold whose distinguished-wave-determined slope yields one of the exceptional fillings. However, this is not always the case, as the manifold v2293(0,0) in the OCC census has no exceptional distinguished-wave-determined slope.
 
Some details. According to SnapPy, the fundamental group of the manifold v2293(0,0) in the OCC has a presentation $\mathcal{P}$ of the form:
$$ \mathcal{P} = \langle \, A,B,C,D,E,F \mid bAA, dCa, DCE, FCFBeB, FDeF \, \rangle. $$
Heegaard, after simplifying and destabilizing $\mathcal{P}$, found only one realizable 1-relator, 2-generator presentation $\mathcal{P}_2$ = $\langle \, A,B \mid R \, \rangle$ for v2293(0,0).
 
Then, from the relator $R$ of $\mathcal{P}_2$, which has the form:
$$R = AABABAABaBAABaBBaBAABaB,$$
Heegaard found that the two distinguished curves $m_1$ and $m_2$, obtained from $R$ by surgery on $R$ along the distinguished wave based at $R$, are:
$$m_1 = BBaBAABaB, \quad \text{and} \quad m_2 = ABABAABaBAAB.$$
It follows the filled manifold $H[R,m_1]$ has the realizable presentation: 
$$\mathcal{P}_3 = \langle \, A,B \mid R, m_1 \, \rangle, \quad \text{or:}$$
$$\mathcal{P}_3 = \langle \, A,B  \mid AABABAABaBAABaBBaBAABaB, BBaBAABaB \, \rangle,$$
which Heegaard destabilized and simplified to:
$$\mathcal{P}_4 = \langle \, A,B \mid AAABAbbAB, AABBaBaBB \, \rangle.$$
Next, SnapPy provided the following presentation for the filled manifold v2293(1,1):
$$\mathcal{P}_5 = \langle \, A,B,C,D,E,F \mid bAA, dCa, DCE, FCFBeB, FDeF, BFdCAeBe \, \rangle.$$
Heegaard, after simplifying and destabilizing $\mathcal{P}_5$, found the following two minimal-length 2-generator presentations $\mathcal{P}_6$ and $\mathcal{P}_7$ for v2293(1,1), which presumably represent two distinct genus two Heegaard splittings of v2293(1,1).
$$\mathcal{P}_6 = \langle \, A,B \mid AAABAbbAB, AABBaBaBB \, \rangle, \quad \text{and} $$
$$\mathcal{P}_7 = \langle \, A,B \mid  AABAAbabAAbabAAbab, ABBABaBaB \, \rangle.$$
Comparison of $\mathcal{P}_4$ and $\mathcal{P}_6$ shows they are identical. It follows, since $\mathcal{P}_4$ and $\mathcal{P}_6$ have unique realizations, that $H[R,m_1]$ is homeomorphic to v2293(1,1).

Finally, Dunfield's paper \cite{D19a} and associated list \cite{D19b} of the exceptional fillings of the manifolds in the OCC show that v2293(0,0) has exceptional filling slopes (1,0) and (0,1), but the distinguished-wave-determined slope (1,1) is not exceptional.
\end{ex}

\begin{ex}\textbf{An OCC manifold with four exceptional fillings, one cyclic, which has two distinguished-wave-determined exceptional slopes, both non-cyclic.}\hfill

The OCC manifold s768(0,0) provides such an example. From SnapPy, the fundamental group $\mathcal{P}$ of s768(0,0) has the following realizable presentation:
$$\mathcal{P} = \langle\;A,B,C,D,E,F \mid Baa, cFcBA, Dff, EBd, EEDC \; \rangle.$$
Then, using Heegaard to simplify and destabilize $\mathcal{P}$ produced the following four 2-generator, 1-relator presentations. 
\begin{align*}
\mathcal{P}_1 & = \langle\;A,B \mid R_1 = AAAAABBAABBAAbbbAABBAABB\; \rangle;\\
\mathcal{P}_2 & = \langle\;A,B \mid R_2 = AAABBAAbAABBAAABBAAbAAbAABB \; \rangle;\\
\mathcal{P}_3 & = \langle\;A,B \mid R_3 = AAAAABBBAABAABBBAAAAABBBAABBB \; \rangle; \\
\mathcal{P}_4 & = \langle\;A,B \mid R_4 = AABAABaBAABaBBaBAABaBBaBAABaB \; \rangle.
\end{align*}
Since the cyclic words which $R_1$, $R_2$, $R_3$, and $R_4$ represent in $F_2(A,B)$ are distinct up to automorphisms of $F_2(A,B)$, the pairs $(H,R_1)$, $(H,R_2)$, $(H,R_3)$, and $(H,R_4)$ are also distinct up to homeomorphism. So, up to homeomorphism, s768(0,0) has at least four distinct tunnels.

Next, Heegaard finds the following four pairs of distinguished curves $(m_{i,1},m_{i,2})$, obtained by performing surgery on each of $R_1$, $R_2$, $R_3$, and $R_4$ along the distinguished waves based at each $R_i$ for $i = 1,2,3,4$:
\begin{alignat*}{3}
m_{1,1} &= aabbaaB, & m_{1,2} &= aabbaabbaaaaa ;\\
m_{2,1} &= aabbaaa, & m_{2,2} &= baaBaabbaaabbaaBaa ;\\
m_{3,1} &= bbaaa, & m_{3,2} &= aabbbaabaabbbaaaaabbbaab ;\\
m_{4,1} &= baabAbaaba, \quad \quad & m_{4,2} &= bAbaabAbbAbaabAbb ;
\end{alignat*}
Then filling the manifolds with presentations $\mathcal{P}_1$, $\mathcal{P}_2$, $\mathcal{P}_3$, and $\mathcal{P}_4$ at distinguished-wave-determined slopes yields manifolds with presentations $\mathcal{P}_5$, $\mathcal{P}_6$, $\mathcal{P}_7$, and $\mathcal{P}_8$ respectively.
\begin{alignat*}{2}
\mathcal{P}_5 & = \langle\;A,B \mid m_{1,1},m_{1,2}\; \rangle  = \langle\;A,B \mid aabbaaB,aabbaabbaaaaa\; \rangle; \\
\mathcal{P}_6 & = \langle\;A,B \mid m_{2,1},m_{2,2}\; \rangle  = \langle\;A,B \mid aabbaaa,baaBaabbaaabbaaBaa\; \rangle; \\
\mathcal{P}_7 & = \langle\;A,B \mid m_{3,1},m_{3,2}\; \rangle  = \langle\;A,B \mid bbaaa,aabbbaabaabbbaaaaabbbaab\; \rangle; \\
\mathcal{P}_8 & = \langle\;A,B \mid m_{4,1},m_{4,2}\; \rangle  = \langle\;A,B \mid baabAbaaba,bAbaabAbbAbaabAbb\; \rangle.
\end{alignat*}
Simplifying these, Heegaard finds that $\mathcal{P}_5$, $\mathcal{P}_6$, and $\mathcal{P}_7$ each simplify to $\mathcal{P}_9$, while $\mathcal{P}_8$ simplifies to $\mathcal{P}_{10}$.
\begin{alignat*}{1}
\mathcal{P}_9 & = \langle\;A,B \mid  AABaBABaB, ABBAb \; \rangle; \\
\mathcal{P}_{10} & = \langle\;A,B \mid  AABAAbAABAAbAb, AAbabbab \; \rangle. 
\end{alignat*}
For comparison, $\mathcal{P}_{11}$ and $\mathcal{P}_{12}$ below are SnapPy's realizable presentations of the fundamental groups of s768(0,1), and s768(-1,1) respectively. 
\begin{alignat*}{1}
\mathcal{P}_{11} & = \langle\;A,B,C,D,E,F \mid Baa, cFcBA, Dff, EBd, EEDC, cFcBA, \; \rangle. \\
\mathcal{P}_{12} & = \langle\;A,B,C,D,E,F \mid Baa, cFcBA, Dff, EBd, EEDC, eDfCcFcee \; \rangle.
\end{alignat*}
Simplifying these, Heegaard finds one of the presentations $\mathcal{P}_{11}$ simplifies to is identical with $\mathcal{P}_9$, and one of the presentations $\mathcal{P}_{12}$ simplifies to is identical to $\mathcal{P}_{10}$.

Finally, Dunfield's paper \cite{D19a} and associated list \cite{D19b} of the exceptional fillings of the manifolds in the OCC show that s768(0,0) has exceptional filling slopes (0,1), (1,0), (1,1), and (-1,1), of which only (1,0) = L(43,12) is cyclic. So the two distinguished-wave-determined slopes (0,1) and (-1,1) are each exceptional, but the cyclic slope (1,0) does not appear as a distinguished-wave-determined slope.
\end{ex}

\subsection{Some OCC survey motivated conjectures.} \hfill
\smallskip

\begin{conj}
The set of tunnels of a tunnel-number-one orientable 3-manifold with incompressible torus boundary exhibits no more than two distinguished-wave-determined slopes.
\end{conj}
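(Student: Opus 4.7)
The plan is to argue by contradiction: assume $M$ admits three tunnels $(H_i,R_i)$, $i=1,2,3$, with each $H_i[R_i] \cong M$ and yielding three pairwise distinct distinguished-wave-determined slopes on $\partial M$, and derive a contradiction. As a first reduction I would invoke Theorem \ref{high-distance implies a unique tunnel}: any $R_i$ at curve-complex distance greater than five from every disk-bounding curve in $\partial H_i$ determines $(H_i,R_i)$ up to homeomorphism, and hence can contribute only one slope to the slope set of $M$. So we may assume each of the three tunnels is low-distance, and in particular the meridian disks of each $H_i$ lie within a bounded radius of $R_i$ in the curve complex of $\partial H_i$.

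Next I would organize the low-distance tunnels of $M$ into a graph $\mathcal{T}(M)$ whose vertices are tunnels up to homeomorphism and whose edges are elementary moves (handle slides and cabling-type operations) between adjacent tunnels, in the spirit of the Cho--McCullough tree for tunnels in $S^3$. The strategy is then a local-to-global argument: the hope is that each elementary move transforms the distinguished slope in a restricted way, and that $\mathcal{T}(M)$ is sufficiently connected to propagate a local bound into a global one. The computational data in Section \ref{Distinguished-wave-determined slope dependence on tunnels}, and especially the observation in Table 2 that every pair of coexisting slopes has intersection number one, points to a rigid toggle structure in which each adjacent move either fixes the slope or interchanges it with a single canonical alternative.

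The main obstacle will be the local transformation rule itself: given adjacent tunnels $(H,R)$ and $(H',R')$, one must show their distinguished slopes are either equal or distance one apart. The bandsum lemmas of Section \ref{Basics of genus two Heegaard diagrams} and the distinguished-wave persistence Lemma \ref{Distinguished Wave persistence under a bandsum} give good control when the meridian system varies inside a fixed handlebody, but an elementary tunnel move generally alters the ambient handlebody, so those lemmas apply only after an auxiliary common refinement of $(H,R)$ and $(H',R')$ is found. Building such a common refinement, and tracking how the horizontal/vertical wave dichotomy of Definition \ref{horizontal and vertical waves in positive diagrams def} transports across it, seems to be the technical heart of the problem.

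A reasonable intermediate goal would be to prove the conjecture under additional structural hypotheses: for example when at least one of the two distinguished meridian representatives of $H[R]$ is primitive in $H$, so that the shortest-meridian calculus of Definition \ref{Shortest meridian rep} applies and the relevant tunnel reductions stay within a tractable class. One could then attempt the general case by induction on the depth of the tunnels involved, in the sense of Section \ref{Computing depth}, using the unknotting-path analysis there to reduce each tunnel to a bounded-depth neighborhood of the primitive layer. Even if the full conjecture resists, this program should at least yield the weaker statement that the set of distinguished slopes for tunnels of $M$ is finite, which the present paper leaves open.
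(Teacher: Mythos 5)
This statement is one of the paper's conjectures, not a theorem: the paper offers no proof of it, only the computational evidence of Heegaard's survey of the OrientableCuspedCensus summarized in Section \ref{Distinguished-wave-determined slope dependence on tunnels}. So there is no argument in the paper to compare yours against, and what you have written is a research program rather than a proof; you acknowledge as much when you describe the ``local transformation rule'' as the main obstacle and defer it. That rule --- that the distinguished slopes of two adjacent tunnels are either equal or distance one apart --- is essentially the paper's second conjecture in the same subsection, so your plan reduces one open conjecture to another. Moreover, even if that local rule were established, it would not by itself give the bound of two: three slopes on a torus can be pairwise distance one apart (for instance $(1,0)$, $(0,1)$, $(1,1)$, which all appear in Table 2), so a pairwise distance-one constraint is compatible with a slope set of size three. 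Some additional rigidity --- your ``toggle structure'' --- would have to be proved, and nothing in the paper supplies it.

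Two further gaps are worth naming. First, the connectivity of your proposed graph $\mathcal{T}(M)$ of tunnels under elementary moves is only known for $S^3$ (the Cho--McCullough tree); the manifolds the conjecture concerns are precisely those that do \emph{not} embed in $S^3$, $S^1 \times S^2$, or $(S^1 \times S^2)\,\#\,L(p,q)$, and no analogous connectivity theorem is available there, so the local-to-global propagation has no scaffold to run on. Second, the machinery you cite for controlling the wave under changes of meridian system --- Lemma \ref{Distinguished Wave persistence under a bandsum} and the horizontal/vertical dichotomy of Definition \ref{horizontal and vertical waves in positive diagrams def} --- operates entirely inside a \emph{fixed} handlebody $H$ with a fixed curve $R$; it says nothing about how slopes compare across two non-homeomorphic pairs $(H,R)$ and $(H',R')$ with $H[R]\cong H'[R']$, which is exactly the situation the conjecture addresses (and which the paper's own example m011 shows can produce genuinely distinct slopes). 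Your reduction via Theorem \ref{high-distance implies a unique tunnel} is fine as far as it goes, and the intermediate goals you propose (the primitive case, finiteness of the slope set) are sensible, but as it stands the proposal does not constitute a proof and the conjecture should be regarded as open.
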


\begin{conj}
If a tunnel-number-one orientable 3-manifold $M$ with incompressible torus boundary has tunnels exhibiting distinct distinguished-wave-deter-mined slopes, then those slopes are distance one apart on $\partial M$.
\end{conj}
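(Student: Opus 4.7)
The plan is to attack this by analyzing how the distinguished-slope transforms under elementary modifications that relate different tunnels of the same manifold. Two distinct tunnels correspond to two pairs $(H,R)$ and $(H',R')$ with $H[R] \cong H'[R'] \cong M$, and by Theorem~\ref{high-distance implies a unique tunnel}, distinct tunnels force $R$ to be at distance at most $5$ in the curve complex of $\partial H$ from some disk-bounding curve (and similarly for $R'$), so both tunnels are ``close'' to the primitive/proper-power locus. I would first package the data of each such tunnel as the distinguished pair $(m_1,m_2)$ on $\partial H$ together with the accompanying wave structure, identifying the torus boundary of $M$ via the homeomorphism so that the two distinguished slopes $m$ and $m'$ live on a common torus.

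Next, I would try to connect $(H,R)$ to $(H',R')$ by a sequence of elementary tunnel moves, in the spirit of the Morimoto--Sakuma and Cho--McCullough classifications of genus-two tunnels. The local model is already supplied by Proposition~\ref{Slopes determined by vertical waves and distance one}: inside a single positive connected diagram without cut-vertices, the slope determined by the vertical wave is always distance one from the distinguished horizontal slope. For each elementary move, the goal would be a local dichotomy -- either the distinguished slope is preserved, which is essentially the content of the persistence lemmas \ref{Distinguished Wave persistence under a bandsum} and \ref{Distinguished wave persistence under bandsums preserving connectivity and no cut-vertices when two relators are present}, or it changes to a neighbor at distance one. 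Establishing this local dichotomy should reduce to a careful case analysis of how horizontal and vertical waves interact across a bandsum performed along an edge of a connected Heegaard diagram without cut-vertices.

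The main obstacle, I expect, will be preventing accumulation: even if each individual move preserves the distance-one bound, iterating through a long sequence of bandsums and handle slides could a priori push slopes farther apart unless there is a structural reason forcing all tunnel-arising distinguished slopes to lie on a single triangle of the Farey graph of $\partial M$. The cleanest route would be to isolate a rigidity statement -- for instance, that every distinguished-slope of a tunnel of $M$ has algebraic intersection $\pm 1$ with some fixed homology class on $\partial M$ canonically determined by the tunnel-number-one structure -- which, combined with Conjecture preceding this one bounding the number of slopes by two, would immediately yield the distance-one bound. The computational evidence summarized in Tables 2 and 3 of Section~\ref{Distinguished-wave-determined slope dependence on tunnels} (every observed pair is distance one apart) strongly suggests such a rigidity exists, and the extremal examples in Subsection~\ref{Example distinct distinguished-wave-determined slopes}, especially the manifold v2293 where the distinguished slope is not among the exceptional fillings, are the natural test cases for formulating and attacking it.
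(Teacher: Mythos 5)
This statement is a \emph{conjecture} in the paper: the author offers no proof, only the computational evidence of the OCC survey (Tables 2 and 3) and a handful of worked examples. So there is nothing in the paper to compare your argument against, and your proposal, as written, is a research plan rather than a proof --- you yourself flag the two essential missing ingredients (the ``local dichotomy'' for elementary moves and the ``rigidity statement'') without establishing either.

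Beyond that, there is a concrete gap in the strategy. The persistence lemmas you invoke (Lemma~\ref{Distinguished Wave persistence under a bandsum} and Lemma~\ref{Distinguished wave persistence under bandsums preserving connectivity and no cut-vertices when two relators are present}) operate on a \emph{fixed} pair $(H,R)$: they show the distinguished wave is unchanged as one varies the complete set of meridian disks by bandsums. They say nothing about passing from one tunnel $(H,R)$ to a genuinely different tunnel $(H',R')$ of the same manifold $M$; indeed, the whole point of the examples in Section~\ref{Distinguished-wave-determined slope dependence on tunnels} (e.g.\ m011, s768) is that non-homeomorphic pairs can yield \emph{different} slopes, so no persistence statement can bridge them. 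There is no sequence of ``elementary tunnel moves'' in the paper connecting distinct tunnels, and importing one from the Cho--McCullough machinery would require substantial new work (that theory is built for knots in $S^3$, whereas the conjecture concerns manifolds that do \emph{not} embed in $S^3$, $S^1\times S^2$, or $(S^1\times S^2)\,\#\,L(p,q)$). Similarly, Proposition~\ref{Slopes determined by vertical waves and distance one} compares the horizontal and vertical waves \emph{within a single diagram of a single curve $R$}; it does not compare the distinguished (horizontal) slopes of two different tunnels, which is what the conjecture asks. Finally, the ``rigidity statement'' you propose to isolate is essentially a restatement of the conjecture itself, and leaning on the preceding (also unproven) conjecture bounding the number of slopes by two does not close the argument. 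The proposal identifies reasonable test cases but does not constitute a proof.
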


\begin{conj}
A tunnel-number-one orientable 3-manifold with incompressible torus boundary has no more than five distinct tunnels, up to homeomorphism.
\end{conj}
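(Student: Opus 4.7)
The plan is to attack the bound of five tunnels by combining the distinguished-wave-determined slope invariant of this paper with the curve-complex distance estimate of Theorem~\ref{high-distance implies a unique tunnel}. Since that theorem yields a unique tunnel whenever $R$ lies at distance greater than five in the curve complex of $\partial H$ from every disk-bounding curve of $H$, the full count of distinct tunnels for a fixed tunnel-number-one manifold $M$ is concentrated in the \emph{low-distance regime}, where $R$ sits in the radius-$5$ neighborhood of the disk set $\mathcal{D}(H)$. The conjecture thereby reduces to bounding the number of inequivalent pairs $(H,R)$ in this finite-radius subcomplex that can produce homeomorphic $H[R]$.

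First I would organize the tunnels of $M$ by the invariants supplied by their distinguished waves. Each tunnel $(H,R)$ determines a distinguished-wave-determined slope on $\partial H[R] = \partial M$, and the procedure of Section~\ref{Computing depth} assigns to it a canonical minimal-length unknotting path terminating at a primitive or proper-power curve. The two companion conjectures stated just above the target conjecture, namely that at most two slopes arise and that any two must be distance one apart, suggest partitioning tunnels first by slope class and then refining each class by the leading segment of its unknotting path, producing a near-complete homeomorphism invariant of the tunnel.

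Next I would graft these invariants onto the Cho--McCullough tunnel-tree formalism of \cite{CM1}--\cite{CM9}. Their tree organizes tunnel isotopy classes for knots in $S^{3}$ into a one-dimensional simplicial complex with controlled local branching; adapting their framework to tunnel-number-one manifolds with incompressible torus boundary and decorating tree vertices with distinguished-wave-determined slopes would allow pruning of branches whose local combinatorics violate the low-distance constraint. A count bound would then follow from a bound on the number of admissible vertices in the resulting finite subtree, combined with the injectivity of the slope-and-leading-path invariant onto those vertices.

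The main obstacle is the genuinely low-distance regime, and in particular the sub-case in which two disjoint representatives $R, R'$ are simultaneously primitive in $H$: here the combinatorics of R-R diagrams proliferates and the existing distinguished-wave machinery gives no direct bound. At present only experimental OCC evidence fixes the universal bound at exactly five. A rigorous proof of this precise constant appears to require a sharp quantitative converse to Theorem~\ref{high-distance implies a unique tunnel}, controlling how many inequivalent tunnels can coexist within the radius-$5$ ball of $\mathcal{D}(H)$. Without new input of this kind, the most one should realistically hope to extract from the methods of this paper is some larger explicit universal bound, leaving the sharpness of $5$ as the hard residual content of the conjecture.
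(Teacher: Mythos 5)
The statement you have been asked to prove is presented in the paper only as a conjecture: the paper gives no proof, and the sole support offered is the computational evidence of Heegaard's survey of the 59{,}107 one-cusped manifolds in SnapPy's OrientableCuspedCensus (Table 1, where no manifold was found with more than five inequivalent tunnels). Your proposal does not close that gap, and to your credit you say so explicitly in your final paragraph. The central missing idea is exactly the one you identify: Theorem \ref{high-distance implies a unique tunnel} reduces multiplicity of tunnels to the regime where $R$ lies within distance five of the disk set of $H$, but neither the paper nor your sketch produces \emph{any} finiteness or counting statement in that regime, let alone the sharp constant $5$. Reducing the conjecture to ``bound the number of inequivalent $(H,R)$ in the radius-$5$ neighborhood'' is a restatement of the problem, not progress toward it.

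Two further steps in your outline would not survive scrutiny even as conditional arguments. First, your partition of tunnels by distinguished-wave-determined slope leans on the two companion conjectures (at most two slopes, and distance one apart when distinct); those are themselves unproven and supported only by the same census data, so they cannot be invoked as lemmas. Second, the Cho--McCullough tree of \cite{CM1}--\cite{CM9} is built specifically for tunnels of knots in $S^3$ from the disk complex of the standard genus-two splitting; its adaptation to arbitrary tunnel-number-one manifolds with incompressible torus boundary, and the asserted ``injectivity of the slope-and-leading-path invariant'' on the resulting vertices, are substantial unestablished claims --- the paper's own Lemma \ref{The two distinguished meridian representatives are the two ``shortest'' meridian representatives of a tunnel of tunnel-number-one knot} and the unknotting-path machinery of Section \ref{Computing depth} are proved only for manifolds embedding in $S^3$, $S^1 \times S^2$, or $(S^1 \times S^2)\,\#\,L(p,q)$, which are precisely the manifolds excluded from the conjecture's interesting cases. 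In short: there is no proof in the paper to compare against, and your proposal is a reasonable research program rather than an argument; the statement remains open.
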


\begin{rem} \textbf{{Data for Heegaard's Distinguished-Wave-Determined slopes for tunnel-number-one OCC manifolds, Heegaard program source and documentation.}} \hfill
\label{OCC data}

A 32 MB zipped archive $Distinguished\_Waves\_and\_Slopes.zip$ containing 9 files with 300 MB of data from Heegaard's survey of distinguished-wave-determined slopes for tunnel-number-one OCC manifolds is at: 
\begin{verbatim}
   < https://bitbucket.org/JOIBerge/heegaard_results2/src/master >. 
\end{verbatim}
(Attempting to view the raw data in $Distinguished\_Waves\_and\_Slopes.zip$ should cause the archive to download.)

Finally, the 3-manifold topology software archive maintained by Marc Culler and Nathan Dunfield at:
\begin{verbatim}
                 < https://t3m.math.uic.edu >
\end{verbatim}
contains links to some Heegaard program software material.

\end{rem}


\end{document}